\documentclass[11pt]{article}
\usepackage{amsfonts,amsmath,amssymb,amsthm,mathrsfs,colortbl}
\usepackage{url}
\usepackage[font=normalsize,textfont=normalfont]{caption}
\usepackage{subcaption}
\usepackage{epsfig,latexsym,graphicx}
\usepackage{esint}
\usepackage{graphicx,latexsym,amssymb,amsmath,amsfonts,fancyhdr}
\usepackage{euscript}
\usepackage[letterpaper,hmargin=1.0in,vmargin=1.0in]{geometry}

\parindent 1 pc

\newtheorem{maintheorem}{Theorem}

\newtheorem{theorem}{Theorem}[section]
\newtheorem{lemma}[theorem]{Lemma}

\newtheorem{proposition}[theorem]{Proposition}
\newtheorem{observation}[theorem]{Observation}

\newtheorem{definition}[theorem]{Definition}

\def\XXint#1#2#3{{\setbox0=\hbox{$#1{#2#3}{\int}$ }
\vcenter{\hbox{$#2#3$ }}\kern-.6\wd0}}


\renewcommand{\P}{\mathbb{P}}

\newcommand{\A}[1]{\mathcal{A}^{(#1)}}

\newcommand{\cd}{\mathcal{D}}
\newcommand{\ch}{\mathcal{H}}
\newcommand{\ce}{\mathcal{E}}
\newcommand{\cg}{\mathcal{G}}
\newcommand{\cj}{\mathcal{J}}
\newcommand{\cm}{\mathcal{M}}
\newcommand{\Z}{\mathbb{Z}}
\newcommand{\N}{\mathbb{N}}
\newcommand{\R}{\mathbb{R}}
\newcommand{\cf}{\mathcal{F}}

\begin{document}
\title{Lipschitz Embeddings of Random Fields}

\author{Riddhipratim Basu\thanks{Stanford University. Email: rbasu@stanford.edu.}
\and
Vladas Sidoravicius\thanks{Courant Institute of Mathematical Sciences, New York, NYU-ECNU Institute of Mathematical Sciences at NYU Shanghai and
Cemaden, Sao Jose dos Campos. Email: vs1138@nyu.edu}
\and
Allan Sly\thanks{University of California, Berkeley. Email: sly@stat.berkeley.edu}
}

\date{}
\maketitle

\begin{abstract}
We consider the problem of embedding one i.i.d.\ collection of Bernoulli random variables indexed by $\Z^d$ into an independent copy in an injective $M$-Lipschitz manner. For the case $d=1$, it was shown in \cite{BS14} to be possible almost surely for sufficiently large $M$. In this paper we provide a multi-scale argument extending this result to higher dimensions.  
\end{abstract}

\section{Introduction}\label{s:intro}

The question of finding Lipschitz embeddings of one random object into another has attracted a significant interest in recent years \cite{GLR:10, GriHol:10a, GriHol:10, GriHol:12, HolMar:12}, these questions also have close connections with co-ordinate percolation problems \cite{BBS:00, Winkler:00, BSS14+}. In this paper, we consider a natural question in this class for Euclidean lattices.  

Let $\mathbb{X}=\{X_{v}\}_{v\in \Z^d}$ and $\mathbb{Y}=\{Y_{v}\}_{v\in \Z^d}$ be collections of binary fields on $\Z^d$. We say $\mathbb{X}$ can be \textbf{$M$-embedded} in $\mathbb{Y}$ if there exists an injective map $\phi: \Z^d \rightarrow \Z^d$ such that $X_{v}=Y_{\phi(v)}$ $\forall v\in \Z^d$ and $||\phi(v_1)-\phi(v_2)||\leq M||v_1-v_2||$  $\forall v_1,v_2\in \Z^d$ where $||\cdot||$ denotes the Euclidean norm in $\Z^d$. We denote the event that $\mathbb{X}$ embeds into $\mathbb{Y}$ by $\mathbb{X}\hookrightarrow _{M} \mathbb{Y}$.

The primary question we investigate is the following. Suppose $\mathbb{X}$ and $\mathbb{Y}$ are independent collection of i.i.d.\ Bernoulli variables. Does there exist $M$ sufficiently large such that $\mathbb{X}\hookrightarrow _{M} \mathbb{Y}$ almost surely? This question was answered affirmatively for $d=1$ in \cite{BS14}. Our contribution is to extend this result to higher dimensions.

\begin{maintheorem}
\label{t:embedh}
Let $\mathbb{X}=\{X_{v}\}_{v\in \Z ^2}$ and $\mathbb{Y}=\{Y_{v}\}_{v\in \Z ^2}$ be independent collections of i.i.d.\ $\mbox{Ber}(\frac{1}{2})$ random variables. There exists $M>0$ such that $\mathbb{X}$ can be $M$-embedded in $\mathbb{Y}$ almost surely.  
\end{maintheorem}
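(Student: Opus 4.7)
The plan is to extend the multi-scale renormalization strategy of \cite{BS14} to two dimensions. I would fix a rapidly growing sequence of length scales $L_{k+1} = L_k^{\alpha}$ (for some $\alpha>1$) and at each scale $k$ partition $\Z^2$ into disjoint blocks $B_k(x)$ of side length $L_k$. In parallel, I would pick a Lipschitz constant $M$ and assign to each source block a potential target ``image region'' in $\Z^2$ of side length $ML_k$, with the rule that image regions of neighboring blocks are disjoint but close enough that any two may be swapped with a modest local distortion. The embedding will be built by recursively selecting, inside each good scale-$k$ target region, which scale-$(k-1)$ target subregion hosts each scale-$(k-1)$ source subblock.

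Next I would introduce a recursive notion of goodness. A scale-$0$ block is good if a simple local matching between the patterns of $\mathbb{X}$ and $\mathbb{Y}$ succeeds. A scale-$k$ block is good if (i) all but a tiny fraction of its scale-$(k-1)$ subblocks are good and (ii) the bad subblocks are sparse enough, and surrounded by enough good subblocks, that each bad one can be embedded into the image region of a nearby good subblock while respecting the Lipschitz and injectivity constraints at scale $k{-}1$. The crux is to set up the quantitative thresholds so that these conditions, verified at scale $k$, imply the existence of a concrete scale-$k$ embedding that can itself be plugged into the scale-$(k+1)$ recursion.

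I would then prove by induction that the probability $p_k$ that a scale-$k$ block is bad decays doubly-exponentially in $k$. The inductive step is a union bound over configurations of bad scale-$(k-1)$ subblocks, weighted by an entropy factor counting how such ``bad clusters'' can be arranged inside a scale-$k$ block. Because goodness at scale $k{-}1$ depends only on the randomness inside the corresponding block, independence across disjoint scale-$(k-1)$ blocks makes this a clean iteration; choosing $\alpha$ and $M$ large enough gives $p_{k+1} \le C^{L_k^{2\beta}} p_{k-1}^{c}$ with $c>1$, closing the recursion. A Borel--Cantelli argument then yields, almost surely, some scale at which every block containing the origin is good, and the multi-scale construction produces the required global embedding.

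The principal obstacle, and the reason the higher-dimensional case is genuinely harder than $d=1$, is the step (ii) above: rerouting around bad subblocks in 2D. In one dimension the linear order forces any bad interval to be flanked by good intervals on either side, and a bad region can always be absorbed by shifting the embedding locally; in two dimensions one must show that bad scale-$(k-1)$ subblocks inside a good scale-$k$ block form only small, well-separated clusters, and that the surrounding annuli of good subblocks admit a family of Lipschitz ``detours'' in $\Z^2$ that are mutually disjoint and compatible with the Lipschitz and injectivity constraints inherited from neighboring scale-$k$ blocks. Proving this subcritical-percolation-plus-rerouting property at every scale, with constants uniform in $k$, is where the bulk of the technical work goes and where the main novelty beyond \cite{BS14} lies.
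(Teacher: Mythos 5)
Your sketch reproduces the overall scaffolding (doubly exponential scales, recursive good/bad blocks, rerouting around sparse bad clusters), but it has a genuine gap at its quantitative core: you only track a single number $p_k=\P[\text{scale-}k\text{ block is bad}]$. That is not enough to close the induction. Bad blocks occur at every scale, and a bad $\mathbb{X}$-block must be matched to some region of $\mathbb{Y}$; how hard this is depends not on \emph{whether} the block is bad but on \emph{how} bad it is, and there is no a priori bound on that. The induction only closes if one propagates a full tail estimate on the (random) embedding probability $S_j(X)=\P[X\hookrightarrow Y\mid X]$, of the form $\P(S_j(X)\le p,\,V_X\ge v)\le p^{m+2^{-j}}L_j^{-\beta}L_j^{-\gamma(v-1)}$ as in \eqref{e:tailx1h}: the large exponent $m$ is what guarantees that trying polynomially many candidate partners (the $L_j^{3/2}$ independent $\alpha$-canonical maps) succeeds except with a probability whose tail is again power $m$ at the next scale. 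This is the central mechanism of \cite{BS14} and of Proposition~\ref{l:totalSizeBoundh} here, and your proposal never engages with it; a dichotomous good/bad bookkeeping with ``$p_{k+1}\le C^{L_k^{2\beta}}p_{k-1}^c$'' cannot absorb the entropy of bad-cluster placements once the conditioning on arbitrarily difficult sub-blocks is taken into account. A related, subtler point: your goodness condition (ii) quantifies over whether bad sub-blocks ``can be embedded into the image region of a nearby good subblock,'' which as stated depends on both fields; to preserve independence across blocks, goodness of an $\mathbb{X}$-block must be an intrinsic function of $\mathbb{X}$ alone, which the paper achieves via the semi-bad condition ($S_j\ge 1-\varepsilon_j$, a conditional probability given $X$) and the airport condition (quantified over all possible semi-bad $\mathbb{Y}$-shapes).

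Two smaller corrections. First, the endgame is not ``Borel--Cantelli yields some scale at which every block containing the origin is good'': one needs the blocks around the origin to be good (with undeformed domains) at \emph{all} scales simultaneously, an event of positive probability (Proposition~\ref{p:allgood}), followed by a compactness argument and an appeal to ergodicity to upgrade to almost-sure; goodness at a single scale does not produce a global embedding. Second, in two dimensions the blocks cannot all be kept as squares: to guarantee no bad sub-blocks near block boundaries one is forced to merge cells across conjoined buffer zones into lattice-animal-shaped blocks and to perturb their boundaries, which is why the size variable $V_X$ and the factor $L_j^{-\gamma(v-1)}$ appear in every estimate. You correctly identify the 2D rerouting geometry as the main new difficulty, but the shape bookkeeping and the tail estimate are inseparable from it.
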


A couple of remarks are in order before we proceed further. 

Observe that by ergodicity, the event $\mathbb{X}\hookrightarrow _{M} \mathbb{Y}$ is a $0-1$ event, and hence to prove Theorem \ref{t:embedh} it suffices to prove that $\P[\mathbb{X}\hookrightarrow _{M} \mathbb{Y}]>0$ for $M$ sufficiently large.

Notice also that we state our result for $d=2$, and strictly speaking that is what we shall prove. However, it shall be clear following our proof that the same argument, modulo minor modifications and suitable changes in parameters, will go through for $d>2$ as well; where the Lipschitz constant $M=M(d)$ will depend on $d$. For the sake of clarity and notational convenience we have chosen to write the proof for the case $d=2$ only.


\subsection{Related Works}
In early 1990s Winkler introduced a fascinating class of dependent percolation problems, the so-called co-ordinate percolation problems, where the vertices are open or closed depending on variables on co-ordinate axes. Long-range dependence makes these problems not amenable to the tools of Bernoulli percolation. It turns out that several natural questions about embedding one random sequence into another following certain rules can be reformulated as problems in this class (see e.g.~\cite{BBS:00,CTP:00,Grimmett:09,Winkler:00}). In particular, Grimmett, Liggett and Richthammer~\cite{GLR:10} asked whether there exists a Lipschitz embedding of one Bernoulli sequence (indexed by $\Z$) into an independent copy. This question was recently answered in \cite{BS14} (see also \cite{Gacs:12}). The problem we investigate in this paper is a natural generalisation of the above question to higher dimensions.

In \cite{BS14}, a multi-scale structure was developed which was flexible enough to solve a number of embedding question. The novelty in that multi-scale approach is to ignore the anatomy of difficult
to embed configurations and instead obtain recursive tail estimates for the probability that they can be embedded into
a random block. A similar approach has been used to solve a co-ordinate percolation problem in \cite{BSS14+}. Our approach here is also essentially similar to that in \cite{BS14}, however substantially more complicated geometry of blocks in higher dimensions, makes it much more technically challenging. 

In a  similar vein is the question of a rough, (or  quasi-), isometry of two independent Poisson processes.  Informally, two metric spaces are roughly isometric if their metrics are equivalent up to multiplicative and additive constants. Originally Ab\'{e}rt~\cite{Abert:08} asked whether two independent infinite components of bond percolation on a Cayley graph are roughly isometric. Szegedy and also Peled~\cite{Peled:10} asked the problem when these sets are independent Poisson process in $\R$  (see~\cite{Peled:10} for a fuller description of the history of the problem). This problem was settled in \cite{BS14} as well. To settle the question in higher dimension (i.e., to show that for large enough choices of parameters, two independent copies of rate one Poisson processes on $\R^d$ are rough isometric almost surely) requires more work than we do in this paper and is subject of a future work in preparation. A crucial ingredient needed for this extension comes from \cite{BSS14+++}.

%
%

In a related direction, a series of works by Grimmett, Holroyd and their collaborators~\cite{DDGHS:10,Grimmett:09,GriHol:10a,GriHol:10,GriHol:12} investigated a number of problems including when one can embed $\mathbb{Z}^d$ into site percolation in $\mathbb{Z}^D$ and showed that this was possible almost surely for $M=2$ when $D>d$ and the the site percolation parameter was sufficiently large but almost surely impossible for any $M$ when $D \leq d$.  Recently Holroyd and Martin \cite{HolMar:12} showed that a comb can be embedded in $\mathbb{Z}^2$.  Another series of work in this area involves embedding words into higher dimensional percolation clusters~\cite{BenKes:95,HLNS14,KSV:98,KSZ:01}.

\subsection{Outline of the proof}
\label{s:outline}  
Like essentially all results in this area, our argument is also based on multi-scale analysis. The main challenge is to match the difficult to embed regions in $\mathbb{X}$ to their suitable partners in $\mathbb{Y}$ simultaneously at all scales. Our approach here is in spirit similar to the one taken in \cite{BS14}, but it is technically much more challenging because the difficult-to-embed regions can have many different shapes and complicated geometries in higher dimensions. 

We divide the collections $\mathbb{X}$ and $\mathbb{Y}$ into blocks on a series of doubly exponentially growing length scales $L_j=L_0^{\alpha^j}$ for $j\geq 0$. A block of level $j$ is approximately a square of side length $L_j$, though we also allow blocks of more complicated shapes and larger sizes. At each of these levels we define a notion of a ``good'' block. Single elements in  $\mathbb{X}$ constitute the level 0 blocks and in $\mathbb{Y}$ squares of a fixed (large) size make level 0 blocks.

Suppose that we have constructed the blocks up to level $j$. In Section \ref{s:prelimh}, we give a construction of $(j+1)$-level blocks as a union of $j$-level sub-blocks in such a way that the blocks are identically distributed, non neighbouring blocks are independent and there are no bad $j$-level subblocks very close to the boundary of a $(j+1)$-level block. To ensure the last condition we need to allow blocks to be of larger size, and in certain cases blocks will approximate a connected union of squares of size $L_j$. For more details, see Section \ref{s:prelimh}.

At each level we distinguish a set of blocks to be good.  In particular this will be done in such a way that at each level $(j+1)$ for \emph{any} good block $X$ in $\mathbb{X}$ and \emph{any} good block $Y$ in $\mathbb{Y}$, their $j$-level bad sub-blocks can be matched with suitable partners via a bi-Lipschitz map of Lipschitz constant $(1+10^{-(j+4)})$ (this is termed as embedding at level $(j+1)$). Flexibility in choosing this map gives us an improved chance to find suitable partners for difficult to embed blocks at higher levels. We describe how to define good blocks in Section \ref{s:goodhd}. We also define components which are unions of blocks such that different components containing bad sub-blocks are separated by good components which are just single good blocks.

The proof then involves a series of recursive estimates at each level given in Section \ref{s:rech}. We ask that at level $j$ the probability that a block is good is at least $1-L_j^{-\gamma}$, conditioned on a subset (possibly empty) of other level $j$ blocks and hence the vast majority of blocks are good.  Furthermore, we show tail bounds on the embedding probabilities showing that for $0<p\leq 1-L_j^{-1}$,
\[
\P(S_j^{\mathbb{X}}(X)\leq p, V_X\geq v)\leq p^{m+2^{-j}}L_j^{-\beta}L_{j}^{-\gamma(v-1)}
\]
where $S_j^{\mathbb{X}}(X)$ denotes the $j$-level embedding probability of a $j$ level component $X$, and $V_X$ denote the number of squares of size $L_j$ that $X$ approximates, see Section \ref{s:dembedh} for a formal definition. We show the analogous bounds for $\mathbb{Y}$-blocks as well. 
The full inductive step is given in Section \ref{inductionh}. Proving this constitutes the main work of the paper.

The key quantitative estimate in the paper is Proposition~\ref{l:totalSizeBoundh} which follows directly from the recursive estimates, and bounds the chance of a block having a large size, many bad sub-blocks or a particularly difficult collection of sub-blocks measured by the product of their embedding probabilities.  In order to improve the embedding probabilities at each level we need to take advantage of the flexibility in mapping a small collection of bad blocks to a large number of possible partners in a Lipschitz manner with appropriate Lipschitz constants. To this effect we define families of maps between blocks to describe such potential maps. Because $m$ is large and we take many independent trials  the estimate at the next level improves significantly. Our analysis is split into four different cases.

To show that good blocks at level $(j+1)$ have the required properties, we construct them so that the total size of bad subcomponents contained in them is at most $k_0$ and all of which are ``semi-bad" (defined in Section \ref{s:dembedh}) in particular with embedding probability close to $1$.  We also require that every semi-bad block maps into a large proportion of the sub-blocks in every $L_j^{3/2}\times L_j^{3/2}$ square of $j$ level blocks contained in a $(j+1)$. Under these conditions we show that good blocks can always be mapped to any other good block.

To complete the proof we note that with positive probability the blocks surrounding the origin are good at each level. The proof is then completed using a standard compactness argument.

\subsubsection{Parameters}\label{s:parameters}
Our proof involves a collection of parameters $\alpha,\beta,\gamma,k_0,m$ and $v_0$ which must satisfy a system of constraints.  The required constraints are
\begin{eqnarray*}
\alpha>6, \gamma>40\alpha, \beta>1500\alpha\gamma, k_0>6000\alpha \gamma, v_0>3000\alpha,\\
8\gamma(v_0-1)>3\alpha\beta, m\geq 9\alpha \beta +3\alpha \gamma v_0, \gamma k_0> 300\alpha \beta, k_0>10\gamma, (1-10^{-10})^{4v_0}>\frac{9}{10}.
\end{eqnarray*}

To fix on a choice we will set
\begin{equation}\label{e:parameters}
\alpha =8, \gamma =350, \beta =4500000, v_0=45000, m=15\times 10^7, k_0=13\times 10^6. 
\end{equation}
Given these choices we then take $L_0$ to be a sufficiently large integer.  We did not make a serious attempt to optimize the parameters or constraints, often with the aim of keeping the exposition more transparent.

\subsection*{Acknowledgements}
This work was completed when R.B. was a graduate student at the Department of Statistics at UC Berkeley. He gratefully acknowledges the support of UC Berkeley graduate fellowship. The result in this paper appeared in Chapter 4 of the Ph.D. dissertation of R.B. at UC Berkeley: \emph{Lipschitz Embeddings of Random Objects and Related Topics, 2015}. A.S. was supported by an Alfred Sloan Fellowship and NSF grants DMS-1208338, DMS-1352013.

\subsection*{Organization of the paper}
Rest of this paper is organised as follows. In Section~\ref{s:prelimh} we describe our block constructions and formally define good blocks.  In Section~\ref{s:rech} we state the main recursive theorem and show that it implies Theorem~\ref{t:embedh}.  In Section~\ref{s:construction} we construct a collection of bi-Lipschitz functions which we will use to describe our mappings between blocks.  In Section~\ref{s:tailestimateh} we prove the main recursive tail estimates on the embedding probabilities.  In Section~\ref{s:goodh} we show that good blocks have the required inductive properties thus completing the induction.

\section{The Multi-scale Structure}\label{s:prelimh}
As mentioned before, we shall restrict ourselves only to the two dimensional setting, but the reader can observe that the construction described here naturally goes through for in higher dimensions with minimal changes. For reasons of notational convenience that will momentarily be clear, without loss of generality, we shall take our sequence to be indexed by a translate of $\Z^2$ rather than $\Z^2$ itself. Let $\iota =(\frac{1}{2}, \frac{1}{2})$. Let $\mathbb{X}=\{X_{v}\}_{v\in \iota +\Z^2}$ and $\mathbb{Y}=\{Y_{v}\}_{v\in \iota + \Z^2}$ be collections of i.i.d.\ $\mbox{Ber}(\frac{1}{2})$ random variables.

As mentioned above, our argument for proof of Theorem \ref{t:embedh} is multi-scale and depends of partitioning $\mathbb{X}$ and $\mathbb{Y}$ into blocks at level $j$-for each $j\geq 0$. The blocks are constructed recursively. For the purpose of our construction we shall work with $\R^2$ rather than $\Z^2$. At each level $j$ we shall partition $\R^2$ into disjoint (except at the boundary) random regions $\{\mathscr{B}^{j,\mathbb{X}}_{\alpha}\}_{\alpha\in I_1}$ and $\{\mathscr{B}^{j,\mathbb{Y}}_{\alpha}\}_{\alpha\in I_2}$ respectively for $\mathbb{X}$ and $\mathbb{Y}$.

\textbf{We shall interchangeably use the term blocks at level $j$ (for $\mathbb{X}$, say) to refer to the regions $\mathscr{B}^{j,\mathbb{X}}_{\alpha}$ or the collection of random variables contained in these regions: $\{X_{u}:u\in \mathscr{B}^{j,\mathbb{X}}_{\alpha}\}$.}  

Our blocks will be indexed by elements in a random partition of $\Z^2$. 

\subsection{Blocks at level $0$}
We start with describing the construction of blocks at level $0$. Construction of blocks at level $0$ are different for $\mathbb{X}$ and $\mathbb{Y}$. Also level $0$ blocks are deterministic (i.e.\ the regions corresponding to them are deterministic) and indexed by vertices in $\Z^2$. 

For each $u=(u_1,u_2)\in \Z^2$, the $\mathbb{X}$-block at level $0$ indexed by $u$, denoted by $X^0(u)$ corresponds to the region $[u_1,u_1+1]\times [u_2,u_2+1]$. 

Let $M_0\in \N$ denote some large constant to be determined later. For each $u=(u_1,u_2)\in \Z^2$, the $\mathbb{Y}$-block at level $0$ indexed by $u$, denoted by $Y^0(u)$ corresponds to the region $[u_1M_0,(u_1+1)M_0]\times [u_2M_0,(u_2+1)M_0]$.

For $U\subseteq \Z^2$, the collection of blocks $\{X^0(u):u\in U\}$ will be denoted by $X^0_U$ (and similarly for $Y^0_U$).

Observe that level $0$ blocks are independent for both $\mathbb{X}$ and $\mathbb{Y}$. Level $0$ blocks are fundamental units of our multi-scale structure. All the blocks at higher scales will be unions of blocks at level $0$. For the rest of this construction, we rescale space for $\mathbb{Y}$ such that blocks at level $0$ become unit squares. Under this rescaling construction of higher level blocks are performed identically for $\mathbb{X}$ and $\mathbb{Y}$.

%

\subsubsection{Good Blocks at level 0}
As we have mentioned before, at each scale of the multi-scale construction, we shall designate a set of blocks in both $\mathbb{X}$ and $\mathbb{Y}$ to as {\bf good}. At level $0$, each $\mathbb{X}$-block will be good. For $u\in \Z^2$, $Y^0(u)$ is called good if we have the fraction of both 0's and 1's contained in $Y^0(u)$ is at least $1/3$ i.e.,

$$\#\{v\in Y^0(u): Y_v=1\}\wedge \#\{v\in Y^0(u): Y_v=0\} \geq \frac{M_0^2}{3}.$$

\subsection{An Overview of the Recursive Construction}
After rescaling blocks at level $0$ the recursive construction of blocks at higher levels is identical for both $\mathbb{X}$ and $\mathbb{Y}$. Without loss of generality, we shall restrict ourselves to construction of the blocks for $\mathbb{X}$ for levels $j\geq 1$. Our recursive block construction algorithm is fairly complex and has many elements to it. To facilitate the reader, before giving the formal definition, in this subsection we give a rough description of how the construction goes and make a list of different terms associated with the construction for easy reference. 

\begin{enumerate}
\item[$\bullet$] {\bf Cells:} Cells at level $j$ are basic units of construction at level $j$. These are squares, indexed by $\Z^2$, of size $L_j$, that $\R^2$ is divided into. Denote the cell corresponding to $u\in \Z^2$ by $B^j(u)$. Recall that $L_j=L_0^{\alpha^j}$ is the doubly exponentially increasing length scale. 
\item[$\bullet$] {\bf Buffer Zones:} Buffer zones are regions around the boundary of a cell, which should be thought of as fattened versions of the boundaries of cells.
\item[$\bullet$]{\bf Lattice Blocks:} At each level $j$ we partition $\Z^2$ as a (random) union of lattice animals (connected finite subsets). The elements of this are called lattice blocks. Let the set of lattice blocks at level $j$ be $\mathcal{H}_j=\mathcal{H}$.  The blocks at level $j$ are indexed by elements of $\ch$, typically denoted $X=X^j_{H}, H\in \ch$. For $H$ that is a union of elements in $\ch$, $X^j_{H}$ will denote the union of the corresponding blocks.
\item[$\bullet$]{\bf Ideal Multi-blocks:} For a lattice block $H$ at level $j$, we call  $\cup_{u\in H} B^j(u)$ an ideal multi-block.
\item[$\bullet$]{\bf Domains and Boundary Curves of Blocks:} Domains of blocks at level $j$ are small bi-Lipschitz perturbation of ideal multi-blocks. These are formed in such a way that the boundaries of the domains are nice (in some sense to be specified later). For a block $X$, we typically denote its domain by $\hat{U}_X$ and the curve corresponding to the boundary of $\hat{U}_{X}$ by $C_{X}$.
\item[$\bullet$]{\bf Blocks:} We shall define regions $\tilde{U}_X$, that are unions of smaller level blocks and these will define blocks. The regions $\tilde{U}_X$ will be defined as approximations of the regions $\hat{U}_X$ defined above. We shall denote the term block interchangeably for the region defining it as well as the collection of random variables in the region. For a block $X$, we shall denote by $V_{X}$ the size of a block, i.e., the size of the lattice block corresponding to it.
\item[$\bullet$]{\bf Good and Bad Blocks:} At each level, we designate some of the blocks to be good (depending on the configuration), other blocks are called bad. Good blocks will always correspond to lattice blocks of size 1, but the converse need not be true. 
\item[$\bullet$]{\bf Components of blocks:} We also form components of blocks at each level, where a component is a connected union of a number of blocks such that two components containing bad blocks are not neighbouring. Components are deterministically determined given the blocks and the identity of good blocks. For a component $X$, the size of it, i.e., the total size of all lattice blocks contained in that component will be denote by $V_{X}$.

\item[$\bullet$]{\bf Semi-bad Components:}  Components are called bad if they contain one or more bad blocks. Some of the bad components are designated as semi-bad component, depending on the configuration.    
\end{enumerate}

Observe that, at level $0$, lattice blocks are all singletons. Cells, domains and blocks are all the same and boundary curves are just the boundaries of cells. Now we give a detailed description of how we construct each of the steps above for $j\geq 1$.

\subsection{Cells and Buffer Zones}

\begin{definition}[Cells at level $j$]
\label{d:ib}
For $j\geq 1$, set $L_j=L_{j-1}^{\alpha}=L_0^{\alpha^j}$. For $j\geq 1$ and $u=(u_1,u_2)\in \Z^2$, we define $B^{j}(u)=[u_1L_j,(u_1+1)L_j]\times [u_2L_j, (u_2+1)L_j]$. These squares which partition $\R^2$, will be called {\bf cells at level $j$}. 
\end{definition}

Observe that cells at level $j$ are squares of doubly exponentially growing length $L_j$. Also observe that cells are nested across $j$, i.e., a cell at level $j\geq 1$ is a union of $L_j^{2\alpha-2}$ many cells at level $(j-1)$. The above definition is illustrated in Figure \ref{f:ib}. 

\begin{figure}[h]
\begin{center}
\includegraphics[width=0.6\textwidth]{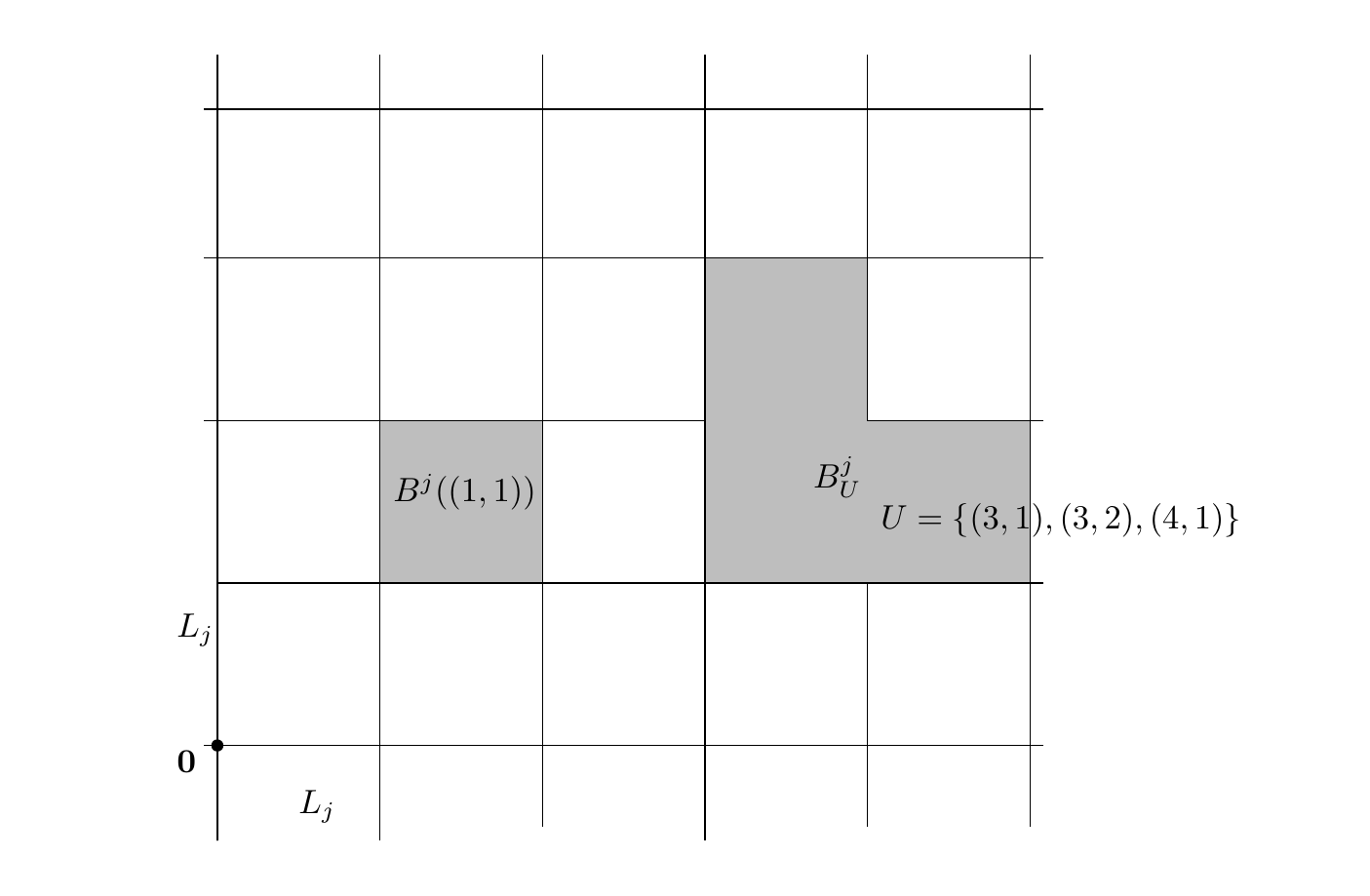}
\end{center}
\caption{Cells and multi-cells at level $j$}
\label{f:ib}
\end{figure}

The basic philosophy of constructing the blocks here is similar to that in \cite{BS14}: we want the region around the boundary of the blocks at level $j$ to consist of `good' subblocks at level $(j-1)$. Because of the more complicated geometry of $\R^2$ (as compared to the real line considered in \cite{BS14}) we shall need to consider cells of different shapes and sizes at a given level. This motivates the following sequence of definitions.   

\begin{definition}[Lattice animals and Shapes]
\label{d:animal}
A connected finite subset of vertices in $\Z^2$ is called a {\bf lattice animal}. Two lattice animals $U$ and $U'$ are said to have the same {\bf shape} if there is a translation from $\Z^2$ to itself that takes $U$ to $U'$.  
\end{definition}

We shall use the term shape also to identify equivalence classes of lattice animals having the same shape.

Two cells $B^{j}(u)$ and $B^{j}(u')$ are called {\bf neighbouring} if they share a common side, i.e., if $u$ and $u'$ are neighbours in $\Z^2$.  

\begin{definition}[Multi-cells at level $j$]
\label{d:mib}
For a lattice animal $U\subset \Z^2$, we call $B^j_{U}:=\cup_{u\in U} B^j(u)$ a {\bf multi-cell} at level $j$ corresponding to the lattice animal $U$.  
\end{definition}

The size of a multi-cell at level $j$ is defined to be $|U|$, i.e., the number of cells contained in it. The topological boundary of $B^j_{U}$ shall be denoted by $\partial B^{j}_{U}$.

 
Our blocks at levels $j\geq 1$ will be suitable perturbations of certain $j$ level multi-cells (ideal multi-blocks) that ensure that there are no bad $(j-1)$ level subblocks near the boundary. To define the appropriate notion of perturbation we need to consider slightly thinned and fattened versions of cells at levels $j\geq 1$. 

\begin{definition}[Buffer zones of cells]
\label{d:bufferzone} Consider the squares 
$$B^{j,{\rm int}}(\mathbf{0}):=[L_{j-1}^5, L_j-L_{j-1}^5]^2;$$
$$B^{j,{\rm ext}}(\mathbf{0}):=[-L_{j-1}^5, L_j+L_{j-1}^5]^2.$$
For $j\geq 1$, call $B^{j,{\rm int}}(\mathbf{0})$ the {\bf interior} and $B^{j,{\rm ext}}(\mathbf{0})$ the {\bf blow up} of the $j$-level cell $B^j(\mathbf{0})$. 

For $u=(u_1,u_2)\in \Z^2$, define the interior and blow up of the cell $B^j(u)$ by setting 

$$B^{j,{\rm int}}(u):= (u_1L_j, u_2L_j)+B^{j,{\rm int}}(\mathbf{0});$$
$$B^{j,{\rm ext}}(u):=(u_1L_j, u_2L_j)+B^{j,{\rm ext}}(\mathbf{0}).$$

We call $\Delta B^{j}(u):=B^{j,{\rm ext}}(u)\setminus B^{j,{\rm int}}(u)$ the {\bf buffer} for the cell $B^j(u)$. We write $\Delta B^{j}(u)$ as the (non-disjoint) union of 4 rectangles called the {\bf top, left, bottom} and {\bf right} buffer zone denoted $\Delta B^{j,T}(u)$, $\Delta B^{j,L}(u)$, $\Delta B^{j,B}(u)$ and $\Delta B^{j,R}(u)$ respectively. Define $\Delta B^{j,T}(u)=\Delta B^{j}(u)\cap \Delta B^j(u')$ where $u'=u+(0,1)$, rest are defined similarly. 
\end{definition}

\begin{figure}[h]
\begin{center}
\includegraphics[width=0.5\textwidth]{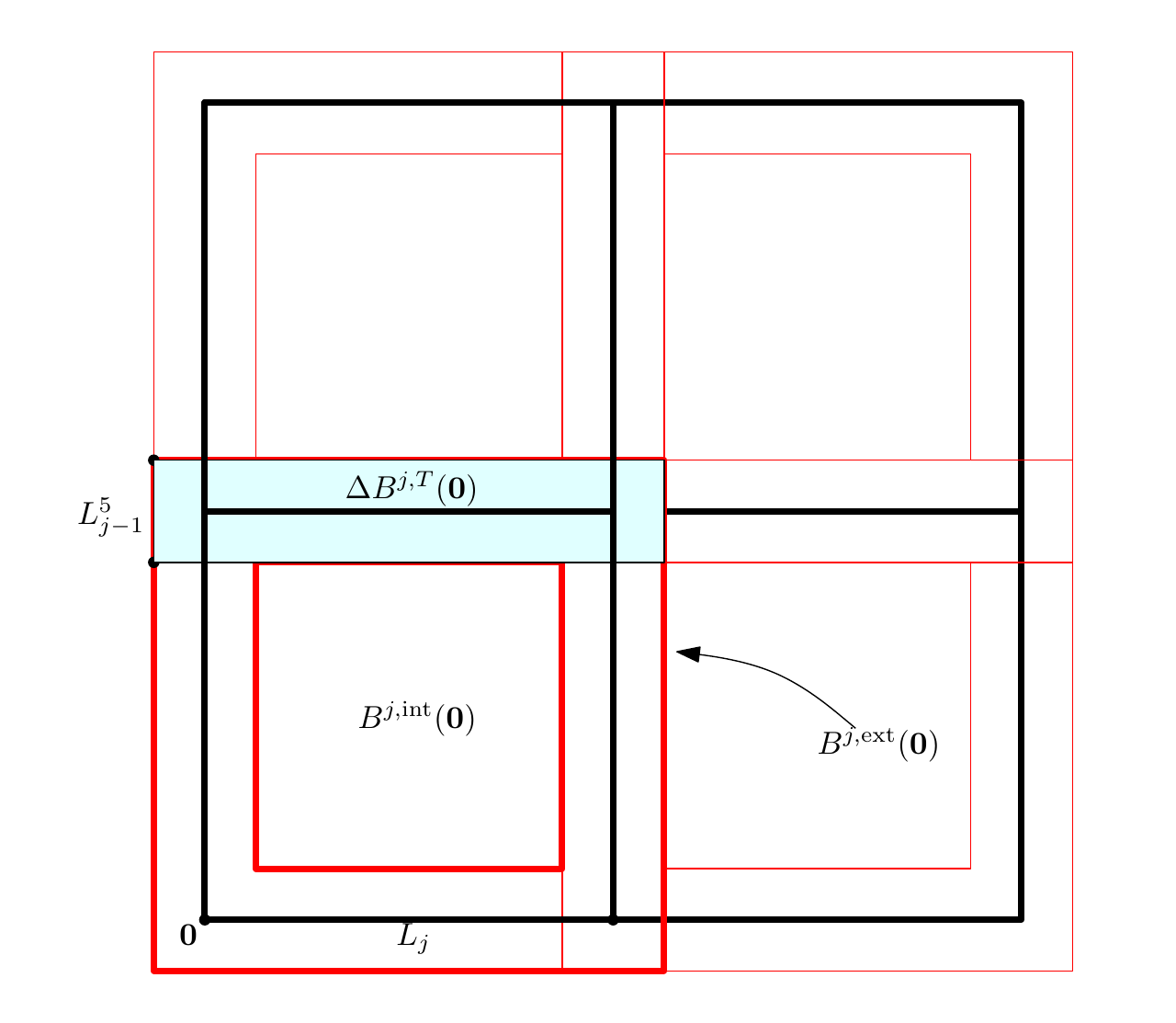}
\end{center}
\caption{Buffer Zones of Cells}
\label{f:buffer}
\end{figure}

Observe that if $u$ and $u'$ are neighbours in $\Z^2$, then $B^{j}(u)$ and $B^{j}(u')$ has one rectangular buffer zone (e.g.\ $\Delta B^{j,{T}}(u)$) in common, and conversely every rectangular  buffer zone is shared between two neighbouring cells. If $u$ and $u'$ are neighbours in the closed packed lattice of $\Z^2$ (i.e., $u$ and $u'$ are neighbours if $||u-u'||_{\infty}=1$) then also their buffer zones intersect. See Figure \ref{f:buffer} for illustration of this definition.

We next extend the definition of buffer zone to multi-cells at level $j$.

\begin{definition}[Buffer zones of Multi-cells]
\label{d:bufferzonem}
Fix $j\geq 1$, and a lattice animal $U\subset \Z^2$. Consider $B^{j}_{U}$, the multi-cell corresponding to $U$ at level $j$. For $u\in U$, and $\star \in \{T,L,B,R\}$, we call $\Delta B^{j,\star}(u)$ an {\bf outer buffer zone} of $B^{j}_{U}$ if this buffer zone is shared with a cell outside $B^{j}_{U}$. The buffer $\Delta B^{j}(U)$, of the multi-cell $B^{j}_{U}$ is defined as the union of all outer buffer zones of $B^{j}(u)$ for $u\in U$. The interior and blow up of $B^j_U$ is defined similarly as above. 
\end{definition}

This is illustrated in Figure \ref{f:buffer2}.

\begin{figure}[h]
\begin{center}
\includegraphics[width=0.5\textwidth]{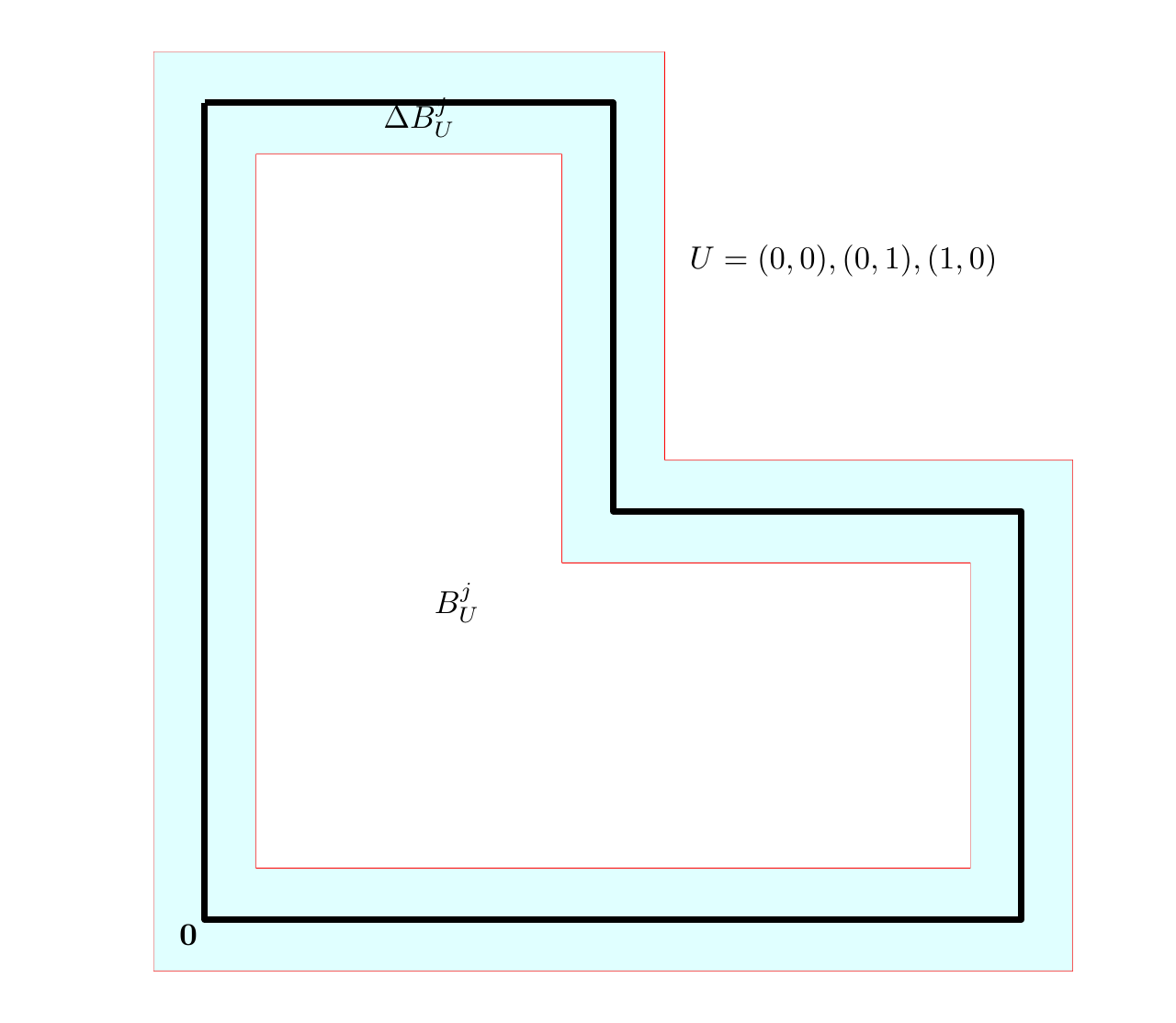}
\end{center}
\caption{Buffer Zones of a Multi-cell}
\label{f:buffer2}
\end{figure}


\subsection{Recursive construction of blocks I: Forming Ideal Multi-blocks}
In this subsection we start describing how to recursively construct the blocks at levels $j\geq 1$. Suppose that blocks have already been constructed for some $j\geq 0$. Also suppose that the good blocks at level $j$ have been specified. Further assume that other elements of the structure at level $j$ have also been constructed. In particular this means components have been identified with bad and semi-bad components also being specified at level $j$. We now describe how to construct the structure at level $(j+1)$.  Notice that the blocks and good blocks at level $0$ has already been defined. We postpone the precise definitions of components and semi-bad components for the moment.

%
%
%

\subsubsection{Conjoined Buffer Zones}
Our first step is to construct the lattice blocks and ideal multi-blocks at level $(j+1)$. We start with the following observation. For each $u\in \Z^2$, by recursive construction, there exists a set $H(u)=H^j(u)\subseteq \Z^2$ containing such that $X^j_{H}$ is a component at level $j$.

To construct the ideal multiblocks at level $(j+1)$ we start with the following definition.

\begin{definition}[Conjoined buffer zone and Conjoined cells]
\label{d:conjoinedbuffer}
Fix neighbouring vertices $u,u'\in \Z^2$, consider the shared buffer zone denoted by $\Delta B^{j+1}(u,u')$ between cells $B^{j+1}(u)$ and $B^{j+1}(u')$. We call the buffer zone $\Delta B^{j+1}(u,u')$ {\bf conjoined} if one of the following conditions fail.

\begin{enumerate}
\item[\rm i.] Let $T\subseteq \Z^2$ be such that $B^j_{T}=\Delta B^{j+1}(u,u')$. Then we have
$$\#\{t\in T: X^{j}_{H(t)}~\text{is a bad component}\}\leq k_0.$$
%
That is, the total size of bad level $j$-components contained in the buffer zone is at most $k_0$. 
\item[\rm ii.] All the bad components contained in the buffer zone are semi-bad.
\end{enumerate}
Call the $(j+1)$-level cells $B^{j+1}(u)$ and $B^{j+1}(u')$ conjoined if $\Delta B^{j+1}(u,u')$ is {conjoined}.
\end{definition}

%

Using the notion of conjoined cells above we now define the ideal multi-blocks at level $j+1$ with the property that if two cells sharing a conjoined buffer zone are necessarily contained in the same ideal multi-block. More formally we define the following.

\begin{definition}[Lattice Blocks and Ideal multi-blocks at level $j+1$]
\label{d:idealmulti}
Consider the following bond  percolation on $\Z^2$. For $u,u'$ neighbours in $\Z^2$, we keep the edge between $u$ and $u'$ if $B^{j+1}(u)$ and $B^{j+1}(u')$ are conjoined. The connected components of this percolation are called the lattice blocks at level $(j+1)$. For a lattice block $U$ at level $(j+1)$, we call $B^{j+1}_{U}$ an ideal multi-block at level $(j+1)$.
%
\end{definition}  

It will follow from our probabilistic estimates that almost surely all lattice blocks are finite. The definition of Ideal multi-blocks is illustrated in Figure \ref{f:conjoin}. The conjoined buffer zones and the ideal multi-blocks are marked in the figure.  

\begin{figure}
\begin{subfigure}{.5\textwidth}
  \centering
  \includegraphics[width=.8\textwidth]{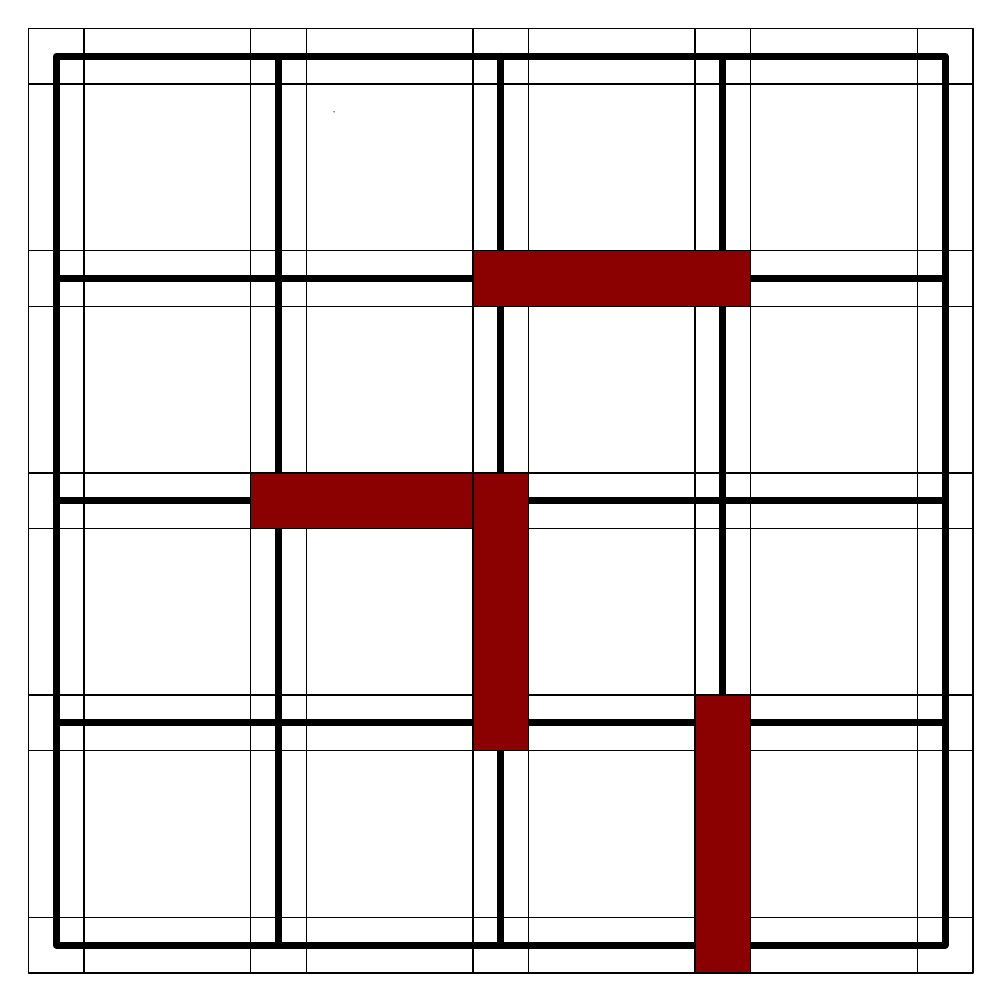}
  \vspace{0.1in}
  \caption{Conjoined buffer zones}
  \label{f:conjoin1}
\end{subfigure}%
\begin{subfigure}{.5\textwidth}
  \centering
  \includegraphics[width=.8\textwidth]{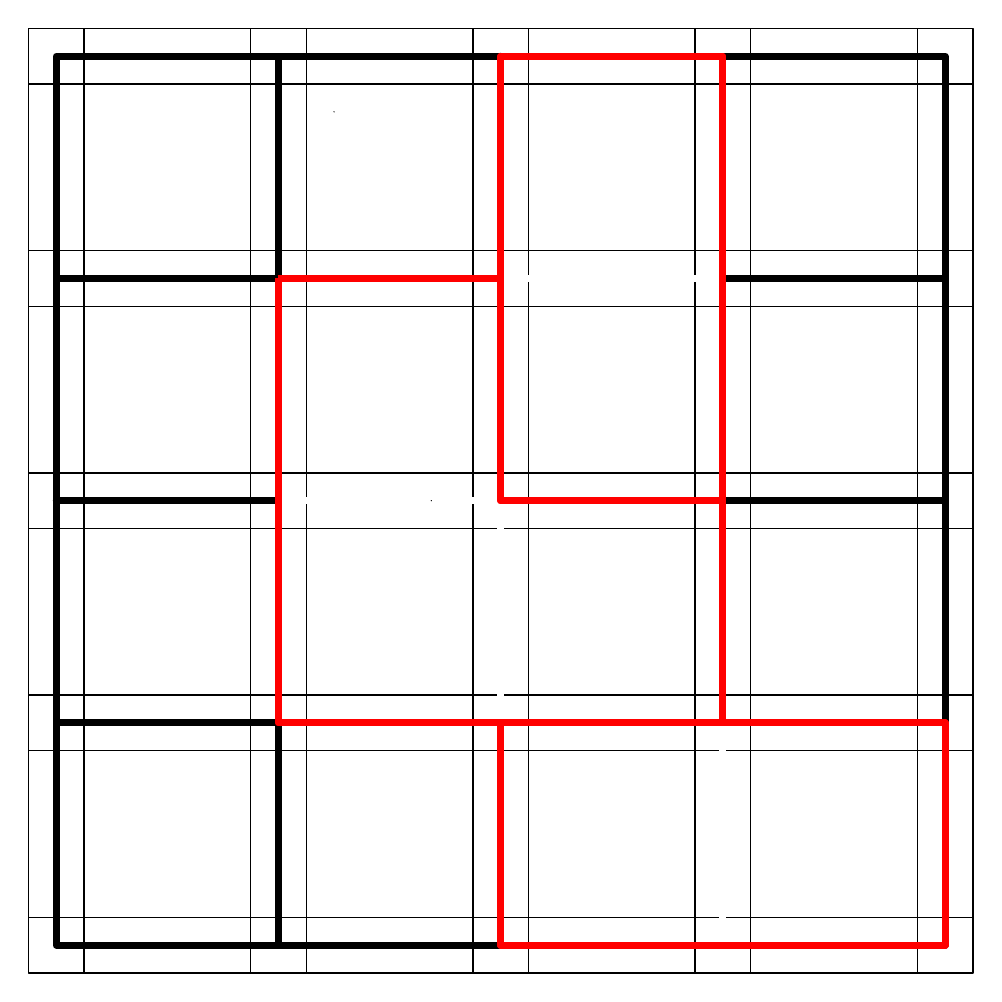}
  \vspace{0.1in}
  \caption{Ideal multi-blocks}
  \label{f:conjoin2}
\end{subfigure}
\caption{Formation of Ideal Multi-blocks. Ideal multi-blocks of size bigger than $1$ are marked} 
\label{f:conjoin}
\end{figure}


\subsection{Recursive construction of blocks II: Constructing Domains}
Our purpose in this and the next section is the following: we want to show that the boundaries of the ideal multiblocks can be locally randomly perturbed (with the randomness not having long-range dependence) in a bi-Lipschitz manner (with Lipschitz constant $1+O(10^{-j})$) such that the $(j+1)$-level blocks (unions of the $j$-level blocks approximately contained in that region) formed by this construction satisfy the property that it does not contain any $j$-level bad sub-block near the boundary. The property we need further is if there are no bad sub-blocks near the ideal multi-block boundary to start with then with overwhelmingly high probability we do not deform this boundary; also any boundary that can be chosen is chosen with probability that is at most exponentially small in $j$ (see Observation \ref{o:bdcurve} for a precise statement). The way we do it is roughly the following. We construct a large number of such bi-Lipschitz perturbations, that are not too close to one another. Since the total size of the bad sub-blocks in the buffer zone is not large, out of these large number of choices some must satisfy the property of having no bad sub-blocks near the boundary. Now we use some appropriate locally dependent randomness to choose one of the valid choices. The formal construction is technical, and in our opinion, not very illuminating. So the reader, who is willing to believe that such a construction is plausible, might want to skip these two sections for now and go to Observation \ref{o:blockprop} directly.
 
\bigskip
\bigskip
\bigskip

Let $\ch=\ch^{j+1,\mathbb{X}}$ denote the lattice blocks of $\mathbb{X}$ at level $(j+1)$ constructed as above. Clearly $\ch$ is a partition of $\Z^2$ and $\{B^{j+1}_{H}\}_{H\in \ch}$ is a partition of $\R^2$. As alluded to above, the blocks at level $(j+1)$ will be indexed by $\ch$ and will be ``approximations" to the ideal multi-blocks $B^{j+1}_{H}$. To construct the blocks at level $(j+1)$, we first start with constructing domains of blocks which will be some smooth perturbations of the ideal multiblocks $B^{j+1}_{H}$.

\subsubsection{Potential Boundary Curves}
Ideally we would have liked to use the ideal multi-blocks as our blocks at level $(j+1)$, but in that case it is not possible to guarantee that the $j$-level subblocks near the boundary will be good. Hence depending on the distribution of $j$-level subblocks in the buffer zone we would choose boundaries for our blocks. We want the number of possible curves that could serve as boundaries to be limited and hence we first construct a family of curves through buffer zones. 

Let $(\Z^2, E^2)$ denote the usual nearest neighbour lattice on $\Z^2$. The family of curves we construct would be indexed by $\{(\ell_{v}, s_{v}):v\in \Z^2, s_{e}: e\in E^2\}$ where each $\ell_{v}, s_{e}\in [2k_0]$ and each $s_{v}\in \{1,2\}$. Here is the rough meaning of the above indexing. Observe that the buffer zone is union of mutually parallel horizontal and vertical strips, which can be thought of as a fattened version of the graph $(\Z^2, E^2)$. That is, consider the horizontal strips $S^1_{v_1}=\R\times [v_1L_{j+1}-L_j^{5},v_1L_{j+1}+L_j^5]$ for $v_1\in \Z$ and the vertical strips $S^2_{v_2}= [v_2L_{j+1}-L_j^{5},v_2L_{j+1}+L_j^5]\times \R$ for $v_2\in \Z$. So the vertex $v=(v_1,v_2)$ corresponds to the square $S_{v}=S^1_{v_1}\cap S^2_{v_2}$ and an edge would correspond to the rectangle connecting two such squares. Roughly the parameters $\ell_v$ and $s_v$ determine the curve in the square $S_v$ whereas $s_{e}$ determines the curve in the region of the buffer zones corresponding to the edge $e\in E^2$.    

Curves we construct through $S^1_{v_1}$ (say) will be images of the horizontal line $y=v_1L_{j+1}$ under some mild perturbation, and a similar statement is true for vertical strips of buffer zones. Without loss of generality we describe the construction of these maps of $S^1_0$, rest are obtained by translation. Curves through vertical strips are defined similarly.

Let $v=(v_1,0)$. Define points $p_{v,\ell}^{-}=(v_1L_{j+1}-\ell 100^{-(j+5)}L_{j}^5,0)$ and $p_{v,\ell}^{+}=(v_1L_{j+1}+\ell 100^{-(j+5)}L_{j}^5,0)$ for $\ell\in [2k_0]$. Let $T_{\ell,v}$ denote the square whose centre is $(v_1L_{j+1},0)$ and has a side length $2\ell 100^{-(j+5)}L_{j}^5$. Also let $e$ denote the edge between $v$ and $v'=v+(1,0)$. Denote by $T_{\ell_1,\ell_2,e}$ the rectangle $[v_1L_{j+1}+\ell_1 100^{-(j+5)}L_{j}^5,0),(v_1+1)L_{j+1}-\ell_2 100^{-(j+5)}L_{j}^5,0)]\times[-L_j^5/2,L_j^5/2]$. Also let $R^1_{\ell,v}=R_{\ell,v}$ denote the straightline segment in the intersection of $T_{\ell,v}$ and the $x$-axis. Further let $R^1_{\ell_1,\ell_2,e}$ denote the straightline segment in the intersection of $T_{\ell_1,\ell_2,e}$ and the $x$-axis.  

Now suppose we choose $\ell_{v}$ and $\ell_{v'}$ to be corresponding parameters to our curves. Then the curve passes through points $p_1=p_{v,\ell_{v}}^{-}$ and $p_2=p_{v,\ell_v}^{+}$ (and also through points $p_3=p_{v',\ell_{v'}}^{-}$ and $p_4=p_{v',\ell_{v'}}^{+}$). The curve between the points $p_1$ and $p_2$ is determined by the choice of $s_{v}$ and the curve between the points $p_2$ and $p_3$ is determined by the choice of $s_{e}$. Fix $\ell_1$, $\ell_2\in [2k_0]$. Fix functions $F^s_{\ell_1,v}$ for $s\in \{1,2\}$ and $F^s_{\ell_1,\ell_2,e}$ for $s\in [2k_0]$ satisfying the following properties (we shall suppress the subscript $v$ and $e$ in the following):

\begin{enumerate}
\item[\rm i.] $F^s_{\ell}$ (resp.\ $F^s_{\ell_1,\ell_2}$) is a bijection from $T_{\ell}$ (resp.\ $T_{\ell_1,\ell_2}$) to itself.
\item[\rm ii.] $F^{s}_{\ell}$ (resp.\ $F^s_{\ell_1,\ell_2}$) is identity on the boundary of $T_{\ell}$ (resp.\ $T_{\ell_1,\ell_2}$) and is bi-Lipschitz with Lipschitz constant $1+10^{-(j+10)}$.
\item[\rm iii.] For all $\ell_1, \ell_2$ we have $F^{1}_{\ell_1}$ (resp.\ $F^{1}_{\ell_1,\ell_2}$) is the identity map.
\item[\rm iv.] Let $R^1_{\ell}$ (resp.\ $R^1_{\ell_1,\ell_2}$) denote the straight line segment formed by the intersection of the $x$-axis with $T_{\ell}$ (resp.\ $T_{\ell_1,\ell_2}$). We have that $R^2_{\ell}=F^2_{\ell} (R^1_{\ell})$ (resp.\ $R^{s}_{\ell_1,\ell_2}=F^s_{\ell_1,\ell_2} (R^1_{\ell_1,\ell_2})$ for each $s\in [2k_0]\setminus \{1\}$) is contained in the strip $\R\times [- 100^{-(j+6)}L_{j}^5,100^{-(j+6)}L_{j}^5]$.
\item[\rm v.] The $\ell_{\infty}$ distance between $R_{\ell}$ and $R_{\ell'}$ for $\ell \neq \ell'$ (resp.\ between $R^{s}_{\ell_1, \ell_2}$ and $R^{s'}_{\ell_1, \ell_2}$ for $s\neq s'$) is at least $10L_j^4$ on the interval $[p^{+}_{v,\ell_1} +L_j^4,p^{-}_{v+1,\ell_{2}}-L_j^{4}]$.
\end{enumerate}
   
We shall omit the proof of the following basic lemma which easily follows from the fact $L_0$ is sufficiently large and $L_j$ grows doubly exponentially.  

%
%
%
\begin{lemma}
\label{l:bdexistence}
For all $\ell, \ell_1, \ell_2\in [2k_0]$, there exist functions $F^s_{\ell}$ and $F^s_{\ell_1,\ell_2}$ satisfying the properties listed above.
\end{lemma}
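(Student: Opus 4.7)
The plan is to realise each $F^s_{\bullet}$ explicitly as a vertical shear of the form
\[
F^s_{\bullet}(x,y) = \bigl(x,\; y + g^s(x)\,\phi(y)\bigr),
\]
where $g^s$ is a smooth height profile that determines the shape of the image curve $R^s_{\bullet}$, and $\phi$ is a fixed tent-like cut-off on the vertical extent of the domain with $\phi(0)=1$ and $\phi$ vanishing at the top and bottom edges. Such a map preserves the $x$-coordinate, is a bijection of the domain onto itself, is the identity on the horizontal edges automatically, and is the identity on the vertical edges as soon as $g^s$ vanishes there; hence (i), (iii), and (iv) reduce to appropriate choices of the height profiles, while (ii) amounts to bounding the singular values of the Jacobian
\[
DF^s(x,y) = \begin{pmatrix} 1 & 0 \\ (g^s)'(x)\,\phi(y) & 1 + g^s(x)\,\phi'(y) \end{pmatrix}.
\]

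For the rectangle $T_{\ell_1,\ell_2}$ I would set $g^1\equiv 0$ and, for $s\geq 2$, pick $2k_0-1$ distinct plateau heights $h_s\in(-100^{-(j+6)}L_j^5,\,100^{-(j+6)}L_j^5)$ pairwise separated by at least $10L_j^4$; such a packing is possible because $100^{-(j+6)}L_j^5\gg 20k_0 L_j^4$ for $L_0$ large. Each $g^s$ is chosen to rise smoothly from $0$ at the left vertical edge to $h_s$, to remain at the plateau value $h_s$ throughout the inner interval $[p^+_{v,\ell_1}+L_j^4,\,p^-_{v+1,\ell_2}-L_j^4]$, and then to descend smoothly back to $0$ at the right vertical edge, with both the rise and the fall spread monotonically over horizontal length $\Theta(L_{j+1})$ rather than compressed into the narrow buffer. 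This directly yields (iv) and the required separation in (v), while $\|(g^s)'\|_\infty = O(k_0 L_j^{4-\alpha})$ and $\|g^s\|_\infty\cdot\|\phi'\|_\infty = O(100^{-(j+6)})$ are both $o(10^{-(j+10)})$ when $\alpha\geq 6$ and $L_0$ is sufficiently large.

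For the nested squares $T_\ell$ only two profiles are needed and the construction is simpler: take $g^1_\ell\equiv 0$ and $g^2_\ell$ to be a small symmetric bump supported on the horizontal span of $T_\ell$ of maximum height proportional to $\ell$, e.g. $10\ell L_j^4$. Such a choice lies well inside the strip required by (iv) (since $20 k_0 L_j^4 \ll 100^{-(j+6)}L_j^5$) and gives bumps of different heights for different $\ell$, delivering (v) for the square case. The Jacobian estimate now has denominator $\ell\cdot 100^{-(j+5)}L_j^5$, so both $\|(g^2_\ell)'\|_\infty$ and $\|g^2_\ell\|_\infty\|\phi'\|_\infty$ are of order $100^{j+5}L_j^{-1}$, again $o(10^{-(j+10)})$ for $L_0$ large, establishing (ii).

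The main obstacle to keep in mind is purely quantitative: the plateau separation $10L_j^4$ in (v) must be realised through gradual rises of length $\Theta(L_{j+1})$, never through steep transitions across the $L_j^4$-wide buffer, since the latter would force slopes of order one and immediately destroy (ii). Once the rises are spread over the full horizontal extent, all remaining estimates reduce to inequalities of the form $L_j^{-c}\leq 10^{-(j+10)}$ with $c>0$, which hold for $L_0$ sufficiently large by the double-exponential growth of $L_j$.
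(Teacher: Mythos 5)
The paper itself omits the proof of this lemma entirely, so there is no argument of the authors' to compare against; your explicit shear ansatz $F^s(x,y)=(x,\,y+g^s(x)\phi(y))$ is a reasonable way to supply one, and it does dispose of properties (i), (iii), (iv) and of the Jacobian term $g^s\phi'$ correctly. The gap is in reconciling (ii) with (v), and your write-up is internally inconsistent exactly there: you require $g^s$ to sit at the plateau $h_s$ throughout $[p^+_{v,\ell_1}+L_j^4,\,p^-_{v+1,\ell_2}-L_j^4]$ \emph{and} to rise from $0$ at the left vertical edge over horizontal length $\Theta(L_{j+1})$. But the left edge of $T_{\ell_1,\ell_2,e}$ is at $p^+_{v,\ell_1}$, so the first requirement forces the entire rise to occur within horizontal distance $L_j^4$, while the second means that at $x=p^+_{v,\ell_1}+L_j^4$ one only has $g^s(x)=O(h_sL_j^{4-\alpha})=O(L_j)$, so the curves are separated by far less than $10L_j^4$ on an initial (and terminal) stretch of the stated interval of length $\Theta(L_{j+1})$, and (v) fails there. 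Compressing the rise into length $L_j^4$ is not an option either, since the extreme plateaus have $|h_s|\gtrsim k_0L_j^4$ and the slope would then be of order $k_0$, destroying (ii). The sentence ``this directly yields (iv) and the required separation in (v)'' is therefore a genuine gap, not a routine verification.

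Moreover, the obstruction is not an artifact of the shear ansatz: property (v), read literally on the interval $[p^+_{v,\ell_1}+L_j^4,\,p^-_{v+1,\ell_2}-L_j^4]$, is incompatible with (ii) for \emph{any} choice of maps, because all the curves $R^s_{\ell_1,\ell_2}$ pass through the boundary point $p^+_{v,\ell_1}$ where every $F^s$ is pinned to the identity. Indeed, writing coordinates relative to $p^+_{v,\ell_1}$ and setting $F(t,0)=(u,w)$ with $K=1+10^{-(j+10)}$, comparison with the boundary points $(0,0)$ and $(0,w)$ gives $u^2+w^2\le K^2t^2$ and $u^2\ge (t^2+w^2)/K^2$, hence $|w|\le t\sqrt{K^2-1}$; at $u=L_j^4$ this forces every image curve to lie within height $O(10^{-(j+10)/2})L_j^4\ll 10L_j^4$ of the axis, so no two of them can be $10L_j^4$ apart there. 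To make your construction work you must record that the $10L_j^4$ separation can only be demanded at distance at least of order $k_0L_j^4\cdot 10^{(j+10)/2}$ from the endpoints $p^{\pm}$ (which is all that is used later, since near those endpoints the needed flexibility comes from the nested squares $T_{\ell,v}$, i.e.\ from the choice of $\ell$, not of $s_e$), and then carry out the gradual rise over that correspondingly shortened transition region; as written, the construction does not satisfy (v).
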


See Figure \ref{f:bd1} for an illustration of the above construction.
\begin{figure}[h]
\begin{center}
\includegraphics[width=\textwidth]{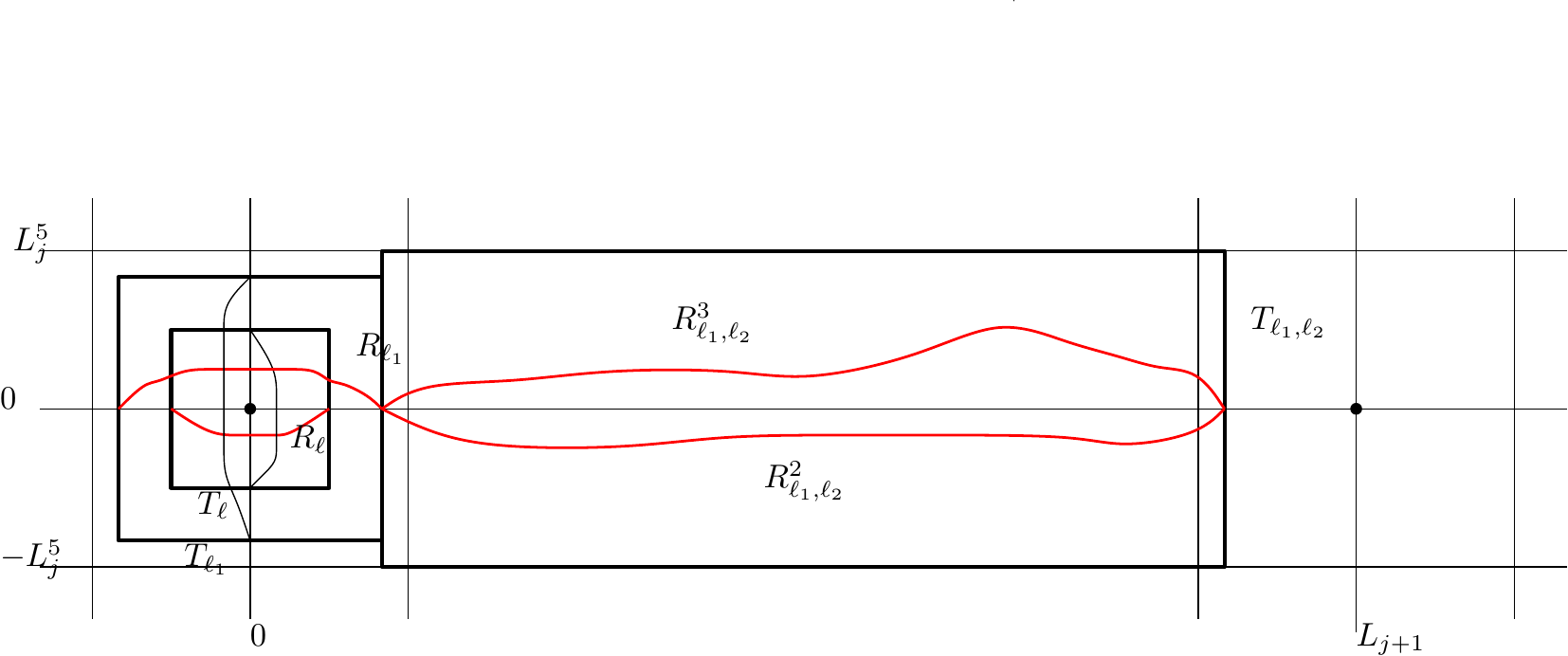}
\end{center}
\caption{Potential boundary curves through a buffer zone}
\label{f:bd1}
\end{figure}

We do similar constructions for vertical strips of buffer zones as well using the same maps $F^s_{\ell,v}$ for the squares $T_{\ell,v}$. Observe the following. For each choice of $\{\ell_{v},s_{v}\}_{v\in \Z^2}$ and $\{s_e\}_{e\in E^2}$ we get one curve contained in each horizontal and vertical buffer zone strip. The family of such curves are called {\bf potential boundary curves}. When we restrict to one buffer zone, the family is called {\bf potential boundary curves} through that buffer zone.

Fix $u\in \Z^2$. Now observe that if we restrict to the buffer zone $\Delta B^{j+1,B}(u)$, then a potential boundary curve through $\Delta B^{j+1,B}(u)$ is determined by $\ell_{u}, \ell_{u'}, s_{u}, s_{u'}$ and $s_{e}$ where $u'=u+(1,0)$ and $e$ is the edge joining $u$ and $u'$, (except at the extremities). In particular, a potential boundary curve through the buffer zone of a cell is determined by choices of $\ell$ and $s$ along the corners and edges of the cell. See Figure \ref{f:bd2}. 
  
%

\begin{definition}[Potential Boundary Curves of a multi-cell and Potential Domains] 
Fix a multi-cell $B^{j+1}_{U}$ at level $j+1$. Each choice of potential boundary curves through each of the outer buffer zones of $B^{j+1}_{U}$  determines a simple closed curve $C$ through the buffer zone of $B^{j+1}_{U}$. These curves are called the potential boundary curves of the multi-cell $B^{j+1}_{U}$. The region surrounded by $C$ is called a potential domain of the multi-cell $B^{j+1}_{U}$. 
\end{definition}   

It follows from the construction, that the number of potential boundary curves of the multi-cell $B^{j+1}_{U}$ is at most $(8k_0)^{16|U|k_0^2}$.

\begin{figure}
\begin{subfigure}{.5\textwidth}
  \centering
  \includegraphics[width=.8\linewidth]{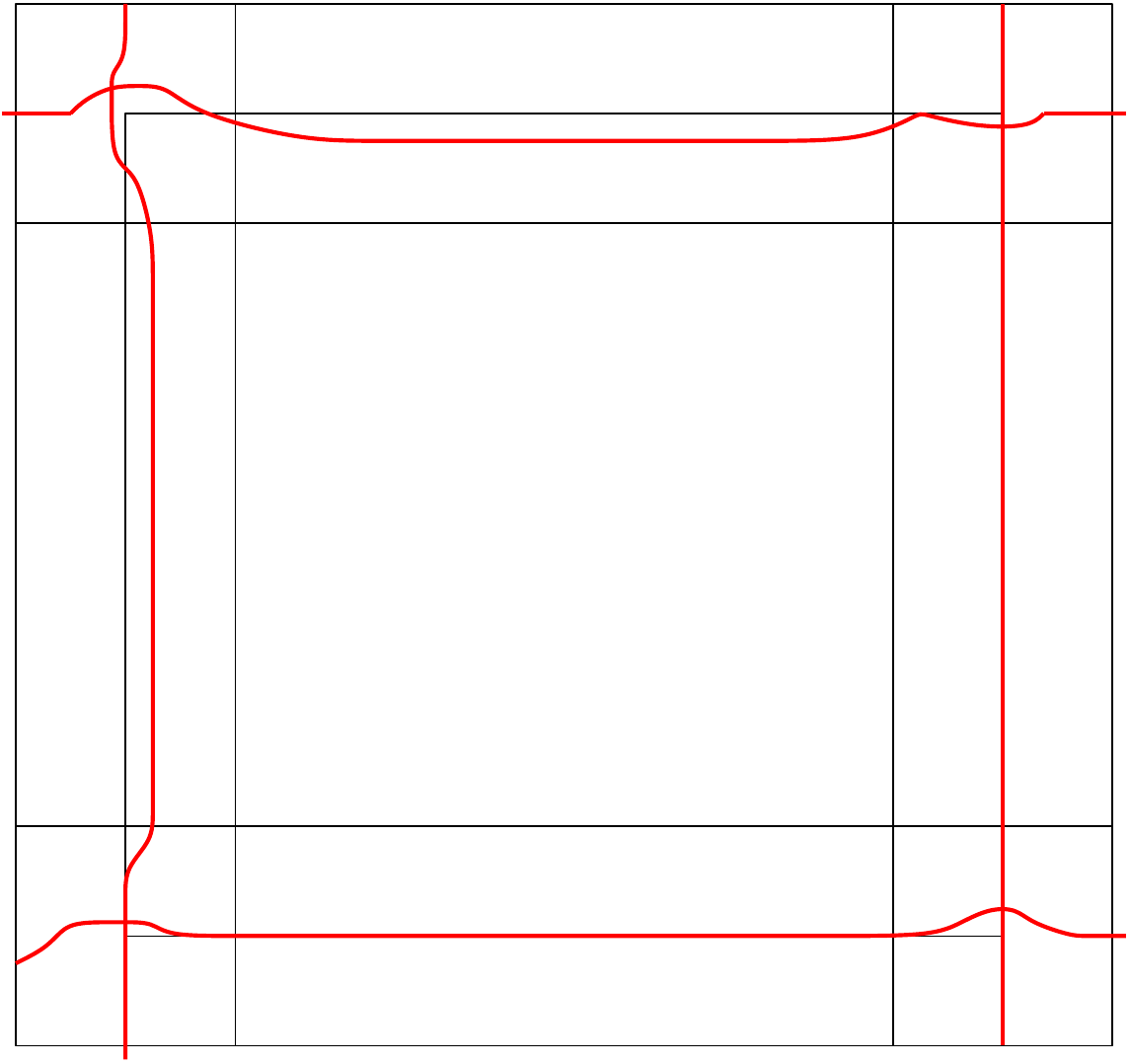}
  \label{f:potbd2}
\end{subfigure}%
\begin{subfigure}{.5\textwidth}
  \centering
  \includegraphics[width=.8\linewidth]{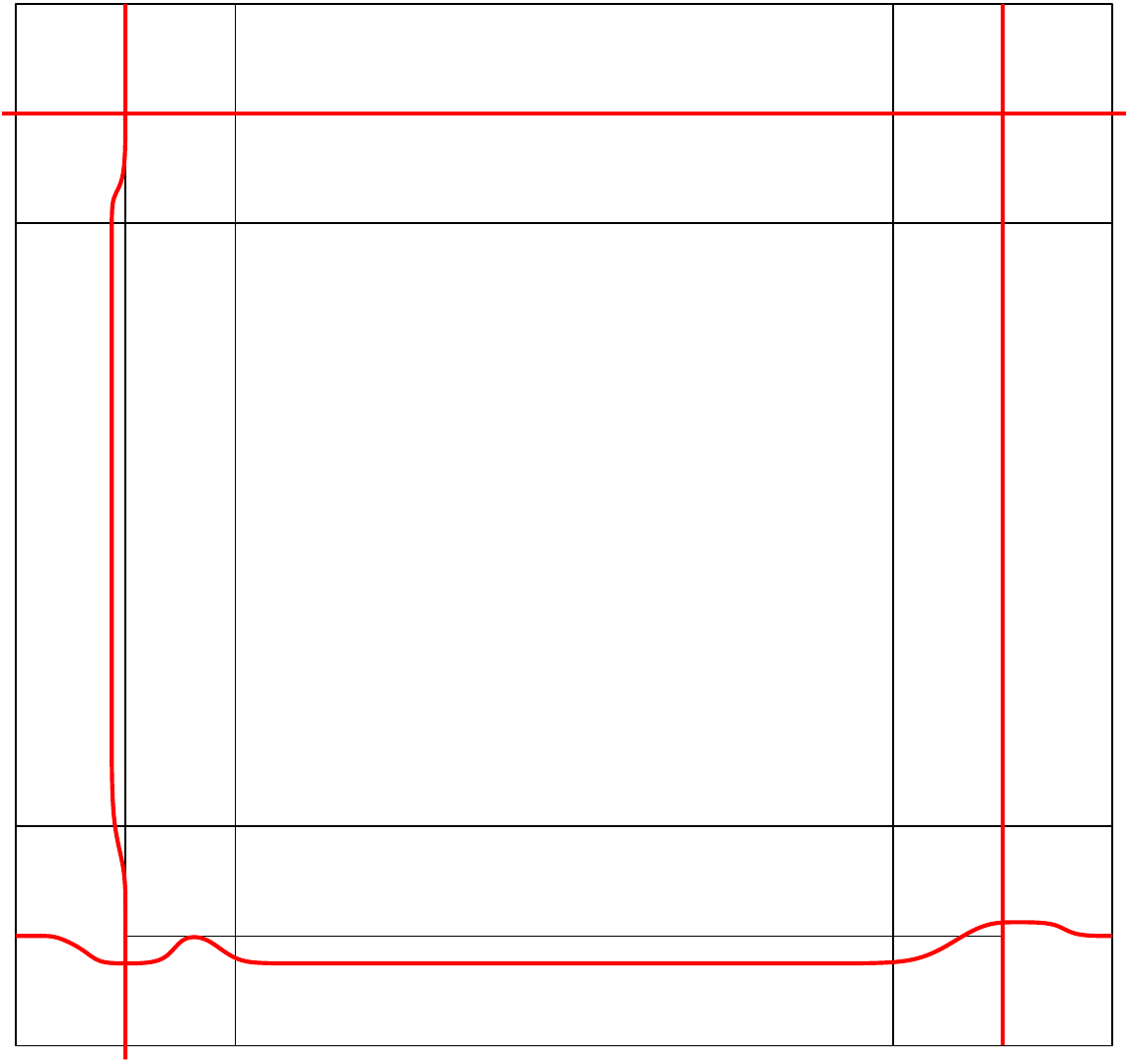}
  \label{f:potbd3}
\end{subfigure}
\caption{Two choices of potential boundary curves of a (multi) cell of size $1$} 
\label{f:bd2}
\end{figure}

It is clear from our construction that associated with each potential boundary curve there is a unique bijection from $\R^2$ to itself which is bi-Lipschitz with Lipschitz constant $(1+10^{-(j+5)})$.  Let $F$ denote such a map. Then for all multi-cell $B^{j+1}_{U}$, $F(\partial B^{j+1}_{U})$ is the potential boundary curve through the buffer zone of $B^{j+1}_{U}$ induced by the potential boundary curve corresponding to $F$
That is, potential boundary curves are small perturbations of the boundaries of multi-cells. We make a formal definition for this.

\begin{definition}[Canonical Maps]
\label{d:canonical}
For a multi-cell $B^{j+1}_{U}$ and for any potential boundary curve $C$ through $\Delta B^{j+1}_{U}$, there exists a unique bi-Lipschitz map $F=F_C$ on $B^{j+1}_{U}$ with Lipschitz constant $(1+10^{-(j+5)})$ such that $F(\partial B^{j+1}_{U})=C$. These maps and their inverses are called canonical maps. That is, a canonical map is a map that transforms a multi-cell $B^{j+1}_{U}$ to a potential domain $U_{C}$  and vice-versa. Observe also that the family of canonical maps only depend on the shape of $U$ upto translation. For two potential boundary curves $C_1,C_2$ of the multi-cell $B^{j+1}_{U}$, the maps $F_{C_2}\circ F_{C_1}^{-1}$ from $U_{C_1}$ to $U_{C_2}$ are also called canonical maps.   
\end{definition}

\subsubsection{Valid Boundary Curves and Domains} 
Recall that we have already constructed the ideal multi-blocks at level $(j+1)$. Our next order of business is to stochastically choose one boundary curve through the outer buffer zones of each ideal multi-block satisfying certain conditions. This curve will be called the boundary curve at level $(j+1)$ and the potential domain corresponding to this choice of boundary will be called domain. Since the outer buffer zones of ideal multi-blocks are not conjoined, the choice of a boundary curve through these boils down to choosing $\{(\ell_{v},s_{v})\}_{v\in V^*}$ and $\{s_{e}\}_{e\in E^*}$. Here $V^*\subseteq \Z^2$ is the set of all vertices corresponding to the squares (intersection of a horizontal and a vertical buffer zone)  such that not all of the four buffer zones intersecting at that square are conjoined and $E^*$ denotes the edges in $E^2$ that correspond to non-conjoined buffer zones.  

Recall that we want to choose our boundaries so that they are away from the $j$ level bad components. To this end we restrict our choices to {\bf valid} boundary curves defined below. 

For $v\in V^*$, we call $(\ell_v,s_{v})$ (where $\ell_{v}\in [2k_0]$ and $s_{v}\in [2]$) {\bf valid} if there does not exist any bad $j$ level component within distance $10L_j^4$ of the boundary of $T_{v,\ell}$ and $F^{s}(R^*)$, where $R^*$ is the intersection of the boundaries of $(j+1)$-level cells with $T_{v,\ell}$.

Let $e$ be the edge connecting neighbouring vertices $v,v'\in V^*$. For a valid choice of $(\ell_{v},s_{v})$ and $(\ell_{v'},s_{v'})$ we call $(\ell_v,s_{v},\ell_{v'},s_{v'}, s_{e})$ {\bf valid} if $R^{s}_{\ell_1,\ell_2,e}$ does not have any $j$ level bad-component within distance $L^4$ of it. 

The following observation is immediate from the definition of conjoined block.

\begin{observation}
\label{o:validexist}
For all $v\in V^*$, there exist valid choices of $(\ell_{v},s_{v})$. Also for all $e=(v,v')\in E^*$, and for all valid choices of $(\ell_{v},s_{v})$ and $(\ell_{v'},s_{v'})$ there exist $s_{e}$ such that $(\ell_v,s_{v},\ell_{v'},s_{v'}, s_{e})$ is valid.
\end{observation}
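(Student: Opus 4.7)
The plan is to verify both existence claims by pigeonhole arguments keyed to the definition of conjoined buffer zone: by Definition \ref{d:conjoinedbuffer}, a non-conjoined buffer zone contains at most $k_0$ bad $j$-level components, while Lemma \ref{l:bdexistence} constructs a family of $\Theta(k_0)$ pairwise well-separated candidate curves that dominates this obstruction count.

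For the first assertion, I would fix $v \in V^*$ and consider the (at most four) buffer zones meeting at $v$; at least one of them is non-conjoined. Each non-conjoined one contributes at most $k_0$ bad components, so there are at most $4k_0$ bad $j$-level components relevant to the corner routing at $v$. Against these I would pit the $4k_0 = 2k_0 \cdot 2$ candidate pairs $(\ell_v, s_v) \in [2k_0] \times \{1,2\}$. The key geometric inputs are that the concentric boundaries $\partial T_{v,\ell}$ are $\ell_\infty$-spaced by $2 \cdot 100^{-(j+5)} L_j^5$, which vastly exceeds $L_j^4$ once $L_0$ is large enough (because $L_j = L_0^{\alpha^j}$ grows doubly exponentially), and, at fixed $\ell$, the curves $F^1(R^*)$ and $F^2(R^*)$ are $\ell_\infty$-separated by at least $10L_j^4$ by property (v) of Lemma \ref{l:bdexistence}. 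Since each bad $j$-level component has diameter at most $k_0 L_j \ll L_j^4$, its $10L_j^4$-fattening meets only $O(1)$ of the candidate curves and hence blocks only $O(1)$ pairs $(\ell_v, s_v)$. Summing over the at most $4k_0$ bad components gives at most $O(k_0)$ blocked pairs, strictly less than $4k_0$ by the parameter choices in Section \ref{s:parameters}, so at least one valid pair exists.

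The second assertion is analogous, one dimension down. For $e = (v, v') \in E^*$ the edge buffer zone is non-conjoined and hence contains at most $k_0$ bad components, and by property (v) of Lemma \ref{l:bdexistence} the $2k_0$ candidate curves $R^s_{\ell_v, \ell_{v'}, e}$ are pairwise $\ell_\infty$-separated by at least $10L_j^4$ on the interval $[p^{+}_{v,\ell_v} + L_j^4, p^{-}_{v',\ell_{v'}} - L_j^4]$. The same counting, with each bad component invalidating only $O(1)$ values of $s_e$ for a total of at most $O(k_0) < 2k_0$ blocked values, produces a valid $s_e$. The only real obstacle in turning this sketch into a rigorous argument is the geometric bookkeeping to verify that each bad component really does block only $O(1)$ candidates; this reduces to elementary comparisons between $L_j^4$, $k_0 L_j$, and the separation scales from Lemma \ref{l:bdexistence}, all immediate from the doubly exponential growth of $L_j$ and the parameter choices.
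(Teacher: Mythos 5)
The paper offers no argument here at all --- it simply declares the observation ``immediate from the definition of conjoined block'' --- so you are reconstructing a proof the authors left implicit, and the pigeonhole strategy (at most $k_0$ bad cells in a non-conjoined buffer zone versus $\Theta(k_0)$ well-separated candidate curves) is certainly the intended one. Your treatment of the edge case is essentially right: property (v) really does separate $R^{s}_{\ell_v,\ell_{v'},e}$ from $R^{s'}_{\ell_v,\ell_{v'},e}$ by $10L_j^4$ on the stated interval, each blocked value of $s_e$ costs at least one bad cell inside the (non-conjoined, hence $\le k_0$ bad cells) buffer zone, and the uncontrolled end segments of length $L_j^4$ near $p^{\pm}$ are already clean because $(\ell_v,s_v)$ and $(\ell_{v'},s_{v'})$ are assumed valid, which keeps bad components $10L_j^4$ away from $\partial T_{v,\ell_v}$ and $\partial T_{v',\ell_{v'}}$.

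The corner case, however, contains a genuine error. You invoke ``property (v)'' to claim that, at fixed $\ell$, the curves $F^1(R^*)$ and $F^2(R^*)$ are $\ell_\infty$-separated by $10L_j^4$. Property (v) says nothing of the sort: it separates $R_\ell$ from $R_{\ell'}$ for $\ell\neq\ell'$ and the \emph{edge} curves $R^{s}_{\ell_1,\ell_2}$ from $R^{s'}_{\ell_1,\ell_2}$ for $s\neq s'$, and only on the interval between the corner squares. Inside $T_{v,\ell}$ no such separation is possible: $R^*$ is the cross formed by the $(j+1)$-level cell boundaries, $F^1(R^*)=R^*$ and $F^2(R^*)$ are both images of that cross under homeomorphisms of $T_{v,\ell}$ fixing its boundary, so they share the same four endpoints on $\partial T_{v,\ell}$ and the horizontal branch of one must cross the vertical branch of the other. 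Consequently your count of ``$O(1)$ blocked pairs per component'' has no justification for the $s$-coordinate, and the closing inequality ``$O(k_0)<4k_0$'' is not a strict inequality without pinning the constant. Two further bookkeeping slips: (a) only \emph{one} of the four buffer zones at $v\in V^*$ need be non-conjoined, and the conjoined ones carry no bound, so ``$4k_0$ relevant components'' does not follow from your reasoning --- the correct point is that $T_{v,2k_0}$ and its $10L_j^4$-neighbourhood sit inside the corner square $S_v$, which is contained in \emph{every} one of the four buffer zones, so the single non-conjoined one already caps the relevant bad cells at $k_0$; (b) a bad component need not have diameter $\le k_0L_j$ --- only its intersection with the buffer zone is bounded --- so the per-component blocking count must be run on bad \emph{cells} inside $S_v$ rather than on whole components. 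With these repairs the pigeonhole over $\ell$ works cleanly (each bad cell is within $10L_j^4$ of at most one $\partial T_{v,\ell}$ since consecutive boundaries are $100^{-(j+5)}L_j^5$ apart), but the existence of a workable $s_v$ for some unblocked $\ell$ still rests on an unstated separation property of the maps $F^2_\ell$ near the crossing point, which neither you nor the paper supplies.
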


Given $V^*$ and $E^*$, we choose a valid boundary curve randomly independently of everything else as follows. 

\begin{enumerate}
\item[$\bullet$] For each $v\in V^*$, choose a valid $(\ell_v,s_{v})$.
\item[$\bullet$] If there exist valid $(\ell_v, s_{v})$ with $s_{v}=1$ choose one such with probability at least $(1-10^{-(j+10)})$.
\item[$\bullet$] For $e=(v,v')\in E^*$, choose $s_{e}$ such that $(\ell_v,s_{v},\ell_{v'},s_{v'}, s_{e})$ is valid.
\item[$\bullet$] If $s_{e}=1$ leads to a valid choice, then choose it with probability at least $(1-10^{-(j+10)})$.
\item[$\bullet$] The probability of each valid choice must be at least $(8k_0)^{-4k_0^2}100^{-(j+10)}$. 
\end{enumerate}

This choice leads to a boundary curve, which we shall call the boundary curve at level $(j+1)$. The following important properties of the boundary curve as chosen above is easy to see and recorded as an observation for easy reference.  

\begin{observation}[Domains]
\label{o:bdcurve}
Let $\ch=\ch^{j+1}$ denote the set of lattice blocks of $\mathbb{X}$ at level $(j+1)$. The boundary curve partitions $\R^2$ (in a weak sense) into closed connected regions $\{\hat{U}_{X}\}_{U\in \ch}$, called domains, which have the following properties.
\begin{enumerate}
\item[\rm i.] For each $U\in \ch$, $\hat{U}_{X}$ contains the interior of the ideal multi-block $B^{j+1}_{U}$ and is contained in the blow-up of the $B^{j+1}_{U}$.
\item[\rm ii.] Given $\ch$, for $U_1,U_2\in \ch$ such that $U_1$ and $U_2$ are non-neighbouring, the choice of $\hat{U_1}_{X}$ and $\hat{U_2}_{X}$ are independent.
\item[\rm iii.] There is a canonical map $F$, which is a bi-Lipschitz bijection from $\R^2\to \R^2$ with Lipschitz constant $(1+10^{-(j+5)})$ such that $F(B^{j+1}_{U})=U_{X}$ for all $U\in \ch$, and such that $F$ is identity everywhere except near the boundaries of ideal multi-blocks.
\item[\rm iv.] There are no $j$-level bad components near the boundaries of the domains.
\item[\rm v.] If there are no bad $j$ level component in the buffer zone of the ideal multi-block $B^{j+1}_{U}$, then with probability at least $(1-10^{-(j+10)})^{4|U|}$, the canonical map $F$ is identity on $B^{j+1}_{U}$.     
\end{enumerate}  
\end{observation}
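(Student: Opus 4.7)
The proof is essentially a bookkeeping exercise that reads off each of the five items from the construction in Definition \ref{d:canonical} and the randomized valid-choice rule preceding the observation. I would organize it as follows.

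\textbf{Step 1 (Setting up the global canonical map and proving (i), (iii)).} First I would construct the global map $F\colon\R^2\to\R^2$ associated to the random choice $\{(\ell_v,s_v)\}_{v\in V^*},\{s_e\}_{e\in E^*}$. On each square $T_{\ell_v,v}$ use $F^{s_v}_{\ell_v}$, on each rectangle $T_{\ell_v,\ell_{v'},e}$ use $F^{s_e}_{\ell_v,\ell_{v'}}$, and extend by the identity elsewhere. Since the $T$-regions are pairwise disjoint (they sit inside mutually disjoint portions of the buffer strips $S^1_{v_1},S^2_{v_2}$) and each local map is the identity on $\partial T$, this piecewise definition is a well-defined bijection of $\R^2$. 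Property (i) follows because every perturbation happens inside a $T$-region, which by construction is contained in the buffer $\Delta B^{j+1}_U$; hence $B^{j+1,\mathrm{int}}_U\subseteq\hat U_X\subseteq B^{j+1,\mathrm{ext}}_U$. For (iii), each local map is bi-Lipschitz with constant $1+10^{-(j+10)}$ by Lemma \ref{l:bdexistence}, and because the supports are disjoint the global $F$ inherits the same constant, which is far below $1+10^{-(j+5)}$. The map is identity away from the $T$-regions, i.e.\ away from the boundaries of the ideal multi-blocks, and it sends $\partial B^{j+1}_U$ onto the chosen potential boundary curve $C$, hence $B^{j+1}_U$ onto $\hat U_X$.

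\textbf{Step 2 (Properties (ii) and (iv)).} Property (iv) is immediate from the validity definition: for each $(\ell_v,s_v)$ the curve $F^{s_v}(R^*)$ has no bad $j$-level component within distance $10L_j^4$, and the same is true for each $R^{s_e}_{\ell_v,\ell_{v'},e}$; combining along the outer buffer of $U$ yields an $L_j^4$-collar of the boundary of $\hat U_X$ free of bad $j$-level components. For (ii), observe that the valid-choice procedure at a vertex $v\in V^*$ (respectively at an edge $e\in E^*$) depends only on the $j$-level components within a bounded neighbourhood of the corner square $T_{*,v}$ (respectively of $T_{*,*,e}$), and the auxiliary randomness used to pick among valid labels is drawn independently at each vertex/edge. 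For lattice blocks $U_1,U_2\in\ch$ that are non-neighbouring, the outer buffers of $B^{j+1}_{U_1}$ and $B^{j+1}_{U_2}$ are disjoint, so the corresponding vertex/edge sets in $V^*,E^*$ are disjoint and the governing $(j+1)$-level configurations live in disjoint regions; together with the independence of the local random draws, this gives the claimed independence.

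\textbf{Step 3 (Property (v)).} Suppose there are no bad $j$-level components in the buffer of $B^{j+1}_U$. Then for every $v\in V^*$ on the outer boundary of $U$ the choice $(\ell_v,s_v)=(\ell_v,1)$ is valid (by item (iii) of Lemma \ref{l:bdexistence} the corresponding $F^1$ is the identity and hence has no proximity to a non-existent bad component), and similarly $s_e=1$ is a valid choice for every edge $e\in E^*$ on the boundary of $U$. The randomized selection rule then assigns probability at least $1-10^{-(j+10)}$ to the $s=1$ option at each such vertex and edge; by the independence across distinct vertices/edges noted in Step 2, the joint probability that every $s_v$ and every $s_e$ on the boundary of $U$ equals $1$ is at least $(1-10^{-(j+10)})^{|V^*_U|+|E^*_U|}\ge(1-10^{-(j+10)})^{4|U|}$, using that the boundary of a lattice animal of size $|U|$ carries at most $4|U|$ corner squares and edge strips. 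When all selected $s$'s equal $1$, each local $F^s$ is the identity, so $F$ is identity on $B^{j+1}_U$.

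\textbf{Main obstacle.} The essentially combinatorial parts (i), (ii), (iv), (v) follow cleanly; the one point that requires care is the global Lipschitz bound in (iii), because the $T$-regions for adjacent corners and edges meet along common sides of the buffer strips. The key observation that makes the gluing work is that each local $F^s$ equals the identity on the full boundary of its $T$; thus agreement on overlaps is automatic and the global Lipschitz constant equals the worst local one. Verifying this compatibility, and checking the explicit slack between $1+10^{-(j+10)}$ and the stated $1+10^{-(j+5)}$, is the most delicate bookkeeping in the argument.
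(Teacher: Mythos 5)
The paper records this Observation without proof (``easy to see and recorded \ldots for easy reference''), and your proposal is precisely the intended verification: glue the local maps $F^{s}_{\ell}$ and $F^{s}_{\ell_1,\ell_2}$ --- each identity on the boundary of its own disjoint support --- into a global bijection $F$, read off (i), (iii) and (iv) from properties i.--v.\ of Lemma \ref{l:bdexistence} together with the definition of valid choices, and obtain (ii) and (v) from the locality of the validity conditions and the independence of the auxiliary randomness at distinct vertices and edges. The one quibble is the final count in (v): a single cell already carries $4$ corner squares \emph{and} $4$ edge strips, so $|V^*_U|+|E^*_U|$ is bounded by roughly $8|U|$ rather than $4|U|$; this mirrors an imprecision in the paper's own statement and is immaterial downstream, since all that is ever used is a bound of the form $(1-10^{-(j+10)})^{C|U|}$ with $C$ an absolute constant (cf.\ the parameter constraint $(1-10^{-10})^{4v_0}>\frac{9}{10}$, which holds with ample room to spare).
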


See Figure \ref{f:domain} for an illustration of domain constructions. Bad level $j$ components are marked in red.

\begin{figure}[h!]
\begin{center}
\includegraphics[width=0.6\textwidth]{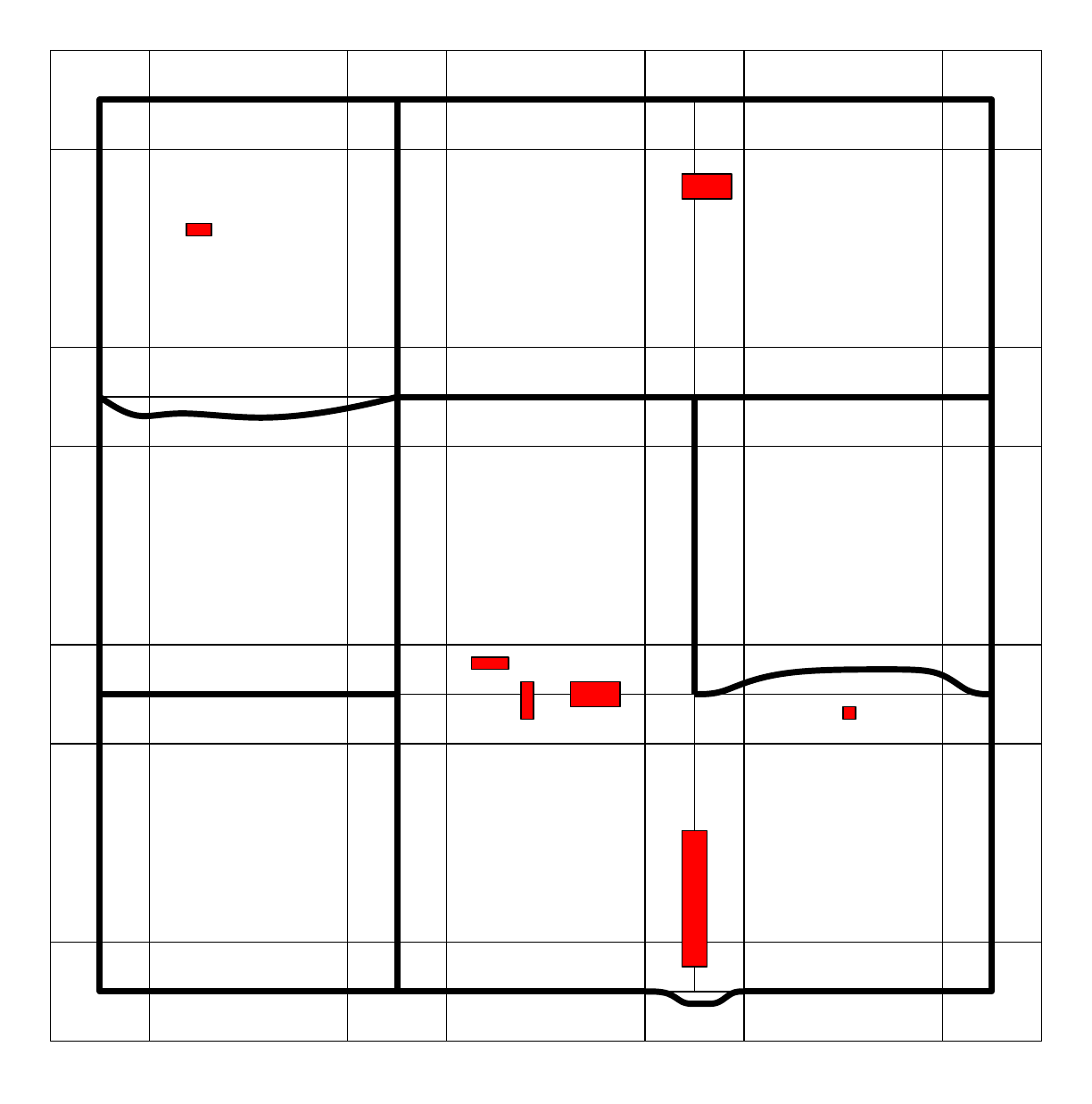}
\end{center}
\caption{Domains at level $j+1$}
\label{f:domain}
\end{figure}

\subsection{Recursive Construction of Blocks III: Forming Blocks out of Domains}

Notice that we have constructed the domains in such a way that boundaries of domains at level $(j+1)$ avoid the bad components at level $j$. However observe also that domains at level $j+1$ are not necessarily unions of blocks at level $j$. This is why we cannot use domains as blocks themselves and have to do one more level of approximation. 

Let $\{\hat{U}_{X}\}_{U\in \ch}$ denote the set of domains of $\mathbb{X}$ at level $(j+1)$. Define $\tilde{U}\subseteq \Z^2$ to be the set of all vertices $u$ of $\Z^2$ such that the $j$-level cell $B^{j}(u)$ is contained in $\hat{U}_{X}$ or the north east corner of $B^{j}(u)$ is contained in $\hat{U}_{X}$. Then define the block at level $j+1$ corresponding to the lattice block $U$, denoted by $X^{j+1}_{U}$ to be equal to $X^{j}_{\tilde{U}}$. Notice that this is well defined because by construction $\tilde{U}$ is a union of lattice blocks for $\mathbb{X}$ at level $j$. The set of all blocks at level $(j+1)$ is $\{X^{j+1}_{U}:U\in \ch^{j+1}\}$. Notice that blocks at level $(j+1)$ are union of blocks at level $j$ with none of the $j$ level bad subcomponents close to the boundary of the $(j+1)$ level blocks. Suppose $V\subseteq \Z^2$ is such that $X^j_{V}$ is a bad component at level $j$. Then the distance of $V$ from the boundary of $\tilde{U}$ is at least $L_j^3$. We record some useful properties of the blocks in the following observation.

\begin{observation}[Properties of Blocks]
\label{o:blockprop}
The blocks constructed as above satisfy the following conditions.
\begin{enumerate}
\item[\rm i.] Each block corresponds to a unique ideal multi-block, contains its interior and is contained in its blow-up.
\item[\rm ii.] The distance between any bad $j$-level subblock contained in a $j+1$-level block and its boundary is at least $L_j^3$ level $j$ cells. 
\item[\rm iii.] Suppose $B^{j+1}_{H}$ and $B^{j+1}_{H'}$ are two multi-cells that do not share a buffer zone. Condition on the event $\ce=\ce(H,H')$ that none of the external buffer zones of $B^{j+1}_{H}$ and $B^{j+1}_{H'}$ are conjoined. Clearly, on $\ce$ we have that $H$ and $H'$ are both unions of lattice blocks for $\mathbb{X}$ at level $(j+1)$. Then conditioned on $\ce$, we have that $\{X^{j+1}_{H}\}$ and $\{X^{j+1}_{H'}\}$ are independent.
\end{enumerate}
\end{observation}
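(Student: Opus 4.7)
The plan is to verify the three assertions in turn, each being a fairly direct consequence of the definition of $\tilde{U}$ combined with Observation \ref{o:bdcurve}.

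For part (i), I would unpack the definition: $u\in \tilde{U}$ iff $B^{j}(u) \subseteq \hat{U}_{X}$ or the north-east corner of $B^{j}(u)$ lies in $\hat{U}_{X}$. Observation \ref{o:bdcurve}(i) gives that $\hat{U}_{X}$ contains the interior of $B^{j+1}_{U}$ and lies inside its blow-up, while the buffer has width $L_j^5 \gg L_j$. Hence every $j$-level cell fully contained in the interior sits in $\tilde{U}_{X}$, and every cell entering $\tilde{U}_{X}$ is inside the blow-up. Uniqueness of the associated ideal multi-block is immediate since blocks are indexed by $\ch^{j+1}$.

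For part (ii), the input is the validity condition used in choosing the boundary curve: by the definitions of validity for $(\ell_v,s_v)$ on a corner square and $(\ell_v,s_v,\ell_{v'},s_{v'},s_e)$ on an edge rectangle, the chosen boundary $C_X$ of $\hat{U}_X$ is at $\ell_\infty$-distance at least $L_j^4$ from every bad $j$-level component. Replacing $\hat{U}_X$ by $\tilde{U}_X$ snaps each boundary point to a boundary of a neighbouring $j$-level cell, moving it by at most $\sqrt{2}L_j$. Hence every bad $j$-level sub-block of $X^{j+1}_U$ is at distance at least $L_j^4-\sqrt{2}L_j$ from $\partial \tilde{U}_X$, i.e.\ at least $L_j^3$ many level-$j$ cells once $L_0$ is taken large.

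For part (iii) I would combine two observations. First, on $\ce$ no outer buffer zone of $B^{j+1}_H$ or $B^{j+1}_{H'}$ is conjoined, so each of $H$ and $H'$ is an exact union of $(j+1)$-level lattice blocks, and every lattice block in $H$ is non-neighbouring with every one in $H'$ since $B^{j+1}_H$ and $B^{j+1}_{H'}$ share no buffer zone. Observation \ref{o:bdcurve}(ii) then says that conditional on $\ce$ the domains inside $H$ and those inside $H'$ are constructed from independent local randomness. Second, once those domains are fixed, each $X^{j+1}_H$ is a deterministic function of the $j$-level sub-blocks in the blow-up of $B^{j+1}_H$, whose regions are disjoint from the corresponding ones for $H'$; appealing to the inductive level-$j$ independence of sub-blocks in disjoint regions (part of the recursive structure) yields the claim. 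The one subtle point, and the step I expect to need the most care, is in showing that conditioning on $\ce$ does not create long-range dependence: the cleanest route is to note that $\ce$ is determined by the level-$j$ structure in a neighbourhood of the outer buffer zones of $B^{j+1}_H$ and $B^{j+1}_{H'}$, so that the inductive independence at level $j$ together with Observation \ref{o:bdcurve}(ii) lets one factorise the conditional law on the two sides.
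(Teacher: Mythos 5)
Your verification is correct and follows exactly the route the paper intends: the paper records this as an Observation with no written proof, treating all three parts as immediate consequences of the definition of $\tilde{U}$, the validity conditions on boundary curves, and Observation \ref{o:bdcurve}. Your unpacking of (i) and (ii) matches the paper's implicit reasoning (including the same harmless rounding in converting the $L_j^4$ validity margin into ``$L_j^3$ cells''), and for (iii) you correctly identify and dispose of the only delicate point, namely that the event $\ce$ and the domain choices are determined by local randomness so the conditioning factorises.
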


\subsection{Geometry of a block: Components}
To complete the description of block construction, it remains to define good blocks at level $j\geq 1$. Before we give the recursive definition of the good blocks, it is necessary to introduce certain definitions and notations regarding the geometry of the multi-blocks. 

\subsubsection{Bad Components of blocks}
Fix $j\geq 0$. Suppose that blocks and good blocks are already defined up to level $j$. Recall that good blocks at level $j$ always correspond to lattice blocks of size $1$. Let $\mathcal{H}=\{H(u)\}_{u\in \Z^2}$ denote the family of lattice blocks at level $j$, i.e., $H(u)$ denotes the lattice block containing $u$. Our objective is to group the neighbouring bad blocks together. To this end we make the following definition.
%

\begin{definition}[Lattice Components]
\label{d:latticecomp}
Let $\mathcal{Q}=\{Q(u)\}_{u\in \Z^2}$ be the family of subsets having the following properties.
\begin{enumerate}
\item[\rm i.] $Q(u)= \cup_{v\in Q(u)} H(v)$, i.e., elements of $\mathcal{Q}$ form a partition of $\Z^2$, where each element is a union of lattice blocks.
\item[\rm ii.] If $|Q(u)|>1$, then $Q(u)$ must contain $H$ such that $X^j_{H}$ is a bad block at level $j$.
\item[\rm iii.] If $|Q(u)|>1$ or if $X^j_{Q(u)}$ is a bad block at level $j$, then for all neighbours $v$ of $Q(u)$ in the closed packed lattice of $\Z^2$, $X^j_v=X^j_{\{v\}}$ is a good block at level $j$.
\item[\rm iv.] If there are are vertices $v,v'\in Q(u)$ which are not neighbours in the usual Euclidean lattice but neighbours in the close packed lattice of $\Z^2$, then the $2\times 2$ square containing $v$ and $v'$ is also contained in $Q(u)$.
\item[\rm v.] The family $\{Q(u)\}_{u\in \Z^2}$ is the maximal family having properties ${\rm i.-iv.}$ above, i.e., any other family having the same properties must consist of unions of elements of $\mathcal{Q}$.
\end{enumerate}
Elements of $\mathcal{Q}$ are called {\bf lattice components} at level $j$.
\end{definition}
%

It is easy to see that $\mathcal{Q}$ is well defined. For $Q\in \mathcal{Q}$, we call $X^j_{Q}$ a {\bf component} of $\mathbb{X}$ at level $j$.   
Notice that a component is always a union of blocks at level $j$. We call $X^j_{Q}$ a {\bf bad component} at level $j$ if it contains a bad block at level $j$. Often we shall denote the component $X^j_{Q(u)}$ by $X^{*,j}(u)$. The following observation is easy but useful.

\begin{observation}
\label{o:components}
If $|Q(u)|\geq k>1$, there exists $Q^*\subseteq Q(u)$ with $|Q^*|\geq \lceil \frac{k}{25}\rceil$ such that elements of $Q^*$ are non-neighbouring and for all $v\in Q^*$, $X^j_{H(v)}$ is a bad block at level $j$.
\end{observation}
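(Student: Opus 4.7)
The plan is to split the proof into two stages. First, I would show that a positive fraction of the vertices in $Q(u)$ lie in bad lattice blocks: defining $V^{\rm bad} = \{v \in Q(u) : X^j_{H(v)}~\text{is bad}\}$, the goal is to establish $|V^{\rm bad}| \geq |Q(u)|/c_1$ for a small absolute constant $c_1$. Second, from $V^{\rm bad}$ I would extract a non-neighbouring subset of size at least $|V^{\rm bad}|/c_2$ via a parity-colouring argument; combining gives $|Q^*| \geq |Q(u)|/(c_1 c_2)$, and it then suffices to arrange $c_1 c_2 \leq 25$.

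For the first stage, the key point is that any good singleton $v \in Q(u)$ must be forced into the component by property iv of Definition \ref{d:latticecomp} together with the maximality in property v: by minimality of $\mathcal{Q}$, such a $v$ lies in $Q(u)$ only when it completes a $2\times 2$ square two of whose corners (diagonally opposite) are already in $Q(u)$. Tracing this closure back iteratively and exploiting property iii (which forces all close-packed neighbours of $Q(u)$ lying outside the component to be good singletons, and hence prevents the $2\times 2$ closure from escaping the cluster of bad vertices) one can charge each good singleton in $Q(u)$ to nearby bad vertices, establishing the desired constant-fraction lower bound on $|V^{\rm bad}|$.

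For the second stage, I would use the natural $4$-colouring of $\Z^2$ by parity: assign to each $v = (v_1, v_2) \in \Z^2$ the colour $(v_1 \bmod 2,\, v_2 \bmod 2) \in \{0,1\}^2$. Any two vertices of the same colour differ by an even vector in each coordinate, so their $\ell_\infty$-distance is at least $2$; in particular they are non-neighbouring in both the Euclidean and the close-packed senses. By pigeonhole, some colour class contains at least $|V^{\rm bad}|/4$ vertices of $V^{\rm bad}$; taking $Q^*$ to be that intersection yields $|Q^*| \geq |V^{\rm bad}|/4 \geq |Q(u)|/(4c_1) \geq \lceil k/25 \rceil$ provided $c_1 \leq 6$, with the final rounding absorbed by the slack in the counting argument (and with the trivial case $|V^{\rm bad}| \geq 1$ handling small values of $k$ directly via property ii).

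The main obstacle is the constant-fraction bound in the first stage. The iterative $2\times 2$ closure forced by property iv can a priori spread well beyond the initial bad vertices — for instance a short diagonal of bad singletons already forces a solid square — so the subtle point is to control this spread combinatorially and show that the number of good singletons produced in the closure is at most a bounded multiple of the number of bad vertices. This is the step that genuinely relies on properties iii, iv and v acting in tandem with the planar geometry of $Q(u)$; the extraction argument in the second stage is then essentially a clean pigeonhole.
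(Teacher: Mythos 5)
The paper does not actually print a proof of this observation (it is asserted as ``easy''), but the constant $25$ reveals the intended route: take $Q^*$ to be a \emph{maximal} non-neighbouring (in the close-packed sense) subset of the bad vertices of $Q(u)$; by maximality every bad vertex lies within $\ell_\infty$-distance $1$ of $Q^*$, so if every vertex of $Q(u)$ lies within $\ell_\infty$-distance $1$ of a bad vertex, then $Q(u)$ is covered by the $\ell_\infty$-balls of radius $2$ centred at $Q^*$, each containing $25$ lattice points, whence $|Q^*|\geq |Q(u)|/25\geq \lceil k/25\rceil$. Your stage 2 (parity $4$-colouring plus pigeonhole) is a correct substitute for this extraction step. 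But note that the covering route needs only a \emph{domination} statement (every vertex of $Q(u)$ is king-adjacent to a bad vertex), whereas your decomposition forces you to prove the strictly stronger \emph{density} statement $|V^{\rm bad}|\geq |Q(u)|/c_1$ with $c_1\leq 6$.

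The genuine gap is stage 1, which you yourself label ``the main obstacle'' and never carry out; and as formulated it cannot be carried out under a literal reading of Definition \ref{d:latticecomp}. Property iv applies to \emph{all} pairs $v,v'\in Q(u)$, so the $2\times 2$ completion iterates: starting from a diagonal chain of bad singletons $(0,0),(1,1),\dots,(n,n)$ (all forced into one component by property iii), the completions first produce the band $|i-j|\leq 1$, then (e.g.\ from the new diagonal pair $(0,1),(1,2)$) the band $|i-j|\leq 2$, and so on until the entire square $\{0,\dots,n\}^2$ lies in $Q(u)$. That component has $(n+1)^2$ vertices of which only $n+1$ are bad, so the bad density is $1/(n+1)$ and no absolute constant $c_1$ exists; the same example shows the corner $(0,n)$ is at king-distance roughly $n/2$ from the nearest bad vertex, so even the weaker domination statement fails for the iterated closure. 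You mention precisely this diagonal configuration as the danger, but then assert that a charging argument ``can'' control the spread; it cannot in the generality you claim. A complete proof must either read property iv as a single, non-iterated completion applied to diagonally touching bad vertices --- which is what the constant $25$ presupposes and under which domination within $\ell_\infty$-distance $1$ is immediate --- or supply a genuinely new combinatorial bound on the iterated closure, which your proposal does not contain. As written, the proposal reduces the observation to an unproved, and literally false, intermediate claim.
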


Notice that once we know the blocks at level $j$, and also know which blocks at level $j$ are good, we can work out what the components at level $j$ are, as the components only depend on the geometry of locations of the bad blocks at level $j$ and not on the anatomy of the blocks themselves. See Figure \ref{f:component} for an illustration. The bad blocks are marked as well as the boundary of the components.

\begin{figure}[h]
\begin{center}
\includegraphics[width=0.6\textwidth]{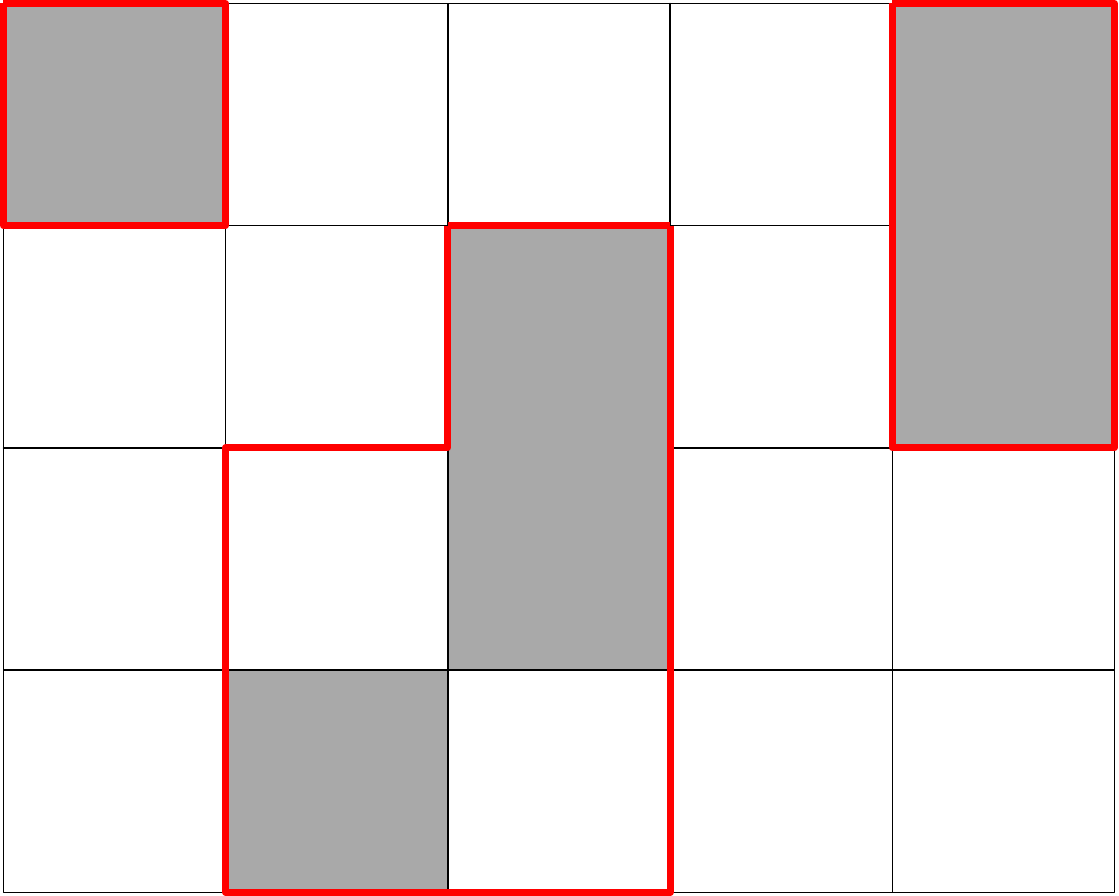}
\end{center}
\caption{Blocks and Components: Bad blocks are marked in gray, Boundaries of components are also marked}
\label{f:component}
\end{figure}

\subsubsection{Sub-blocks and Subcomponents}
Let $X^j_{U}$ be a $j$-level block or component. Then $|U|$ shall denote the size of the block/component. Now suppose $j\geq 1$. Let $U'\subset \Z^2$ be such that $X^{j-1}_{U'}=X^{j}_{U}$. For $V\subset U'$ such that $V$ is a lattice block (resp.\ lattice component at level $(j-1)$) we call $X^{j-1}_{V}$ a sub-block (resp.\ sub-component) at level $(j-1)$ of the the $j$-level block/component $X^j_{U}$. Notice that by construction we have that all bad subcomponents are away from the boundary of the component $X^j_{U}$. 
%
%
%
%

%

\subsection{Embedding, Embedding Probabilities and Semi-bad components}
\label{s:dembedh}
Now we need to make a recursive definition of embedding and define embedding probabilities for blocks and components at level $j+1$. We start with defining embedding at level $0$.
\subsubsection{Embedding at level $0$}
For $v,v'\in \Z^2$, suppose $X^0_{v}$ and $Y^0_{v'}$ are blocks at level $0$. We call $Y^0_{v'}\in \mathbf{0}$ if $Y^0_{v'}$ is not good and $Y^0_{v'}$ contains more $0$'s than $1$'s. Similarly $Y^0_{v'}\in \mathbf{1}$ if $Y^0_{v'}$ is not good and $Y^0_{v'}$ contains at least as many $1$'s as $0$'s.

For $v,v'\in \Z^2$, we call $X^0_v$ embeds into $Y^0_{v'}$, denoted $X^0_v\hookrightarrow Y^0_{v'}$, if one of the following three conditions hold.
\begin{enumerate}
\item[\rm i.] $Y^0_{v'}$ is a good block at level $0$.
\item[\rm ii.] $X_{\iota +v}=0$ and $Y^0_{v'}\in \mathbf{0}$.
\item[\rm iii.] $X_{\iota+v}=1$ and $Y^0_{v'}\in \mathbf{1}$.
\end{enumerate}

Let $U$ and $U'$ be lattice animals of the same shape. Let $h:\Z^2\rightarrow \Z^2$ denote the translation that sends $U$ to $U'$. Then we say $X^0_{U}\hookrightarrow Y^0_{U'}$ if $X_{u}\hookrightarrow Y_{h(u)}$ for all $u\in U$.

Notice that at level $0$, the component $X^0_U$ always corresponds to the ideal multi-block $B^0_{U}$. This is no longer true for $j\geq 1$ as the boundaries can have different shapes. So we need to make a more complicated recursive definition at levels $j\geq 1$.

\subsubsection{Embedding at higher levels}
Fix $j\geq 1$. Suppose $U$ is a union of lattice blocks for $\mathbb{X}$ at level $j$. Suppose also that $V\subseteq \Z^2$ is a union of lattice blocks for $Y$ at level $j$. Suppose further that $U$ and $V$ have the same shape. We want to define an event $X^j_{U}$ embeds into $Y^j_{V}$, denoted by $X^j_{U}\hookrightarrow Y^j_{V}$. 

Modulo a translation from $\R^2\to \R^2$ that takes $B^j_{U}$ to $B^j_{V}$, we can assume that $U=V$. Define the domain of $X^j_{U}$ to be the union of the domains of the $j$ level blocks contained in $X^j_{U}$, denote it by $\hat{U}_{X}$. Define $\hat{U}_{Y}$, the domain of $Y^j_{U}$, in a similar manner. To define the embedding we need to define bi-Lipschitz maps that take $\hat{U}_{X}$ to $\hat{U}_{Y}$. Notice that we already have one such candidate map, namely the canonical map that takes $\hat{U}_{X}$ to $\hat{U}_{Y}$. We shall consider small perturbations of that map.

\begin{definition}[$\alpha$-canonical maps]
\label{d:alphacano}
Let $X^{j}_{U}$, $Y^j_{U}$, $\hat{U}_{X}$, $\hat{U}_{Y}$ be as above. Let $T_1,T_2, \ldots ,T_{k}\subseteq \Z^2$ be such that $X^{(j-1)}_{T_{1}}, \ldots ,X^{(j-1)}_{T_{k}}$ are unions of blocks of $\mathbb{X}$ at level $(j-1)$ with domains $\hat{T}_{i,X}$ for $i\in [k]$. Similarly let $T'_1,T'_2,\ldots ,T'_{k'}$ be such that $Y^{(j-1)}_{T'_{1}}, \ldots ,Y^{(j-1)}_{T'_{k}}$ are unions of blocks of $\mathbb{Y}$ at level $(j-1)$ with domains $\hat{T'}_{i,Y}$ for $i\in [k']$. Let $F$ be the canonical map from $\hat{U}_{X}$ to $\hat{U}_{Y}$. Then we call $G_{\theta}=\theta \circ F$ to be an $\alpha$-canonical map from $\hat{U}_{X}$ to $\hat{U}_{Y}$ (with respect to $\mathcal{T}=\{\hat{T}_1,\hat{T}_2,\ldots ,\hat{T}_k\}$ and $\mathcal{T}'=\{\hat{T'}_1,\ldots \hat{T'}_{k'}\}$) if the following conditions are satisfied.
\begin{enumerate}
\item[\rm i.] $\theta$ is a bijection from $\hat{U}_{Y}$ to itself that is identity on the boundary of $\hat{U}_{Y}$ and is bi-Lipschitz with Lipschitz constant $(1+10^{-(j+10)})$.
\item[\rm ii.] There exists $\{S_{i}:i\in [k]\}$ (resp.\ $\{S'_{i}:i\in [k']\}$) such that $S_i$ has the same shape as $T_{i}$ (resp.\ $S'_{i}$ has the same shape as $T'_{i}$) such that $Y^{j-1}_{S_i}$ is a union of $j-1$  level blocks of $\mathbb{Y}$ with domain $\hat{S}_{i,Y}$ (resp.\ $X^{j-1}_{S'_i}$ is a union of $j-1$  level blocks of $\mathbb{X}$ with domain $\hat{S'}_{i,X}$) such that $G_{\theta}(\hat{T}_{i,X})=\hat{S}_{i,Y}$ for all $i\in [k]$ and $G_{\theta}(\hat{S'}_{i,X})=\hat{T'}_{i,Y}$ for all $i\in [k']$.   
\item[\rm iii.]  $G_{\theta}$ restricted to $hat{T}_{i}$ (resp.\ $\hat{S'}_i$) coincides with the canonical map from $\hat{T}_{i,X}$ to $\hat{S}_{i,Y}$ (resp.\ from $\hat{S}'_{i,X}$ to $\hat{T}'_{i,Y}$).
\end{enumerate}
\end{definition}

Notice that an $\alpha$-canonical map by definition is a bi-Lipschitz map with Lipschitz constant $(1+10^{-(j+5)})$. For the sake of notational convenience we shall often denote this as an $\alpha$-canonical map with respect to $\{T_1, T_2,\ldots, T_{k}\}$ and $\{T'_1, T'_2,\ldots, T'_{k}\}$, but it will always be implicitly assumed that we know domains of the $(j-1)$ level blocks corresponding to $T_{i}, T'_{i'}$. In the above setting, we shall also denote $S_i=G_\theta(T_i)$ and $S'_{i}=G_\theta^{-1}(T'_i)$.

Observe that an $\alpha$-canonical map maps a domain to a domain of same shape while matching up certain sub-blocks in $\mathbb{X}$ (resp.\ in $\mathbb{Y}$) to sub-blocks of same shapes in $\mathbb{Y}$ (resp.\ in $\mathbb{X}$). For embedding, we want to match up all bad sub-blocks by an $\alpha$-canonical map as above. We define embedding at level $j$ formally as follows. Assume that we have defined embedding at levels upto $(j-1)$.

\begin{definition}[Embedding at level $j$]
\label{d:embedding}
Let $X=X^{j}_{U}$, $Y=Y^j_{U}$, $\hat{U}_{X}$, $\hat{U}_{Y}$ be as above. Let $T_1,T_2, \ldots ,T_{k}\subseteq \Z^2$ be such that $X^{(j-1)}_{T_{1}}, \ldots ,X^{(j-1)}_{T_{k}}$ are unions of blocks of $\mathbb{X}$ at level $(j-1)$ containing all $(j-1)$ level bad sub-blocks. Similarly let $T'_1,T'_2,\ldots ,T'_{k'}$ be such that $Y^{(j-1)}_{T'_{1}}, \ldots ,Y^{(j-1)}_{T'_{k'}}$ are unions of blocks of $\mathbb{Y}$ at level $(j-1)$ containing all bad sub-blocks. We say $X$ embeds into $Y$, denoted $X\hookrightarrow Y$ if there exist $\mathcal{T}=\{T_1,T_2,\ldots ,T_k\}$, and $\mathcal{T}'=\{T'_1,\ldots T'_{k'}\}$ as above and there exists an $\alpha$-canonical map $G_{\theta}$ from $\hat{U}_{X}$ to $\hat{U}_{Y}$ with respect to $\mathcal{T}$ and $\mathcal{T}'$ such that for all $i\in [k]$, $X^{(j-1)}_{T_{i}}\hookrightarrow Y^{(j-1)}_{G_\theta(T_i)}$ and for all $i\in [k']$ we have $X^{(j-1)}_{G_\theta^{-1}(T'_{i})}\hookrightarrow Y^{(j-1)}_{T'_i}$.  
\end{definition}

In the situation of the above definition, we say \textbf{$G_{\theta}$ gives an embedding of $X$ into $Y$}. The following sufficient condition for embedding given in terms of components will be useful for us.

\begin{lemma}
\label{l:embedcond}
Let $X=X^{j}_{U}$, $Y=Y^j_{U}$, $\hat{U}_{X}$, $\hat{U}_{Y}$ be as above. Let $T_1,T_2, \ldots ,T_{k}\subseteq \Z^2$ be such that $X^{(j-1)}_{T_{1}}, \ldots ,X^{(j-1)}_{T_{k}}$ are all bad level $(j-1)$ components contained in $X$. Let $W\subseteq \Z^2$ be such that $Y=Y^{j-1}_{W}$. Suppose there exists an $\alpha$-canonical map $G_{\theta}$ from $\hat{U}_{X}$ to $\hat{U}_{Y}$ with respect to $\mathcal{T}=\{T_1,T_2,\ldots ,T_k\}$ and $\emptyset$ such that for all $i\in [k]$, $X^{(j-1)}_{T_{i}}\hookrightarrow Y^{(j-1)}_{G_\theta(T_i)}$ and for all $u\in W\setminus  \cup_{i} G_\theta(T_i)$, $Y^j_{u}$ is a good block at level $j-1$. Then $X\hookrightarrow Y$.
\end{lemma}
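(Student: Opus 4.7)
The plan is to unwind Definition \ref{d:embedding} directly and to use $\mathcal{T}' := \{G_\theta(T_1), \dots, G_\theta(T_k)\}$ as the $\mathbb{Y}$-side family, taking $k' = k$. The given map $G_\theta$ will itself serve as the $\alpha$-canonical map witnessing the embedding; almost everything reduces to checking that the $\alpha$-canonical property holds with respect to this new $\mathcal{T}'$, not only with respect to $\mathcal{T}$ and $\emptyset$ as assumed.

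First, I would verify that $\mathcal{T}'$ covers all bad $(j-1)$-level sub-blocks of $Y = Y^j_U$. By the hypothesis that $Y^{j-1}_u$ is good at level $j-1$ for every $u \in W \setminus \cup_i G_\theta(T_i)$, every bad level-$(j-1)$ sub-block of $Y$ must lie inside $\cup_i G_\theta(T_i) = \cup_i T'_i$, which is exactly the requirement on $\mathcal{T}'$ in Definition \ref{d:embedding}.

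Next, I would show that $G_\theta$ is $\alpha$-canonical with respect to $\mathcal{T}$ and $\mathcal{T}'$ (not just $\mathcal{T}$ and $\emptyset$). The $\mathcal{T}$-side conditions (i), (ii), (iii) of Definition \ref{d:alphacano} already hold by assumption; it only remains to verify the analogous conditions on the $\mathcal{T}'$-side. For each $i$, applying condition (ii) for $\mathcal{T}$ gives an $S_i$ of the same shape as $T_i$ with $G_\theta(\hat{T}_{i,X}) = \hat{S}_{i,Y}$; by construction $S_i = G_\theta(T_i) = T'_i$. Therefore taking $S'_i := G_\theta^{-1}(T'_i) = T_i$ (of the same shape as $T'_i$ since canonical maps preserve lattice-block shape), condition (ii) for the $\mathcal{T}'$-side reads $G_\theta(\hat{S'}_{i,X}) = \hat{T'}_{i,Y}$, which is literally the identity $G_\theta(\hat{T}_{i,X}) = \hat{S}_{i,Y}$ already known. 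Condition (iii) for the $\mathcal{T}'$-side is similarly a restatement of the $\mathcal{T}$-side (iii), because the canonical map between two domains equals its own inverse image of itself.

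Finally, I would feed this into Definition \ref{d:embedding}. The forward embedding conditions $X^{(j-1)}_{T_i} \hookrightarrow Y^{(j-1)}_{G_\theta(T_i)}$ for $i \in [k]$ are given by hypothesis. The backward embedding conditions $X^{(j-1)}_{G_\theta^{-1}(T'_i)} \hookrightarrow Y^{(j-1)}_{T'_i}$ for $i \in [k']$ are, under our choice $T'_i = G_\theta(T_i)$, the very same statements, so nothing new is needed. This gives $X \hookrightarrow Y$. The only subtle point, and the one worth dwelling on, is the symmetry argument in the $\alpha$-canonical check: Definition \ref{d:alphacano} is written asymmetrically in the two sides, but because the $T'_i$ have been chosen as images of the $T_i$ under $G_\theta$ itself, the two sides collapse into one another. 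There are no quantitative estimates to make; the lemma is purely a bookkeeping consequence of the definitions.
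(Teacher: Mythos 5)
Your proof is correct and follows the same route as the paper, which simply states that the lemma follows immediately from Definition \ref{d:embedding}; your write-up just makes explicit the choice $\mathcal{T}'=\{G_\theta(T_1),\ldots,G_\theta(T_k)\}$ and the resulting collapse of the forward and backward conditions. No gaps.
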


\begin{proof}
Follows immediately from Definition \ref{d:embedding}.
\end{proof}

\subsubsection{Random Blocks and Embedding Probabilities}
Observe that at a fixed level $j$ the distribution of the blocks and components is translation invariant. That is, there exist laws $\mu_j^{\mathbb{X}}$ (resp.\ $\mu_j^{\mathbb{Y}}$) such that for all $u\in \Z^2$, the $j$-level component $X^{*,j}(u)$ (resp.\ $Y^{*,j}(u)$) has the law $\mu_j^{\mathbb{X}}$ (resp.\ $\mu_j^{\mathbb{Y}}$).

Fix a component $X^*=X^{j}_{U}$ at level $j$. Let $A^{\mathbb{Y}}_{\rm valid}$ denote the event that the external buffer zones of $B^j_{U}$ are not conjoined in $\mathbb{Y}$. On $A^{\mathbb{Y}}_{\rm valid}$, clearly $Y^*=Y^j_{U}$ is a union of $j$ level blocks in $\mathbb{Y}$. Denote the embedding probability of the component $X^*$  

\begin{equation}
\label{e:defsx}
S_j^{\mathbb{X}}(X^*)=\P[X^*\hookrightarrow Y^*, A^{\mathbb{Y}}_{\rm valid}\mid X^*].
\end{equation} 

In a similar vein we define the embedding probability of a $j$-level $\mathbb{Y}$-component $Y^*$ by      
     
\begin{equation}
\label{e:defsy}
S_j^{\mathbb{Y}}(Y^*)=\P[X^*\hookrightarrow Y^*, A^{\mathbb{X}}_{\rm valid}\mid Y^*].
\end{equation} 

We shall drop the superscripts $\mathbb{X}$ or $\mathbb{Y}$ when it will be clear from the context which block we are talking about. The embedding probabilities are very important quantities for us. The key of our multi-scale proof rests on proving recursive power law tail estimates for $S_j(X^*)$ when $X^*$ is distributed according to $\mu_j^{\mathbb{X}}$ and similarly for $S_j(Y^*)$.

\subsubsection{Semi-bad Components and Airports}
It will be useful for us to classify the bad components at level $j$ into two types: {\bf semi-bad} and {\bf really bad}. A semi-bad component will be one which is not too large in size and has a sufficiently high embedding probability. We define it only for $\mathbb{X}$-components, semi-bad $\mathbb{Y}$-components are defined in a similar fashion. For a component $X$ we shall denote by $V_{X}$ its size, that is the size of the multi-cell it corresponds to. 

\begin{definition}[Semi-bad Components]
\label{d:semibad}
A component $X=X^j_{U}$ at level $j$ is said to be semi-bad if it satisfies the following conditions.
\begin{enumerate}
\item[\rm i.] $V_{X}=|U|\leq v_0$.
\item[\rm ii.] $S_j^{\mathbb{X}}(X)\geq 1-\frac{1}{v_0^5k_0^4100^j}$.
\end{enumerate}
\end{definition}

An \textbf{airport} is a region such that most locations in it can be embedded into any semi-bad component. The formal definition is as follows.

\begin{definition}[Airports]
\label{d:airport}
A square $S$ of $L_{j-1}^{3/2}\times L_{j-1}^{3/2}$ many $j-1$ level cells contained in a $j$ level component of $\mathbb{X}$ is called an airport if for all level $j-1$ semi-bad component $Y^{*}=Y^{j-1}_{U}$ the following condition holds.
\begin{enumerate}
\item[$\bullet$] Fix any $H\subseteq S$ having the same shape as $U$. Let the event that $X^*=X^{j-1}_{H}$ is a union of blocks at level $j-1$ be denoted by $A^{H}_{\rm valid}$. We have 
$$\#\{H: A^{H}_{\rm valid}, X^*\hookrightarrow Y^*\} \geq (1-v_0^{-2}k_0^{-4}100^{-j})N(S,U)$$ 
where $N(S,U)$, denotes the number of multi-cells in $S$ having the same shape as $U$.
\end{enumerate}   
\end{definition}

Airports are defined for $\mathbb{Y}$ blocks in an analogous manner.
%
%
%
%
%
%
%
%
%

\subsection{Good Blocks}
\label{s:goodhd}
To complete the construction of the multi-scale structure, we need to define good blocks at level $j\geq 1$. With the preparations from the preceding subsections, we are now ready to give the recursive definition. Fix $j\geq 1$. Suppose we already have constructed the structure up to level $j-1$. As usual, in the following definition, we only consider $\mathbb{X}$ blocks; $j$-level good blocks for $\mathbb{Y}$ are defined similarly.

\begin{definition}[Good Blocks]
\label{d:good}
A block $X=X(u)$ at level $j$ is said to be good if the following conditions hold.
\begin{enumerate}
\item[\rm i.] $X$ has size $1$, i.e., $B^j(u)$ is an ideal multi-block. 
\item[\rm ii.] The total sizes of $(j-1)$ level bad components contained in $X$ is at most $k_0$.
\item[\rm iii.] All the bad components contained in $X$ are semi-bad.
\item[\rm iv.] All $L_{j-1}^{3/2}\times L_{j-1}^{3/2}$ squares of $(j-1)$ level cells contained in $X$ are airports.  
\end{enumerate} 
\end{definition}

\section{Recursive Estimates}
\label{s:rech}
Our proof of Theorem \ref{t:embedh} depends on a collection of recursive estimates, all of which are proved together by induction. In this section we list these estimates. The proof of these estimates are provided over the next few sections. We recall that for all $j>0$, we have $L_j=L_{j-1}^{\alpha}=L_0^{\alpha ^j}$.

\begin{itemize}
\item {\bf Tail Estimate:}
Let $j\geq 0$. Let $X=X^{j}_{U}$ be a $\mathbb{X}$-component at level $j$ (having the distribution $\mu_{j}^{\mathbb{X}}$) and  let $m_j=m+2^{-j}$. Recall $V_X=|U|$. Then
\begin{equation}
\label{e:tailx1h}
\mathbb{P}(S_j^{\mathbb{X}}(X)\leq x, V_{X}\geq v)\leq x^{m_j}L_j^{-\beta}L_{j}^{-\gamma(v-1)}~~\text{for}~~0< x\leq 1-L_{j}^{-1}~~\text{for all}~v\geq 1.
\end{equation}
Let $Y=Y^{j}_{U}$ be a $\mathbb{Y}$-component at level $j$ (having the distribution $\mu_{j}^{\mathbb{X}}$). Recall $V_X=|U|$. Then
\begin{equation}
\label{e:taily1h}
\mathbb{P}(S_j^{\mathbb{Y}}(Y)\leq x, V_{Y}\geq v)\leq x^{m_j}L_j^{-\beta}L_{j}^{-\gamma(v-1)}~~\text{for}~~0< x\leq 1-L_{j}^{-1}~~\text{for all}~v\geq 1.
\end{equation}

\item{\bf Size Estimate:}
Let $j\geq 0$. Let $X=X^{j}_{U}$ (resp.\ $Y=Y^{j}_{U}$) be a $\mathbb{X}$-component at level $j$ having the distribution $\mu_{j}^{\mathbb{X}}$ (resp.\ $\mathbb{Y}$-component at level $j$ having the distribution $\mu_{j}^{\mathbb{Y}}$). Then
\begin{equation}
\label{e:sizex1h}
\mathbb{P}(V_{X}\geq v)\leq L_{j}^{-\gamma(v-1)}~~\text{for}~~v\geq 1.
\end{equation}
\begin{equation}
\label{e:sizey1h}
\mathbb{P}(V_{Y}\geq v)\leq L_{j}^{-\gamma(v-1)}~~\text{for}~~v\geq 1.
\end{equation}

\item {\bf Good Block Estimate:}
\begin{itemize}
\item[$\bullet$] Good blocks embed into to good blocks, i.e., for all good $j$-level block $X$ and for all good $j$-level block $Y$ we have
\begin{equation}
\label{e:goodvgoodh}
X\hookrightarrow Y.
\end{equation}


\item[$\bullet$] Conditioned on a partial set of outside blocks, blocks are good with high probability. Fix $u\in \Z^2$. Let $V\subseteq \Z^2\setminus \{u\}$. Let $\cf_{V}^{\mathbb{X}}$ (resp.\ $\cf_{V}^{\mathbb{Y}}$) denote the conditioning $\cf_{V}^{\mathbb{X}}=\{X^j_{V}, X^j(u)\notin X^j_V\}$ (resp.\ $\cf_{V}^{\mathbb{Y}}=\{Y^j_{V}, Y^j(u)\notin Y^j_V\}$), i.e. we condition on partial set of $j$ level blocks corresponding to ideal blocks excluding $B^j(u)$ such that these blocks are not the block corresponding to $B^j(u)$. Then we have the following for all $u\in \Z^2$ and for all $V\subseteq \Z^2\setminus \{u\}$.

\begin{equation}
\label{e:goodconditionx}
\P[X^j_u~\text{is good}\mid \cf_{V}^{\mathbb{X}}]\geq 1-L_{j}^{-\gamma}.
\end{equation}

\begin{equation}
\label{e:goodconditiony}
\P[Y^j_u~\text{is good}\mid \cf_{V}^{\mathbb{Y}}]\geq 1-L_{j}^{-\gamma}.
\end{equation}
\end{itemize}
\end{itemize}

\begin{theorem}[Recursive Theorem]
\label{inductionh}
For $\alpha$, $\beta$, $\gamma$, $m$, $k_0$ and $v_0$ as in equation~\eqref{e:parameters}, the following holds for all large enough $L_0$. If the recursive estimates \eqref{e:tailx1h}, \eqref{e:taily1h}, \eqref{e:sizex1h}, \eqref{e:sizey1h}, \eqref{e:goodvgoodh}, \eqref{e:goodconditionx} and \eqref{e:goodconditiony} hold at level $j$ for some $j\geq 0$ then all the estimates hold at level $(j+1)$ as well.
\end{theorem}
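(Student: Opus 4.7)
The plan is to carry out the induction by establishing the seven estimates at level $(j+1)$ in the following order: first the size estimate, then the conditional good-block probabilities, then the good-to-good embedding property, and finally (and most substantively) the tail estimate. Throughout, I would lean on the structural facts in Observation~\ref{o:blockprop} (especially the conditional independence of non-adjacent lattice blocks) so that every event we estimate can be localized to a bounded cluster of cells at level $(j+1)$.

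For the size estimate \eqref{e:sizex1h}--\eqref{e:sizey1h} at level $(j+1)$, note that a lattice block of size $\geq v$ corresponds to a connected lattice animal of $\geq v$ cells all joined by conjoined buffer zones. By Definition~\ref{d:conjoinedbuffer}, a buffer zone is conjoined only if it contains either more than $k_0$ bad $j$-level components or at least one non-semi-bad component. Using \eqref{e:tailx1h} at level $j$, the probability that a component inside a buffer zone is not semi-bad is at most $\cm_j\cdot L_j^{-\beta}$ for some explicit constant (times an appropriate factor in $v_0,k_0$), and the probability of $k_0$ bad components in a buffer zone is bounded via \eqref{e:sizex1h} and a union bound over placements. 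A Peierls-type count over lattice animals then yields \eqref{e:sizex1h} at level $(j+1)$ provided $\gamma\alpha$ beats the combinatorial exponent, which is exactly the role of the constraint $\gamma>40\alpha$ in Section~\ref{s:parameters}.

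Next, for the conditional good-block probabilities \eqref{e:goodconditionx}--\eqref{e:goodconditiony} at level $(j+1)$, I would note that goodness of $X^{j+1}(u)$ (Definition~\ref{d:good}) is a local event in the $j$-level blocks near $B^{j+1}(u)$; the conditioning $\cf_V^{\mathbb{X}}$ only constrains things outside. A union bound over the four failure modes (size $>1$, total bad subcomponent mass $>k_0$, a non-semi-bad subcomponent, or a non-airport square of $j$-level cells) uses: the size estimate just proved; \eqref{e:tailx1h} at level $j$ applied to each candidate bad component (giving an $L_j^{-\beta}$ gain per offending component); and for the airport condition, the level-$j$ tail estimate applied to each semi-bad shape $U$ inside every $L_j^{3/2}\times L_j^{3/2}$ square of $j$-level cells. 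Summing over the $O(L_{j+1}^2/L_j^3)$ squares still leaves a bound much smaller than $L_{j+1}^{-\gamma}$ once $\beta \gg \alpha\gamma$ (the constraint $\beta>1500\alpha\gamma$). The good-to-good claim \eqref{e:goodvgoodh} then follows from Lemma~\ref{l:embedcond}: both $X^{j+1}$ and $Y^{j+1}$ have at most $k_0$ semi-bad subcomponents, every $L_j^{3/2}$-square of $Y$ is an airport, so I would greedily allocate one airport per semi-bad sub-component of $X$, use the airport property to pick an embedding location $G_\theta(T_i)$ inside it (noting $(1-v_0^{-2}k_0^{-4}100^{-j})$ of placements work), and stitch these into a single $\alpha$-canonical map $G_\theta$ (the flexibility budget $(1+10^{-(j+10)})$ suffices for $O(k_0)$ local perturbations since $k_0$ is much smaller than $10^{j+10}$).

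The heart of the argument is the tail estimate \eqref{e:tailx1h} at level $(j+1)$ with its improved exponent $m_{j+1}=m+2^{-(j+1)}$. Following the outline, I would split into four cases based on the joint behavior of $V_X$ and $x$: (i) $V_X$ large, (ii) many bad $j$-sub-components but each semi-bad, (iii) at least one non-semi-bad sub-component, and (iv) $V_X=1$ with a moderate configuration of semi-bad sub-components driving $S_{j+1}^{\mathbb{X}}(X)\leq x$. Case (i) reduces to the size estimate at $(j+1)$ that we just established, which is stronger than the target for $v\geq v_0$. Case (iii) uses \eqref{e:tailx1h} at level $j$ directly with $x\approx 1-L_j^{-1}$. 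Cases (ii) and (iv) are where the gain comes from: conditionally on the shapes of the semi-bad sub-components $T_1,\ldots,T_k$ with embedding probabilities $s_1,\ldots,s_k$, the embedding probability of $X^{j+1}$ conditional on $Y^{j+1}$ being typical is at least $\prod_i (1-(1-s_i)^{N_i})$ where $N_i$ is the number of available landing spots in an airport---of order $L_j^{3-\epsilon}$, which is vastly larger than $m$. Coupling this with the key Proposition~\ref{l:totalSizeBoundh} (which will be derived from the recursive estimates at level $j$) to bound the probability of seeing a collection $(s_i)$ with $\prod s_i$ small, one obtains a convolution bound that, after summing over numbers and shapes of sub-components, yields $x^{m_{j+1}} L_{j+1}^{-\beta} L_{j+1}^{-\gamma(v-1)}$. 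The delicate step is keeping the constants at each level consistent so that the increment $2^{-j}-2^{-(j+1)}=2^{-(j+1)}$ in the tail exponent is absorbed by the $L_j^{3-\epsilon}$-fold redundancy in airports; the constraints $m\geq 9\alpha\beta+3\alpha\gamma v_0$ and $\gamma k_0>300\alpha\beta$ are precisely what is needed.

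The main obstacle I expect is case (iv): bounding the contribution of single-cell lattice blocks $X$ where $V_X=1$ but with a small-but-positive number of semi-bad sub-components whose joint failure probability happens to equal $x$. This is the regime where the improvement of the exponent from $m_j$ to $m_{j+1}$ must be squeezed out of the airport redundancy without any help from a size factor. The counting of how many $\alpha$-canonical maps $G_\theta$ can match a given collection of semi-bad sub-components, combined with the independence/conditioning argument that lets us treat the $N_i$ attempts as essentially independent Bernoulli trials with success probability $s_i$, is the technical step that will require the most care, and which Proposition~\ref{l:totalSizeBoundh} is designed to handle.
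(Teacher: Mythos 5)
Your architecture matches the paper's: the induction is carried out via a key joint estimate (Proposition~\ref{l:totalSizeBoundh}) on $\{V_X\geq v,\ K_X\geq k,\ \prod_i S_j(X^j_{T_i})\leq e^{-x}\}$, a case analysis on the tail estimate driven by trying many $\alpha$-canonical maps, and a separate localization argument for the good-block probabilities and the airport property. However, there are two genuine gaps. First, your Case (i) does not reduce to the size estimate: the target is $\P[S_{j+1}(X)\leq p,\ V_X\geq v]\leq p^{m_{j+1}}L_{j+1}^{-\beta}L_{j+1}^{-\gamma(v-1)}$, and for small $p$ the bound $\P[V_X\geq v]\leq L_{j+1}^{-\gamma(v-1)}$ is weaker than this by the factor $p^{m_{j+1}}L_{j+1}^{-\beta}$. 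What is actually needed (and what the paper does in Lemmas~\ref{l:A4Maph}--\ref{l:A4Boundh}) is a deterministic lower bound $S_{j+1}(X)\geq c^{-v}\prod_i S_j(X^j_{T_i})$ with $c^{-v}$ only exponentially small in $v$ (obtained from the $*$-canonical map and pricing in the probability that $Y$ has the same domain), combined with the joint bound from Proposition~\ref{l:totalSizeBoundh} whose extra margin $L_{j+1}^{-9\gamma(v-1)}$ absorbs the factor $c^{vm_{j+1}}$.

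Second, your key lower bound $\prod_i\bigl(1-(1-s_i)^{N_i}\bigr)$ is not what a single family of shifted $\alpha$-canonical maps delivers, and the independence it presumes is exactly the unresolved point. Each map $\Upsilon_h$ in Proposition~\ref{p:canonical1} translates \emph{all} the bad subcomponents by the same shift $h$, so one trial succeeds with probability $\prod_i s_i$ and the natural bound is $1-(1-\prod_i s_i)^{N}$, i.e.\ a single multiplicative gain of order $L_j$ over $\prod_i s_i$ (Lemma~\ref{l:A2Maph}); the exponent $m_{j+1}$ is then carried not by airport redundancy but by $m>\alpha\beta$ in the Markov/convolution step, while the decrement $m_j\to m_{j+1}$ is spent absorbing the combinatorial prefactors in the Gamma-tail computation \eqref{e:gammadist2h}. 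Moreover the trials are not independent Bernoulli($s_i$): conditioning on a trial having failed biases the $\mathbb{Y}$-blocks at the attempted landing site, which then sit inside the putative good environment needed for Lemma~\ref{l:embedcond}. The paper resolves this by conditioning on the events $\cd_h\cup\cg_h$ and separately excluding the event $\cm$ that two distinct shifts succeed while revealing bad $\mathbb{Y}$-blocks; some such device is indispensable and is absent from your sketch. (Smaller omissions: the events $A^{\mathbb{Y}}_{\rm valid}$ and the matching of domains/boundary curves, which enter the very definition of $S_{j+1}$, and the $\mathbb{Y}$-side bad subcomponents in the good-embeds-into-good argument.)
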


We shall prove Theorem \ref{inductionh} over the next few sections. Before that we prove that these estimates indeed hold at level $j=0$.

\begin{theorem}
\label{t:inductionbase}
Fix $\alpha$, $\beta$, $\gamma$, $m$, $k_0$, $v_0$ and $L_0$ such that the conclusion of Theorem \ref{inductionh} holds. Then for $M_0$ sufficiently large depending on all the parameters the estimates \eqref{e:tailx1h}, \eqref{e:taily1h}, \eqref{e:sizex1h}, \eqref{e:sizey1h}, \eqref{e:goodvgoodh}, \eqref{e:goodconditionx} and \eqref{e:goodconditiony} hold for $j=0$.
\end{theorem}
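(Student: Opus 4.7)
The key observation is that level $0$ is almost entirely trivial because all level-$0$ $\mathbb{X}$-blocks are good by definition, level-$0$ blocks are deterministic (indexed by $\Z^2$) and independent of each other for both $\mathbb{X}$ and $\mathbb{Y}$, and the $\mathbb{Y}$-block at $u$ depends only on an $M_0^2$ square of independent $\mathrm{Ber}(\tfrac12)$ variables. My plan is to reduce every recursive estimate to a single tail bound $\epsilon:=\P[Y^0(u)\text{ bad}]$ and to choose $M_0$ large enough that $\epsilon$ dominates every polynomial in $L_0$ that appears on the right hand side.

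First I would record two explicit formulas for the embedding probabilities. Since all $\mathbb{X}$-components at level $0$ are singletons ($V_X\equiv 1$), and a single $X_v$ embeds into a random $Y^0(v')$ iff $Y^0(v')$ is good or $Y^0(v')$ agrees with $X_v$ in majority, a symmetry computation for $\mathrm{Ber}(\tfrac12)$ fields gives $\P[Y^0\in\mathbf{0}]=\P[Y^0\in\mathbf{1}]=\epsilon/2$ (there are no bad blocks with exactly equal counts, since both counts would equal $M_0^2/2>M_0^2/3$), so $S_0^{\mathbb{X}}(X)=1-\epsilon/2$ almost surely. Similarly $S_0^{\mathbb{Y}}(Y)=1$ when $Y$ is good, and $S_0^{\mathbb{Y}}(Y)=\tfrac12$ when $Y$ is bad (the single $X_v$ agrees with the majority of $Y$ with probability $\tfrac12$). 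Next, by Hoeffding applied to the $M_0^2$ i.i.d.\ $\mathrm{Ber}(\tfrac12)$ variables in a $\mathbb{Y}$-block,
\[
\epsilon \;=\; \P\bigl[\#0 < \tfrac{M_0^2}{3}\text{ or } \#1 < \tfrac{M_0^2}{3}\bigr] \;\leq\; 2e^{-M_0^2/18}.
\]

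With these in hand the seven estimates are verified as follows. Condition \eqref{e:goodvgoodh} is immediate from case (i) of level-$0$ embedding. Condition \eqref{e:goodconditionx} is trivial since every $\mathbb{X}$-block is good. Condition \eqref{e:goodconditiony} follows from independence of distinct level-$0$ $\mathbb{Y}$-blocks plus the Hoeffding bound, giving $\P[Y^0(u)\text{ good}\mid \cf_V^{\mathbb{Y}}]\geq 1-\epsilon\geq 1-L_0^{-\gamma}$ once $M_0$ is large. Estimate \eqref{e:sizex1h} is trivial since $V_X\equiv 1$. For \eqref{e:sizey1h} with $v\geq 2$, Observation \ref{o:components} and independence of level-$0$ $\mathbb{Y}$-blocks give $\P[V_Y\geq v]\leq C_0^{v}\,\epsilon^{\lceil v/25\rceil}$, where $C_0^v$ is a crude bound for the number of lattice animals of size $v$ containing a given vertex (along with the choice of which $\geq v/25$ vertices are bad); for $v=2$ one uses more directly that a non-singleton component forces two bad level-$0$ blocks within close-packed distance, giving $\P[V_Y\geq 2]\leq C\epsilon^{2}$. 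The tail bound \eqref{e:tailx1h} is vacuous because $S_0^{\mathbb{X}}(X)=1-\epsilon/2>1-L_0^{-1}$ deterministically (for $M_0$ large), so the event on the left is empty. Finally, \eqref{e:taily1h} splits by range of $x$: for $x<\tfrac12$ the event is empty since $S_0^{\mathbb{Y}}(Y)\geq\tfrac12$, while for $x\geq\tfrac12$ we must show $\P[Y\text{ bad},\,V_Y\geq v]\leq 2^{-m_0}L_0^{-\beta-\gamma(v-1)}$, which reduces to the component estimate above.

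All seven inequalities therefore reduce to inequalities of the form $\epsilon^{c(v)} \leq 2^{-m_0} L_0^{-\beta-\gamma(v-1)}$ or $\epsilon\leq L_0^{-\gamma}$, and every one of them can be satisfied simultaneously by taking
\[
M_0^2 \;\geq\; C\bigl(m + \beta\log L_0 + \gamma\log L_0\bigr)
\]
for a sufficiently large absolute constant $C$, because $-\log\epsilon = \tfrac{M_0^2}{18}-\log 2$ grows faster than any polynomial in $\log L_0$. I do not anticipate a genuine obstacle: the only thing that requires care is to write out the combinatorial bound on $\P[V_Y\geq v]$ (where one must use Observation \ref{o:components} together with a lattice-animal enumeration, and separately handle the small case $v=2$ where $\lceil v/25\rceil=1$ is not yet a quadratic gain), and to verify explicitly that the symmetry argument giving $S_0^{\mathbb{X}}\equiv 1-\epsilon/2$ uses that equal $0/1$ counts cannot occur in a bad block. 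Neither of these is serious; the proof is essentially a uniform statement that exponentially small tails dominate fixed polynomial factors.
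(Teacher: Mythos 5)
Your overall strategy is the same as the paper's: reduce everything to the single quantity $\epsilon=\P[Y^0(u)\text{ bad}]$, note that level-$0$ blocks are independent and all $\mathbb{X}$-blocks are good, and choose $M_0$ so large that the Hoeffding bound $\epsilon\leq 2e^{-M_0^2/18}$ beats every polynomial in $L_0$. Most of the seven estimates are handled correctly and essentially as in the paper. However, there is one genuine error, in your treatment of \eqref{e:taily1h}.

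You assert that $S_0^{\mathbb{Y}}(Y)=\tfrac12$ whenever $Y$ is bad, and conclude that the event $\{S_0^{\mathbb{Y}}(Y)\leq x\}$ is empty for $x<\tfrac12$. This is false: a level-$0$ $\mathbb{Y}$-component $Y=Y^0_U$ of size $v>1$ is a union of several level-$0$ blocks, of which $b\geq 1$ (and possibly as many as $v$) are bad, and since the corresponding $X$-bits must independently match the majority type of each bad block, $S_0^{\mathbb{Y}}(Y)=2^{-b}$, which can be as small as $2^{-v}$. So the regime $x<\tfrac12$ is not vacuous and must be handled. The paper does so by observing $S_0^{\mathbb{Y}}(Y)\geq 2^{-V_Y}$, so that $\{S_0^{\mathbb{Y}}(Y)\leq x\}\subseteq\{V_Y\geq \log_2(1/x)\}$, and then bounding
$\P[S_0^{\mathbb{Y}}(Y)\leq x,\,V_Y\geq v]\leq \P[V_Y\geq\max\{v,\log_2(1/x)\}]$
by the component-size estimate; since $\epsilon$ can be made smaller than, say, $2^{-50(m+1)}L_0^{-50(\beta+\gamma)}$, this is $\leq x^{m_0}L_0^{-\beta}L_0^{-\gamma(v-1)}$. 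The fix costs you nothing new --- it uses exactly the bound $\P[V_Y\geq v]\leq C_0^v\epsilon^{\lceil v/25\rceil}$ you already derived --- but as written your argument for \eqref{e:taily1h} only covers $x\geq\tfrac12$ and silently discards the main case. (Your formula $S_0^{\mathbb{X}}(X)=1-\epsilon/2$, including the remark about ties, and your treatment of the remaining five estimates are fine.)
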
 

\begin{proof} 
Observe that \eqref{e:goodvgoodh} for $j=0$ follows from the definition of good blocks at level $0$. Recall that blocks at level $0$ are independent and hence by taking $M_0$ sufficiently large we make sure that (\ref{e:goodconditiony}) holds for $j=0$, and \eqref{e:goodconditionx} holds vacuously. As a matter of fact, by taking $M_0$ sufficiently large, we can ensure that $\P[Y^0(u)~\text{is good}]\geq 1-L_0^{-20\beta}$. Notice that components of $\mathbb{X}$ all have size $1$ and hence \eqref{e:sizex1h} also holds trivially. For a component $X=X^0(u)$ we have $S_0^{\mathbb{X}}(X)\geq \P[Y^0(u)~\text{is good}]\geq 1-L_0^{-1}$, and hence \eqref{e:tailx1h} also holds.

Now look at the component $Y=Y^{*,0}(u)=Y^0_{U}$. If $V_{Y}=v>1$, there are at least $\frac{v}{9}$ many bad blocks contained in $Y$. Since blocks are independent, it follows by summing over all lattice animals containing $u$ of size $v$ that $\P[V_{Y}\geq v]\leq L_j^{-10\beta (v-1)}$. Also notice that, $S_0^{\mathbb{Y}}(Y)=1$ if $Y$ is good, $S_0^{\mathbb{Y}}(Y)=\frac{1}{2^{v}}$ otherwise. Hence it suffices to prove \eqref{e:taily1h} for $x=\frac{1}{2^v}$ and $v\geq 1$. For $x\leq \frac{1}{2}$, it follows that for  $\P[S_0^{\mathbb{Y}}(Y)\leq x, V_{Y}\geq v]\leq \P[V_{Y}\geq \max\{v,\log_{2} x\}]\leq x^{m_{j+1}}L_0^{-\beta}L_0^{-\gamma(v-1)}$ because $L_0$ is sufficiently large. This establishes \eqref{e:taily1h} for $j=0$.
%
%
%
%
\end{proof}

\subsection{Proof of the Main Theorem}

Before proceeding with the proof of Theorem \ref{inductionh}, we show how Theorem \ref{inductionh} and Theorem \ref{t:inductionbase} can be used to deduce Theorem \ref{t:embedh}. 

\begin{proof}[Proof of Theorem \ref{t:embedh}]
Notice that by ergodic theory considerations it suffices to prove that $\P[\mathbb{X}\hookrightarrow _{M} \mathbb{Y}]>0$ for some $M$. Fix $\alpha$, $\beta$, $\gamma$, $m$, $k_0$, $v_0$, $L_0$ and $M_0$ in such a way that conclusions of both Theorem \ref{inductionh} and Theorem \ref{t:inductionbase} hold. This implies that the recursive estimates (\ref{e:tailx1h}), (\ref{e:taily1h}), (\ref{e:goodvgoodh}), (\ref{e:sizex1h}), (\ref{e:sizey1h}), (\ref{e:goodconditionx}) and (\ref{e:goodconditiony}) hold for all $j\geq 0$. 

Let $u_1=(0,0)$, $u_2=(0,-1)$, $u_3=(-1,1)$ and $u_4=(-1,0)$. So $\{X^j_{u_{i}}:i\in \{1,2,3,4\}\}$ denote the blocks surrounding the origin at level $j$. Let us denote the domains of these blocks by $D^{j,\mathbb{X}}_{u_i}$ respectively. Define $D^{j,\mathbb{Y}}_{u_i}$ similarly.
For $j\geq 0$, let $\mathcal{A}^{\mathbb{X}}_j$ (resp.\ $\mathcal{A}^{\mathbb{Y}}_j$) denote the following event that for all $i\in [4]$ we have $D^{j,\mathbb{X}}_{u_i}=B^{j}_{u_i}$ and $X^j_{u_i}$ is good (resp.\ $D^{j,\mathbb{Y}}_{u_i}=B^{j}_{u_i}$ and $Y^j_{u_i}$ is good). The proof is now completed using the following three propositions. 
%
%
%
%
%
%
\end{proof}


\begin{proposition}
\label{p:allgood}
Suppose (\ref{e:tailx1h}), (\ref{e:taily1h}), (\ref{e:goodvgoodh}), (\ref{e:sizex1h}), (\ref{e:sizey1h}), (\ref{e:goodconditionx}) and (\ref{e:goodconditiony}) hold for all $j\geq 0$. Then 
$$\P\left[\bigcap_{j\geq 0} (\mathcal{A}^{\mathbb{X}}_j\cap \mathcal{A}^{\mathbb{Y}}_j)\right]>0.$$
\end{proposition}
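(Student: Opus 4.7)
The plan is to show that $\sum_{j \geq 0} \bigl( \P[(\mathcal{A}^{\mathbb{X}}_j)^c] + \P[(\mathcal{A}^{\mathbb{Y}}_j)^c] \bigr) < 1$ by establishing strong polynomial-in-$L_j^{-1}$ bounds at every scale; a single union bound on complements will then yield the proposition. The level-zero contributions are essentially free: every level-$0$ $\mathbb{X}$-block is automatically good with its domain equal to the ideal cell, so $\P[(\mathcal{A}^{\mathbb{X}}_0)^c] = 0$, while the proof of Theorem~\ref{t:inductionbase} gives $\P[(\mathcal{A}^{\mathbb{Y}}_0)^c] \leq 4 L_0^{-20\beta}$. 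All substantive work is therefore at levels $j \geq 1$.

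Fix $j \geq 1$ and $i \in [4]$. The failure event $\{D^{j,\mathbb{X}}_{u_i}\neq B^j_{u_i}\}\cup\{X^j_{u_i}\text{ not good}\}$ decomposes into three sub-events: (a) $X^j_{u_i}$ is not good at level $j$; (b) some outer buffer zone of $B^j(u_i)$ meets a bad $(j-1)$-level component; (c) neither (a) nor (b) holds, but the random boundary curve at level $j$ nevertheless produces a non-identity canonical map on $B^j(u_i)$. For (a), applying (\ref{e:goodconditionx}) with $V=\emptyset$ gives $\P[(a)] \leq L_j^{-\gamma}$. For (b), the buffer $\Delta B^j(u_i)$ has total area $O(L_jL_{j-1}^5)$ and hence covers $O(L_{j-1}^{\alpha+3})$ many $(j-1)$-cells; since any bad $(j-1)$-component must contain a bad $(j-1)$-block, a union bound combined with (\ref{e:goodconditionx}) at level $(j-1)$ yields $\P[(b)] \leq C L_{j-1}^{\alpha+3-\gamma}$. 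For (c), note that on $(b)^c$ both bullets of Definition~\ref{d:conjoinedbuffer} hold vacuously, so each of the four buffer zones of $B^j(u_i)$ is non-conjoined and the lattice block of $u_i$ is the singleton $\{u_i\}$; Observation~\ref{o:bdcurve}(v), together with the fact that the random boundary choice is independent of everything else, then bounds $\P[(c)\mid (a)^c\cap (b)^c]$ by $1-(1-10^{-(j+9)})^4 \leq 4\cdot 10^{-(j+9)}$.

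Summing over $i \in [4]$ and running the identical analysis on $\mathbb{Y}$ using (\ref{e:goodconditiony}) produces, for every $j \geq 1$,
$$\P[(\mathcal{A}^{\mathbb{X}}_j)^c] + \P[(\mathcal{A}^{\mathbb{Y}}_j)^c] \leq C\bigl( L_j^{-\gamma} + L_{j-1}^{\alpha+3-\gamma} + 10^{-(j+9)} \bigr).$$
Because $\gamma > 40\alpha \gg \alpha+3$ and $L_j = L_0^{\alpha^j}$ grows doubly exponentially, the right-hand side is (very) summable in $j$, and each term can be made arbitrarily small by taking $L_0$ large. In particular, the total sum (including the level-zero contribution) is driven below $1$, so the inequality $\P\bigl[\bigcap_{j\geq 0}(\mathcal{A}^{\mathbb{X}}_j \cap \mathcal{A}^{\mathbb{Y}}_j)\bigr] \geq 1 - \sum_{j}\bigl(\P[(\mathcal{A}^{\mathbb{X}}_j)^c] + \P[(\mathcal{A}^{\mathbb{Y}}_j)^c]\bigr) > 0$ follows. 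The one place that needs genuine care is step (b): it turns a level-$j$ failure into a union bound over a polynomial number of level-$(j-1)$ cells and would break down were $\gamma$ not chosen much larger than $\alpha$, which is precisely what the constraint $\gamma > 40\alpha$ in Section~\ref{s:parameters} ensures.
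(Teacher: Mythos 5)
Your proposal is correct and follows essentially the same route as the paper: the paper likewise splits $\mathcal{A}_j$ into the goodness of the four blocks (bounded via \eqref{e:goodconditionx}), the event that the twelve surrounding buffer zones contain only good $(j-1)$-level blocks (giving the same $L_{j-1}^{\alpha+3-\gamma}$ union bound), and the conditional probability that the boundary choice is the identity, then sums a geometrically decaying bound over $j$. Your three-way decomposition (a)/(b)/(c) matches the paper's $\mathcal{A}_j^{G}$, $\mathcal{A}_j^{B}$, and the $(1-10^{-(j+2)})$ boundary-selection factor exactly.
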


\begin{proof}
Notice that $\mathcal{A}_j^{\mathbb{X}}=\mathcal{A}_j^{\mathbb{X},G}\cap \mathcal{A}_j^{\mathbb{X},\partial}$ where $\mathcal{A}_j^{\mathbb{X},G}$ denotes the event the the four blocks around origin at level $j$ are good and $\mathcal{A}_j^{\mathbb{X},\partial}$ denotes the event that $D^{j,\mathbb{X}}_{u_i}=B^j(u_i)$ for all $i$. Clearly $\P[\mathcal{A}_0^{\mathbb{X},\partial}]=1$. Now for $j\geq 1$, let $\mathcal{A}_j^{\mathbb{X},B}$ denote the event that the  $j$-level buffer zones in these blocks (12 in number) only contain good $(j-1)$ level blocks. Clearly from construction of the blocks 
$$\P[\mathcal{A}_j^{\mathbb{X},\partial}]\geq (1-10^{-(j+2)})\P[\mathcal{A}_j^{\mathbb{X},B}].$$
It follows from \eqref{e:goodconditionx} that for $j\geq 1$
$$\P[\mathcal{A}_j^{\mathbb{X},B}]\geq 1-12L_{j-1}^{\alpha+3-\gamma}.$$
Since $\gamma >\alpha +3$ we get   
$$\P[\mathcal{A}_j^{\mathbb{X},\partial}] \geq 1-10^{-(j+3/2)}$$
by taking $L_0$ sufficiently large. Combining these estimates we get for all $j\geq 0$,
$$\P[\mathcal{A}_j^{\mathbb{X}}]\geq 1-10^{-(j+1)}.$$
By the obvious symmetry between $\mathbb{X}$ and $\mathbb{Y}$ the same lower bound also holds for $\P[\mathcal{A}_j^{\mathbb{Y}}]$. The proposition follows by taking a union bound over $\mathbb{X}$, $\mathbb{Y}$ and all $j\geq 0$.
\end{proof}


\begin{proposition}
\label{p:compactness1}
Fix $J\in \N$. On $\bigcap_{J\geq j\geq 0} (\mathcal{A}^{\mathbb{X}}_j\cap \mathcal{A}^{\mathbb{Y}}_j)$, there exists a map $\Phi=\Phi_{J}:[-L_J,L_j]^2\rightarrow [-L_J,L_J]^2$ satisfying the following conditions.
\begin{enumerate}
\item[\rm i.] $\Phi(0)=0$ and $\Phi$ is identity on the boundary.
\item[\rm ii.] $\Phi$ is bi-Lipschitz with Lipschitz constant $10$.
\item[\rm iii.] For each level $0$ bad $\mathbb{Y}$-block $Y^0(u')$ contained in $[-\frac{1}{2}L_J, \frac{1}{2}L_J]^2$, there is a  $\mathbb{X}$-block $X^0(u)$ at level $0$ such that $\Phi(u)=u'$ and $X^0(u)\hookrightarrow Y^0(u')$.  
\end{enumerate}
\end{proposition}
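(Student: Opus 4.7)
The strategy is to build $\Phi_J$ by gluing and then iteratively refining the $\alpha$-canonical maps furnished by the good-to-good embedding property (\ref{e:goodvgoodh}) at each level from $J$ down to $0$. On the event in question, for every $j\le J$ the four blocks $X^j_{u_i}$ and $Y^j_{u_i}$ around the origin are good and, crucially, their domains coincide with the ideal multi-blocks $B^j(u_i)$. In particular $[-L_J,L_J]^2$ is exactly the union of the four ideal multi-blocks $B^J(u_1),\dots,B^J(u_4)$, each of which is both an $\mathbb{X}$- and a $\mathbb{Y}$-domain of a good level-$J$ block.

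First, apply (\ref{e:goodvgoodh}) at level $J$ to the four pairs $(X^J_{u_i},Y^J_{u_i})$ to obtain four $\alpha$-canonical maps $G^J_i:B^J(u_i)\to B^J(u_i)$, each bi-Lipschitz with constant $1+10^{-(J+5)}$. Since both domains are ideal, the canonical piece $F_i$ is the identity, so $G^J_i=\theta_i$ is the identity on $\partial B^J(u_i)$. Hence the four maps glue across shared edges into a single bi-Lipschitz map $\Phi^{(J)}:[-L_J,L_J]^2\to[-L_J,L_J]^2$ that fixes $0$ (a common corner) and is the identity on $\partial[-L_J,L_J]^2$. By the definition of embedding at level $J$, the map $G^J_i$ matches every level-$(J-1)$ bad sub-component $T$ of $X^J_{u_i}$ with a level-$(J-1)$ region $G^J_i(T)\subset Y^J_{u_i}$ such that $X^{(J-1)}_T\hookrightarrow Y^{(J-1)}_{G^J_i(T)}$ (and symmetrically for $\mathbb{Y}$-bad components).

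Next, descend recursively. On each of the bad-sub-component domains identified above, the level-$(J-1)$ embedding provides a new $\alpha$-canonical map which we \emph{substitute in place of} the unrefined restriction of $G^J_i$. By Observation \ref{o:blockprop}(ii), bad sub-components sit at distance at least $L_{J-1}^3$ from the boundary of their containing level-$J$ block, so these substitutions happen strictly inside the $B^J(u_i)$ and the glued boundary is untouched. Iterate down through levels $J-1,J-2,\dots,0$; define $\Phi_J$ to be the resulting map after descending all the way. Any level-$0$ bad $\mathbb{Y}$-block $Y^0(u')$ in the interior half is contained in a bad level-$0$ $\mathbb{Y}$-component, which (since bad components are tracked through the inductive embedding definition) is contained in a nested chain of bad sub-components at every higher level; at the bottom of the recursion, the level-$0$ embedding matches $u'$ with some $u$ satisfying $X^0(u)\hookrightarrow Y^0(u')$ and $\Phi_J(u)=u'$ by construction.

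It remains to verify conditions (i)--(iii). Fixing $0$ and identity on $\partial[-L_J,L_J]^2$ follow from the fact that the subsequent refinements only alter the map on bad-sub-component domains, all of which are strictly interior to the respective blocks by Observation \ref{o:blockprop}(ii), and hence miss both $0$ and the boundary square. The bi-Lipschitz constant is bounded by the telescoping product $\prod_{j=0}^{J}(1+10^{-(j+5)})$, which is well under $2$, hence certainly under $10$. The matching condition (iii) is precisely what the recursive descent delivers, as described. The main technical point to be careful about is that the recursion does not ``lose'' a bad level-$0$ $\mathbb{Y}$-block as it descends: this uses that at every level $j$, the embedding definition explicitly accounts for all bad $(j-1)$-level sub-components on both the $\mathbb{X}$- and $\mathbb{Y}$-sides (via the sets $\mathcal{T}$ and $\mathcal{T}'$), and that bad sub-components are kept safely away from block boundaries by Observation \ref{o:blockprop}(ii). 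The restriction to $[-L_J/2,L_J/2]^2$ is a comfortable margin guaranteeing that every bad $\mathbb{Y}$-block considered is buried deep enough inside the four top-level domains that this recursive tracking is unambiguous.
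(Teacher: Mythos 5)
Your top-down refinement scheme is the same skeleton as the paper's (Lemma \ref{l:compactinduction}), and the gluing, boundary, and Lipschitz-constant bookkeeping are fine. But there is a genuine gap in the claim that every bad level-$0$ $\mathbb{Y}$-block ``is contained in a nested chain of bad sub-components at every higher level.'' This is false, and it is exactly the hard part of the proposition. By Definition \ref{d:good}, a \emph{good} block at level $j$ is permitted to contain bad $(j-1)$-level components --- up to total size $k_0$, all semi-bad. So a bad level-$0$ block will typically sit inside a good level-$1$ block, and the chain of bad components terminates there. Your recursion only ever refines the map on the bad sub-component domains identified by the embedding at the level above, so it never visits these blocks, and condition (iii) fails for them.

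The missing ingredient is the airport mechanism, which is the reason the notions of semi-bad components and airports exist in the first place. At each step of the descent one must, in addition to substituting the embedding maps inside the already-matched bad $j$-level sub-blocks, also handle the bad $(j-1)$-level components lying in the \emph{good} $j$-level blocks (the region the paper calls $B$). There one uses that these components are all semi-bad and of total size at most $k_0$, and that every $L_{j-1}^{3/2}\times L_{j-1}^{3/2}$ square of a good block is an airport, so each such component embeds into all but a tiny fraction of the candidate positions in a nearby square $S_k$ free of bad $\mathbb{Y}$-components; this yields local bi-Lipschitz adjustments $\psi^{S_k}$, identity on $\partial S_k$, which are then glued into the global map. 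Without this step the construction does not deliver condition (iii), and no amount of care about nesting or boundary margins repairs it, since the blocks in question are simply invisible to your recursion.
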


We postpone the proof of Proposition \ref{p:compactness1} for the moment.

\begin{proposition}
\label{p:compactness2}
Suppose for all $J\in \N$, there exists a $\Phi_{J}$ satisfying the conditions in Proposition \ref{p:compactness1}. Then there exists a $20M_0$-Lipschitz injection $\phi$ from $\Z^2\to \Z^2$ such that $X_{\iota+v}=Y_{\iota+\phi(v)}$ for all $v\in \Z^2$.
\end{proposition}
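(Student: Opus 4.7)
The plan is a standard diagonal/compactness extraction, converted from the coarse block level down to the original lattice level, followed by a careful bit-by-bit matching.

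First I would use the uniform 10-Lipschitz property and the normalization $\Phi_J(0)=0$ to note that for every fixed $v\in\Z^2$ the sequence $(\Phi_J(v))_{J\ge J_0(v)}$ is contained in the bounded set $\{u:\|u\|\le 10\|v\|\}$. Because $\Phi_J$ is the embedding map at the block scale (in the rescaled $\mathbb{Y}$-coordinate where each level-$0$ block is a unit square), the relevant values at lattice points live in a finite subset of $\Z^2$; a Cantor/diagonal extraction along an enumeration of $\Z^2$ then produces a subsequence $J_k\to\infty$ such that $\Phi_{J_k}(v)$ stabilizes at some $\phi_\infty(v)\in\Z^2$ for every $v\in\Z^2$. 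The map $\phi_\infty:\Z^2\to\Z^2$ inherits $\phi_\infty(0)=0$ and the bi-Lipschitz constant $10$.

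Next I would upgrade $\phi_\infty$ to a map $\phi:\Z^2\to\Z^2$ on the original (unrescaled) $\mathbb{Y}$-lattice. For each $v\in\Z^2$ the integer $u'=\phi_\infty(v)$ indexes a level-$0$ block $Y^0(u')$, which in the original coordinates is the $M_0\times M_0$ square $S(u'):=M_0u'+\{0,\ldots,M_0-1\}^2$. By the stabilization and property (iii) of Proposition~\ref{p:compactness1}, whenever $Y^0(u')$ is bad we have $X^0(v)\hookrightarrow Y^0(u')$ for every large $k$, so by the level-$0$ embedding definition there is some site in $S(u')$ whose $Y$-value equals $X_{\iota+v}$; when $Y^0(u')$ is good, both bit values occur in $S(u')$ with density at least $1/3$, and again a matching site exists. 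I would then define $\phi(v)$ by choosing, greedily in some fixed enumeration of $\Z^2$, an as-yet-unused site of $S(u')$ matching $X_{\iota+v}$.

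The Lipschitz bound is then easy: $\phi_\infty$ being $10$-Lipschitz in the rescaled $\mathbb{Y}$-space gives $\|M_0\phi_\infty(v)-M_0\phi_\infty(v')\|\le 10M_0\|v-v'\|$ in original coordinates, and since $\phi(v)\in S(\phi_\infty(v))$ the triangle inequality adds at most $2\sqrt{2}\,M_0$, yielding
\[
\|\phi(v)-\phi(v')\|\le 10M_0\|v-v'\|+2\sqrt{2}\,M_0\le 20M_0\|v-v'\|
\]
for distinct $v,v'\in\Z^2$. The equality $X_{\iota+v}=Y_{\iota+\phi(v)}$ is immediate from the bit matching. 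The main obstacle I anticipate is injectivity: distinct $v,v'$ can have $\phi_\infty(v)=\phi_\infty(v')$ (since the Lipschitz constant is $10$, not $1$), so many $v$'s may compete for sites inside the same block $S(u')$. However, the bi-Lipschitz property of $\Phi_{J_k}$ forces the number of $v$'s with $\phi_\infty(v)=u'$ to be bounded by an absolute constant (a cube of $v$'s collapsing to one unit square in the rescaled image has bounded cardinality by the inverse Lipschitz bound), while $S(u')$ contains at least $M_0^2/3$ sites of each bit value for good blocks, and a linear-in-$M_0^2$ fraction for bad blocks — so for $M_0$ chosen sufficiently large (as in Theorem~\ref{t:inductionbase}) the greedy selection never fails, and $\phi$ is injective.
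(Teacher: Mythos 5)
Your proposal reverses the order of the two operations relative to the paper (you take the compactness limit of the block-scale maps $\Phi_J$ first and do the bit-matching afterwards; the paper builds a lattice-level injection $\phi^J$ on the window $[-\tfrac14 L_J,\tfrac14 L_J]^2$ for each finite $J$ and only then extracts a limit), and this reversal creates a genuine gap. First, a technical point: Proposition \ref{p:compactness1} only asserts that $\Phi_J$ is a bi-Lipschitz self-map of the square; for a generic $v\in\Z^2$ the point $\Phi_J(v)$ is an arbitrary point of $\R^2$, not a lattice point, so the claimed ``stabilization at some $\phi_\infty(v)\in\Z^2$'' is unjustified --- at best you get a subsequence along which $\Phi_{J_k}(v)$ converges in $\R^2$. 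Only the designated preimages of bad $\mathbb{Y}$-blocks in condition {\rm iii.} are known to land exactly on lattice points.

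The more serious problem is the step ``whenever $Y^0(u')$ is bad we have $X^0(v)\hookrightarrow Y^0(u')$.'' Condition {\rm iii.} says that for each bad block $Y^0(u')$ and each $J$ there \emph{exists} some $u$ with $\Phi_J(u)=u'$ and $X^0(u)\hookrightarrow Y^0(u')$; by injectivity of $\Phi_J$ that $u$ is the unique exact preimage of $u'$, but it may change with $J$ and need never equal your $v$. If $\Phi_{J_k}(v)$ merely converges to (or into the unit square of) a bad block $u'$ without equaling $u'$, nothing forces $X_{\iota+v}$ to agree with the majority bit of $Y^0(u')$; a bad level-$0$ block can contain very few, or zero, sites of the minority bit, so your greedy assignment can have no admissible site and the construction fails. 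So the obstacle you flagged (injectivity) is handled correctly, but the real obstacle --- guaranteeing a matching bit when the image falls on a bad block --- is not. The repair is essentially the paper's argument: for each finite $J$ define $\phi^J$ directly from $\Phi_J$, splitting into the case $\Phi_J(u)=v$ with $Y^0(v)$ bad (where condition {\rm iii.} plus injectivity applies verbatim) and the complementary case (where one finds a good block within distance $1$ of $\Phi_J(u)$ and uses the $M_0^2/3$ density of both bits), and only then run the diagonal/compactness argument on the maps $\phi^J$, which genuinely take values in $\Z^2$ and satisfy $\|\phi^J(0)\|\le M_0$ uniformly.
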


\begin{proof}
Fix $J\in \N$. Define $\phi^J:[-\frac{1}{4}L_j,\frac{1}{4}L_J]^2\cap \Z^2 \to \Z^2$ as follows.

{\bf Case 1:}
For $u\in \Z^2$ suppose $\Phi_J(u)=v=(v_1,v_2)$ be such that $Y^0_{v}$ is a bad level $0$ block of $\mathbb{Y}$. Clearly, there exists $v'\in \Z^2$ such that $\iota + v'\in [v_1M_0,(v_1+1)M_0]\times [v_2M_0,(v_2+1)M_0]$ and $X_{\iota+u}=Y_{\iota +v'}$. Choose such a $v'$ arbitrarily and set $\phi^J(u)=v'$.

{\bf Case 2:} If for $u\in \Z^2$ Case 1 does not hold then there exists $v\in \Z^2$ such that $||v-\Phi_{J}(u)||\leq 1$ and $Y^0(v)$ is a good block. Clearly there are many (at least $M_0^2/3$ in number) $\iota+v'\in [v_1M_0,(v_1+1)M_0]\times [v_1M_0,(v_1+1)M_0]$ such that $X_{\iota+u}=Y_{\iota+v'}$. Choose one such $v'$ arbitrarily and set $\phi^J(u)=v'$. Since the number of sites $u$ that correspond to $v$ in the above manner is limited it follows that such a $\phi^J$ can be chosen to be an injection. 

Notice that $\phi^J$ is $20M_0$-Lipschitz and also observe that as $\Phi^J$ is identity at the origin it follows that $||\phi^J(0)||\leq M_0$. It now follows by a compactness argument that there exists an injective map $\phi:\Z^2 \rightarrow \Z^2$ which is $20M_0$-Lipschitz and such that $X_{\iota+v}=Y_{\iota+\phi(v)}$ for all $v\in \Z^2$.      
\end{proof}

It remains to prove Proposition \ref{p:compactness1} which will follow from the next lemma.

\begin{lemma}
\label{l:compactinduction}
Assume the set-up of Proposition \ref{p:compactness1}. Fix $0<j\leq J$. Suppose there exists a map $\phi_j:[-L_J,L_J]^2\rightarrow [-L_J,L_J]^2$ satisfying the following conditions.
\begin{enumerate}
\item[\rm i.] $\phi_j(0)=0$ and $\phi_j$ is identity on the boundary.We take $\phi_{J}$ to be the identity map.
\item[\rm ii.] $\phi_{j}$ is bi-Lipschitz with Lipschitz constant $C_j$.
\item[\rm iii.] There exists $\{X^j_{U}\}_{U\in I_1}$ with respective domains $\{\hat{U}_{X}\}_{U\in I_1}$ containing all bad level $j$ blocks of $\mathbb{X}$ contained in $[-L_J(1-10^{-(j+1)}),L_J(1-10^{-(j+1)})]^2$. Also there exists $\{Y^j_{U'}\}_{U'\in I_2}$ with respective domains $\{\hat{U'}_{Y}\}_{U'\in I_2}$ containing all bad level $j$ blocks of $\mathbb{Y}$ contained in $[-L_J(1-10^{-(j+1)}),L_J(1-10^{-(j+1)})]^2$. 

Also all $U\in I_1$, there exists $f(U)$ such that $\phi_{j}$ restricted to $\hat{U}_{X}$ is a canonical map to the domain $\widehat{f(U)}_{Y}$ of $Y^j_{f(U)}$ and such that such that $X^j_{U}\hookrightarrow Y^j_{f(U)}$. Further for each $U'\in I_2$, there exists $f^{-1}(U)$ such that $\phi_j^{-1}(\hat{U'}_{Y})$ is the domain $\widehat{f^{-1}(U')}_{X}$ of the block $X^j_{f^{-1}(U)}$ and such that $\phi_j$ restricted to $\widehat{f^{-1}(U')}_{X}$ is a canonical map and $X^j_{f^{-1}(U)}\hookrightarrow Y^j_{U'}$.  
%
%
%
\end{enumerate}
Then there exists a function $\psi_{j-1}:[-L_J,L_J]^2\rightarrow [-L_J,L_J]^2$ with $\psi_{j-1}(0)=0$, $\psi_{j-1}$ identity on the boundary, bi-Lipschitz with Lipschitz constant $(1+10^{-(j+9)})$ such that $\phi_{j-1}:=\psi_{j-1}\circ \phi_j$ satisfies all the above conditions with $j$ replaced by $(j-1)$ (with setting $C_{j-1}=C_j(1+10^{-(j+9)})$), that is $\phi_{j-1}$ matches up all the bad $(j-1)$ level components in a Lipschitz manner.  
\end{lemma}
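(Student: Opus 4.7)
The plan is to build $\psi_{j-1}$ as a local perturbation performed independently inside each level-$j$ domain. For every level-$j$ block $X^j_U$ whose domain $\hat{U}_X$ lies in the relevant region, $\phi_j$ already carries $\hat{U}_X$ onto a level-$j$ domain $\widehat{V}_Y$ of $\mathbb{Y}$ by a canonical map. What I want to exploit is that in each case we have a bona fide level-$j$ embedding: either $U\in I_1$, in which case condition iii supplies $X^j_U\hookrightarrow Y^j_{f(U)}$ directly, or $X^j_U$ is a good level-$j$ block which $\phi_j$ sends to a good level-$j$ block of $\mathbb{Y}$, in which case \eqref{e:goodvgoodh} yields the embedding. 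In either scenario Definition~\ref{d:embedding} supplies an $\alpha$-canonical map $G_\theta=\theta\circ F$ from $\hat{U}_X$ to the target domain that simultaneously matches every $(j-1)$-level bad sub-block of both sides.

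The key observation is that the canonical part $F$ of each $G_\theta$ is nothing other than the restriction of $\phi_j$ to $\hat{U}_X$. Hence I would define $\psi_{j-1}$ on the level-$j$ target domain $\widehat{V}_Y$ as the associated bijection $\theta$ from Definition~\ref{d:alphacano}, and as the identity everywhere else. Since each $\theta$ is the identity on $\partial\widehat{V}_Y$ and is bi-Lipschitz with constant $(1+10^{-(j+10)})$, the pieces glue into a globally well-defined map $\psi_{j-1}$ that fixes the origin and the outer boundary and is globally bi-Lipschitz with the same constant $(1+10^{-(j+10)})\le (1+10^{-(j+9)})$. Setting $\phi_{j-1}=\psi_{j-1}\circ\phi_j$ immediately gives the desired Lipschitz constant $C_{j-1}=C_j(1+10^{-(j+9)})$.

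To verify condition iii at level $(j-1)$ I would take $I_1^{j-1}$ to be the union of all $(j-1)$-level bad sub-blocks appearing in the above level-$j$ blocks (and analogously for $I_2^{j-1}$). Because each $G_\theta$ is by construction an $\alpha$-canonical map with respect to these sub-blocks, it sends the $(j-1)$-level domain $\hat{T}_X$ onto a $(j-1)$-level domain $\widehat{G_\theta(T)}_Y$ via the canonical map of Definition~\ref{d:canonical}, and it certifies $X^{j-1}_T\hookrightarrow Y^{j-1}_{G_\theta(T)}$. The same argument on the $\mathcal{T}'$ side of each $G_\theta$ handles the inverse direction. The buffer of $10^{-j}L_J$ shrinks only slightly to $10^{-(j-1)}L_J/10$, which safely absorbs the geometric slack introduced by the perturbation $\theta$ (of size at most $L_j^5$) together with the requirement that bad $(j-1)$-blocks sit at distance $\ge L_{j-1}^3$ from their enclosing level-$j$ boundary, established in Observation~\ref{o:blockprop}(ii).

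The main obstacle I anticipate is confirming that every good level-$j$ block of $\mathbb{X}$ in the relevant region is indeed paired by $\phi_j$ with a \emph{good} level-$j$ block of $\mathbb{Y}$, which is what legitimises invoking \eqref{e:goodvgoodh}. This relies on the hypothesis that $I_1$, respectively $I_2$, already contains all bad level-$j$ blocks of $\mathbb{X}$, respectively $\mathbb{Y}$, inside the slightly larger window $[-L_J(1-10^{-(j+1)}),L_J(1-10^{-(j+1)})]^2$: outside these exceptional families $\phi_j$ matches good $\mathbb{X}$-domains with good $\mathbb{Y}$-domains of the same shape, and the good-to-good embedding property closes the loop. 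The remaining bookkeeping—tracking exactly which $(j-1)$-level bad components are absorbed into $I_1^{j-1}$ versus produced from the $\mathcal{T},\mathcal{T}'$ lists of an $\alpha$-canonical map—is straightforward but needs to be laid out carefully.
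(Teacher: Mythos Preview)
Your approach for the bad level-$j$ domains (those in $I_1$ or preimages of $I_2$) is exactly what the paper does: take $\psi^A=\theta$ from the witnessing $\alpha$-canonical map. The problem lies in how you handle the \emph{good} level-$j$ blocks.

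You assert that for a good level-$j$ block $X^j_U$, the map $\phi_j$ carries $\hat{U}_X$ onto some level-$j$ $\mathbb{Y}$-domain $\widehat{V}_Y$ by a canonical map, so that you may invoke \eqref{e:goodvgoodh} and extract a second $\alpha$-canonical map. But hypothesis~iii makes no such promise: it only says $\phi_j$ is canonical on the domains in $I_1$ and on the preimages of domains in $I_2$. On the complement, $\phi_j$ is merely a $C_j$-bi-Lipschitz bijection. In particular $\phi_j(\hat{U}_X)$ need not be a $\mathbb{Y}$-domain at all, and there is no level-$j$ embedding $X^j_U\hookrightarrow Y^j_V$ to appeal to. (At the base step $j=J$ your picture happens to be correct because $\phi_J=\mathrm{id}$ and the event $\mathcal{A}_J^{\mathbb{X}}\cap\mathcal{A}_J^{\mathbb{Y}}$ forces the four top-level domains to coincide; but this collapses at the very next step, since the $\alpha$-canonical map you install does not send good $(J-1)$-level $\mathbb{X}$-domains to $\mathbb{Y}$-domains.) The obstacle you flagged---whether the image is \emph{good}---is therefore not the real issue; the real issue is whether the image is a domain and whether $\phi_j$ is canonical there, and neither is available.

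The paper's proof deals with this by abandoning the block-by-block picture on the good region. It sets $B$ to be the complement of the bad domains and constructs a single map $\psi^B$ on $\phi_j(B)$ directly: since the bad $(j-1)$-level $\mathbb{X}$-components lying in $B$ live inside good level-$j$ $\mathbb{X}$-blocks, they are semi-bad and sparse (total size $\le k_0$ locally); one finds small squares $S_k\subseteq\phi_j(B)$ containing their images, disjoint from the bad $(j-1)$-level $\mathbb{Y}$-components, and uses the airport property of good $\mathbb{Y}$-blocks to produce a local $\psi^{S_k}$ that matches each semi-bad $\mathbb{X}$-component to a compatible $\mathbb{Y}$-target. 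This is the missing mechanism; your proposal needs to replace the good-to-good embedding step with this construction (or prove a strengthened inductive hypothesis in which $\phi_j$ is canonical on \emph{all} level-$j$ domains, which the paper does not attempt).
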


Notice that Proposition \ref{p:compactness1} follows from Lemma \ref{l:compactinduction} using induction and definition of good block and embedding. Now we prove Lemma \ref{l:compactinduction}.

\begin{proof}[Proof of Lemma \ref{l:compactinduction}]
We shall construct $\psi_{j-1}$ satisfying the requirements of the lemma. The strategy we adopt is the following. Denote $\mathcal{B}= \{\hat{U}_{X}:U\in I_1; \widehat{f^{-1}(U')}_X: U'\in I_2\}$. Set $B=[-L_J,L_J]^2\setminus \cup_{A\in \mathcal{B}} A$. For each $A\in \mathcal{B}\cup \{B\}$, we shall construct a function $\psi^{A}: \phi_j(A)\to \phi_j(A)$ that is a Lipschitz bijection with Lipschitz constant $(1+10^{-(j+9)})$ and is identity on the boundary of $A$. We shall take $\psi_{j-1}$ to be the unique function on $[-L_J,L_J]^2$such that its restriction to $A$ equals $\psi^A$ for each $A\in \mathcal{B}\cup \{B\}$. We shall then verify that $\psi_{j-1}$ constructed as such satisfies the conditions of the lemma.  
 
First fix $A\in \mathcal{B}$. We describe how to construct $\psi^A$. Without loss of generality, take $A=\hat{U}_{X}$ and hence $\phi_j(A)=\widehat{f(U)}_{Y}$. By definition of $X^j_{U}\hookrightarrow Y^j_{f(U)}$, there exists an $\alpha$-canonical map $G_{\theta}$ from $\hat{U}_{X}\to \widehat{f(U)}_{Y}$ that gives the embedding. Take $\psi^A=\theta$. Clearly $\psi^A$ is identity on the boundary and is a Lipschitz bijection with Lipschitz constant $(1+10^{-(j+9)})$, and also satisfies the hypothesis of the Lemma for all $(j-1)$ level bad blocks in $X^j_{U}$ and $Y^{j}_{f(U)}$ by Definition \ref{d:embedding}.    

So it suffices to define $\psi^{B}$ in such a way that all the bad components at level $(j-1)$ that are not contained in any $j$-level bad sub-block are matched up. 

Let $\{X^{j-1}_{W}\}_{W\in I'_1}$ denote the set of all $(j-1)$-level bad components of $\mathbb{X}$ contained in $[-L_j(1-10^{-j}),L_j(1-10^{-j})]^2$ and not contained in $A$ for any $A\in \mathcal{B}$, let $\hat{W}_{X}$ denote their respective domains. Similarly let $\{Y^{j-1}_{W'}\}_{W'\in I'_2}$ denote the set of all $(j-1)$-level bad components of $\mathbb{Y}$ contained in $[L_j(1-10^{-j}),L_j(1-10^{-j})]^2$ and not contained in $\phi_j(A)$, let $\hat{W'}_{Y}$ denote their respective domains. We shall only describe how to match up the $X^{j-1}_{W}$'s; the $\mathbb{Y}$-components can be taken care of similarly.

Notice that since all these are contained in good $j$-level blocks, these are all away from the boundaries of $B$ and $\phi_j(B)$ respectively. Also it follows from the definition of good blocks that there cannot be too many bad components close together. It is easy to see that one can find squares $S_{k}\subseteq \phi_j(B)$ such that $\phi_{j}(W_{X})$ are all contained in the union of $S_k$, are at distance at least $L_{j-1}^3$ from the boundary of $S_k$'s and such that $S_k$ does not intersect any of the bad $(j-1)$-level components of $\mathbb{Y}$. Also it can be ensured that for a fixed $k$, the total size of components $X^{j-1}_{W}$ such that $\phi_j(W_{X})$ is contained in $S_k$ is not more than $k_0$. Since by definition of good block the components $X^{(j-1)}_{W}$ are all semi-bad, and the region $\phi_j(S_k)$ contains enough airports it follows that it is possible to define a map $\psi^{S_k}: \phi(S_k)\to \phi(S_k)$ which is identity on the boundary and $\psi^{S_k}$ gives an embedding  $X^{j-1}_{W}\hookrightarrow Y^{j-1}_{g(W)}$ for all the bad $j-1$ level components contained in $S_k$. Now gluing together all such maps we get the required $\psi^{B}$ that matches up all the bad $j-1$ level components contained in $B$. We omit the details, see the proof of Proposition \ref{p:canonical1} for a similar construction. This completes the proof. 

\end{proof}

\vspace{1in}

The remainder of the paper is devoted to the proof of the estimates in the induction statement. Throughout these sections we assume that the estimates (\ref{e:tailx1h}), (\ref{e:taily1h}), (\ref{e:goodvgoodh}), (\ref{e:sizex1h}), (\ref{e:sizey1h}), (\ref{e:goodconditionx}) and (\ref{e:goodconditiony}) hold for some level $j\geq 0$ and then prove the estimates at level $j+1$.  Combined they will complete the proof of Theorem~\ref{inductionh}.

From now on, in every Theorem, Proposition and Lemma we state, we would
implicitly assume the hypothesis that all the recursive estimates hold upto level $j$, the parameters satisfy the constraints described in \S~\ref{s:parameters} and $L_0$ is sufficiently large.

\section{Geometric Constructions}
\label{s:construction}

To show the existence of embeddings we need to construct $\alpha$-canonical maps having different properties. In this section we develop different geometric constructions which shall imply the existence of $\alpha$-canonical maps in different cases. We start with the following simple case where the blocks are not moved around but only the boundaries of domains are adjusted. More specifically, our aim is the following. Consider a potential domain $\tilde{U}$ at level $(j+1)$. We want to construct an $\alpha$-canonical map from $\tilde{U}$ to itself that takes a given set of potential domains of $j$-level multi-cells contained in $\tilde{U}$ that are away from the boundary to any other given set of potential domains of the same multi-cells. We have the following proposition.

\begin{proposition}
\label{p:starcanonical}
Fix a lattice animal $U\subseteq \Z^2$. Fix a level $(j+1)$ potential domain $\tilde{U}$ corresponding to the multi-cell $B^{j+1}_{U}$, i.e., $C=C(\tilde{U})$, the boundary of $\tilde{U}$ is a potential boundary curve through the buffer zone of $B^{j+1}_{U}$. Let $U_1\subseteq \Z^2$ be such that $B^j_{U_1}$ denotes the collection of all level $j$-cells that intersect $\tilde{U}$. Let $U_2\subseteq U_1$ be the set of all vertices in $U_1$ such that the distance from the boundary of $U_1$ is at least $\frac{L_j^3}{2}$. Let $\mathcal{T}=\{T_1,T_2,\ldots T_{\ell}\}$ be a set of disjoint subsets of $U_2$. Let $\{U_{T_{i}}\}_{i\in [\ell]}$ (resp.\ $\{\hat{U}_{T_{i}}\}_{i\in [\ell]}$) denote a set of potential domains corresponding to the $j$-level multi-cells $B^{j}_{T_{i}}$ that are compatible i.e., there exists a canonical map $\Gamma$ (resp.\ $\hat{\Gamma}$) at level $j$ such that $\Gamma(B^j_{T_i})=U_{T_i}$ (resp.\ $\hat{\Gamma}(B^j_{T_i})=\hat{U}_{T_i}$). Then there exists an $\alpha$-canonical map $\Upsilon$ with respect to $\mathcal{T}$ and $\emptyset$ such that $\Upsilon(T_i)=T_i$ for each $i\in [\ell]$.     
\end{proposition}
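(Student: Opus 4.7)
The plan is to exploit the observation that, since the source and target of the desired $\alpha$-canonical map are both the same potential domain $\tilde U$, the canonical ``$F$-factor'' in the decomposition $G_\theta = \theta \circ F$ of Definition~\ref{d:alphacano} is the identity. Hence everything reduces to producing a bi-Lipschitz self-bijection $\Upsilon$ of $\tilde U$ which is the identity on $\partial \tilde U$ and whose restriction to each $U_{T_i}$ agrees with the level-$j$ canonical map $U_{T_i} \to \hat U_{T_i}$, i.e.\ with $\hat\Gamma \circ \Gamma^{-1}$. I will take $\Upsilon$ to literally equal this composition (restricted to $\tilde U$) once $\Gamma$ and $\hat\Gamma$ have been chosen to be localized near $\bigcup_i B^j_{T_i}$.

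First I would observe that the compatibility hypothesis only constrains $\Gamma$ and $\hat\Gamma$ on the multi-cells $B^j_{T_i}$, so we may choose them to be localized. Recall that a level-$j$ canonical map is parametrized by the local data $\{(\ell_v, s_v)\}_v$ and $\{s_e\}_e$ on the vertices and edges of $\Z^2$, with $\ell_v = s_v = s_e = 1$ giving the trivial (identity) perturbation in the corresponding piece of buffer zone. Setting all parameters to $1$ on vertices and edges not belonging to the closure of $\bigcup_i T_i$ is compatible with the constraints imposed by the data $\{U_{T_i}\}$ and yields a canonical map $\Gamma$ which coincides with the identity outside the union of the level-$j$ buffer zones of the cells in $\bigcup_i T_i$ and their immediate neighbors. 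The same construction produces a localized $\hat\Gamma$. Since each level-$j$ buffer zone has linear extent $O(L_{j-1}^5)$, the supports of $\Gamma - \mathrm{id}$ and $\hat\Gamma - \mathrm{id}$ lie in an $O(L_{j-1}^5)$-neighborhood of $\bigcup_i \partial B^j_{T_i}$.

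Next I would set $\Upsilon := (\hat\Gamma \circ \Gamma^{-1})\big|_{\tilde U}$ and check the required properties. Because each $T_i \subseteq U_2$ is at distance at least $L_j^3/2$ cells from $\partial U_1$, and hence essentially that much from $\partial \tilde U$, and because $L_{j-1}^5 \ll L_j^3/2$, the composition equals the identity on a neighborhood of $\partial \tilde U$; in particular it restricts to a bijection of $\tilde U$ to itself that is the identity on the boundary. On each $U_{T_i}$ we have $\Gamma^{-1}(U_{T_i}) = B^j_{T_i}$ and $\hat\Gamma(B^j_{T_i}) = \hat U_{T_i}$, so $\Upsilon|_{U_{T_i}}$ is precisely the canonical map $U_{T_i} \to \hat U_{T_i}$, which is exactly what condition (iii) of Definition~\ref{d:alphacano} demands with the choice $S_i := T_i$ (giving $\Upsilon(T_i) = T_i$). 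The bi-Lipschitz constant is at most the product of the Lipschitz constants of $\Gamma$ and $\hat\Gamma$, well within the bound allowed for a level-$(j+1)$ $\alpha$-canonical map given that the $F$-factor here is the identity.

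The only step that requires any genuine care is the first one, namely justifying that $\Gamma$ and $\hat\Gamma$ can be simultaneously localized across all the $T_i$. This is essentially a bookkeeping observation: the $T_i$ are disjoint and the parameters $\{(\ell_v, s_v), s_e\}$ can be prescribed independently at different vertices and edges, so the choices forced on the closure of each $T_i$ never conflict with the trivial choice elsewhere. No probabilistic input and no quantitative estimates are needed beyond the two length-scale comparisons $L_{j-1}^5 \ll L_j$ and $L_{j-1}^5 \ll L_j^3/2$, both of which hold whenever $L_0$ is sufficiently large.
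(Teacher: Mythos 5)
Your proposal is correct and follows essentially the same route as the paper: both note that the canonical $F$-factor is the identity, choose $\Gamma$ and $\hat\Gamma$ localized near $\bigcup_i B^j_{T_i}$ (so they are the identity away from the relevant level-$j$ buffer zones), set $\Upsilon=\hat\Gamma\circ\Gamma^{-1}$, and verify identity on $\partial\tilde U$, the bi-Lipschitz bound, and $\Upsilon(U_{T_i})=\hat U_{T_i}$. Your extra bookkeeping on the $(\ell_v,s_v,s_e)$ parametrization just makes explicit the localization step the paper asserts.
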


\begin{proof}
Notice that the canonical map from $\tilde{U}$ to itself is the identity map. Also observe that without loss of generality we can assume that $U=\{\mathbf{0}\}$ and $\mathcal{T}=\{\{u\}:u\in U_2\}$. Let $U_2\subseteq U_3\subseteq \Z^2$ be such that $U_3$ contains all sites at a distance $2$ from $U_2$. It is clear from the construction of potential domains that $\Gamma$ and $\hat{\Gamma}$ as in the statement of the proposition can be chosen such that both $\Gamma$ and $\hat{\Gamma}$ are identity outside $B^j_{U_3}$. Define the map $\Upsilon$ on $\tilde{U}$ by $\Upsilon:=\hat{\Gamma}\circ \Gamma^{-1}$. It follows from definition that 

\begin{enumerate}
\item[\rm i.] $\Upsilon$-is identity on $\tilde{U}\setminus B^j_{U_3}$, in particular on the boundary of $\tilde{U}$.
\item[\rm ii.] $\Upsilon$ is bi-Lipschitz with Lipschitz constant $(1+10^{-(j+5)})$.
\item[\rm iii.] $\Upsilon(U_{T_i})=\hat{U}_{T_i}$ for all $i$.
\end{enumerate}
It follows then from the definition of $\alpha$-canonical maps that $\Upsilon=\Upsilon\circ Id$ is an $\alpha$-canonical map from $\tilde{U}$ to itself with respect to $\mathcal{T}$ and $\emptyset$ such that  $\Upsilon(T_i)=T_{i}$ for all $i$. This completes the proof. 
\end{proof}

An $\alpha$-canonical map as above will be referred to as a $*$-canonical map. Figure \ref{f:starcano} illustrates this construction.

\begin{figure}[h]
\begin{center}
\includegraphics[width=\textwidth]{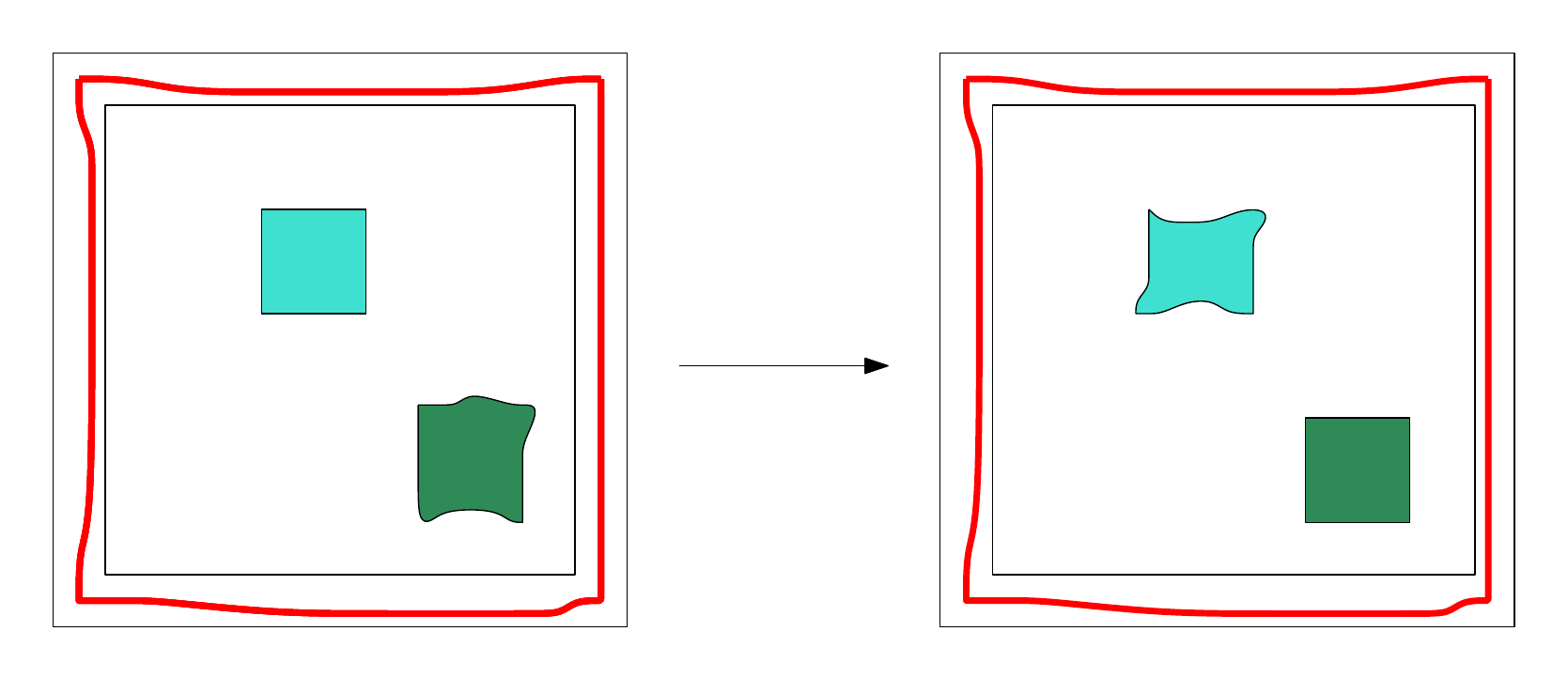}
\end{center}
\caption{A $*$-canonical map}
\label{f:starcano}
\end{figure}

Now we want to move to a more complicated construction of $\alpha$-canonical maps, where we want to match up a non-trivial subset of bad blocks in both $\mathbb{X}$ and $\mathbb{Y}$. We have the following proposition.

\begin{proposition}
\label{p:canonical1}
Fix $U\subseteq \Z^2$. Consider $\tilde{U}_1$ and $\tilde{U}_2$ to be any two potential domains corresponding to the $j+1$-level multi-cell $B^{j+1}_{U}$. Let $U_{1,1}\subseteq \Z^2$ (resp.\ $U_{1,2}\subseteq \Z^2$) be such that $B^j_{U_{1,1}}$ (resp.\ $B^j_{U_{1,2}}$) denotes the collection of all level $j$-cells that intersect $\tilde{U}_1$ (resp.\ $\tilde{U}_2$). Let $U_{2,1}\subseteq U_{1,1}$ (resp.\ $U_{2,2}\subseteq U_{1,2}$) be the set of all vertices in $U_{1,1}$ (resp.\ $U_{1,2}$) such that the distance from the boundary of $U_{1,1}$ (resp.\ $U_{1,2}$) is at least $\frac{L_j^3}{2}$. Let $\mathcal{T}=\{T_1,T_2,\ldots T_{\ell_1}\}$ and $\mathcal{T}'=\{T'_1,T'_2,\ldots T'_{\ell_2}\}$  be a set of disjoint and non-neighbouring subsets of $U_{2,1}$ and $U_{2,2}$ respectively such that $\sum |T_i|\leq v_0k_0$ and $\sum |T'_i|\leq v_0k_0$. Then there exists a sequence of $\alpha$-canonical maps $\{\Upsilon_{h_1,h_2}\}_{(h_1,h_2)\in [L_j^2]^2}$ from $\tilde{U}_1$ to $\tilde{U}_2$ with respect to $\mathcal{T}$ and $\mathcal{T}'$ satisfying the following conditions.
\begin{enumerate}
\item[\rm i.] For each $i\in [\ell_1]$, $\Upsilon_{h_1,h_2}(T_i)=(h_1-1,h_2-1)+\Upsilon_{1,1}(T_i)$ and for each $i\in [\ell_2]$, $\Upsilon^{-1}_{h_1,h_2}(T'_i)=-(h_1-1,h_2-1)+\Upsilon_{1,1}^{-1}(T'_i)$.
\item[\rm ii.] For all $h=(h_1,h_2)$ for all $i\in [\ell_1], i'\in [\ell_2]$ we have $T_i$ and $\Upsilon_{h}^{-1}(T'_{i'})$ are disjoint and non-neighbouring.
\end{enumerate}
\end{proposition}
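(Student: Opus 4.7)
The approach is to write each map in the factored form $\Upsilon_{h_1,h_2}=\theta_{h_1,h_2}\circ F$, where $F:\tilde U_1\to\tilde U_2$ is the canonical bi-Lipschitz map between the two potential domains, and $\theta_{h_1,h_2}:\tilde U_2\to\tilde U_2$ is a bi-Lipschitz self-bijection, identity on $\partial\tilde U_2$, with Lipschitz constant at most $1+10^{-(j+10)}$. The finer $(j-1)$-level potential-domain alignment demanded by Definition~\ref{d:alphacano} is handled at the end by composing with a $*$-canonical map supplied by Proposition~\ref{p:starcanonical}, so the core task is purely geometric: design the self-bijection $\theta_{h_1,h_2}$ with the desired action on a small finite collection of sub-multi-cells.

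The heart of the construction is a once-and-for-all choice of landing spots, independent of $h$. Using that $\sum|T_i|+\sum|T'_{i'}|\leq 2v_0k_0$ while the multi-cell $B^{j+1}_U$ contains at least $|U|L_j^{2\alpha-2}$ cells at level $j$, a greedy packing argument produces lattice subsets $S_i\subseteq U_{1,2}$ of the same shape as $T_i$ and $S'_{i'}\subseteq U_{1,1}$ of the same shape as $T'_{i'}$ with three properties: each $S_i$ and each $S'_{i'}$ lies at lattice distance at least $L_j^3/4$ from the boundary of its ambient domain; the sets $\{B^j_{S_i}\}\cup\{F(B^j_{S'_{i'}})\}$, all viewed inside $\tilde U_2$, are pairwise Euclidean-separated by at least $100\,L_j^4$; and every $S'_{i'}$ is at lattice distance at least $10\,L_j^2$ from every $T_i$ in $\tilde U_1$. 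The last of these already secures condition (ii), because the construction below forces $\Upsilon_{h_1,h_2}^{-1}(T'_{i'})=S'_{i'}-(h_1-1,h_2-1)$, and $\|(h_1-1,h_2-1)\|_\infty\leq L_j^2-1$, so this preimage stays at lattice distance at least $9\,L_j^2$ from every $T_i$.

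To build the base map $\theta_{1,1}$ I compose local bi-Lipschitz ``transport bumps'': for each $i$ a bump supported in a Euclidean disk of radius $L_j^4/8$ around $F(B^j_{T_i})$ that carries $F(B^j_{T_i})$ onto $B^j_{S_i}$, and symmetrically for each $i'$ a bump around $B^j_{T'_{i'}}$ that carries $F(B^j_{S'_{i'}})$ onto $B^j_{T'_{i'}}$ under $\theta_{1,1}$. Each bump has Euclidean displacement $O(L_j^3)$ inside a working disk of radius $\Theta(L_j^4)$, hence Lipschitz constant $1+O(L_j^{-1})$. For general $(h_1,h_2)$, I obtain $\theta_{h_1,h_2}$ by further composing with ``shift bumps'' $\sigma_i$ and $\sigma'_{i'}$ which rigidly translate neighbourhoods of $B^j_{S_i}$ and $B^j_{F(S'_{i'})}$ by $L_j(h_1-1,h_2-1)$ and $-L_j(h_1-1,h_2-1)$ respectively, each supported in a disk of radius $L_j^4/16$; again each has displacement at most $L_j^3$ and working radius $\Theta(L_j^4)$, giving Lipschitz constant $1+O(L_j^{-1})$. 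The separation arranged in the previous step guarantees that all transport and shift bumps have pairwise disjoint supports, so the full composition retains total Lipschitz constant below $1+10^{-(j+10)}$ and is identity on $\partial\tilde U_2$ once $L_0$ is large.

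The main obstacle is arranging condition (ii) uniformly over all $L_j^4$ choices of $h$ simultaneously; this is exactly what forces the landing spots $S'_{i'}$ to be chosen well away from every $T_i$, rather than the naive $S'_{i'}=F^{-1}(T'_{i'})$, which could collide with some $T_i$. Once the landing geometry is fixed, the remaining work, namely checking the bi-Lipschitz bound, the identity on $\partial\tilde U_2$, and the $(j-1)$-level compatibility via Proposition~\ref{p:starcanonical}, is routine composition bookkeeping.
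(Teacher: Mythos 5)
Your overall architecture matches the paper's: write $\Upsilon_{h_1,h_2}=\theta_{h_1,h_2}\circ F$ with $F$ the canonical map, build $\theta$ from locally supported bi-Lipschitz relocations whose $h$-dependence is a rigid lattice translation of fixed landing spots, and defer the sub-domain alignment to canonical/$*$-canonical maps. However, there is a genuine gap in the disjoint-supports claim, and the packing properties you impose are mutually inconsistent. Your transport bumps are supported in disks of radius $L_j^4/8$ centred at the \emph{sources} $F(B^j_{T_i})$ and $B^j_{T'_{i'}}$. Two sets $T_{i_1},T_{i_2}\in\mathcal{T}$ are only required to be non-neighbouring, so they can sit at lattice distance $2$, i.e.\ Euclidean distance $L_j\ll L_j^4$; their support disks then overlap almost entirely. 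Worse, nothing separates $\mathcal{T}$ from $\mathcal{T}'$: a $T_i$ and a $T'_{i'}$ can be the very same subset of $\Z^2$ (this is exactly why conclusion (ii) of the proposition is nontrivial), so the disk around $F(B^j_{T_i})$ and the disk around $B^j_{T'_{i'}}$ can essentially coincide. Separating the \emph{landing spots} by $100L_j^4$ does not help — and is in fact impossible when two sources are within, say, $L_j^4$ of each other while each landing spot must lie within $L_j^4/8$ of its source. With overlapping supports the composition no longer does what you claim: the bump for $T_{i_1}$ displaces $F(B^j_{T_{i_2}})$ before the bump for $T_{i_2}$ acts on it, so the composite map fails to carry $F(B^j_{T_{i_2}})$ onto $B^j_{S_{i_2}}$, and similarly the $\mathbb{Y}$-side bumps can destroy the $\mathbb{X}$-side matching.

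The missing idea is a preliminary separation/clustering step. The paper first builds a single bi-Lipschitz self-map $\Omega$ of $\tilde U_2$ together with squares $S_1,\dots,S_k$ that are pairwise $L_j^{7/2}$-separated, contain all of $\bigl(\cup_i \Omega\circ F(U_{T_i})\bigr)\cup\bigl(\cup_{i'}U_{T'_{i'}}\bigr)$ well inside their interiors, and — crucially — are such that each square meets at most one of the two collections. Only then is the $h$-indexed translation performed, square by square: several nearby $T_i$-images landing in the same square are translated \emph{together} by the common lattice vector $(h_1-1,h_2-1)$, so no per-index disjointness among close sources is ever needed, and the $\Omega$ step guarantees the $\mathbb{X}$-images never share a square with a $T'_{i'}$. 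Your argument needs this (or an equivalent clustering of sources into well-separated groups, each handled by one bump) to go through; as written, the step "all transport and shift bumps have pairwise disjoint supports" fails. The remaining ingredients of your proposal — the choice of $S'_{i'}$ far from every $T_i$ to secure (ii), the Lipschitz bookkeeping $1+O(L_j^{-1})\le 1+10^{-(j+10)}$ for $L_0$ large, and the final $*$-canonical correction — are sound.
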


\begin{figure}[ht!]
\begin{center}
\includegraphics[width=\textwidth]{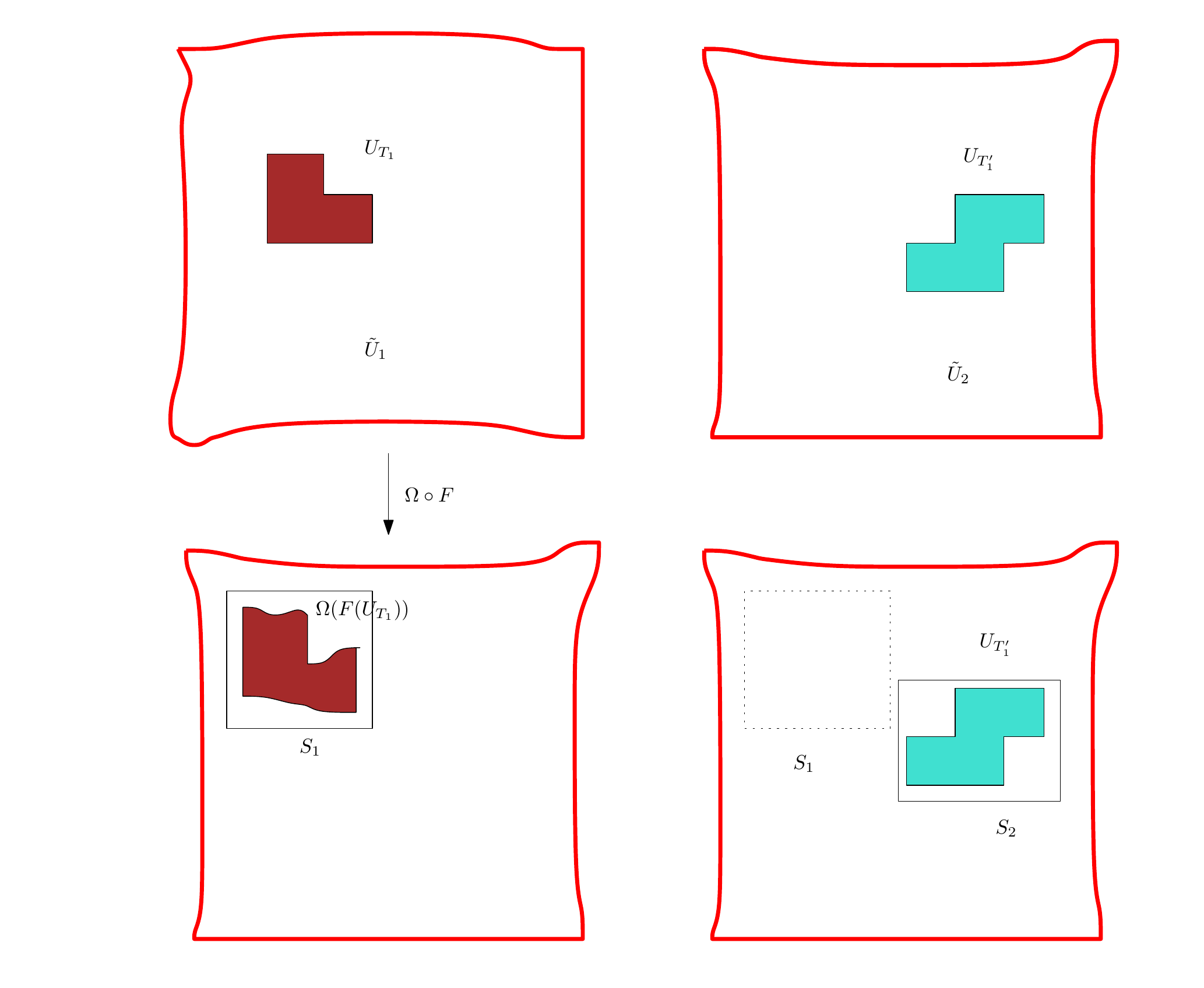}
\end{center}
\caption{Construction of $S_i$ as described in the proof of Proposition \ref{p:canonical1}}
\label{f:cano11}
\end{figure}

\begin{proof}
For $i\in [\ell_1]$ (resp.\ $i'\in [\ell_2]$) Let  $U_{T_i}$ (resp.\ $U_{T'_{i'}}$) be the domain corresponding to $B^j_{T_i}$ (resp.\ $B^j_{T'_{i'}}$). Also let $F$ denote the canonical map at level $(j+1)$ that takes $\tilde{U}_{1}$ to $\tilde{U}_{2}$. Since $\tilde{U}_{2,1}$ and $\tilde{U}_{2,2}$ are away from the boundaries of $U_{1,1}$ and $U_{1,2}$ respectively (by a distance of order $L_j^3$) and the total sizes of $\mathcal{T}$ and $\mathcal{T}'$ are bounded (independent of $L_j$) it follows that for $L_j$ sufficiently large there exists a function $\Omega: \tilde{U}_2 \to \tilde{U}_2$ satisfying the following properties.

\begin{enumerate}
\item[\rm i.] $\Omega$ is identity on the boundary of $\tilde{U}_2$, and bi-Lipschitz with Lipschitz constant $(1+10^{-(j+10)})$.
\item[\rm ii.] There exists squares $S_{1},S_2,\ldots S_{k} \subseteq \tilde{U}_2$ with the following properties.
\begin{itemize}
\item We have that 
$$\biggl(\cup_{i} \Omega \circ F(U_{T_i})\biggr) \bigcup \left(\cup_{i'} U_{T'_{i}}\right) \subseteq \cup_{i=1}^{k} S_i.$$
\item For a fixed $i\in [k]$, $S_i$ intersects at most one of $\cup_{i} \Omega \circ F(U_{T_i})$ and $\cup_{i'} U_{T'_{i'}}$.
\item Distance between $S_{i}$ and $S_{i'}$ for $i\neq i'$ is at least $L_j^{7/2}$.
\item The distance between the boundary of $S_i$ and the sets $F(U_{T_{\ell}})$ or $U_{T'_{\ell'}}$ contained in it is at least $L_j^{7/2}$.
\end{itemize}
\end{enumerate}

See Figure \ref{f:cano11} for the above construction. For $(h_1,h_2)\in [L_j^2]^2$  We shall construct functions $\rho_{h_1,h_2}: \tilde{U}_2 \to \tilde{U}_2$ such that each $\rho_{h_1,h_2}$ is identity except on the interior of $\cup_{i} S_i$. Eventually we shall show that $\Upsilon_{h_1,h_2}:=\rho_{h_1,h_2}\circ \Omega \circ F$ will be the sequence of $\alpha$-canonical maps satisfying the conditions in the statement of the proposition. Without loss of generality we describe below how to construct $\rho_{h_1,h_2}$ on $S_1$, similar constructions work for the other $S_i$. 

Now consider $S_1$. Without loss of generality assume that $S_1$ contains only $\Omega\circ F(U_{T_1})$, more general cases can be handled in a similar manner.  Fix $W$, a translate of $T_1$, such that the distance of $F(U_{T_1})$ from $B^j_{W}$ is at most $2L_j$. For $h=(h_1,h_2)$ set $W_{h}=(h_1,h_2)+W$. Let the domain corresponding to the multi-cell $B^j_{W_{h}}$ be denoted by $\tilde{W}_{h}$. Let $G_{h}$ denote the canonical map from $U_{T_1}$ to $\tilde{W}_{h}$.  On $\Omega \circ F(U_{T_1})$, set $\rho_h:=G_h\circ F^{-1}\circ \Omega ^{-1}$. Clearly $\rho_h$ is bi-Lipschitz with Lipschitz constant $1+10^{-(j+7)}$, also since $F(U_{T_1})$ and $\tilde{W}_{h}$ are sufficiently far from the boundary of $S_1$, it follows that $\rho_{h}$ can be extended to $S_1$ in such a way that $\rho_{h}$ is bi-Lipschitz on $S_1$ with Lipschitz constant $1+10^{-(j+7)}$ and is identity on the boundary of $S_1$. See Figure \ref{f:cano12}.

\begin{figure}[h]
\begin{center}
\includegraphics[width=\textwidth]{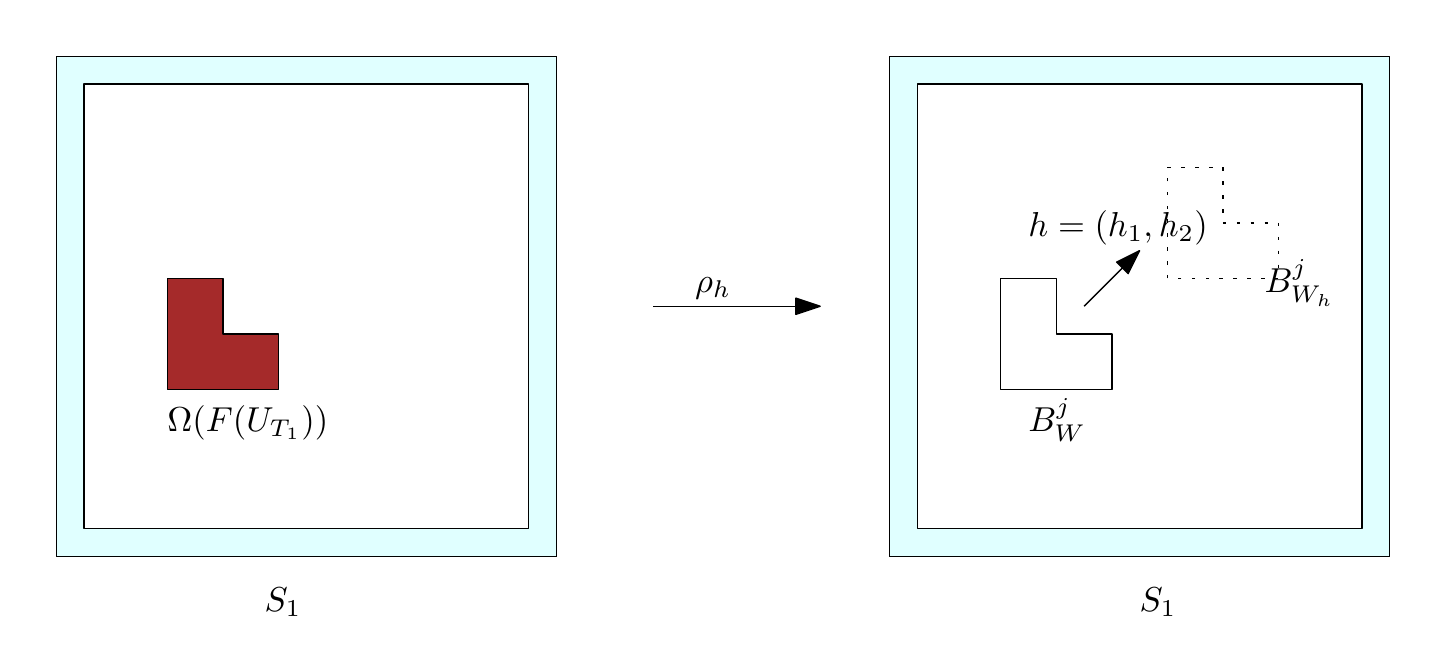}
\end{center}
\caption{Construction of $\rho_{h}$ on $S_1$ in the proof of Proposition \ref{p:canonical1}}
\label{f:cano12}
\end{figure}

It is now easy to check that $\Upsilon_h$ as defined above does indeed produce a sequence of $\alpha$-canonical maps satisfying the conditions in the proposition. This completes the proof.
\end{proof}

Finally we want to construct $\alpha$-canonical maps that match up bad sub-components and ensures that interior of the corresponding multi-cell is mapped into the interior of the multi-cell itself. This property is needed to make sure certain boundaries are valid; see Lemma \ref{l:A1Map2h}. We have the following proposition.

\begin{proposition}
\label{p:canonical2}
Fix $U\subseteq \Z^2$. Let $\tilde{U}_1$ be any potential domain corresponding to the $j+1$-level multi-cell $B^{j+1}_{U}$. Let $U_{1}\subseteq \Z^2$ be such that $B^j_{U_{1}}$ denotes the collection of all level $j$-cells that intersect $\tilde{U}_1$. Let $U_{2}\subseteq U_{1}$ be the set of all vertices in $U_{1}$ such that the distance from the boundary of $U_{1}$ is at least $\frac{L_j^3}{2}$. Let $\mathcal{T}=\{T_1,T_2,\ldots T_{\ell_1}\}$  be a set of disjoint and non-neighbouring subsets of $U_{2}$ such that $\sum |T_i|\leq v_0k_0$. Let $U_3\subseteq U_2$ be such that $B^j_{U_3}=B^{j+1,{\rm int}}_{U}$. Then there exists a sequence of $\alpha$-canonical maps $\{\Upsilon_{h_1,h_2}\}_{(h_1,h_2)\in [L_j^2]^2}$ from $\tilde{U}_1$ to $B^{j+1}_{U}$ with respect to $\mathcal{T}$ and $\emptyset$ satisfying the following conditions.
\begin{enumerate}
\item[\rm i.] For each $i\in [\ell_1]$, $\Upsilon_{h_1,h_2}(T_i)=(h_1-1,h_2-1)+\Upsilon_{1,1}(T_i)$.

\item[\rm ii.] For $T_i$ not contained in $U_2\setminus U_3$, and for all $h=(h_1,h_2)$ we have $\Upsilon_h(T_i)\subseteq U_3$.
\end{enumerate}
\end{proposition}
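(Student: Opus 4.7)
The plan is to adapt the construction in the proof of Proposition \ref{p:canonical1}, incorporating one new ingredient that enforces condition (ii). Let $F$ denote the canonical map from $\tilde{U}_1$ to $B^{j+1}_{U}$, i.e.\ the inverse of the canonical map defining $\tilde{U}_1$. As in that proof, I would seek maps of the form $\Upsilon_h = \rho_h \circ \Omega \circ F$, where $\Omega: B^{j+1}_U \to B^{j+1}_U$ is a single fixed bi-Lipschitz adjustment supported in a controlled set, and $\rho_h$ implements an $h$-dependent ``shift'' of the target positions on top of $\Omega$.

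For each $T_i \in \mathcal{T}$ I would first fix an anchor translate $W_i$ of $T_i$, so that $B^j_{W_i}$ plays the role of the target multi-cell at $h = (1,1)$, and then set $W_i^{(h)} := (h_1 - 1, h_2 - 1) + W_i$. For $T_i \subseteq U_2 \setminus U_3$, I would choose $W_i$ within $\ell^{\infty}$-distance $2L_j$ of $F(U_{T_i})$, exactly as in Proposition \ref{p:canonical1}. For $T_i \subseteq U_3$, I would instead choose $W_i \subseteq U_3$ at $\ell^{\infty}$-distance at least $L_j^2 + v_0$ from the boundary of $U_3$. Such a choice is available because $U_3$ is obtained from $U_1$ by removing a boundary strip of depth $L_j^5/L_j = L_j^4$ cells at level $j$ (the interior $B^{j+1,\mathrm{int}}_U$ shrinks $B^{j+1}_U$ by $L_j^5$ in each direction), and $L_j^4 \gg L_j^2$ once $L_0$ is large enough. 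Since every shift satisfies $|h_1|, |h_2| \leq L_j^2$ and each $|W_i| \leq v_0$, this placement guarantees $W_i^{(h)} \subseteq U_3$ for every $h$, which is exactly condition (ii).

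With these target anchors fixed, the rest of the construction parallels Proposition \ref{p:canonical1} verbatim. I would pick pairwise disjoint squares $S_1, \ldots, S_k \subseteq B^{j+1}_U$, each containing $F(U_{T_i})$ together with the domains of all shifted targets $W_i^{(h)}$, $h \in [L_j^2]^2$, pairwise separated from one another and from $\partial B^{j+1}_U$ by at least $L_j^{7/2}$; this is feasible because $L_{j+1} = L_j^{\alpha} = L_j^8$ dwarfs every relevant length scale like $L_j^3$ (the maximum real-coordinate shift) and $v_0 k_0$ (a bound on the number of $T_i$'s). I would then build $\Omega$ to be identity outside $\bigcup_i S_i$ and to canonically relocate $F(U_{T_i})$ onto the domain of $W_i$ inside each $S_i$, and $\rho_h$ to be identity outside $\bigcup_i S_i$ and to canonically translate the domain of $W_i$ onto that of $W_i^{(h)}$ inside each $S_i$. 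Both maps inherit the Lipschitz constant $1 + 10^{-(j+10)}$ from the generous $L_j^{7/2}$ slack around each object, by the same construction as in Proposition \ref{p:canonical1}.

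Setting $\Upsilon_h := \rho_h \circ \Omega \circ F$ then produces the desired sequence: condition (i) is immediate from the translation structure of $\rho_h$ relative to the $h = (1,1)$ template, and condition (ii) was arranged in the choice of $W_i$. The composition is bi-Lipschitz with constant at most $(1 + 10^{-(j+10)})^2 (1 + 10^{-(j+5)}) < 1 + 10^{-(j+4)}$, and its restriction to each $U_{T_i}$ equals a composition of canonical maps at level $j$, hence is itself canonical; together with the fact that $\rho_h \circ \Omega$ is identity on $\partial B^{j+1}_U$, this shows that $\Upsilon_h$ is an $\alpha$-canonical map with respect to $\mathcal{T}$ and $\emptyset$ in the sense of Definition \ref{d:alphacano}. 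The only genuinely new step beyond Proposition \ref{p:canonical1} is the interior-preserving placement of $W_i$, which is also the main technical point; the key observation enabling it is that the interior buffer has width $L_j^5$, far larger than the $L_j^2$-scale family of admissible shifts.
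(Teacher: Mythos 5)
Your proposal is correct and follows essentially the same route as the paper: both reduce to the construction of Proposition \ref{p:canonical1} and add one new ingredient, namely choosing the target anchors $W_i$ for the $T_i$ meeting $U_3$ so that all $[L_j^2]^2$ shifts remain inside $U_3$, which works because the interior buffer width ($L_j^5$ in real coordinates, i.e.\ $L_j^4$ level-$j$ cells) dwarfs the $L_j^2$-cell shift range. The paper implements this slightly differently (it pigeonholes over nested squares $B^{j+1,{\rm int},\ell}\subseteq B^{j+1,{\rm ext},\ell}$ and keeps each anchor within $10L_j^3$ of its source, whereas you push the anchors to depth $L_j^2+v_0$ inside $U_3$), and your case split should read ``$T_i$ not contained in $U_2\setminus U_3$'' rather than ``$T_i\subseteq U_3$'' to cover straddling components, but these are cosmetic differences.
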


\begin{figure}[ht!]
\begin{center}
\includegraphics[width=\textwidth]{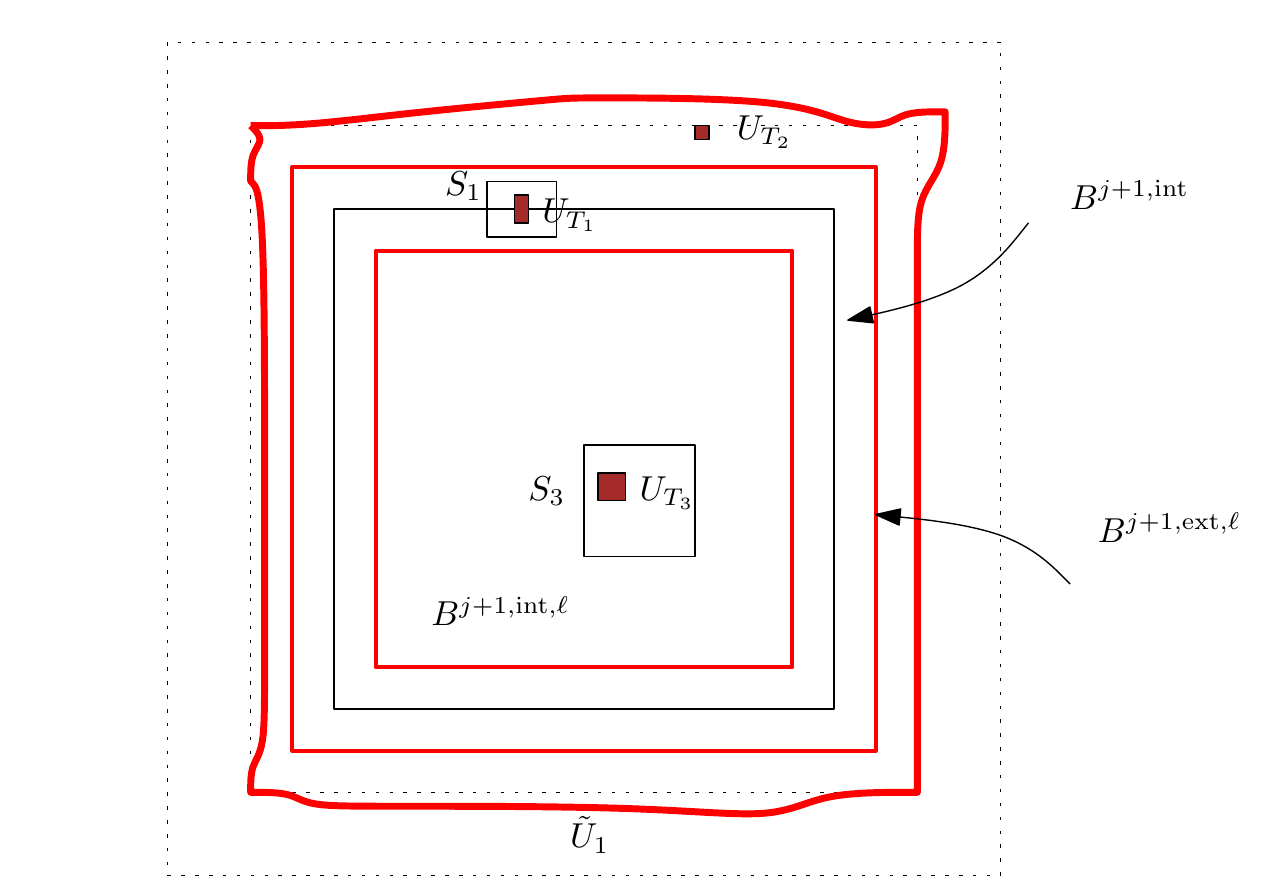}
\end{center}
\caption{Construction of $S_i$ as described in the proof of Proposition \ref{p:canonical2}}
\label{f:cano21}
\end{figure}

\begin{proof}
This proof is similar to the proof of Proposition \ref{p:canonical1} except that we have to do some extra work to ensure condition {\rm ii.} above. We use the same notations for domains as in the proof of Proposition \ref{p:canonical1}. Let $F$ be the canonical map that takes $\tilde{U}_1$ to $B^{j+1}_{U}$. Without loss of generality, we take $U$ to be the singleton $\{\mathbf{0}\}$. Define the squares $B^{j+1, {\rm int}, \ell}= [L_j^5+\ell L_j^4, L_{j+1}-L_j^5-\ell L_j^4]^2$ and $B^{j+1, {\rm ext}, \ell}=[L_j^5-\ell L_j^4, L_{j+1}-L_j^5+\ell L_j^4]^2$  such that $0<\ell <100k_0v_0$ and such that the distance of $F(U_{T_i})$'s from the boundaries of  $B^{j+1, {\rm int}, \ell}$ and $B^{j+1, {\rm ext}, \ell}$ is at least $L_j^4$. See Figure \ref{f:cano21}. Observe that by construction of canonical maps  $F$ is identity on $B^{j+1, {\rm ext}, \ell}$. Now as in the proof of Proposition \ref{p:canonical1}, it is not hard to see that there exist squares $S_{1},S_2,\ldots S_{k} \subseteq \tilde{U}_2$ with the following properties.
\begin{itemize}
\item We have that 
$$\cup_{i} F(U_{T_i}) \subseteq  \cup_{i=1}^{k} S_i.$$
\item The distance between the boundaries  of $S_i$ and the sets $F(U_{T_{\ell}})$ contained in it is at least $L_j^{7/2}$. 
\item The distance between $S_i$ and the boundaries of $B^{j+1, {\rm int}, \ell}$ and $B^{j+1, {\rm ext}, \ell}$ is at least $L_j^{7/2}$.
\end{itemize}
For $h=(h_1,h_2)\in [L_j^2]^2$, as before our strategy is to construct $\rho_{h}$ that is identity except on the interiors of $S_i$ and such that $\Upsilon_{h}=\rho_{h}\circ F$ is an $\alpha$-canonical map satisfying the conditions of the proposition. We construct $\rho_{h}$ separately on squares $S_i$. If $S_i\subseteq \tilde{U}_2\setminus B^{j+1, {\rm ext}, \ell}$ or $S_i\subseteq B^{j+1, {\rm int}, \ell}$ then the construction of $\rho_{h}$ proceeds as in the proof of Proposition \ref{p:canonical1}. We specify below the changes we need to consider if $S_i\subseteq B^{j+1, {\rm ext}, \ell}\setminus B^{j+1, {\rm int}, \ell}$. Without loss of generality take $S_1\subseteq B^{j+1, {\rm ext}, \ell}\setminus B^{j+1, {\rm int}, \ell}$ and also that $U_{T_1}$ is the only one (among $U_{T_i}$'s) that is contained in $S_1$.  

Recall that we only need to worry about condition {\rm ii.} in the statement of the proposition being violated if $T_1$ is not contained in $U_2\setminus U_3$. Let us assume that to be the case.  Notice that the assumptions on $S_1$ and $U_{T_1}$ imply that there exists $W$ which is a translate of $T_1$ such that $B^j_{W}$ has distance at most $10L_j^3$ from $U_{T_1}=F(U_{T_1})$ for all $h\in [L_j^2]^2$ and  $W_{h}:=h+W$ we have that $W_h\subseteq U_3$. See Figure \ref{f:cano22}. With this choice of $W$, construct $\rho_h$ exactly as in the proof of Proposition \ref{p:canonical1} and it is easy to verify that $\Upsilon_h=\rho_h\circ F$ satisfies the conclusion of the proposition. This completes the proof.
\end{proof}

\begin{figure}[h]
\begin{center}
\includegraphics[width=\textwidth]{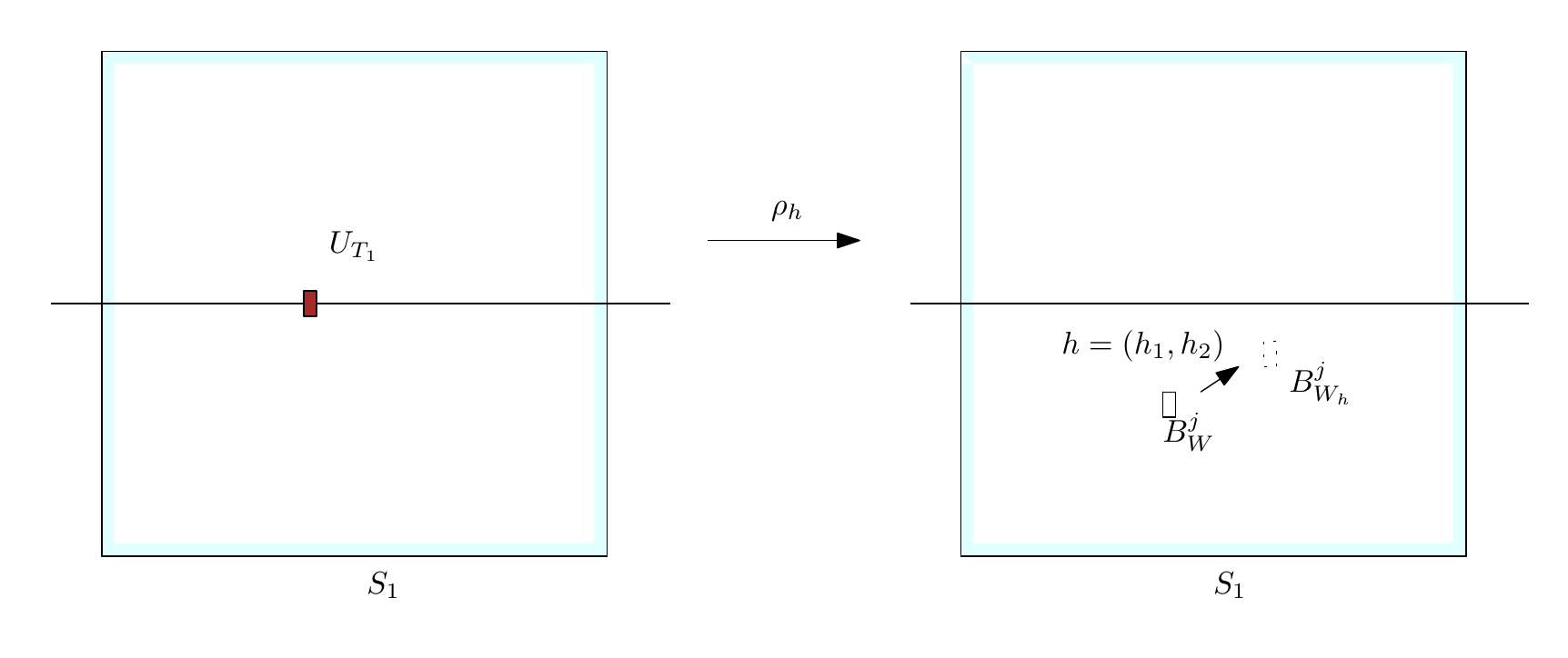}
\end{center}
\caption{Construction of $\rho_{h}$ on $S_1$ in the proof of Proposition \ref{p:canonical2}}
\label{f:cano22}
\end{figure}

\section{Tail Estimates}
\label{s:tailestimateh}
We now start with proving our recursive estimates at level $(j+1)$. The most important of our inductive hypotheses is the following recursive estimate. Let $X=X^{j+1}_{U}$ and $Y=Y^{j+1}_{U'}$ be random  $(j+1)$-level components in $\mathbb{X}$ and $\mathbb{Y}$ having laws $\mu_{j+1}^{\mathbb{X}}$ and $\mu_{j+1}^{\mathbb{Y}}$ respectively. Let $V_{X},V_{Y}$ denote the sizes of $X$ and $Y$ respectively.
%
%
We have the following theorem establishing \eqref{e:tailx1h} and \eqref{e:taily1h} at level $j+1$.

\begin{theorem}\label{t:tailh}
In the above set-up, we have for all $v\geq 1$ and all $p\leq 1-L_{j+1}^{-1}$
\begin{align*}
\mathbb{P}(S^\mathbb{X}_{j+1}(X)\leq p, V_{X}\geq v)\leq p^{m_{j+1}} L_{j+1}^{-\beta} L_{j+1}^{-\gamma (v-1)}; \\
\mathbb{P}(S^\mathbb{Y}_{j+1}(Y)\leq p, V_{Y}\geq v)\leq p^{m_{j+1}} L_{j+1}^{-\beta}L_{j+1}^{-\gamma (v-1)}
\end{align*}
where  $m_{j+1}=m+2^{-(j+1)}$.
\end{theorem}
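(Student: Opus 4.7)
The plan is to lower-bound $S^\mathbb{X}_{j+1}(X)$ by exploiting the $L_j^4$-size family of $\alpha$-canonical maps $\{\Upsilon_h\}_{h\in[L_j^2]^2}$ produced by Proposition~\ref{p:canonical1}, and to convert failure of \emph{every} trial $\Upsilon_h$ into a compound event controlled by the $j$-level estimates~\eqref{e:tailx1h}--\eqref{e:sizey1h}. Condition on $X=X^{j+1}_U$ with bad $j$-sub-components $T_1,\ldots,T_{\ell_1}$, and let $Y=Y^{j+1}_U$ be random. By Definition~\ref{d:embedding} and the definitions~\eqref{e:defsx}--\eqref{e:defsy}, a sufficient condition for $\Upsilon_h$ to produce an embedding $X\hookrightarrow Y$ is that $X^j_{T_i}\hookrightarrow Y^j_{\Upsilon_h(T_i)}$ for each $i$ together with the symmetric condition on $\mathbb{Y}$-bad sub-components, and this has conditional probability essentially $\prod_i S_j^{\mathbb{X}}(X^j_{T_i})$ modulo adjustments for the $\mathbb{Y}$-bad sub-components that happen to be hit.

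I will organise the proof around a ``total difficulty'' $D_X$ of $X$ that packages $V_X$, the total size of bad $j$-sub-components, and the product $\prod_i S_j^{\mathbb{X}}(X^j_{T_i})$. The key quantitative input, Proposition~\ref{l:totalSizeBoundh} (alluded to in Section~\ref{s:outline}), bounds $\mathbb{P}(D_X\geq d)$ via~\eqref{e:tailx1h} and~\eqref{e:sizex1h} together with a counting argument over shapes of sub-configurations inside lattice animals of size $V_X$. I will then split the event $\{S^\mathbb{X}_{j+1}(X)\leq p,\ V_X\geq v\}$ into four cases: (A) $V_X$ is atypically large (so~\eqref{e:sizex1h} immediately gives the required bound); (B) the total bad-sub-component size is disproportionate to $V_X$; (C) some single bad sub-component is atypically hard, i.e.\ has exceptionally small $S_j^{\mathbb{X}}$ (handled by~\eqref{e:tailx1h}); and (D) the residual case in which none of (A)--(C) holds but $X$ still embeds with probability $\leq p$.

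Cases (A)--(C) will be handled directly by summing the $j$-level tails via Proposition~\ref{l:totalSizeBoundh}. The force of the argument lies in case~(D). Here I will show that inside the $L_j^4$-family $\{\Upsilon_h\}$ one can extract a subfamily of size $\Omega(L_j^3)$ whose target regions in $\mathbb{Y}$ are pairwise disjoint and thus genuinely independent by the non-neighbour independence of Observation~\ref{o:blockprop}(iii); therefore the probability that \emph{every} trial fails is at most $(1-q)^{\Omega(L_j^3)}$, where $q$ is a single-trial success probability controlled by $D_X$ together with a corresponding difficulty on the $\mathbb{Y}$-side (the latter itself bounded by~\eqref{e:taily1h}--\eqref{e:sizey1h}). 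The constraints of~\S\ref{s:parameters}, in particular $8\gamma(v_0-1)>3\alpha\beta$ and $m\geq 9\alpha\beta+3\alpha\gamma v_0$, are tailored so that after this amplification the final estimate is at most $p^{m_{j+1}}L_{j+1}^{-\beta}L_{j+1}^{-\gamma(v-1)}$; the fact that $m_{j+1}=m+2^{-(j+1)}<m_j$ supplies precisely the slack needed to absorb the polynomial losses coming from the union bound over shapes and over trials.

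The $\mathbb{Y}$-tail is proved in parallel by the symmetric argument, with the sole substantive change being that when a bad $\mathbb{Y}$-sub-component lies near the boundary of the ideal multi-block one must invoke Proposition~\ref{p:canonical2} in place of Proposition~\ref{p:canonical1} so that the $\alpha$-canonical map induces a \emph{valid} boundary curve on the $\mathbb{X}$-side. The main obstacle I expect to wrestle with is case~(D): quantifying the dependence between overlapping $\Upsilon_h$-trials, extracting a large enough pairwise-independent subfamily, and simultaneously tracking the probabilistic cost of atypically bad target regions on the opposite side, all while respecting the exponent budget $p^{m_{j+1}}L_{j+1}^{-\beta}L_{j+1}^{-\gamma(v-1)}$ dictated by the recursion.
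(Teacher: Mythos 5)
Your overall strategy is the paper's: lower-bound $S_{j+1}$ deterministically in terms of $\prod_i S_j(X^j_{T_i})$ by trying a family of $\alpha$-canonical maps, feed the resulting event into Proposition~\ref{l:totalSizeBoundh}, and split into four cases corresponding closely to the paper's $\A{1}$--$\A{4}$. Your case (D) amplification --- extracting a subfamily of trials with pairwise non-neighbouring targets so that failures are conditionally independent, gaining roughly a factor $L_j$ in the lower bound --- is exactly the mechanism of Lemma~\ref{l:A2Maph}, including the conditioning subtlety you flag.

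The genuine gap is in your cases (A) and (C). You claim that when $V_X$ is atypically large, \eqref{e:sizex1h} ``immediately gives the required bound,'' and that an atypically hard single sub-component is ``handled by \eqref{e:tailx1h}.'' Neither is true as stated: the target bound carries the factor $p^{m_{j+1}}$, which for small $p$ is far below $\P[V_X\ge v]\le L_{j+1}^{-\gamma(v-1)}$, and the event $\{S_{j+1}(X)\le p\}$ is not a priori an event about the level-$j$ sub-components at all. In \emph{every} case --- including large $V_X$ and large $K_X$ --- one must first prove a deterministic lower bound of the form $S_{j+1}(X)\ge c(v,k)\prod_i S_j(X^j_{T_i})$ before Proposition~\ref{l:totalSizeBoundh} can be invoked with $x\approx-\log p$. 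This is precisely what Lemmas~\ref{l:A4Maph} and~\ref{l:A5Maph} supply, and it needs a different construction from the one you lean on: Proposition~\ref{p:canonical1} only produces the $L_j^4$-family under the restriction $\sum_i|T_i|\le v_0k_0$, so for large $V_X$ or large $K_X$ one must fall back on the single $*$-canonical map of Proposition~\ref{p:starcanonical}, forcing $Y$ to have the \emph{same} domain as $X$ and paying the explicit price $(8k_0)^{-16vk_0^2}100^{-4(j+10)v}2^{-v}2^{-4k_0v}$ for the boundary-curve choice and the goodness of the surrounding $j$-level $\mathbb{Y}$-blocks; the constraints $\gamma k_0>300\alpha\beta$ and $\gamma(v_0-1)>\beta$ are what absorb this cost. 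Two further omissions: your sketch never accounts for the probability that $Y^{j+1}_U$ is valid with a compatible domain (the events $\mathcal{B}$ and $\cj$ in the paper), which is built into the definition \eqref{e:defsx} of $S_{j+1}$ and must be estimated in every case; and in the regime $\tfrac12\le p\le 1-L_{j+1}^{-1}$ one cannot condition on the target being free of bad sub-components, which is why the paper needs the separate two-sided argument of Lemma~\ref{l:A1Map2h} summing over shapes, domains, and bad-sub-component configurations on both sides.
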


Due to an obvious symmetry between our $X$ and $Y$ bounds, we shall state all our bounds in terms of $X$ and $S^\mathbb{X}_{j+1}$ but will similarly hold for $Y$ and $S^\mathbb{Y}_{j+1}$. We shall drop the superscript $\mathbb{X}$ for the rest of this section. 

As a consequence of translation invariance we can assume without loss of generality that $X=X^{j+1}_{U}=X^{*,j+1}(\mathbf{0})$, i.e., $U$ is the lattice component containing the origin. Let $U_{X}$ denote the domain of $X$. Also let $\tilde{U}\subseteq \Z^2$ be such that $X^{j}_{\tilde{U}}=X^{j+1}_{U}=X$. Let $X^{j}_{U_1}, X^{j}_{U_2}, \ldots, X^j_{\tilde{U}_{N_{X}}}$ denote the $j$-level bad subcomponents of $X$. Let $K_{X}$ denote the total size of the bad subcomponents, i.e., $K_{X}=\sum_{i=1}^{N_{X}}|U_i|$.
%
Our first order of business is to obtain a bound on the probability that a component $X$ has either large $V_X$, or large $K_X$ or small $\prod_{i=1}^{N_{X}} S_j(X^j_{U_i})$. The following proposition is the key estimate of the paper.

\begin{proposition}\label{l:totalSizeBoundh}
Let $X$ be as above. For all $v'\geq 1, k,x\geq 0$ we have that
\[
\P\left[V_X \geq v', K_X \geq k, -\log \prod_{i=1}^{N_X}S_j(X^j_{U_i}) > x\right] \leq 500 L_j^{-\gamma k /10}\exp(-x m_{j+1}) L_{j+1}^{-9\gamma (v'-1)}.
\]
\end{proposition}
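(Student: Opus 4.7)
The plan is a union bound over the configuration of bad substructures inside $X$, extracting the three factors on the right-hand side from three distinct probabilistic costs. The main inputs are the recursive tail estimates \eqref{e:tailx1h}--\eqref{e:taily1h} at level $j$, the near-independence of non-neighbouring blocks (Observation~\ref{o:blockprop}.iii), and the fact that a lattice component of size $v''$ contains at least $\lceil v''/25 \rceil$ non-neighbouring bad blocks (Observation~\ref{o:components}).

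For the factors $\exp(-x m_{j+1})$ and $L_j^{-\gamma k/10}$, I would condition on the list of bad $j$-level subcomponents $X^j_{U_1},\dots,X^j_{U_N}$ with sizes $v_i = |U_i|$ and embedding probabilities $p_i = S_j(X^j_{U_i})$. Applying \eqref{e:tailx1h} at level $j$ to each one and using independence across separated components, the joint probability of $\{S_j(X^j_{U_i}) \leq p_i,\, V_{X^j_{U_i}} = v_i\}_{i \leq N}$ is at most $\prod_i p_i^{m_j} L_j^{-\beta - \gamma(v_i - 1)}$. Since $\beta \geq \gamma$ this is bounded by $\prod_i p_i^{m_j}\, L_j^{-\gamma K_X}$; a layer-cake sum over dyadic ranges of the $p_i$'s, together with $\sum_i(-\log p_i) > x$ and $m_j > m_{j+1}$, turns the product into $\exp(-x m_{j+1})$. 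Combined with $K_X \geq k$, this provides the desired $\exp(-x m_{j+1}) L_j^{-\gamma k/10}$ with enough surplus in $L_j^{-9\gamma K_X/10}$ to absorb the combinatorial overhead (shapes of each $U_i$, positions within the $(j+1)$-level cell, and the choice of $N$).

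For the factor $L_{j+1}^{-9\gamma(v'-1)}$, when $V_X = v'' \geq v' > 1$ the lattice component $U$ contains at least $\lceil v''/25 \rceil$ non-neighbouring bad $(j+1)$-level blocks. Each such block must fail one of the conditions of Definition~\ref{d:good}: lattice-block size $>1$ (forcing a conjoined buffer, which by Definition~\ref{d:conjoinedbuffer} requires either more than $k_0$ bad $j$-level components or a non-semi-bad one in the buffer), more than $k_0$ bad $j$-level components inside, a non-semi-bad bad component, or a non-airport $L_{j-1}^{3/2}\times L_{j-1}^{3/2}$ square. Using \eqref{e:tailx1h}--\eqref{e:taily1h} and the parameter constraints $\gamma k_0 > 300 \alpha \beta$ and $8\gamma(v_0-1) > 3\alpha\beta$, each such failure has probability at most $L_j^{-C\gamma}$ for some $C$ with $C/25 \geq 9\alpha$; taking the product over the $\geq v''/25$ non-neighbouring bad blocks and summing the resulting geometric series over $v'' \geq v'$ yields $L_{j+1}^{-9\gamma(v'-1)} = L_j^{-9\alpha\gamma(v'-1)}$.

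The principal obstacle I anticipate is bounding the non-airport condition (iv) of Definition~\ref{d:good}: Definition~\ref{d:airport} couples $\mathbb{X}$ and $\mathbb{Y}$ through embeddings of semi-bad $\mathbb{Y}$-components, and controlling its failure probability requires \eqref{e:taily1h} applied simultaneously to all candidate semi-bad $\mathbb{Y}$-components of size $\leq v_0$, exploiting the $v_0^{-2}k_0^{-4}100^{-j}$ slack built into the airport definition. A secondary subtlety is that a single bad $j$-level component can lie in buffer zones shared between several $(j+1)$-level blocks, so to avoid double counting in the product each such component must be assigned unambiguously to one $(j+1)$-level block; standard enumeration bounds on lattice animals and on the positions of the subcomponents then get absorbed into the leading constant $500$.
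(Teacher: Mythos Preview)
Your proposal identifies the right ingredients---Observation~\ref{o:components}, the recursive tail bound \eqref{e:tailx1h}, independence of non-neighbouring blocks, and the list of ways a $(j+1)$-level block can fail to be good---but the way you assemble them has a structural gap.

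You treat the three factors on the right-hand side as separate probabilistic costs to be multiplied: the first paragraph extracts $\exp(-xm_{j+1})L_j^{-\gamma k/10}$ from the bad $j$-level subcomponents, and the second paragraph extracts $L_{j+1}^{-9\gamma(v'-1)}$ from the badness of $\lceil v''/25\rceil$ many $(j+1)$-level blocks. But these are not independent events. Except for the airport condition, whether a given $(j+1)$-level block is bad is a \emph{deterministic} function of the $j$-level bad subcomponents you have already conditioned on in the first paragraph. So once you have paid $L_j^{-\gamma K_X}$ for the subcomponents, you cannot legitimately pay an additional $L_j^{-C\gamma}$ per bad $(j+1)$-level block and multiply.

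The paper resolves this by a trichotomy that replaces your second paragraph and drives the whole argument. After fixing $v$ and $H\in\mathcal{H}_v$, the event $\{U=H\}$ is contained in $A_1\cup A_2\cup A_3$: either ($A_1$) the total size of bad $j$-level subcomponents in the blow-up exceeds $k_0v/75$, or ($A_2$) there are at least $v/75$ really bad subcomponents, or ($A_3$) at least $v/75$ non-neighbouring cells fail the airport condition while having at most $k_0$ bad subcomponents each. In each case all three factors are extracted simultaneously from a single sum over configurations $\mathcal{T}_{k'}$ of bad subcomponents. In $A_1$ the $v$-factor comes precisely from your ``surplus'' $L_j^{-9\gamma K_X/10}$ via $K_X\geq k_0v/75$ and $k_0>6000\alpha\gamma$; in $A_2$ it comes from the extra cost $L_j^{-(\gamma v_0/4)\wedge(\beta/2)}$ per really bad component, on top of the generic bad-component cost; in $A_3$ it comes from the airport failure, which is shown to be dominated by an event $S'_h$ of probability $\leq L_j^{-10\beta}$ that is \emph{independent} of the subcomponent configuration $\mathcal{T}_{k'}$ restricted to that cell. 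Your proposal hints at this split in listing the failure modes, but does not recognise that it must be the organising principle rather than an afterthought.

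A secondary remark: your layer-cake approach to turning $\prod_i p_i^{m_j}$ into $\exp(-xm_{j+1})$ can be made to work, but the paper uses the cleaner stochastic domination $-\log S_j(X^j_{T_i})\mathbf{1}_{\{\text{bad}\}}\preceq \mathscr{V}_i(1+\mathscr{R}_i)$ with $\mathscr{V}_i\sim\mathrm{Ber}(L_j^{-\gamma t_i/2})$ and $\mathscr{R}_i\sim\mathrm{Exp}(m_j)$, followed by an explicit Gamma tail bound, which packages the passage from $m_j$ to $m_{j+1}$ more transparently.
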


For brevity of notation we shall write $S^*(X)=\prod_{i=1}^{N_X}S_j(X^j_{U_i})$. For $v\geq 1$, let $\mathcal{H}_{v}$ denote the set of all lattice animals of size $v$ containing $\mathbf{0}$. Clearly, we have 
\begin{equation}
\label{e:tailsum1}
\P\left[V_X \geq v', K_X \geq k, -\log S^*(X)> x\right] = \sum_{v=v'}^{\infty}\sum_{H\in \mathcal{H}_v} \P\left[U=H, K_X \geq k, -\log S^*(X)> x\right].
\end{equation}

To begin with, let us analyse the event $\{U=H\}$. Let $\hat{W}\subseteq \Z^2$ be such that $B^j_{\hat{W}}=B^{j+1,{\rm ext}}_{H}$, i.e., $hat{W}$ corresponds to the $j$-level cells contained in the blow-up of the level $(j+1)$ ideal multi-block $B^{j+1}_{H}$. Observe that on $\{U=H\}$, there exists a subset $H^*\subseteq H$ with at least $\lceil \frac{v}{25} \rceil$ vertices that are non neighbouring (in the closed packed lattice of $\Z^2$) and such that for all $h\in H^*$, the ideal multi-block containing $B^{j+1}_{H}$ must correspond to a bad block. Hence, for each $h\in H^*$, one of the following there events must hold for the cell $B^{j+1}(h)$: $(a)$ it has a conjoined buffer zone or the total size of $j$ level bad components contained in its blow up is at least $k_0$, $(b)$ it contains a really bad $j$-level subblock, $(c)$ it fails the airport condition. Hence at least one of these conditions must must hold for at least $\frac{v}{75}$ many $(j+1)$-level cells among the cells corresponding to the vertices of $H^*$. Hence 
$$\{U=H\}\subseteq A_1\cup A_2 \cup A_3$$ where $A_i$ are defined as follows. 
\begin{enumerate}
\item[$\bullet$] Let $A_1$ denote the event that total size of $j$-level bad components contained in $X^j_{\hat{W}}$ is at least $\frac{k_0v}{75}$.

\item[$\bullet$] Let $A_2$ denote the event that the total number of really bad components contained in $X^j_{\hat{W}}$ is at least $\frac{v}{75}$.

\item[$\bullet$] Finally let $A_3$ denote the event that there exists a subset $H'\subseteq H$ of non-neighbouring vertices with $|H'|= \frac{v}{75}$ such that $\cap_{h\in H'} \mathcal{S}_{h}$ holds where $S_{h}$ is the following event. For $h\in H'$, let $G_{h}$ be such that $B^j_{G_{h}}=B^{j+1, {\rm ext}}_{h}$. Then $S_{h}$ denotes the event that the following two conditions hold.
\begin{itemize}
\item[\rm i.] The total size of bad components at level $j$ contained in $B^j_{G_h}$ is at most $k_0$.
\item[\rm ii.] There exists a square $S\subseteq G_{h}$ of size $L_{j}^{3/2}$ such that $B^{j}_{S}$ is not an airport at level $j$.  
\end{itemize}
\end{enumerate}

Fix $v\geq v'$ and $H\in \mathcal{H}_{v}$ for now. The corresponding term in the right hand side of \eqref{e:tailsum1} can be upper bounded by 
$$\sum_{i=1}^{3} \P[-\log S^*(X)>x, K_{X}\geq k, A_i].$$
We shall treat the three cases separately.

\begin{lemma}
\label{l:tailsumcase1}
In the above set-up, we have 
$$\P[-\log S^*(X)>x, K_{X}\geq k, A_1] \leq  2\exp(-xm_{j+1})L_j^{-\gamma k/10}L_{j+1}^{-10\gamma(v-1)}.$$
\end{lemma}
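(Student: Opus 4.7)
The plan is to pass from the product $S^*(X)$ over bad subcomponents of $X$ to the product $S^{**}$ over \emph{all} bad $j$-level components meeting the blow-up $\hat W$, and then exploit the inductive independence of non-adjacent components together with the tail bound \eqref{e:tailx1h}.

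Because every bad $j$-subcomponent of $X$ lies inside $\hat W$ we have $K_X\le K_{\hat W}$, and since each factor $S_j\in(0,1]$ we also have $S^{**}\le S^*(X)$. Therefore
$$
\{-\log S^*(X)>x,\ K_X\ge k,\ A_1\}\ \subseteq\ \{-\log S^{**}>x,\ K_{\hat W}\ge K^*\},\qquad K^*:=\max\!\bigl(k,\tfrac{k_0 v}{75}\bigr),
$$
and it suffices to bound the probability of the latter event.

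I would then enumerate ``skeletons'': the number $s$ of bad $j$-components in $\hat W$, their lattice-animal shapes with sizes $v_1,\dots,v_s$ summing to $K\ge K^*$, and their root positions. Distinct bad $j$-components are separated by good blocks in the close-packed lattice by Definition~\ref{d:latticecomp}(iii), so their random contents are independent by the inductive analogue of Observation~\ref{o:blockprop}(iii) at level $j$. Applying Markov's inequality with exponent $m_{j+1}$ to the skeleton contribution yields
$$
\P\!\left[\prod_i S_j(X^j_{Q_i})\le e^{-x}\right]\ \le\ e^{-xm_{j+1}}\prod_{i=1}^{s}\E\!\left[S_j^{-m_{j+1}}\mathbf{1}_{\{\mathrm{bad},\,V\ge v_i\}}\right].
$$
The gap $m_j-m_{j+1}=2^{-(j+1)}>0$ is crucial here: integrating \eqref{e:tailx1h} against $p^{-m_{j+1}}$ on $(0,1-L_j^{-1}]$ converges and contributes $O(2^{j+1}L_j^{-\beta-\gamma(v_i-1)})$, while the region $S_j\in(1-L_j^{-1},1]$ contributes at most the structural probability of being bad with $V\ge v_i$, which is $O(L_j^{-\gamma\max(1,v_i-1)})$ by \eqref{e:goodconditionx} (for $v_i=1$) and \eqref{e:sizex1h} (for $v_i\ge 2$). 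The per-component moment is therefore $O(L_j^{-\gamma\max(1,v_i-1)})$.

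It remains to perform the combinatorial accounting. There are at most $C_0^{v_i}$ rooted lattice animals of size $v_i$, at most $|\hat W|=O(vL_j^{2(\alpha-1)})$ root positions for each, and at most $\binom{K-1}{s-1}\le 2^K$ size-compositions of $K$ into $s$ parts. Using the identity $\sum_i \max(1,v_i-1)=K-s_{\ge 2}\ge K/2$ (where $s_{\ge 2}$ counts the $v_i\ge 2$, and the inequality follows from $K\ge s_1+2s_{\ge 2}$), the per-skeleton contribution is bounded by $e^{-xm_{j+1}}(2C C_0 v L_j^{2(\alpha-1)})^{s}L_j^{-\gamma K/2}$, which under $\gamma>40\alpha$ reduces to $e^{-xm_{j+1}}L_j^{-\gamma K/3}$ after absorbing polynomials into a large $L_0$. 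Splitting $L_j^{-\gamma K/3}$ against the two-sided bound $K\ge\max(k,k_0v/75)\ge \tfrac{1}{2}(k+k_0v/75)$ extracts $L_j^{-\gamma k/10}$ from the $k$-part, and $L_j^{-80\gamma(v-1)}=L_{j+1}^{-10\gamma(v-1)}$ from the $k_0v/75$-part (using that $k_0\gg \alpha\gamma$ by \S\ref{s:parameters}); a final geometric sum over $s$ and over $K\ge K^*$ is absorbed into the constant $500$. The main obstacle in this argument is the combinatorial bookkeeping, specifically ensuring that the enumeration factor $(vL_j^{2(\alpha-1)})^s$ is dominated by the decay $L_j^{-\gamma K/2}$ uniformly in the skeleton; this relies critically on the parameter constraints $\gamma>40\alpha$ and $k_0\gg\gamma$ from \S\ref{s:parameters}.
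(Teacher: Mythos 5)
Your proposal is correct and follows essentially the same route as the paper: fix the locations/shapes of the bad $j$-level components in the blow-up $\hat W$, use their conditional independence, exploit the gap $m_j-m_{j+1}=2^{-(j+1)}$ to control the product of embedding probabilities, and then do the combinatorial sum, splitting the $L_j^{-\gamma K}$ decay between the $k$-part and the $L_{j+1}^{-10\gamma(v-1)}$-part. The only (cosmetic) difference is that you bound $\P[\prod_i S_j\le e^{-x}]$ by Markov's inequality applied to the negative moments $\E[S_j^{-m_{j+1}}\mathbf{1}_{\mathrm{bad}}]$, whereas the paper stochastically dominates each $-\log S_j$ by a Bernoulli times $(1+\mathrm{Exp}(m_j))$ variable and uses a Gamma tail bound -- the two computations are equivalent.
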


\begin{proof}
Fix $k'\geq k$. Fix a collection $\mathcal{T}_{k'}=\{T_1,T_2, \cdots, T_n\}$ of non-neighbouring subsets of $\hat{W}$ with $\sum_{i}|T_i|=k'$. Let $\cf_{\mathcal{T}_{k'}}$ denote the event that $X^{j}_{T_i}$ is a $j$-level bad component of $\mathbb{X}$ for each $i$. It follows that we have 
\begin{equation}
\label{e:tailfixlocation}
\P\left[U=H, K_X \geq k, -\log S^*(X)> x\right]\leq \sum_{k'=k}^{\infty}\sum_{\mathcal{T}_{k'}} \P[-\log \prod_{i=1}^{n}S^j(X^j_{T_i})>x, \cf_{\mathcal{T}_{k'}}, U=H].
\end{equation}
Notice that on the event $\cf_{\mathcal{T}_{k'}}$, $X^j_{T_i}$ are independent. Observe that on $A_1$, we have $K_X\geq \frac{k_0v}{24}$. Now fix $\mathcal{T}_{k'}$. Set $t_{i}=|T_i|$. Let $\mathscr{V}_i,i=1,2,\ldots ,n$ be a sequence of independent random variables with  $\mbox{Ber}(L_{j}^{-\gamma t_i/2})$ distribution. Let $\mathscr{R}_i, i=1,2,\ldots ,n$ be a sequence of i.i.d.\ $\mbox{exp}(m_j)$ random variables independent of $\{\mathscr{V}_i\}$. It follows from the recursive estimates that 
$$ -\log S_j(X^j_{T_i})1_{\{X^j_{T_i}~\text{bad component}\}} \preceq \mathscr{V}_i(1+\mathscr{R}_i)$$
for all $i$ where $\preceq$ denotes stochastic domination. It follows that  
\begin{eqnarray}
\label{e:keylemmacase1}
\P[-\log \prod_{i=1}^{n}S_j(X^j_{T_i})>x, \cf_{\mathcal{T}_{k'}}, A_1] &\leq & \P[\mathscr{V}_i=1 \forall i]\P[\sum_{i=1}^{n} (1+\mathscr{R}_i)>x]\nonumber\\
&\leq & L_j^{-\gamma k'/2}\P[\sum_{i=1}^{n} \mathscr{R}_{i}> x-n]. 
\end{eqnarray}

Now observe that $\sum_{i=1}^{n} \mathscr{R}_i$ has a $\mbox{Gamma}(n, m_{j})$ distribution and hence
\begin{equation}
\label{e:gammadisth}
\P[\sum_{i=1}^{n} \mathscr{R}_i> x-n] =\int_{(x-n)\vee 0}^{\infty} \frac{m_j^{n}}{(n-1)!} y^{n-1} \exp(-y m_j) dy.
\end{equation}

Following the proof of Lemma 7.3 in \cite{BS14} it follows from this that
\begin{equation}
\label{e:gammadist2h}
\P[\sum_{i=1}^{n} \mathscr{R}_i> x-n] \leq (m_j 2^{j+1}e^{m_{j+1}})^{n} \exp(-x m_{j+1}).
\end{equation}

Since $L_j$ grows doubly exponentially and $n\leq k'$ and $k'>vk_0/24$ it follows from (\ref{e:keylemmacase1}) that for $L_0$ sufficiently large we have 
\begin{equation}
\label{e:keylemma1finalh}
\P[-\log \prod_{i=1}^{n}S_j(X_{T_i})>x, \cf_{\mathcal{T}_{k'}}, A_1]\leq L_j^{-\gamma k'/4}\exp(-xm_{j+1}) \leq L_j^{-\gamma k'/8}\exp(-xm_{j+1})L_{j+1}^{-10\gamma(v-1)}  
\end{equation}
as $k_0> 6000\alpha\gamma$.

Now notice that total number of choices for $\mathcal{T}_{k'}$ is bounded by $16^{v}3^{k'}L_{j+1}^{k'}$ hence summing over all such choices and then summing over all $k'$ from $k$ to $\infty$ we get the desired result as $\gamma>40\alpha$ and $L_j$ is sufficiently large.  
\end{proof}

\begin{lemma}
\label{l:tailsumcase2}
In the set-up of Lemma \ref{l:tailsumcase1}, we have 
$$\P[-\log S^*(X)>x, K_{X}\geq k, A_2] \leq  \exp(-xm_{j+1})L_j^{-\gamma k/10}L_{j+1}^{-10\gamma(v-1)}.$$
\end{lemma}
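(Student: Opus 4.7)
We follow the pattern of Lemma \ref{l:tailsumcase1}. Decompose as in \eqref{e:tailsum1} and \eqref{e:tailfixlocation}: sum over $H \in \mathcal{H}_v$, $k' \geq k$, and collections $\mathcal{T}_{k'}=\{T_1,\ldots,T_n\}$ of disjoint non-neighbouring subsets of $\hat W$ with $\sum_i t_i = k'$ such that each $X^j_{T_i}$ is a bad level-$j$ component. On $A_2 \cap \cf_{\mathcal{T}_{k'}}$ at least $v/75$ of the $X^j_{T_i}$ are really bad; sum over the subset $I \subseteq \{1,\ldots,n\}$ of really-bad indices, paying a combinatorial factor $2^n \leq 2^{k'}$.

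Split $I = I_{\text{large}} \sqcup I_{\text{small}}$ according to whether $t_i > v_0$ or $t_i \leq v_0$; in the latter case, by definition of really bad, $S_j(X^j_{T_i}) \leq 1 - (v_0^5 k_0^4 100^j)^{-1}$. For $i \in I_{\text{small}}$ I invoke the recursive tail estimate \eqref{e:tailx1h} at $x = 1 - (v_0^5 k_0^4 100^j)^{-1}$ (which lies in $(0, 1-L_j^{-1}]$ for $L_0$ large), sharpening the stochastic domination of Lemma \ref{l:tailsumcase1} to $\widetilde{\mathscr{V}}_i \sim \mbox{Ber}(L_j^{-\beta - \gamma t_i/2})$ and yielding an extra multiplicative factor $L_j^{-\beta}$ per $i \in I_{\text{small}}$. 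For $i \in I_{\text{large}}$ (and for $i \notin I$) I retain the basic domination $\mathscr{V}_i \sim \mbox{Ber}(L_j^{-\gamma t_i/2})$; the constraint $t_i \geq v_0+1$ however inflates the baseline product, since the exponent $\gamma k'/2 = \gamma \sum_i t_i / 2$ grows by at least $\gamma v_0 |I_{\text{large}}|/2$ over the $t_i=1$ baseline.

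Since $|I| \geq v/75$, pigeonholing gives either $|I_{\text{small}}| \geq v/150$ or $|I_{\text{large}}| \geq v/150$. In the former case the extra factor is $L_j^{-\beta v/150}$, and since $\beta > 1500\alpha\gamma$ this is $\leq L_{j+1}^{-10\gamma v}$. In the latter case the extra factor is $L_j^{-\gamma v_0 v/300}$, and since $v_0 > 3000\alpha$ this is also $\leq L_{j+1}^{-10\gamma v}$, comfortably absorbing the target $L_{j+1}^{-10\gamma(v-1)}$. Applying the Gamma-tail bound \eqref{e:gammadisth}--\eqref{e:gammadist2h} to $\sum_i \mathscr{R}_i$ using independence across non-neighbouring $T_i$, the joint probability for fixed $\mathcal{T}_{k'}$ and $I$ is bounded by a product of $L_j^{-\gamma k'/2}$, the extra factor above, $(m_j 2^{j+1} e^{m_{j+1}})^n$, and $\exp(-xm_{j+1})$. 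Absorbing the combinatorial counts for $I$ ($\leq 2^{k'}$), for $\mathcal{T}_{k'}$ ($\leq 16^v 3^{k'} L_{j+1}^{k'}$), and for $H \in \mathcal{H}_v$ ($\leq 16^v$) into $L_j^{-\gamma k'/8}$ using $\gamma > 40\alpha$ and $L_0$ large, and writing $L_j^{-\gamma k'/4} \leq L_j^{-\gamma k'/8} \cdot L_j^{-\gamma k'/8}$ with $k' \geq k$ to peel off the $L_j^{-\gamma k/10}$ factor, a final geometric sum over $k' \geq k$ yields the claimed bound.

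The main obstacle is establishing the improved stochastic domination for $i \in I_{\text{small}}$ in a form compatible with the basic domination used for the remaining indices, so that a single application of the Gamma-tail bound still produces the exponent $m_{j+1}$. Once this is set up the rest of the bookkeeping mirrors Lemma \ref{l:tailsumcase1} exactly, and the parameter slack ($\beta > 1500\alpha\gamma$, $v_0 > 3000\alpha$) is more than sufficient to convert the combinatorial hypothesis $|I| \geq v/75$ into the required $L_{j+1}^{-10\gamma(v-1)}$ savings.
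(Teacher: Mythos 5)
Your proposal matches the paper's argument: the paper also fixes $\mathcal{T}_{k'}$, singles out a set $\mathcal{N}$ of $\tfrac{v}{75}$ really-bad indices, and upgrades the stochastic domination there to $\mathscr{W}_i\sim\mbox{Ber}\bigl(L_j^{-\gamma t_i/4-(\gamma v_0/4\wedge\beta/2)}\bigr)$ using exactly your dichotomy (size $>v_0$ versus $S_j\leq 1-(v_0^5k_0^4100^j)^{-1}$ fed into \eqref{e:tailx1h}), before running the same Gamma-tail and combinatorial bookkeeping, with $\gamma v_0\wedge 2\beta>3000\alpha\gamma$ supplying the $L_{j+1}^{-10\gamma(v-1)}$. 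Two small imprecisions, both absorbed by the slack: your parameter $L_j^{-\beta-\gamma t_i/2}$ for $i\in I_{\mathrm{small}}$ overclaims when $t_i=1$ (the tail estimate only yields $L_j^{-\beta-\gamma(t_i-1)}$; the paper's split $\gamma t_i/4+\beta/2$ avoids this), and for $I_{\mathrm{large}}$ you cannot carve a full $\gamma v_0|I_{\mathrm{large}}|/2$ out of the $\gamma k'/2$ baseline while retaining $\gamma k'/4$ — you should instead note that $\gamma(t_i-1)\geq \gamma t_i/2+\gamma(v_0-1)/2$ for $t_i>v_0$, so the bonus sits on top of the baseline rather than being extracted from it. Neither affects the conclusion.
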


\begin{proof}
Fix $k'\geq k$ and $\mathcal{T}_{k'}$ as in the proof of Lemma \ref{l:tailsumcase2}. Fix a subset $\mathcal{N}$ of $[n]$ with $|\mathcal{N}|=\frac{v}{24}$. Now, for $i\in \mathcal{N}$, $X^j_{T_{i}}$ can be a really bad component in one of two ways: (a) $t_{i_{\ell}}\geq v_0$ and (b) $S^j(X_{T_{i_{\ell}}})\leq 1-L_{j}^{-1}$. Observe that it follows from the recursive estimates that for all $i\in \mathcal{N}$ we have 
$$ -\log S_j(X^j_{T_i})1_{\{X^j_{T_{i}}~\text{really bad component}\}} \preceq \mathscr{W}_{i}(1+\mathscr{R}_i)$$
where $\{\mathscr{W}_{i}\}_{i\in \mathcal{N}}$ is a sequence of i.i.d.\  
$\mbox{Ber}(L_{j}^{-\gamma t_{i}/4- (\gamma v_0/4 \wedge \beta/2)})$ distribution. It follows that
\begin{eqnarray}
\label{e:keylemmacase2}
\P[-\log \prod_{i=1}^{n}S_j(X^j_{T_i})>x, \cf_{\mathcal{T}_{k'}}, A_2] &\leq & \sum_{\mathcal{N}} \P[\mathscr{V}_{i}=1 \forall i\in [n]\setminus \mathcal{N}, \mathscr{W}_{i}=1 \forall i\in \mathcal{N}]\P[\sum_{i=1}^{n} (1+\mathscr{R}_i)>x]\nonumber\\
&\leq & \sum_{\mathcal{N}} L_j^{-\gamma k'/4- \frac{v}{300}(\gamma v_0 \wedge 2\beta)}\P[\sum_{i=1}^{n} \mathscr{R}_i> x-n]\nonumber\\
& \leq & \binom{k'}{\frac{v}{75}}L_j^{-\gamma k'/4- \frac{v}{300}(\gamma v_0 \wedge 2\beta)}\P[\sum_{i=1}^{n} U_i> x-n]. 
\end{eqnarray}
Doing the same calculations as in the proof of Lemma \ref{l:tailsumcase1}, we obtain that 
\begin{eqnarray}
\label{e:keylemma2final}
\P[-\log \prod_{i=1}^{k}S_j(X^j_{T_i})>x, \cf_{\mathcal{T}_{k'}}, A_2] &\leq & 2^{k'} L_j^{-\gamma k'/5}\exp(-xm_{j+1})L_j^{-\frac{v}{300}(\gamma v_0 \wedge 2\beta)}\nonumber\\
&\leq & L_j^{-\gamma k'/8}\exp(-xm_{j+1})L_{j+1}^{-10\gamma(v-1)}  
\end{eqnarray}
since $\gamma v_0 \wedge 2\beta >3000\alpha\gamma$.

As before, summing over all $\mathcal{T}_{k'}$ and $k'$ from $k$ to $\infty$ gives the result. 
\end{proof}

\begin{lemma}
\label{l:tailsumcase3}
In the set-up of Lemma \ref{l:tailsumcase1}, we have 
$$\P[-\log S^*(X)>x, K_{X}\geq k, A_3] \leq  \exp(-xm_{j+1})L_j^{-\gamma k/10}L_{j+1}^{-10\gamma(v-1)}.$$
\end{lemma}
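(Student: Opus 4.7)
The proof follows the template of Lemmas \ref{l:tailsumcase1} and \ref{l:tailsumcase2}. Fix $k' \geq k$ and a collection $\mathcal{T}_{k'} = \{T_1, \ldots, T_n\}$ of non-neighbouring subsets of $\hat W$ with $\sum|T_i| = k'$. On $\mathcal{F}_{\mathcal{T}_{k'}}$ the bad sub-components $X^j_{T_i}$ are independent, so the stochastic domination by $\mathscr{V}_i(1+\mathscr{R}_i)$ and the Gamma tail estimate \eqref{e:gammadist2h} of Lemma \ref{l:tailsumcase1} give
\[
\P\!\left[-\log\prod_{i=1}^{n} S_j(X^j_{T_i}) > x,\ \mathcal{F}_{\mathcal{T}_{k'}}\right] \leq L_j^{-\gamma k'/4}\,(m_j\, 2^{j+1}\, e^{m_{j+1}})^n\, \exp(-x m_{j+1}).
\]
Summing this over $\mathcal{T}_{k'}$ and over $k' \geq k$ exactly as in Lemma \ref{l:tailsumcase1} will produce the stated bound, provided we can multiply in an extra factor of $L_{j+1}^{-10\gamma(v-1)}$ coming from the event $A_3$.

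Since $\mathcal{S}_h$ depends only on the $\mathbb{X}$-blocks in $B^{j+1,\mathrm{ext}}(h)$ and these regions are disjoint for non-neighbouring $h$, the events $\{\mathcal{S}_h\}_{h\in H'}$ are conditionally independent given $\mathcal{F}_{\mathcal{T}_{k'}}$. Hence the required factor reduces to a single-cell bound of the form
\[
\P[\mathcal{S}_h \mid \mathcal{F}_{\mathcal{T}_{k'}}] \leq L_j^{-3000\alpha\gamma},
\]
whose $(v/75)$-th power, multiplied by the $\binom{v}{v/75}$ choices of $H'$, yields the desired $L_{j+1}^{-10\gamma v}$ using $v_0 > 3000\alpha$ to absorb the combinatorial factor.

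To obtain the single-cell bound I would argue purely on the $\mathbb{X}$-side. Airport failure at an $L_j^{3/2}\times L_j^{3/2}$ square $S\subseteq G_h$ for some semi-bad $\mathbb{Y}$-component $Y^*$ of shape $U$ (with $|U|\leq v_0$) requires at least $v_0^{-2}k_0^{-4}100^{-j}N(S,U)$ positions $H\subseteq S$ of shape $U$ at which $X^j_H$ fails to embed into $Y^*$ (or fails $A^H_\mathrm{valid}$). The key structural input---designed to avoid an infeasible union bound over the exponentially many possible realizations of $Y^*$---is that a \emph{clean} $X^j_H$, whose $(j{-}1)$-level sub-blocks are all good, embeds into every semi-bad $Y^*$ of matching shape, via an $\alpha$-canonical map constructed from the geometric results of Section \ref{s:construction} and the inductive hypothesis \eqref{e:goodvgoodh}. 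Consequently, airport failure forces at least $v_0^{-2}k_0^{-4}100^{-j}N(S,U) \gtrsim L_j^{3-o(1)}$ positions in $S$ to have unclean $X^j_H$, i.e.\ a rare concentration of bad $(j{-}1)$-level substructure in $S$, which is controlled by the recursive size and tail estimates \eqref{e:sizex1h}--\eqref{e:tailx1h} combined with \eqref{e:goodconditionx}.

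The principal obstacle is the structural embedding claim for clean $X^j_H$ into an arbitrary semi-bad $Y^*$; it is precisely this step that converts airport failure---\emph{a priori} a statement quantified over all semi-bad $\mathbb{Y}$-configurations---into a purely $\mathbb{X}$-side rare event. Once it is established, the remainder is routine: a polynomial union bound over the $O(L_{j+1}^4)$ squares $S$ and the bounded number of shapes $U$ with $|U|\leq v_0$, a Chernoff concentration on non-overlapping translates of $U$ in $S$ (which furnish independent embedding trials), and the Gamma-tail summation over $\mathcal{T}_{k'}$ and $k'\geq k$, all carried out exactly as in Lemma \ref{l:tailsumcase1}.
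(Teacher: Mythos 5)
There is a genuine gap at exactly the point you flag as the ``principal obstacle'': the claim that a \emph{clean} $X^j_H$ (all $(j-1)$-level sub-blocks good) embeds into \emph{every} semi-bad $Y^*$ of matching shape is false, and it cannot be derived from \eqref{e:goodvgoodh}. That hypothesis only gives good-into-good. To have $X^j_H\hookrightarrow Y^*$, Definition \ref{d:embedding} also requires you to find regions $G_\theta^{-1}(T'_i)$ of $X^j_H$ that embed into the bad $(j-1)$-level sub-components $Y^{j-1}_{T'_i}$ of $Y^*$ — and a semi-bad component is still bad, so it has such sub-components. A good $X$-region need not embed into a bad $Y$-region: recursing down, one eventually hits level-$0$ bad $\mathbb{Y}$-blocks in class $\mathbf{0}$ or $\mathbf{1}$, which accept only one bit value, while the corresponding $X$-bit is random. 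Semi-badness guarantees $S_j(Y^*)\geq 1-v_0^{-5}k_0^{-4}100^{-j}$, i.e.\ a \emph{high probability} over a random $X$-piece, not success for every clean one. So airport failure cannot be converted into a purely $\mathbb{X}$-side event about concentration of bad substructure, and the rest of your argument (the single-cell bound $\P[\mathcal{S}_h]\leq L_j^{-3000\alpha\gamma}$ and everything downstream) has no foundation.

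The paper does not avoid the union bound over realizations of $Y^*$; it performs it. Following Lemma \ref{l:airportfinal} (via Lemmas \ref{l:airport1} and \ref{l:airport2}): for each \emph{fixed} semi-bad $Y$, partition the translates of its shape inside $S$ into $O(v_0^2)$ families of non-neighbouring (hence independent) positions and apply a Chernoff bound using $S_j(Y)\geq 1-v_0^{-5}k_0^{-4}100^{-j}$, getting failure probability $e^{-cL_j^{5/2}}$; then observe that the embedding events depend on $Y$ only through the level-$0$ classification of its cells into $\{\mathbf{0},\mathbf{1},\mathrm{good}\}$, so the union bound is over at most $8^{v_0}3^{4v_0L_j^2}$ cases, which is beaten because $5/2>2$. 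This yields an event $S'_h\supseteq S_h$, chosen independent of the conditioned bad components $X^j_{I_h}$ (the airport property is verified while ignoring the at most $k_0$ cells of $I_h$ — a decoupling your sketch also glosses over), with $\P[S'_h]\leq L_j^{-10\beta}$; the factor $L_j^{-10\beta v/75}$ then combines with the standard Gamma-tail sum over $\mathcal{T}_{k'}$ and $k'\geq k$, and $\beta>75\alpha\gamma$ gives the final $L_{j+1}^{-10\gamma(v-1)}$. Your Gamma-tail and combinatorial bookkeeping match the paper's, but the core probabilistic estimate for $A_3$ needs to be the Chernoff-plus-union-bound argument, not a deterministic embedding claim.
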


\begin{proof}
First fix $H'\subseteq H$ as in the definition of $A_3$. Fix $k'\geq k$ and $\mathcal{T}_{k'}$ as before. Now fix $h\in H'$. Set $I_{h}=(\cup_{i=1}^n T_i) \cap G_{h}$ and observe that by hypothesis $|I_h|\leq k_0$. It is not too hard to see that there exists an event $S'_{h}$ such that $S_{h}\subseteq S'_{h}$ and $S'_{h}$ is independent of $X^j_{I_h}$ and $P[S'_{h}]\leq L_{j}^{-10\beta}$. Indeed, that a square is an airport can be verified, even without checking a limited number of cells, and this can be established using arguments identical to the proof of Lemma \ref{l:airportfinal}, we omit the details.
 
Repeating the same calculations as in the proofs of Lemma \ref{l:tailsumcase1} and Lemma \ref{l:tailsumcase2} it then follows that   

\begin{eqnarray*}
\P[-\log S^*(X)>x, K_{X}\geq k, A_3] &\leq &  \sum_{H'}\sum_{k'=k}^{\infty}\sum_{\mathcal{T}_{k'}} \P[\mathcal{F}_{\mathcal{T}_{k'}}]L_j^{-10\beta v/75}\\
&\leq & 2\binom{v}{\frac{v}{75}}\exp(-xm_{j+1})L_{j}^{-\gamma k/10}L_j^{-10\beta v/75}\\
&\leq & \exp(-xm_{j+1})L_{j}^{-\gamma k/10}L_{j+1}^{-10\gamma(v-1)}
\end{eqnarray*}
as $L_j$ is sufficiently large and $\beta > 75\alpha \gamma$.
  
%
%
\end{proof}

Putting together all the cases we are now ready to prove Proposition \ref{l:totalSizeBoundh}.

\begin{proof}[Proof of Proposition \ref{l:totalSizeBoundh}]
Notice that for a fixed $v$, we have $|\mathcal{H}_{v}|\leq 8^v$. We now get from \eqref{e:tailsum1}, Lemmas \ref{l:tailsumcase1}, \ref{l:tailsumcase2}, \ref{l:tailsumcase3} by summing over all $H\in \mathcal{H}_{v}$ and then finally summing over all $v$ from $v'$ to $\infty$ that 

\begin{eqnarray*}
\P\left[V_X \geq v', K_X \geq k, -\log S^*(X)> x\right] &\leq & \sum_{v=v'}^{\infty} 8^v50L_{j}^{-\gamma k/10}\exp(-xm_{j+1})L_{j+1}^{-10\gamma(v-1)}\\
&\leq & 500\exp(-xm_{j+1})L_{j}^{-\gamma k/10}L_{j+1}^{-9\gamma(v'-1)},
\end{eqnarray*}
this completes the proof of the proposition.
\end{proof}

We now move to the proof of Theorem \ref{t:tailh}. Our proof will be divided into four cases depending on the size of $X$, the total size of its bad components and how bad the bad components are. In each one we will use  different $\alpha$-canonical map or maps to get good lower bounds on the probability that $X=X^{j+1}_{U} \hookrightarrow Y^{j+1}_{U}$. We now present our four cases.
%

\subsection{Case 1}

The first case is the generic situation where the components are of small size, have small total size of bad sub-components whose embedding probabilities are not too small. For a $(j+1)$-level component $X$, let $N_{X}$ denote the number of bad $j$ level components contained in $X$ and let $X^j_{T_1}, X^j_{T_2},\ldots, X^j_{T_{N_X}}$ denote the bad subcomponents. Let $K_{X}=\sum_{i=1}^{N_X}|T_i|$ denote the total size of bad subcomponents in $X$. We define the class of blocks $\A{1}_{X,j+1}$  as
\[
\A{1}_{X,j+1} := \left\{X: V_{X}\leq v_0, K_X\leq k_0v_0,  \prod_{i=1}^{N_X}S_j(X^j_{T_i}) \geq L_j^{-1/3} \right\}.
\]

First we show that this case holds with extremely high probability. 

\begin{lemma}\label{l:A1Sizeh}
The probability that $X\in \A{1}_{X,j+1}$ is bounded below by
\[
\P[X\not\in \A{1}_{X,j+1}] \leq L_{j+1}^{-3\beta}L_{j+1}^{-\gamma(v_0-1)}.
\]
\end{lemma}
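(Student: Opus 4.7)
The plan is to decompose the bad event $\{X \notin \A{1}_{X,j+1}\}$ into three sub-events according to which of the three defining conditions of $\A{1}_{X,j+1}$ fails, and then bound each sub-event using the key size/probability estimate Proposition~\ref{l:totalSizeBoundh}. Concretely, write
\[
\{X\notin \A{1}_{X,j+1}\} \subseteq \{V_X > v_0\} \;\cup\; \{K_X > k_0 v_0\} \;\cup\; \Bigl\{-\log \textstyle\prod_{i=1}^{N_X} S_j(X^j_{T_i}) > \tfrac{1}{3}\log L_j\Bigr\},
\]
and estimate each piece by a single application of Proposition~\ref{l:totalSizeBoundh} with a different choice of the triple $(v', k, x)$.

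For the first piece, apply the proposition with $v' = v_0+1$, $k = 0$, $x = 0$ to get a bound of the form $500\, L_{j+1}^{-9\gamma v_0}$. This dominates the target $L_{j+1}^{-3\beta} L_{j+1}^{-\gamma(v_0-1)}$ because the parameter constraint $8\gamma(v_0-1) > 3\alpha\beta$ from \S\ref{s:parameters} implies $9\gamma v_0 > 3\beta + \gamma(v_0-1)$ (using $L_{j+1} \geq L_j$ and $\alpha > 1$). For the second piece, apply it with $v' = 1$, $k = k_0 v_0 + 1$, $x = 0$; this gives $500\, L_j^{-\gamma(k_0 v_0+1)/10}$. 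Converting $L_{j+1} = L_j^{\alpha}$, we need $\gamma k_0 v_0/10 \geq 3\alpha\beta + \alpha\gamma(v_0-1)$, which follows comfortably from $\gamma k_0 > 300\alpha\beta$ together with $v_0 \geq 1$ and $k_0 > 10\gamma$. For the third piece, apply it with $v' = 1$, $k = 0$, $x = \tfrac{1}{3}\log L_j$; this yields $500\, L_j^{-m_{j+1}/3}$. Since $m_{j+1} \geq m$ and the parameter constraint $m \geq 9\alpha\beta + 3\alpha\gamma v_0$ gives $m/3 \geq 3\alpha\beta + \alpha\gamma(v_0-1)$, this is again at most $L_{j+1}^{-3\beta} L_{j+1}^{-\gamma(v_0-1)}$.

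Summing the three contributions and using that $L_0$ is sufficiently large to absorb the factors of $500$ (and an extra factor of $3$), we obtain the desired bound
\[
\P[X \notin \A{1}_{X,j+1}] \leq L_{j+1}^{-3\beta} L_{j+1}^{-\gamma(v_0-1)}.
\]

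The main (and essentially only) work is the bookkeeping of the parameter constraints: Proposition~\ref{l:totalSizeBoundh} is already the heavy lifting, so the lemma amounts to plugging in the right triples $(v', k, x)$ and checking that the list of inequalities in \S\ref{s:parameters} was engineered precisely so that each of the three tail contributions beats the target bound. No new probabilistic input or geometric construction is required.
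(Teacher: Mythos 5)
Your proof is correct and follows exactly the route the paper intends: the paper's own proof simply cites Proposition~\ref{l:totalSizeBoundh} together with the three parameter constraints and omits the details, and your three-way decomposition with the triples $(v_0+1,0,0)$, $(1,k_0v_0+1,0)$, $(1,0,\tfrac13\log L_j)$ is precisely the omitted bookkeeping. The parameter verifications all check out, so nothing further is needed.
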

\begin{proof}
This follows from Proposition \ref{l:totalSizeBoundh} by noting $8\gamma(v_0-1)> 3\alpha\beta$, $m\geq 9\alpha \beta +3\alpha \gamma v_0$ and  $\gamma k_0v_0> 300\alpha \beta + \alpha \gamma v_0$. We omit the details.
\end{proof}

Next we show that $S_{j+1}(X)$ is at least $1/2$ for all $X\in \A{1}_{X,j+1}$.

\begin{lemma}\label{l:A1Map1h}
Condition on $X=X^{j+1}_{U}\in \A{1}_{X,j+1}$ where $U\subseteq \Z^2$ and $|U|\leq v_0$. Let the bad $j$ level components of $X$ be $X^j_{T_1}, X^j_{T_2},\ldots , X^j_{T_{N_X}}$ such that $\sum_{i=1}^{N_{X}}|T_i|\leq v_0k_0$. Then we have  
\[
S_{j+1}(X) \geq \frac{1}{2}.
\]
\end{lemma}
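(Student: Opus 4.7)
The plan is to apply Proposition~\ref{p:canonical1} to construct $L_j^4$ candidate $\alpha$-canonical maps from $\hat{U}_X$ to $\hat{U}_Y$ and show that with probability at least $1/2$ at least one of them gives an embedding of $X$ into $Y$ through Lemma~\ref{l:embedcond}. Throughout, I condition on $X$. First I would restrict to the event $A^{\mathbb{Y}}_{\rm valid}$ that no external buffer zone of $B^{j+1}_U$ is conjoined in $\mathbb{Y}$; this holds with probability close to $1$ by a union-bound argument in the spirit of Proposition~\ref{l:totalSizeBoundh} applied to the $\mathbb{Y}$-buffer. On $A^{\mathbb{Y}}_{\rm valid}$ the domain $\hat{U}_Y$ is a valid potential domain of $B^{j+1}_U$, so I may invoke Proposition~\ref{p:canonical1} with $\tilde{U}_1 = \hat{U}_X$, $\tilde{U}_2 = \hat{U}_Y$, $\mathcal{T} = \{T_1,\dots,T_{N_X}\}$ and $\mathcal{T}' = \emptyset$ to get maps $\{\Upsilon_h\}_{h \in [L_j^2]^2}$, the images $\Upsilon_h(T_i)$ being the translates of $\Upsilon_{1,1}(T_i)$ by $(h_1-1, h_2-1)$.

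Next I would pick a subfamily $\mathcal{H}^* \subseteq [L_j^2]^2$ of shifts at pairwise $\ell_\infty$-distance at least $L_j^{3/2}$; a packing argument gives $|\mathcal{H}^*| \geq L_j$. Because $\sum_i |T_i| \leq v_0 k_0$ is a bounded constant (so each $T_i$ has bounded diameter), the collection $\{\Upsilon_h(T_i) : h \in \mathcal{H}^*,\, i \in [N_X]\}$ consists of pairwise disjoint and non-neighbouring subsets of $\hat{U}_Y$. I then introduce the events: $\mathcal{G}$, that every level-$j$ $\mathbb{Y}$-block in $\hat{U}_Y$ not lying in $\bigcup_{h \in \mathcal{H}^*, i} \Upsilon_h(T_i)$ is a good singleton, and $\mathcal{E}_h$ (for $h \in \mathcal{H}^*$), that for every $i$ the set $\Upsilon_h(T_i)$ is a union of level-$j$ $\mathbb{Y}$-blocks with the same shape as $T_i$ and $X^j_{T_i} \hookrightarrow Y^j_{\Upsilon_h(T_i)}$. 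On $A^{\mathbb{Y}}_{\rm valid} \cap \mathcal{G} \cap \mathcal{E}_h$, Lemma~\ref{l:embedcond} would yield $X \hookrightarrow Y$ via $\Upsilon_h$.

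Probabilistically, since $|U| \leq v_0$ contains at most $v_0 L_j^{2\alpha - 2}$ level-$j$ cells and each is good with probability at least $1 - L_j^{-\gamma}$ by \eqref{e:goodconditiony}, a union bound yields $\P[\mathcal{G}^c \mid X, A^{\mathbb{Y}}_{\rm valid}] \leq v_0 L_j^{2\alpha - 2 - \gamma}$, which is negligible since $\gamma > 40\alpha$. For each fixed $h \in \mathcal{H}^*$, by translation invariance of the distribution of level-$j$ $\mathbb{Y}$-blocks together with the definition of $S_j$,
\[
\P[\mathcal{E}_h \mid X, A^{\mathbb{Y}}_{\rm valid}, \mathcal{G}] \geq \prod_{i=1}^{N_X} S_j(X^j_{T_i}) \geq L_j^{-1/3},
\]
and by Observation~\ref{o:blockprop}(iii), since for distinct $h, h' \in \mathcal{H}^*$ the regions $\bigcup_i \Upsilon_h(T_i)$ and $\bigcup_i \Upsilon_{h'}(T_i)$ are non-neighbouring, the events $\{\mathcal{E}_h\}_{h \in \mathcal{H}^*}$ are conditionally independent given $X, A^{\mathbb{Y}}_{\rm valid}, \mathcal{G}$. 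Hence
\[
\P\Big[\bigcap_{h \in \mathcal{H}^*} \mathcal{E}_h^c \,\Big|\, X, A^{\mathbb{Y}}_{\rm valid}, \mathcal{G}\Big] \leq (1 - L_j^{-1/3})^{L_j} \leq e^{-L_j^{2/3}},
\]
and combining the three factors gives $S_{j+1}(X) \geq 1/2$ for $L_0$ large.

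The hard part will be rigorously justifying the conditional independence of the $\mathcal{E}_h$'s: I will need to use Observation~\ref{o:blockprop}(iii) and the recursive construction of Section~\ref{s:prelimh} to argue that the level-$j$ block structure of $\mathbb{Y}$ in different image regions $\Upsilon_h(T_i)$ is independent given the outer good-singleton structure, and also to check that conditioning on $\mathcal{G}$ does not meaningfully degrade the bound $\prod_i S_j(X^j_{T_i})$ appearing in $\P[\mathcal{E}_h]$ (the worry being that forcing outer level-$j$ blocks to be good might bias the inner configuration). Both are expected to be routine consequences of the far-apart independence at level $j$, but constitute the technical core of the lemma.
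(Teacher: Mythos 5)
Your overall strategy is the right one and matches the paper's: use Proposition~\ref{p:canonical1} to produce $L_j^4$ translated trial maps, extract $\sim L_j$ of them with pairwise non-neighbouring images, and argue that with probability $1-(1-L_j^{-1/3})^{L_j}$ at least one trial succeeds, while the surrounding goodness costs only a constant factor. However, two of your structural choices create genuine gaps.

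First, you target the random domain $\hat{U}_Y$ and condition on it being valid. The domain of $Y^{j+1}_U$ is chosen adaptively as a function of where the bad $j$-level $\mathbb{Y}$-components sit, so your maps $\Upsilon_h$ are $\mathbb{Y}$-measurable and the claimed bound $\P[\mathcal{E}_h \mid X, A^{\mathbb{Y}}_{\rm valid}, \mathcal{G}] \geq \prod_i S_j(X^j_{T_i})$ does not follow from the definition of $S_j$ (which is an unconditional probability over $\mathbb{Y}$ for a fixed, deterministic target region); conditioning on $\mathcal{G}$ compounds this, since forcing the blocks outside the images to be good genuinely changes the law inside. The paper avoids all of this by mapping $\hat{U}_X$ to the \emph{deterministic} ideal multi-block $B^{j+1}_U$, defining $\cd_h$ with fixed target regions so that $\P[\cd_h\mid X]=\prod_i S_j(X^j_{T_i})$ exactly, and then separately paying a constant factor for the event $\mathcal{B}$ that $Y$'s domain happens to equal $B^{j+1}_U$ (probability $\geq (1-10^{-(j+10)})^{4v_0}$ by the boundary-selection rule). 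It also combines the events by a union bound on complements, $\P[\cd\cap\cj\cap\mathcal{B}]\geq \P[\cj\cap\mathcal{B}]-\P[\cd^c]$, rather than by a chain of conditional probabilities, which sidesteps exactly the conditioning you flag at the end. Your route could be repaired by summing over all potential domains and conditioning on the bad-component locations, but that is the machinery of Lemma~\ref{l:A1Map2h}, not a routine footnote.

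Second, your event $\mathcal{G}$ excludes \emph{all} image regions $\bigcup_{h\in\mathcal{H}^*, i}\Upsilon_h(T_i)$ from the goodness requirement, but Lemma~\ref{l:embedcond} applied with the particular successful map $\Upsilon_h$ requires every block outside $\bigcup_i \Upsilon_h(T_i)$ to be good — including the blocks in $\Upsilon_{h'}(T_i)$ for $h'\neq h$, about which $\mathcal{G}\cap\mathcal{E}_h$ says nothing. So the embedding does not follow on the event you construct. The paper's Case 1 resolves this bluntly by demanding (event $\cj$) that \emph{every} $j$-level block in the blow-up be good, which costs only the constant $9/10$; the more delicate bookkeeping you would otherwise need (tracking which trials reveal badness) is exactly what the events $\cg_h$ and $\cm$ do in the Case 2 proof of Lemma~\ref{l:A2Maph}, where the prefactor can no longer be wasted.
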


\begin{proof}
Let $U_{X}$ denote the domain of $X$, $C_{X}$ denote the boundary of $X$, and let $\tilde{U}\subseteq \Z^2$ be such that $X=X^j_{\tilde{U}}$. By Proposition \ref{p:canonical1}, there exist $L_j^4$ $\alpha$-canonical maps at $j+1$-th level $\{\Upsilon^{j+1}_h=\Upsilon_h: h\in [L_j^2]^2\}$ from $U_{C}$ to $B^{j+1}$ with respect to $\mathcal{T}={T_1,T_2,\ldots, T_{K_{X}}}$ and $\emptyset$ such that $\Upsilon_{h}(T_i)$ are different for all $h\in [L_j^2]^2$.  

Clearly there exists a subset $\mathcal{H}\subset [L_j^2]^2$ with $|\mathcal{H}|= L_j<\lfloor L_j^4/ 100v_0^4k_0^4\rfloor$ so that for all $i_1\neq i_2$ and $h_1,h_2\in\mathcal{H}$ we have that $\Upsilon_{h_1}(T_{i_1})$ and  $\Upsilon_{h_2}(T_{i_2})$ are disjoint and non-neighbouring. We will estimate the probability that one of these maps work.

For $h\in \mathcal{H}$ and $i\in [N_X]$, let $\cd^i_h$ denote the event

\[
\cd^i_h=\left\{Y^j_{\Upsilon_{h}(T_i)}~\text{valid}, X^j_{T_i} \hookrightarrow Y^j_{\Upsilon_{h}(T_i)} \right\}.
\]

Since we are only trying out non-neighbouring components, these events are conditionally independent given $X$ and setting
\[
\cd_h=\bigcap_{1\leq i \leq N_X} \cd^i_h
\]
we get 
$$\P[\cd_h\mid X]= \prod_{i=1}^{N_X}S_j(X_{T_i})\geq L_j^{-1/3}.$$ 
By construction of $\ch$, we also get that $\{\cd_{h}:h\in \ch\}$ are mutually independent given $X$ and hence setting $\cd=\cup_{h\in \ch} \cd_{h}$ we have 

\begin{equation}
\label{e:A1Map1h1}
\P[\cd \mid X]\geq 1-(1-L_{j}^{-1/3})^{L_j}\geq 1-L_{j+1}^{-3\beta}.
\end{equation}

Let $\mathcal{B}$ denote the event that $Y^{j+1}_U$ is valid and has domain $B^{j+1}_{U}$. Let $\tilde{U}_2\subseteq \Z^2$ be such that $\tilde{U}_2$ corresponds to the $j$-level cells contained in the blow-up of $B^{j+1}_{U}$, i.e., $B^{j}_{\tilde{U}_2}=B^{j+1,{\rm ext}}_{U}$. Let $\cj$ denote the event
\[
\cj=\left\{Y^j(\ell) \hbox{ is good for all } \ell\in \tilde{U}_2\right\}.
\]
By Lemma \ref{l:embedcond} on $\cd \cap \cj \cap \mathcal{B}$, there exists an embedding and hence $S_{j+1}(X)\geq \P[\cd \cap \cj \cap \mathcal{B}\mid X]$. Using \eqref{e:goodconditiony} at level $j$ we get
\begin{equation}\label{e:A1MapDh}
\P[\cj \mid X] \geq \left(1- L_j^{-\gamma}\right)^{4v_0L_j^{2\alpha-2}} \geq 9/10
\end{equation}
as $\gamma>2\alpha$ and $L_j$ is sufficiently large. Notice that on $\cj$, $Y^{j+1}_{U}$ is valid and $B^{j+1}_{U}$ is a valid potential domain of $Y^{j+1}_{U}$ and by construction
\begin{equation}
\label{e:A1Map1h2}
\P[\mathcal{B} \cap \cj \mid X]\geq \P[\cj\mid X]\P[\mathcal{B}\mid \cj, X] \geq \frac{9}{10} (1-10^{-(j+10)})^{4v_0}\geq \frac{3}{5}. 
\end{equation}
The lemma follows from \eqref{e:A1Map1h1} and \eqref{e:A1Map1h2} since $L_0$ is sufficiently large. 
\end{proof} 
 
Now to improve upon the above estimate, we want to relax the condition that $Y$ does not contain any bad-components by weaker conditions that define a generic block. We proceed as follows. 

Let $\mathcal{S}=\mathcal{S}_{v_0}$ denote the set of all lattice animals containing the set $\{0\}$ and having size at most $v_0$. For $S\in \mathcal{S}_{v_0}$, let $\mathcal{U}_{S}$ denote the set of all potential domains for $X^{j+1}_{S}$ (or $Y^{j+1}_{S}$). Also set $S_0=\cup_{S\in \mathcal{S}}S$. Let $T_1,T_2, \ldots , T_{N_{X}}$ be subsets of $\Z^2$ such that $\{X^j_{T_i}:i\in [N_X]\}$ are the $j$-level bad subcomponents in $X^{j+1}_{S_0}$. Similarly let $T'_1,T'_2, \ldots , T'_{N'_{Y}}$ be subsets of $\Z^2$ such that $\{Y^j_{T_i}:i\in [N'_Y]\}$ denote the $j$-level bad subcomponents in $Y^{j+1}_{S_0}$. Fix $S\in \mathcal{S}$ and $\tilde{U}\in \mathcal{U}_{S}$. Let $\tilde{U}\subseteq \Z^2$ denote the set such that $X^{j+1}_{S}=X^j_{\hat{U}}$ on the event that $\tilde{U}$ is the domain of $X^{j+1}_{S}$. Let $B_{\tilde{U},X}=B_{\tilde{U},S,X}=\{i\in [N_X]: T_i\subseteq \hat{U}\}$ and let $B_{\tilde{U},Y}$ be defined similarly. Let 
$$\ce_{\tilde{U},X}=\left\{X^{j+1}_S~\text{valid},\tilde{U}~\text{valid},\sum_{i\in B_{\tilde{U},X}}|T_i|\leq k_0v_0, \prod_{i\in B_{\tilde{U},X}}S_j(X^j_{T_i}) \geq L_j^{-1/3}\right\}$$   
and
$$\ce_{\tilde{U},Y}=\left\{Y^{j+1}_S~\text{valid},\tilde{U}~\text{valid},\sum_{i\in B_{\tilde{U},Y}}|T'_i|\leq k_0v_0, \prod_{i\in B_{\tilde{U},Y}}S_j(Y^j_{T'_i}) \geq L_j^{-1/3}\right\}.$$ 

Finally let $\mathcal{B}_{\tilde{U},X}$ (resp.\ $\mathcal{B}_{\tilde{U},Y}$) denote the event that the domain of $X^{j+1}_{S}$ (resp.\ $Y^{j+1}_{S}$) is $\tilde{U}$. We have the following lemma.

\begin{lemma}\label{l:A1Map2h}
We have that
\begin{equation}\label{e:A1MapE1h}
\sum_{S\in \mathcal{S}}\sum_{\tilde{U}_1\in \mathcal{U}_S}\sum_{\tilde{U}_2\in \mathcal{U}_S} \P[X^{j+1}_{S}\not\hookrightarrow Y^{j+1}_{S}, \ce_{\tilde{U}_1,X}, \ce_{\tilde{U}_2,Y}, \mathcal{B}_{\tilde{U}_2,X}, \mathcal{B}_{\tilde{U}_2,Y}] \leq L_{j+1}^{-3\beta}L_{j+1}^{-\gamma(v_0-1)}.
\end{equation}
\end{lemma}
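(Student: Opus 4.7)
The plan is to adapt the proof of Lemma~\ref{l:A1Map1h} to the two-sided setting, where the domains $\tilde{U}_1,\tilde{U}_2$ are arbitrary potential domains and both $\mathbb{X}$ and $\mathbb{Y}$ may carry bad sub-components in the relevant regions. For each fixed triple $(S,\tilde{U}_1,\tilde{U}_2)$ I will show that the conditional failure probability given $\mathcal{F} := \ce_{\tilde{U}_1,X}\cap \mathcal{B}_{\tilde{U}_1,X}\cap \ce_{\tilde{U}_2,Y}\cap \mathcal{B}_{\tilde{U}_2,Y}$ decays super-polynomially in $L_j$, after which a trivial union bound over the (huge but $L_j$-independent) number of such triples will give the required estimate.

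Let $\mathcal{T}=\{T_i\}_{i\in B_{\tilde{U}_1,X}}$ and $\mathcal{T}'=\{T'_{i'}\}_{i'\in B_{\tilde{U}_2,Y}}$ be the bad $j$-level sub-components in the two domains; the $\mathcal{E}$-conditions give $\sum|T_i|,\sum|T'_{i'}|\leq v_0k_0$ and $\prod_i S_j(X^j_{T_i}),\prod_{i'}S_j(Y^j_{T'_{i'}})\geq L_j^{-1/3}$. I apply Proposition~\ref{p:canonical1} to the pair $(\tilde{U}_1,\tilde{U}_2)$ with families $\mathcal{T},\mathcal{T}'$, producing $L_j^4$ many $\alpha$-canonical maps $\{\Upsilon_h\}_{h\in[L_j^2]^2}$ from $\tilde{U}_1$ to $\tilde{U}_2$ whose images and preimages of $\mathcal{T}$ and $\mathcal{T}'$ shift linearly with $h$. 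Since $|\mathcal{T}|+|\mathcal{T}'|\leq 2v_0k_0$, a pigeonhole argument extracts a sub-family $\mathcal{H}\subseteq[L_j^2]^2$ with $|\mathcal{H}|=L_j$ such that $\{\Upsilon_h(T_i)\}_{h\in\mathcal{H},i}$, $\{\Upsilon_h^{-1}(T'_{i'})\}_{h\in\mathcal{H},i'}$, $\mathcal{T}$ and $\mathcal{T}'$ are all pairwise disjoint and non-neighbouring.

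For $h\in\mathcal{H}$ let $\mathcal{D}_h$ be the event that $\Upsilon_h$ witnesses an embedding of $X^{j+1}_S$ into $Y^{j+1}_S$. The key structural observation is that on $\mathcal{F}$, every $j$-level sub-block of $\tilde{U}_1$ (resp.\ $\tilde{U}_2$) outside $\cup\mathcal{T}$ (resp.\ $\cup\mathcal{T}'$) is automatically good, since by definition the $T_i$'s (resp.\ $T'_{i'}$'s) enumerate \emph{all} bad sub-components. Combined with \eqref{e:goodvgoodh} and Definition~\ref{d:embedding}, this reduces $\mathcal{D}_h$ to the conjunction of the embeddings $X^j_{T_i}\hookrightarrow Y^j_{\Upsilon_h(T_i)}$ and $X^j_{\Upsilon_h^{-1}(T'_{i'})}\hookrightarrow Y^j_{T'_{i'}}$ together with mild goodness/shape-validity conditions at the perturbed locations, the latter costing only $O(v_0k_0 L_j^{-\gamma})$ via \eqref{e:goodconditionx} and \eqref{e:goodconditiony}. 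By the non-neighbouring arrangement of $\mathcal{H}$ together with Observation~\ref{o:blockprop}(iii) at level $j$, the events $\mathcal{D}_h$ are conditionally independent given $\mathcal{F}$ and the identities of the bad sub-components, and each satisfies $\P[\mathcal{D}_h\mid\mathcal{F}]\geq \tfrac{1}{2}L_j^{-2/3}$ using the two $\mathcal{E}$-product lower bounds.

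Independence then yields
\[
\P\!\left[X^{j+1}_S\not\hookrightarrow Y^{j+1}_S\,\Big|\,\mathcal{F}\right] \leq \left(1-\tfrac{1}{2}L_j^{-2/3}\right)^{L_j}\leq \exp\!\left(-\tfrac{1}{3}L_j^{1/3}\right),
\]
and summing this over the at most $8^{v_0}(8k_0)^{32v_0k_0^2}$ triples $(S,\tilde{U}_1,\tilde{U}_2)$ gives a bound that easily dwarfs $L_{j+1}^{-3\beta}L_{j+1}^{-\gamma(v_0-1)}$ once $L_0$ is sufficiently large, since the super-polynomial decay in $L_j$ swamps any fixed power of $L_{j+1}=L_j^\alpha$. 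The main technical subtlety to handle carefully will be justifying the conditional independence cleanly: this amounts to showing that the $\mathbb{Y}$-randomness witnessing the events $\{X^j_{T_i}\hookrightarrow Y^j_{\Upsilon_h(T_i)}\}$ is supported in disjoint non-neighbouring $Y$-regions $\{\Upsilon_h(T_i)\}_h$ together with their buffer vicinities, and symmetrically for the $\mathbb{X}$-side events, so that Observation~\ref{o:blockprop}(iii) applies to give the product structure needed for the independent-trials estimate.
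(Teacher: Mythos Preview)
Your proposal is correct and follows essentially the same route as the paper's proof: fix $(S,\tilde U_1,\tilde U_2)$, invoke Proposition~\ref{p:canonical1} to produce $L_j^4$ $\alpha$-canonical maps, thin to a subfamily $\mathcal H$ of size $L_j$ with pairwise non-neighbouring image/preimage sets, use the product bound $\prod S_j(X^j_{T_i})\prod S_j(Y^j_{T'_{i'}})\geq L_j^{-2/3}$ from the two $\ce$-conditions, and conclude by independent trials. The only organisational difference is that the paper additionally decomposes over the possible \emph{locations} $(\mathscr S_1,\mathscr S_2)$ of the bad sub-components (at most $L_j^{4\alpha k_0 v_0}$ choices) and proves a per-choice bound strong enough to absorb that sum, whereas you work conditionally on the bad sub-components and average out at the end; since your bound $\exp(-\tfrac13 L_j^{1/3})$ is uniform over all such conditionings, both presentations are equivalent. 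One small point to tighten in the write-up: rather than conditioning on $\mathcal B_{\tilde U_1,X}\cap\mathcal B_{\tilde U_2,Y}$ from the outset (which involves the auxiliary boundary randomness), it is cleaner, as the paper does, to bound $\P[\neg\cup_h\cd_h]$ under the conditioning on the bad sub-components alone and then intersect with the $\mathcal B$-events at the end, so that the independence claim for $\{\cd_h\}_{h\in\mathcal H}$ is purely a statement about non-neighbouring $j$-level blocks.
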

\begin{proof}
Since $|\mathcal{S}|\leq 8^{v_0}$ and for all $S\in \mathcal{S}$ we have $\mathcal{U}_S\leq (8k0)^{16k_0v_0^2}$, it suffices to prove that for each fixed $S$, $\tilde{U}_1$ and $\tilde{U}_2$ we have
$$\P[X^{j+1}_{S}\not\hookrightarrow Y^{j+1}_{S}, \ce_{\tilde{U}_1,X}, \ce_{\tilde{U}_2,Y}, \mathcal{B}_{\tilde{U}_1,X}, \mathcal{B}_{\tilde{U}_2,Y}] \leq L_{j+1}^{-4\beta}L_{j+1}^{-\gamma(v_0-1)}.$$

Now fix $S\in \mathcal{S}$ and $\tilde{U}_1,\tilde{U}_2\in \mathcal{U}_{S}$. Notice that the total number of ways it is possible to choose disjoint sets $S_1, S_2, \ldots, S_{\ell_1}\subseteq \tilde{U}_{1}$ and $S'_1, S'_2, \ldots , S'_{\ell_2}\subseteq \tilde{U}_{2}$ such that $\sum_{i}|S_i|\leq v_0k_0$ and $\sum_{i}|S'_i|\leq k_0v_0$ is $L_j^{4\alpha k_0v_0}$. For $\mathscr{S}_1=\{S_1,S_2,\ldots S_{\ell_1}\}$ and $\mathscr{S}_2=\{S'_1,S'_2,\ldots S'_{\ell_2}\}$, let  
$$\mathscr{I}(\mathscr{S}_1,\mathscr{S}_2)=\left\{\mathscr{S}_1= \{T_i:i\in B_{\tilde{U}_1,X}\},\mathscr{S}_2= \{T'_i:i\in B_{\tilde{U}_2,Y}\}\right\}.$$

Clearly then it suffices to show that for each choice of $\mathscr{S}_1$ and $\mathscr{S}_2$ as above, we have 
\begin{equation}
\label{e:s1s2}
\P[X^{j+1}_{S}\not\hookrightarrow Y^{j+1}_{S}, \ce_{\tilde{U}_1,X}, \ce_{\tilde{U}_2,Y}, \mathcal{B}_{\tilde{U}_1,X}, \mathcal{B}_{\tilde{U}_2,Y}, \mathscr{I}(\mathscr{S}_1,\mathscr{S}_2)] \leq L_{j+1}^{-4\beta-8k_0v_0-\gamma v_0}.
\end{equation}

Fix $\mathscr{S}_1$ and $\mathscr{S}_2$ as above. Condition on $\{X^j_{S_i}:S_i\in \mathscr{S}_1\}$ and $\{Y^j_{S'_i}:S'_i\in \mathscr{S}_2\}$, such that they are compatible with $\ce_{\tilde{U}_1,X}$ and $\ce_{\tilde{U}_2,Y}$. Denote this conditioning by $\cf$. Observe that, by Proposition \ref{p:canonical1}, there exist $L_j^4$ $\alpha$-canonical maps at $j+1$-th level $\{\Upsilon^{j+1}_h=\Upsilon_h: h\in [L_j^2]^2\}$ from $\tilde{U}_{1}$ to $\tilde{U}_2$ with respect to $\mathscr{S}_1$ and $\mathscr{S}_2$ satisfying the following conditions.

\begin{enumerate}
\item[\rm i.] $\Upsilon_{h}(S_i)$ are different for all $h\in [L_j^2]^2$.
\item[\rm ii.] $\Upsilon_{h}^{-1}(S'_i)$ are different for all $h\in [L_j^2]^2$.
\item[\rm iii.] $\Upsilon_h(S_{i_1})\neq S'_{i_2}$ for any $i_1,i_2,h$. 
\end{enumerate} 
As before there exists a subset $\mathcal{H}\subset [L_j^2]^2$ with $|\mathcal{H}|= L_j<\lfloor L_j^4/ 100v_0^4k_0^4\rfloor$ so that the sets $\{\Upsilon_{h}(S_{i_1}):h\in \ch, S_{i_1}\in \mathscr{S}_1\}$ are disjoint and non-neighbouring and also disjoint and non-neighbouring with the sets in $\mathscr{S}_2$. Also the collection of sets $\{\Upsilon_{h}^{-1}(S'_{i_2}):h\in \ch, S'_{i_2}\in \mathscr{S}_2\}$ are disjoint and non-neighbouring and also disjoint and non-neighbouring with the sets in $\mathscr{S}_1$.

For $h\in \ch$, let $\cd_{h}$ denote the event

$$\cd_h=\left\{X^j_{S_{i_1}}\hookrightarrow Y^j_{\Upsilon_h(S_{i_1})},X^j_{\Upsilon_h^{-1}(S'_{i_2}}\hookrightarrow Y^j_{S'_{i_2}}~\forall S_{i_1}\in \mathscr{S}_1~\forall S'_{i_2}\in \mathscr{S}_2 \right\}.$$

Arguing as before we have 

$$\P[\cd_h \mid \cf]= \prod_{S_i\in \mathscr{S}_1} S_j^{\mathbb{X}}(X^j_{S_i})\prod_{S'_i\in \mathscr{S}_2} S_j^{\mathbb{X}}(X^j_{S'_i})\geq L_j^{-2/3}.$$

Since these events are independent for $h\in \ch$ it follows that 

$$\P[\cup_{h\in \ch} \cd_h \mid \cf] \geq 1-L_{j+1}^{-4\beta-8k_0v_0-\gamma v_0}$$
since $L_0$ is sufficiently large. Now observing that on $$\mathcal{B}_{\tilde{U}_1,X}\cap \mathcal{B}_{\tilde{U}_2,Y}\cap \mathscr{I}(\mathscr{S}_1\cap \mathscr{S}_2) \cap \biggl(\cup_{h\in \ch} \cd_h\biggr)$$
we have $X^{j+1}_{S}\hookrightarrow Y^{j+1}_{S}$ and removing the conditioning we get \eqref{e:s1s2} which in turn completes the proof of the lemma. 
\end{proof}

\begin{lemma}\label{l:A1finalh}
When $\frac12\leq p \leq 1-L_{j+1}^{-1}$
\[
\P(S_{j+1}(X)\leq p, V_X\geq v)\leq p^{m_{j+1}} L_{j+1}^{-\beta}L_{j+1}^{-\gamma(v-1)}.
\]
\end{lemma}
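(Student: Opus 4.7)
The plan is to split the event $\{S_{j+1}(X)\leq p,\,V_X\geq v\}$ according to whether $X\in\A{1}_{X,j+1}$, handling the generic block case by a first-moment (Markov) argument and the rare case outside $\A{1}$ by the crude size bounds already available.

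Outside $\A{1}_{X,j+1}$ I drop the $S_{j+1}(X)\leq p$ constraint. For $v\leq v_0$, Lemma~\ref{l:A1Sizeh} gives
$\P[X\notin\A{1}_{X,j+1}]\leq L_{j+1}^{-3\beta}L_{j+1}^{-\gamma(v_0-1)}\leq L_{j+1}^{-3\beta}L_{j+1}^{-\gamma(v-1)}$. For $v>v_0$ membership in $\A{1}$ is automatically precluded, and Proposition~\ref{l:totalSizeBoundh} with $k=x=0$ yields $\P[V_X\geq v]\leq 500\,L_{j+1}^{-9\gamma(v-1)}$. Because $p^{m_{j+1}}\geq 2^{-m_{j+1}}$ is bounded away from $0$ uniformly in $j$, while the constraints $\beta>1500\alpha\gamma$ and $8\gamma(v_0-1)>3\alpha\beta$ leave ample slack, both sub-cases sit inside $\tfrac12\,p^{m_{j+1}}L_{j+1}^{-\beta}L_{j+1}^{-\gamma(v-1)}$ once $L_0$ is sufficiently large.

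For $X\in\A{1}_{X,j+1}$ one has $V_X\leq v_0$, so only $v\leq v_0$ matters. My plan is to bound the first moment
\[
\E\bigl[(1-S_{j+1}(X))\,\mathbf{1}_{X\in\A{1}_{X,j+1}}\bigr]=\P\bigl[X\in\A{1}_{X,j+1},\ \{X\not\hookrightarrow Y^{j+1}_{U_X}\}\cup\{A^{\mathbb{Y}}_{\rm valid}\text{ fails}\}\bigr]
\]
by splitting the right-hand event into three pieces: (i) both components lie in $\A{1}$ but $X\not\hookrightarrow Y$; (ii) $Y^{j+1}_{U_X}$ is valid but fails to lie in $\A{1}_{Y,j+1}$; (iii) $A^{\mathbb{Y}}_{\rm valid}$ fails. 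Piece (i) is exactly the triple sum estimated in Lemma~\ref{l:A1Map2h}, hence contributes at most $L_{j+1}^{-3\beta}L_{j+1}^{-\gamma(v_0-1)}$. Piece (ii) is $\leq\P[Y\notin\A{1}_{Y,j+1}]$, controlled by the $\mathbb{Y}$-analogue of Lemma~\ref{l:A1Sizeh}. For piece (iii) I will use that a conjoined external buffer zone of $B^{j+1}_{U_X}$ in $\mathbb{Y}$ requires either total bad-component mass exceeding $k_0$ or a really bad $j$-level $\mathbb{Y}$-subcomponent inside an $O(L_j^{\alpha+3})$-cell region, which by the inductive tail/size estimates (\ref{e:taily1h})--(\ref{e:sizey1h}) at level $j$ is far smaller than the other two contributions. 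Adding the three pieces and applying Markov's inequality with $1-p\geq L_{j+1}^{-1}$ yields
\[
\P\bigl[S_{j+1}(X)\leq p,\ X\in\A{1}_{X,j+1}\bigr]\lesssim L_{j+1}^{-3\beta+1}L_{j+1}^{-\gamma(v_0-1)},
\]
which is $\leq\tfrac12\,p^{m_{j+1}}L_{j+1}^{-\beta}L_{j+1}^{-\gamma(v-1)}$ for $v\leq v_0$ and $L_0$ large, since $p^{m_{j+1}}\geq 2^{-m_{j+1}}$ is constant in $j$ while $L_{j+1}^{-2\beta+1}$ is negligible.

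The main obstacle is the identification step for piece (i): I need to verify that the triple sum $\sum_{S,\tilde U_1,\tilde U_2}$ appearing in Lemma~\ref{l:A1Map2h} collapses into the single probability $\P[X\in\A{1}_{X,j+1},\,Y\in\A{1}_{Y,j+1},\,X\not\hookrightarrow Y]$. This works because the actual $(j+1)$-level components at the origin in $\mathbb{X}$ and $\mathbb{Y}$ pick out a unique $(S,\tilde U_1)$ and $(S,\tilde U_2)$ respectively, so $\sum_S\sum_{\tilde U_1}\mathbf{1}_{\ce_{\tilde U_1,X}\cap\mathcal{B}_{\tilde U_1,X}}=\mathbf{1}_{X\in\A{1}_{X,j+1}}$, and analogously for $Y$ (after conditioning on $U_X$, which determines the relevant shape for the $Y$-sum). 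Once this bookkeeping is in place, everything else reduces to comparing powers of $L_{j+1}$ using the parameter inequalities in~\eqref{e:parameters}.
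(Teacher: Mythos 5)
Your proposal is correct and follows essentially the same route as the paper: reduce to the worst case $p=1-L_{j+1}^{-1}$ (using $p^{m_{j+1}}\geq 2^{-m_{j+1}}$), dispose of $V_X>v_0$ and $X\notin\A{1}_{X,j+1}$ via Proposition~\ref{l:totalSizeBoundh} and Lemma~\ref{l:A1Sizeh}, and handle $X\in\A{1}_{X,j+1}$ by Markov's inequality applied to $1-S_{j+1}(X)$, bounding the first moment by the triple sum of Lemma~\ref{l:A1Map2h} plus the probability that the $\mathbb{Y}$-side fails the corresponding validity/genericity conditions (your pieces (ii)--(iii) are exactly the paper's $\P[\ce_Y]$ term, estimated as in Lemma~\ref{l:A1Sizeh}). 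The bookkeeping point you flag about collapsing the triple sum is handled implicitly in the paper in the same way you describe.
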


\begin{proof}
Clearly it is enough to show that 
\begin{equation}
\label{e:A1reduced}
\P[S_{j+1}(X)\leq 1-L_{j+1}^{-1}, V_X\geq v]\leq 2^{-m_{j+1}}L_{j+1}^{-\beta}L_{j+1}^{-\gamma(v-1)}.
\end{equation}

For $v\geq v_0$ this follows from Proposition \ref{l:totalSizeBoundh} and $8\gamma(v_0-1)>\beta$, so it suffices to prove that

\begin{equation}
\label{e:A1reduced2}
\P[S_{j+1}(X)\leq 1-L_{j+1}^{-1}, V_X\leq v_0]\leq 2^{-m_{j+1}}L_{j+1}^{-\beta}L_{j+1}^{-\gamma(v_0-1)}.
\end{equation}

Using Markov's inequality and Lemma \ref{l:A1Map2h} we get that 
$$\P[S_{j+1}(X)\leq 1-L_{j+1}^{-1}, V_X\leq v_0]\leq \P[X\notin \A{1}_{X,j+1}]+ L_{j+1}\left(L_{j+1}^{-3\beta}L_{j+1}^{-\gamma(v_0-1)}+\P[\ce_{Y}]\right)$$
where 
$$\P[\ce_{Y}]=\sum_{S\in \mathcal{S}}\sum_{\tilde{U}\in \mathcal{U}_S} \P[(\ce_{C,Y})^c].$$
It can be shown as in Lemma \ref{l:A1Sizeh} that $\P[(\ce_{Y})^c]\leq L_{j+1}^{-3\beta}L_{j+1}^{-\gamma(v-1)}$ and this completes the proof of the lemma. 
\end{proof}

\subsection{Case 2}

The next case involves components which are not too large and do not contain too many bad sub-components but whose bad sub-components may have very small embedding probabilities. For a $(j+1)$-level component $X$, let $N_{X}$ denote the number of bad $j$ level components contained in $X$ and let $X^j_{T_1}, X^j_{T_2},\ldots, X^j_{T_{N_X}}$ denote the bad subcomponents. Let $K_{X}=\sum_{i=1}^{N_X}|T_i|$ denote the total size of bad subcomponents in $X$. We define the class of blocks $\A{2}_{X,j+1}$  as
\[
\A{2}_{X,j+1} := \left\{X: V_{X}\leq v_0, K_X\leq k_0v_0,  \prod_{i=1}^{N_X}S_j(X^j_{T_i}) \leq L_j^{-1/3} \right\}.
\]

\begin{lemma}\label{l:A2Maph}
Condition on $X=X^{j+1}_{U}\in \A{2}_{X,j+1}$ where $|U|\leq v_0$. Let the bad $j$ level components of $X$ be $X^j_{T_1}, X^j_{T_2},\ldots , X^j_{T_{N_X}}$ such that $\sum_{i=1}^{N_{X}}|T_i|\leq v_0k_0$. Then we have  
\[
S_{j+1}(X) \geq \min\left\{\frac12, \frac{1}{10} L_j \prod_{i=1}^{N_X}S_j(X^j_{T_i}) \right\}.
\]
\end{lemma}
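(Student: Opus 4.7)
The plan is to re-execute the argument in the proof of Lemma \ref{l:A1Map1h} while retaining the factor $p^\ast := \prod_{i=1}^{N_X} S_j(X^j_{T_i})$ instead of replacing it by the lower bound $L_j^{-1/3}$ that is available in Case~1. Concretely, I apply Proposition \ref{p:canonical1} with source domain $U_X$ of $X$, target $B^{j+1}_U$, $\mathcal{T} = \{T_1,\ldots,T_{N_X}\}$ and $\mathcal{T}' = \emptyset$ — permissible because $K_X \leq k_0 v_0$ — producing $L_j^4$ $\alpha$-canonical maps $\{\Upsilon_h\}_{h \in [L_j^2]^2}$, and extract $\mathcal{H} \subseteq [L_j^2]^2$ of size $L_j$ so that the sets $\{\Upsilon_h(T_i): h \in \mathcal{H}, i \in [N_X]\}$ are pairwise disjoint and non-neighbouring; this is possible since $L_j < \lfloor L_j^4/(100 v_0^4 k_0^4) \rfloor$ for $L_0$ sufficiently large. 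The key structural point is that by Proposition \ref{p:canonical1} every $\Upsilon_h(T_i)$ lies at distance at least $L_j^3$ from the boundary of the ideal multi-block, so $W := \bigcup_{h \in \mathcal{H},i} \Upsilon_h(T_i)$ is strictly interior and in particular disjoint from the outer buffer-zone cells on which the event $\mathcal{B} = \{Y^{j+1}_U \text{ has domain } B^{j+1}_U\}$ is measurable.

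Define $\cd_h^i$, $\cd_h$, $\cd = \bigcup_{h \in \mathcal{H}}\cd_h$, $\mathcal{B}$, and $\cj$ exactly as in the proof of Lemma \ref{l:A1Map1h}, with $\cj$ weakened to require $Y^j(\ell)$ good only for $\ell \in \tilde{U}_2 \setminus W$; this weakening loses only an additive $|W|\cdot L_j^{-\gamma} = O(k_0 v_0 L_j^{1-\gamma})$ in probability, and still suffices to invoke Lemma \ref{l:embedcond} on the eventual $h$ realising the embedding (the missing cells in $W \setminus \Upsilon_h(T)$ can be enforced good at a negligible additional cost, still leaving the overall bound essentially unchanged). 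By the disjoint/non-neighbouring construction, $\{\cd_h\}_{h \in \mathcal{H}}$ are conditionally independent given $X$, each of conditional probability $p^\ast$, so
\[
\P[\cd \mid X] \;\geq\; 1 - (1-p^\ast)^{L_j} \;\geq\; 1 - e^{-L_j p^\ast}.
\]
Because $\cd$ is measurable with respect to $\mathbb{Y}$ on $W$ while $\cj \cap \mathcal{B}$ is measurable with respect to $\mathbb{Y}$ on $\tilde{U}_2 \setminus W$ together with the boundary region, and these regions are disjoint, $\cd$ and $\cj \cap \mathcal{B}$ are independent given $X$. Combined with the bound $\P[\cj \cap \mathcal{B} \mid X] \geq \tfrac{9}{10}(1-10^{-(j+10)})^{4v_0} \geq \tfrac{3}{5}$ obtained exactly as in \eqref{e:A1MapDh}--\eqref{e:A1Map1h2}, and with Lemma \ref{l:embedcond} giving the embedding on $\cd \cap \cj \cap \mathcal{B}$, one concludes
\[
S_{j+1}(X) \;\geq\; \tfrac{3}{5}\bigl(1 - e^{-L_j p^\ast}\bigr).
\]

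To derive the asserted $\min$, I run a short case analysis. If $L_j p^\ast \geq 5$, then $\tfrac{3}{5}(1-e^{-5}) > 1/2$. If $L_j p^\ast \in [0,5]$, then the elementary inequality $6(1-e^{-x}) \geq x$ on $[0,5]$ (verified by noting that $f(x) := 6(1-e^{-x}) - x$ satisfies $f(0) = 0$, has its unique interior critical point at $\ln 6$ which is a maximum, and $f(5) = 1 - 6e^{-5} > 0$) yields $\tfrac{3}{5}(1 - e^{-L_j p^\ast}) \geq L_j p^\ast / 10$. Together these give $S_{j+1}(X) \geq \min(1/2,\, L_j p^\ast / 10)$. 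The only genuinely delicate step is justifying the independence of $\cd$ from $\cj \cap \mathcal{B}$ in the second paragraph, which reduces to the strict-interior placement of $W$ furnished by Proposition \ref{p:canonical1}; every other ingredient is a direct transcription of the argument in Lemma \ref{l:A1Map1h}.
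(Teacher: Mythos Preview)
Your argument has a genuine gap at precisely the step you flag as ``delicate'': the claim that on $\cd \cap \cj \cap \mathcal{B}$ one obtains an embedding via Lemma~\ref{l:embedcond}, with the missing cells in $W \setminus \Upsilon_{h_0}(\mathcal{T})$ handled ``at a negligible additional cost'', does not go through. This is exactly what the paper calls the \emph{delicate conditioning issue}. If $\cd_{h_0}$ succeeds but some other trial $h'\neq h_0$ failed, that failure may well have been caused by a bad $j$-level $\mathbb{Y}$-block at some $\ell\in \Upsilon_{h'}(T_i)\subseteq W$; Lemma~\ref{l:embedcond} applied to $\Upsilon_{h_0}$ then fails because that cell $\ell$ is required to be good. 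You cannot simply intersect with $\{Y^j_\ell \text{ good for all }\ell\in W\}$ and treat the cost as an additive $O(|W|L_j^{-\gamma})$: in Case~2 the product $p^\ast$ can be arbitrarily small (in particular $\ll L_j^{-\gamma}$), and then the na\"ive bound $\P[\cd_{h_0}\mid \cg_{h_0}]\ge p^\ast-O(L_j^{-\gamma})$ is vacuous, so the $L_j$ independent trials you rely on evaporate under this extra conditioning.

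The paper's proof handles this by a genuinely different mechanism. It conditions on the high-probability event $\bigcap_h(\cd_h\cup\cg_h)$, on which the $\cd_h$ remain conditionally independent with $\P[\cd_h\mid\cd_h\cup\cg_h]\ge \P[\cd_h]$, and then \emph{explicitly subtracts} the double-failure event $\cm=\{\exists\, h_1\neq h_2:\cd_{h_i}\setminus\cg_{h_i}\}$, bounded by $\binom{|\mathcal H|}{2}(p^\ast\wedge k_0v_0L_j^{-\gamma})^2$. On $\neg\cm\cap\bigcap_h(\cd_h\cup\cg_h)\cap\bigcup_h\cd_h$ there is a single $h_0$ with $\cd_{h_0}$ and every other $h'$ satisfies $\cg_{h'}$, so Lemma~\ref{l:embedcond} genuinely applies. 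The $\min$ in $(p^\ast\wedge k_0v_0L_j^{-\gamma})$ is what makes the $\cm$-bound small uniformly in $p^\ast$; your proposal has no analogue of this step. A secondary but related point: your assertion that $W$ is disjoint from the buffer zone is not furnished by Proposition~\ref{p:canonical1}, since some $T_i$ may themselves lie in the buffer; the paper uses Proposition~\ref{p:canonical2} to push images into the interior and, for buffer-zone $T_i$ (necessarily semi-bad), strengthens $\cd^i_h$ to also require goodness of the $Y$-cells there. Without this, your claimed independence of $\cd$ and $\mathcal{B}$ is also unjustified.
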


\begin{proof}
Let us make some notations first. Let $U_{X}$ denote the domain of $X$. Let $\tilde{U}\subseteq \Z^2$ be such that $X=X^j_{\tilde{U}}$. Let $\tilde{U}_1\subseteq \tilde{U}_0\subseteq \tilde{U}_2\subseteq \Z^2$ be defined as follows. 
$$B^j_{\tilde{U}_0}=B^{j+1}_{U};$$
$$B^j_{\tilde{U}_1}=B^{j+1,{\rm int}}_{U};$$
$$B^j_{\tilde{U}_2}=B^{j+1,{\rm ext}}_{U}.$$
Now by Proposition \ref{p:canonical2}, there exist $L_j^4$ $\alpha$-canonical maps at $j+1$-th level $\{\Upsilon^{j+1}_h=\Upsilon_h: h\in [L_j^2]^2\}$ from $U_{X}$ to $B^{j+1}_{U}$ with respect to $\mathcal{T}=\{T_1,T_2,\ldots, T_{N_{X}}\}$ satisfying the following conditions:

\begin{enumerate}
\item[\rm i.] For each $i\in [N_X]$ such that $T_{i}$ is not contained in $\tilde{U}_2\setminus \tilde{U}_1$ and for all $h\in [L_j^2]^2$ we have $\Upsilon_h(T_i)\subseteq \tilde{U}_1$.
\item[\rm ii.] For all $h\in [L_j^2]^2$ and for all $i\in [N_X]$ we have $\Upsilon_h(T_i)$ is at least at a distance $L_j^3$ from the boundaries of $U_0$.
\item[\rm iii.] $\Upsilon_{h}(T_i)$ are different for all $h\in [L_j^2]^2$.  
\end{enumerate} 

It is easy to see that there exists a subset $\mathcal{H}\subset [L_j^2]^2$ with $|\mathcal{H}|= L_j^{3/2}<\lfloor L_j^4/ 100v_0^4k_0^4\rfloor$ so that for all $i_1\neq i_2$ and $h_1,h_2\in\mathcal{H}$ we have that $\Upsilon_{h_1}(T_{i_1})$ and  $\Upsilon_{h_2}(T_{i_2})$ are disjoint and non-neighbouring. We will estimate the probability that one of these maps work.

In trying out these $L_j^{3/2}$ different mappings there is a delicate conditioning issue since a map failing may imply that $Y_{\Upsilon_{h}(T_i)}$ is bad. To avoid this we condition on an  event $\cd_h \cup \cg_h $ which holds with high probability.
For $h\in \mathcal{H}$ and $i\in [N_X]$, let $\cd^i_h$ denote the following event. If $T_i\subseteq \tilde{U}_2\setminus \tilde{U}_1$, then
\[
\cd^i_h=\left\{X^j_{T_i} \hookrightarrow Y^j_{\Upsilon_{h}(T_i)}; Y^j_{\ell} \hbox{ is good for all } \ell\in T_i \right\}.
\]
Otherwise set 
\[
\cd^i_h=\left\{X^j_{T_i} \hookrightarrow Y^j_{\Upsilon_{h}(T_i)} \right\}.
\]
Define
\[
\cd_h=\bigcap_{1\leq i \leq N_X} \cd^i_h.
\]
Also let
\[
\cg^i_h=\left\{Y^j_{\ell} \hbox{ is good for all } \ell\in T_i \right\}
\]
and 
\[
\cg_h=\bigcap_{1\leq i \leq N_X} \cg^i_h.
\]

Then using \eqref{e:goodconditiony} at level $j$ we get for $h\in \ch$
\[
\P[\cd_h \cup \cg_h \mid X] \geq \P[ \cg_h \mid X] \geq (1-L_j^{-\gamma})^{k_0v_0}\geq 1-2k_0v_0L_j^{-\gamma}.
\]
Since $\cd_h \cup \cg_h, h\in \ch$ are conditionally independent given $X$, we have 
\begin{equation}\label{e:A2MapAh}
\P[\cap_{h\in\ch}(\cd_h \cup \cg_h) \mid X] \geq  (1-L_j^{-\gamma})^{v_0k_0 L_j^{3/2}}\geq 9/10
\end{equation}
for $L_j$ sufficiently large. Now
\[
\P[\cd_h\mid X,( \cd_h \cup \cg_h)] \geq \P[\cd_h\mid X] = \prod_{i=1}^{N_X} \left(\frac12 \wedge S_j(X^j_{T_i})\right).
\]
Indeed, observe that if $T_i\subseteq \tilde{U}_2\setminus \tilde{U}_1$, $X_{T_i}$ is semi-bad and hence 
$$\P[\cd_h^i]\geq S_j(X^j_{T_i})-v_0L_j^{-\gamma}\geq \frac12.$$

Also observe that since none of the multi-blocks that are tried (over all $h$ and $i\in [N_X]$) are non-neighbouring it follows that $\{\cd_h:h\in \ch\}$ is independent conditionally on $X$ and $\cap_{h\in\ch}(\cd_h \cup \cg_h)$ and hence

\begin{align}\label{e:A2MapBh}
\P[\cup_{h\in\ch} \cd_h \mid X, \cap_{h\in\ch}(\cd_h \cup \cg_h)] &\geq 1-\left(1- (\frac{1}{2})^{k_0v_0}\prod_{i=1}^{N_X} S_j(X^j_{T_i})\right)^{L_j^{3/2}}\nonumber\\
&\geq \frac{9}{10} \wedge \frac14 L_j \prod_{i=1}^{N_X} S_j(X^j_{T_i})
\end{align}
since $1-e^{-x}\geq x/4\wedge 9/10$ for $x\geq 0$ and $L_j^{1/2}> 2^{k_0v_0}$ for $L_j$ sufficiently large.

Further, set
\[
\cm=\left\{\exists h_1\neq h_2 \in\ch: \cd_{h_1}\setminus \cg_{h_1}, \cd_{h_2}\setminus \cg_{h_2} \right\}.
\]
We then have
\begin{align}\label{e:A2MapCh}
\P[\cm \mid X, \cap_{h\in\ch}(\cd_h \cup \cg_h)]
&\leq {L_j \choose 2} \P[\cd_{h}\setminus \cg_{h}\mid X,\cap_{h\in\ch}(\cd_h \cup \cg_h)]^2\nonumber\\
&\leq {L_j \choose 2}2\left(\prod_{i=1}^{N_X} S_j(X^j_{T_i}) \wedge 2v_0k_0L_j^{-\gamma} \right)^2\nonumber\\
&\leq 2k_0v_0L_j^{-(\gamma-2)} \prod_{i=1}^{N_X} S_j(X^j_{T_i}).
\end{align}

Finally let $\cj$ denote the event
\[
\cj=\left\{Y^j_{k} \hbox{ is good for all } k\in \tilde{U}_2\setminus \cup_{h\in\ch, 1\leq i \leq N_X}\{\Upsilon_h(T_i)\}\right\}.
\]
Then using \eqref{e:goodconditiony} again
\begin{equation}\label{e:A2MapDh}
\P[\cj \mid X, \cup_{h\in\ch} \cd_h, \cap_{h\in\ch}(\cd_h \cup \cg_h), \neg \cm] \geq \left(1- L_j^{-\gamma}\right)^{4v_0L_j^{2\alpha-2}} \geq 9/10.
\end{equation}

Now let $\mathcal{B}$ denote the event that none of the external buffer zones of $Y^{j+1}_{U}$ are conjoined and $B^{j+1}_{U}$ is the domain of $Y^{j+1}_{U}$. Observe that on $\cap_{h\in \ch}(\cd_h \cup \cg_{h}) \cap \cj$, $B^{j+1}_{U}$ is a valid potential domain for $Y^{j+1}_{U}$ and hence we have that 

\begin{equation}\label{e:A2MapEh}
\P[\mathcal{B} \mid X,  \cup_{h\in\ch} \cd_h, \cap_{h\in\ch}(\cd_h \cup \cg_h), \cj, \neg \cm] \geq (1-10^{-(j+10)})^{4v_0} \geq 9/10.
\end{equation}  

If $\mathcal{B},\cj,\cup_{h\in\ch} \cd_h$ and $\cap_{h\in\ch}(\cd_h \cup \cg_h)$ all hold and $\cm$ does not hold then by definition $Y^{j+1}_{U}$ is valid  and there is $h_0\in\ch$ such that $\cd_{h_0}$ holds and $\cg_{h'}$ holds for all $h'\in\ch\setminus\{h_0\}$. The $\alpha$-canonical map $\Upsilon_{h_0}$ then gives rise to an embedding of $X$ into $Y=Y^{j+1}_{U}$. It follows from \eqref{e:A2MapAh}, \eqref{e:A2MapBh}, \eqref{e:A2MapCh}, \eqref{e:A2MapDh} and \eqref{e:A2MapEh}  that
%
\begin{eqnarray}
S_{j+1}(X) &\geq & \P[\cup_{h\in\ch} \cd_h, \cap_{h\in\ch}(\cd_h \cup \cg_h), \cj, \mathcal{B}, \neg \cm \mid X] \nonumber\\
&= &\P[\cj \cap \mathcal{B} \mid X, \cup_{h\in\ch} \cd_h, \cap_{h\in\ch}(\cd_h \cup \cg_h), \cj, \neg \cm]\nonumber\\
&~& \P[\cup_{h\in\ch} \cd_h, \neg \cm \mid X,\cap_{h\in\ch}(\cd_h \cup \cg_h)]\P[\cap_{h\in\ch}(\cd_h \cup \cg_h)\mid X]\\
&\geq &\frac{7}{10}\left[ \left(\frac{9}{10}  \wedge \frac14 L_j \prod_{i=1}^{N_X} S_j(X_{\ell_i})\right) - 2v_0k_0L_j^{-(\gamma-2)} \prod_{i=1}^{N_X} S_j(X_{\ell_i}) \right]\\
&\geq &\frac{1}{2} \wedge \frac{1}{10} L_j \prod_{i=1}^{N_X} S_j(X_{\ell_i}).
\end{eqnarray}
This completes the proof.
\end{proof}

\begin{lemma}\label{l:A2Boundh}
When $0<p< \frac12$ and $v\geq 1$,
\[
\mathbb{P}(X\in \A{2}_{X,j+1}, S_{j+1}(X)\leq p, V_X\geq v)\leq \frac15 p^{m_{j+1}} L_{j+1}^{-\beta}L_{j+1}^{-\gamma(v-1)}.
\]
\end{lemma}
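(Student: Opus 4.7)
The plan is to combine the pointwise lower bound from Lemma~\ref{l:A2Maph} with the large deviation estimate from Proposition~\ref{l:totalSizeBoundh}. Write $S^*(X) = \prod_{i=1}^{N_X} S_j(X^j_{T_i})$. On the event $\{X \in \A{2}_{X,j+1}\}$, Lemma~\ref{l:A2Maph} gives
\[
S_{j+1}(X) \;\geq\; \min\!\left\{\tfrac{1}{2},\ \tfrac{1}{10} L_j\, S^*(X)\right\}.
\]
Since $p < 1/2$, the event $\{X\in\A{2}_{X,j+1},\,S_{j+1}(X)\leq p\}$ therefore forces $\tfrac{1}{10} L_j S^*(X) \leq p$, i.e.\ $-\log S^*(X) \geq \log(L_j/(10p))$. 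Hence
\[
\P\!\left[X\in\A{2}_{X,j+1},\,S_{j+1}(X)\leq p,\,V_X\geq v\right] \;\leq\; \P\!\left[V_X\geq v,\,-\log S^*(X) \geq \log(L_j/(10p))\right].
\]

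I would then apply Proposition~\ref{l:totalSizeBoundh} with $k=0$, $v'=v$, and $x = \log(L_j/(10p))$ (positive since $p<1/2$ and $L_0$ is large). This yields
\[
\P\!\left[X\in\A{2}_{X,j+1},\,S_{j+1}(X)\leq p,\,V_X\geq v\right] \;\leq\; 500 \,(10p/L_j)^{m_{j+1}}\, L_{j+1}^{-9\gamma(v-1)}
\;=\; 500\cdot 10^{m_{j+1}}\, p^{m_{j+1}}\, L_j^{-m_{j+1}}\, L_{j+1}^{-9\gamma(v-1)}.
\]

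To conclude, it remains to verify that the prefactors absorb into the target bound $\tfrac{1}{5} p^{m_{j+1}} L_{j+1}^{-\beta} L_{j+1}^{-\gamma(v-1)}$. Using $L_{j+1} = L_j^\alpha$, this reduces to checking
\[
2500\cdot 10^{m_{j+1}} \;\leq\; L_j^{m_{j+1} - \alpha\beta + 8\alpha\gamma(v-1)}.
\]
The parameter constraints in \S\ref{s:parameters} give $m_{j+1} > m \geq 9\alpha\beta + 3\alpha\gamma v_0 \gg \alpha\beta$, so the exponent on the right-hand side is positive and of order $m_{j+1}$ (even for $v=1$); since $L_0$ is chosen sufficiently large so that $L_j \geq L_0 \gg 10$, the inequality holds with room to spare. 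The only subtle point — and the part requiring care — is ensuring the reduction through Lemma~\ref{l:A2Maph} is valid: this uses $p<1/2$ to rule out the $1/2$-branch of the minimum, and the assumption $X\in\A{2}_{X,j+1}$ to guarantee $K_X\leq k_0 v_0$ and $V_X\leq v_0$ hold so that Lemma~\ref{l:A2Maph} applies. Everything else is a routine bookkeeping step.
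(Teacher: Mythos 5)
Your proposal is correct and follows essentially the same route as the paper: reduce via Lemma~\ref{l:A2Maph} (using $p<\tfrac12$ to discard the $\tfrac12$-branch of the minimum) to the event $\bigl\{\tfrac{1}{10}L_j\prod_i S_j(X^j_{T_i})\leq p,\ V_X\geq v\bigr\}$, apply Proposition~\ref{l:totalSizeBoundh} with $k=0$ to get the factor $500(10p/L_j)^{m_{j+1}}$, and absorb the prefactors using $m_{j+1}>m>\alpha\beta$ and $L_0$ large. The bookkeeping at the end matches the paper's justification, so no further comment is needed.
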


\begin{proof}
We have that
\begin{align}
\mathbb{P}(X\in \A{2}_{X,j+1}, S_{j+1}(X)\leq p, V_{X}\geq v) &\leq \P\left[\frac1{10} L_j \prod_{i=1}^{N_X}S_j(X_{T_i}) \leq p, V_{X}\geq v\right]\nonumber\\
&\leq  500\left(\frac{10 p}{L_j}\right)^{m_{j+1}}L_{j+1}^{-\gamma(v-1)} \leq \frac15 p^{m_{j+1}} L_{j+1}^{-\beta}L_{j+1}^{-\gamma(v-1)}
\end{align}
where the first inequality holds  by Lemma~\ref{l:A2Maph}, the second by Proposition~\ref{l:totalSizeBoundh} and the third holds for large enough $L_0$  since $m_{j+1}>m>\alpha\beta$.
\end{proof}

\subsection{Case 3}
Case 3 involves components with very large size. The class of components $\A{3}_{X,j+1}$ is defined as
\[
\A{3}_{X,j+1} := \left\{X: V_{X}> v_0\right\}.
\]

\begin{lemma}\label{l:A4Maph}
Condition on $X=X^{j+1}_{U}\in \A{3}_{X,j+1}$ with $|U|=v>v_0$. Let the bad $j$ level components of $X$ be $X^j_{T_1}, X^j_{T_2},\ldots , X^j_{T_{N_X}}$. Then we have  
\[
S_{j+1}(X) \geq (8k_0)^{-16vk_0^2}100^{-4(j+10)v}2^{-v}2^{-4k_0v}\prod_{i=1}^{N_X}S_j(X^j_{T_i})
\]
\end{lemma}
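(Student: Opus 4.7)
Since $v=V_X>v_0$ may be arbitrarily large, the multiple-trial strategy used in Lemmas~\ref{l:A1Map1h} and~\ref{l:A2Maph} (which loses a constant factor per block) is too wasteful here; the product $(1-L_j^{-\gamma})^{v\cdot \mathrm{poly}(L_j)}$ would not stay bounded away from $0$. Instead, I commit to a single, essentially--identity $\alpha$-canonical map and bound directly the probability that it yields an embedding.

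The construction is as follows. Condition on the $\mathbb{Y}$-side mirroring the $\mathbb{X}$-side: the domain $\hat{U}_Y$ of $Y^{j+1}_U$ coincides with $\hat{U}_X$, the bad $j$-level sub-components of $Y^{j+1}_U$ sit at the same positions $T_1,\dots,T_{N_X}$ as those of $X^{j+1}_U$, and every remaining $j$-level sub-block of $Y^{j+1}_U$ is good. On this event, taking $\theta$ to be the identity together with the canonical map $F$ (which itself is the identity when $\hat{U}_Y=\hat{U}_X$) produces an $\alpha$-canonical map satisfying the hypotheses of Lemma~\ref{l:embedcond}, so $X^{j+1}_U\hookrightarrow Y^{j+1}_U$ holds once $X^j_{T_i}\hookrightarrow Y^j_{T_i}$ for every $i$. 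By conditional independence of the non-neighbouring $j$-level components of $\mathbb{Y}$ given their locations/shapes, the latter happens with probability exactly $\prod_i S_j(X^j_{T_i})$.

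The remaining pre-factors come from three independent blocks of conditioning. The factor $(8k_0)^{-16vk_0^2}\,100^{-4(j+10)v}$ lower bounds the conditional probability that the randomized boundary-curve selection of Section~\ref{s:prelimh} picks exactly the boundary curve of $\hat{U}_X$ on the $\mathbb{Y}$-side: the total number of potential boundary curves of $B^{j+1}_U$ is at most $(8k_0)^{16vk_0^2}$, and the selection rule places mass at least $(8k_0)^{-4k_0^2}\,100^{-(j+10)}$ on each valid choice at each of the at most $4v$ boundary sites. The factor $2^{-v}$ absorbs the probability that $A^{\mathbb{Y}}_{\rm valid}$ holds and that every $j$-level sub-block of $Y^{j+1}_U$ outside the bad sub-components is good; the latter is at least $(1-L_j^{-\gamma})^{vL_j^{2\alpha-2}}\ge \tfrac{1}{2}$ by \eqref{e:goodconditiony} since $\gamma>2\alpha$ and $L_0$ is large. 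Finally, $2^{-4k_0v}$ is a crude lower bound on the probability that the $\mathbb{Y}$-side bad sub-components occupy the target positions $T_i$ with the correct shapes, using that the total size of bad components in each outer buffer zone is at most $k_0$ and there are at most $4v$ boundary cells contributing at most $4k_0 v$ configurational ``slots''.

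The principal obstacle is the bookkeeping required to show that these three blocks of conditioning are mutually compatible. In particular, once the $\mathbb{Y}$-side bad sub-components have been forced into the positions $T_i$, we must verify that the boundary curve we wish to select is still valid in the sense of Observation~\ref{o:validexist}, so that the randomized rule assigns it positive mass. This reduces to the fact that the $T_i$ lie in the interior of $\hat{U}_X$ at distance at least $L_j^3$ from the boundary, by Observation~\ref{o:blockprop}(ii) applied at level $j+1$; hence the $\mathbb{Y}$-side bad components we have forced into those positions are automatically far from every potential boundary curve, and in particular do not obstruct its validity.
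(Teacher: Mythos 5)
Your overall architecture is the paper's: a single essentially-identity ($*$-canonical) map, forcing the $\mathbb{Y}$-domain to coincide with $\hat{U}_X$, paying $(8k_0)^{-16vk_0^2}100^{-4(j+10)v}$ for the randomized boundary-curve selection and $2^{-v}$ for goodness/validity of the surrounding $\mathbb{Y}$-blocks. However, there is a genuine error in how you treat the positions $T_i$ on the $\mathbb{Y}$-side. You condition on the bad $j$-level sub-components of $Y^{j+1}_U$ sitting at the same positions $T_1,\dots,T_{N_X}$ with the same shapes, and you claim this event has probability at least $2^{-4k_0v}$. That is false: forcing $\mathbb{Y}$ to have a bad component at a prescribed location is a rare event (by \eqref{e:goodconditiony} each such component costs a factor of order $L_j^{-\gamma}$, not a constant), so your second "block of conditioning" has probability far below $2^{-4k_0v}$ and the claimed lower bound does not follow. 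Worse, once you condition $Y^j_{T_i}$ to be a bad component, the conditional probability of $X^j_{T_i}\hookrightarrow Y^j_{T_i}$ is no longer $S_j(X^j_{T_i})$ — that quantity is defined with $\mathbb{Y}$ unconditioned, and bad $\mathbb{Y}$-components are precisely the hard targets — so the factor $\prod_i S_j(X^j_{T_i})$ is also not justified under your conditioning.

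The repair, which is what the paper actually does, is to drop the requirement that $\mathbb{Y}$ be bad at the $T_i$ altogether. Lemma \ref{l:embedcond} only needs $Y$ to be good \emph{outside} the images of the $T_i$; it is irrelevant whether $Y^j_{T_i}$ is good or bad. So one asks for the events $\cd_i=\{X^j_{T_i}\hookrightarrow Y^j_{T_i}\}$ (probability $S_j(X^j_{T_i})$, no conditioning on the anatomy of $Y$ there), together with goodness of $Y$ on all remaining sites. The factor $2^{-4k_0v}$ then arises for a different reason: for the $T_i$ lying in the buffer region $\tilde U_2\setminus\tilde U_1$ one must \emph{additionally} require $Y^j_\ell$ good for $\ell\in T_i$, so that no bad $\mathbb{Y}$-component obstructs the validity of the chosen boundary curve; since those components are semi-bad (embedding probability close to $1$) and their total size is at most $4k_0v$, each such joint requirement still has probability at least $\tfrac12$, as in Lemma \ref{l:A2Maph}. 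Your closing paragraph on compatibility addresses only the $\mathbb{X}$-side distance of the $T_i$ from the boundary, which does not resolve either of these two points.
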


\begin{proof}
Let $\hat{U}=U_{X}\subseteq \R^2$, $\tilde{U}\subseteq \Z^2$, $\tilde{U}_1\subseteq \tilde{U}_0 \subseteq \tilde{U}_2 \subseteq \Z^2$, be defined as in the proof of Lemma \ref{l:A2Maph}. For $i\in [N_X]$, let $\cd_i$ denote the following event. 

If $T_i\subseteq \tilde{U}_2\setminus \tilde{U}_1$, then
\[
\cd_i=\left\{X^j_{T_i} \hookrightarrow Y^j_{T_i}; Y^j_{\ell} \hbox{ is good for all } \ell\in T_i \right\}.
\]
Otherwise set 
\[
\cd_{i}=\left\{X^j_{T_i} \hookrightarrow Y^j_{\upsilon_{h}(T_i)} \right\}.
\]
Let
$$\cd= \cap_{i=1}^{N_{X}}\cd_i.$$

For $\ell \in \tilde{U}_2\setminus (\cup_{i=1}^{N_X} T_i)$ let 
$$\cg_{\ell}=\{Y^j_{\ell}~\text{is good}\}$$
and set 
$$\cg=\cap_{\ell\in \tilde{U}_2\setminus (\cup_{i=1}^{N_X} T_i)} \cg_{\ell}.$$ 
Observe that $|\tilde{U}_2|\leq 4vL_j^{2\alpha}$. 
Finally let 
$\mathcal{B}$ denote the event that $Y=Y^{j+1}_{U}$ is valid and $\hat{U}$ is the domain of $Y^{j+1}_{U}$. Let $\Upsilon$ be the $*$-canonical map from $\hat{U}$ to itself with respect to $\mathcal{T}=\{T_1,T_2,\ldots, T_{N_X}\}$ which exists by Proposition \ref{p:starcanonical}. On 
$\cd \cap \cg \cap \mathcal{B}$, we get an embedding of $X$ into $Y=Y^{j+1}_{U}$ given by $\Upsilon$. Hence it follows that 
$$S_{j+1}(X) \geq \P[\cd]\P[\cg\mid \cd]\P[\mathcal{B}\mid \cg, \cd].$$

Now observe that the total size of $T_i$'s contained in $\tilde{U}_2\setminus \tilde{U}_1$ must be at most $4k_0v$ and hence arguing as in the proof of Lemma \ref{l:A2Maph} we get that

$$\P[\cd]= \left(\frac 12\right)^{4k_0v}\prod_{i=1}^{N_X}S_j(X^j_{T_i}).$$ 

Also using the recursive hypothesis (\ref{e:goodconditiony}) we get that 
$$\P[\cg\mid \cd]\geq (1-L_{j}^{-\gamma})^{4vL_j^{2\alpha}}\geq 2^{-v}$$  
as $\gamma>\alpha$ and $L_0$ is sufficiently large. Finally observe that on $\cd \cap \cg$, the curve corresponding to the boundary of $\hat{U}$ is a valid level $(j+1)$ boundary curve for $Y$ and hence,
$$\P[\mathcal{B}\mid \cg, \cd]\geq (8k_0)^{-16vk_0^2}100^{-4(j+10)v}.$$

Putting all these together we get the lemma.  
\end{proof}

\begin{lemma}\label{l:A4Boundh}
When $0<p\leq \frac12$ and $v\geq 1$,
\[
\mathbb{P}(X\in \A{3}_{X,j+1}, S_{j+1}(X)\leq p, V_{X}\geq v)\leq \frac15 p^{m_{j+1}} L_{j+1}^{-\beta}L_{j+1}^{-\gamma(v-1)}.
\]
\end{lemma}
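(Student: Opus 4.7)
The plan is to combine the lower bound on $S_{j+1}(X)$ from Lemma~\ref{l:A4Maph} with the tail estimate of Proposition~\ref{l:totalSizeBoundh}, exploiting the fact that $X \in \A{3}_{X,j+1}$ already forces $V_X \geq v_0 + 1$ so that we have an extra factor of $L_{j+1}^{-9\gamma v_0}$ to absorb the constants of Lemma~\ref{l:A4Maph}.

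First I would condition on $V_X = v_*$ for $v_* \geq \max(v, v_0+1)$. By Lemma~\ref{l:A4Maph}, on the event $\{S_{j+1}(X) \leq p, V_X = v_*\}$ one has
\[
\prod_{i=1}^{N_X} S_j(X^j_{T_i}) \leq p \cdot \exp(v_* C_j),
\qquad C_j := 16 k_0^2 \log(8 k_0) + 4(j+10)\log 100 + (1 + 4k_0)\log 2,
\]
so that $-\log \prod_i S_j(X^j_{T_i}) > -\log p - v_* C_j$. Applying Proposition~\ref{l:totalSizeBoundh} with $k=0$, $v' = v_*$, and $x = -\log p - v_* C_j$ then yields
\[
\P\bigl[S_{j+1}(X) \leq p, V_X = v_*\bigr] \leq 500 \, p^{m_{j+1}} \exp(v_* C_j m_{j+1}) L_{j+1}^{-9\gamma(v_* - 1)}.
\]
Summing over $v_* \geq v' := \max(v, v_0+1)$ and using that $L_{j+1}^{9\gamma} \geq 2 \exp(C_j m_{j+1})$ for $L_0$ sufficiently large (since $\log L_{j+1} = \alpha^{j+1}\log L_0$ grows doubly exponentially while $C_j m_{j+1}$ grows only linearly in $j$), the geometric tail is dominated by twice the first term:
\[
\P[X \in \A{3}_{X,j+1}, S_{j+1}(X) \leq p, V_X \geq v] \leq 1000 \, p^{m_{j+1}} \exp(v' C_j m_{j+1}) L_{j+1}^{-9\gamma(v' - 1)}.
\]

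It remains to check that this bound is at most $\tfrac{1}{5} p^{m_{j+1}} L_{j+1}^{-\beta - \gamma(v-1)}$, i.e.\ that
\[
L_{j+1}^{9\gamma(v'-1) - \beta - \gamma(v-1)} \geq 5000 \exp(v' C_j m_{j+1}).
\]
I would split into two cases. If $v > v_0$, then $v' = v$ and the exponent equals $8\gamma(v-1) - \beta \geq 8\gamma v_0 - \beta$, which by the constraint $8\gamma(v_0 - 1) > 3\alpha\beta$ is at least a positive constant multiple of $\beta$; the $v$-dependence on both sides is linear, so the inequality reduces to $8\gamma \log L_{j+1} > C_j m_{j+1}$, which holds for $L_0$ large. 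If $v \leq v_0$, then $v' = v_0 + 1$ and the worst case $v = v_0$ gives exponent $8\gamma v_0 + \gamma - \beta > 0$ by the same constraint; the inequality again holds for $L_0$ sufficiently large.

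The main (mild) obstacle is the bookkeeping in this final comparison: the lower bound of Lemma~\ref{l:A4Maph} carries many factors exponential in $v$ (namely $(8k_0)^{16 v k_0^2}$, $100^{4(j+10)v}$, $2^v$, $2^{4k_0 v}$), and under $m_{j+1}$-th power these must be beaten by the $L_{j+1}^{-8\gamma(v'-1)}$ slack. The point that makes everything go through is that membership in $\A{3}$ supplies the lower bound $v' - 1 \geq v_0$ unconditionally, and the parameter constraint $8\gamma(v_0-1) > 3\alpha\beta$ was chosen precisely to give more than $\beta$ worth of exponent in $L_{j+1}$ with room to spare, leaving the residual $j$-dependent and $v$-dependent constants to be absorbed by the doubly exponential growth of $L_{j+1}$.
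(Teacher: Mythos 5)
Your proposal is correct and follows essentially the same route as the paper: invert the lower bound of Lemma~\ref{l:A4Maph} to turn $\{S_{j+1}(X)\leq p\}$ into a tail event for $\prod_i S_j(X^j_{T_i})$, apply Proposition~\ref{l:totalSizeBoundh} level by level in $V_X=v'$, sum the resulting geometric series, and absorb the $(8k_0)^{16v'k_0^2}100^{4(j+10)v'}2^{v'+4k_0v'}$ factors (raised to the $m_{j+1}$) into the slack $L_{j+1}^{-8\gamma(v'-1)}$ guaranteed by $V_X>v_0$ and the constraint on $\gamma(v_0-1)$ versus $\beta$. Your bookkeeping via the constant $C_j$ and the explicit case split at $v\le v_0$ versus $v>v_0$ is just a cleaner presentation of the paper's computation.
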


\begin{proof}
Without loss of generality we can take $v\geq v_0$. Then we have using Lemma \ref{l:A4Maph} and Proposition \ref{l:totalSizeBoundh}
\begin{eqnarray}
\mathbb{P}(X\in \A{3}_{X,j+1}, S_{j+1}(X)\leq p, V_{X}\geq v) &=& \sum_{v'=v}^{\infty}\P[S_{j+1}(X)\leq p, V_{X}=v']\nonumber\\
&\leq & \sum_{v'=v}^{\infty}\P\biggl[(8k_0)^{-16v'k_0^2}100^{-4(j+10)v'}2^{-v'-4k_0v'}\nonumber\\
&~& \prod_{i=1}^{N_X}S_j(X^j_{T_i})\leq p, V_{X}=v'\biggr]\nonumber\\
&\leq & \sum_{v'=v}^{\infty} 500p^{m_{j+1}}\times (2000k_0)^{64k_0^2v'(j+10)m_{j+1}}L_{j+1}^{-9\gamma(v'-1)}\nonumber\\
&\leq & 500p^{m_{j+1}}L_{j+1}^{-\beta}L_{j+1}^{-\gamma(v-1)}\left(\sum_{v'=v}^{\infty} (2000k_0)^{64k_0^2(j+10)m_{j+1}}L_{j+1}^{-5\gamma v'}\right)\nonumber\\
&\leq & \frac15 p^{m_{j+1}} L_{j+1}^{-\beta}L_{j+1}^{-\gamma(v-1)}  
\end{eqnarray}
where the penultimate inequality follows from $
\gamma (v_0-1) >\beta$ and  $v_0>5$ and the last inequality follows by taking $L_0$ sufficiently large.
\end{proof}

\subsection{Case 4}
The final case is the case of components of size not too large, but with a large size of bad subcomponents.
The class of blocks $\A{4}_{X,j+1}$ is defined as
\[
\A{4}_{X,j+1} := \left\{X:K_X \geq V_{X}k_0 , V_{X}\leq v_0\right\}.
\]

\begin{lemma}\label{l:A5Maph}
Condition on a $(j+1)$ level component $X=X^{j+1}_{U}\in \A{5}_{X,j+1}$ with $|U|=v\leq v_0$. Let the bad $j$ level components of $X$ be $X^j_{T_1}, X^j_{T_2},\ldots , X^j_{T_{N_{X}}}$ such that $\sum_{i=1}^{N_{X}}|T_i|\geq vk_0$. Then we have  
\[
S_{j+1}(X) \geq (8k_0)^{-16vk_0^2}100^{-4(j+10)v}2^{-v}2^{-4k_0v}\prod_{i=1}^{N_X}S_j(X^j_{T_i})
\]
\end{lemma}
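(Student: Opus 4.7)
The statement of Lemma \ref{l:A5Maph} has the same conclusion as Lemma \ref{l:A4Maph}, and the only difference in hypotheses is that we are now in the regime $v \leq v_0$ (rather than $v > v_0$). My plan is to observe that the proof of Lemma \ref{l:A4Maph} never used the size bound $V_X > v_0$ in an essential way, so the same argument transfers verbatim. Concretely, I would invoke Proposition \ref{p:starcanonical} to produce a single $*$-canonical map $\Upsilon$ from $\hat{U}_X$ to itself with respect to $\mathcal{T} = \{T_1,\ldots,T_{N_X}\}$ which fixes each $T_i$ setwise, and then lower-bound the probability that $\Upsilon$ gives an embedding.

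Keeping the notation of Lemma \ref{l:A4Maph}, let $\tilde{U}\subseteq\Z^2$ with $X=X^j_{\tilde{U}}$ and let $\tilde{U}_1\subseteq \tilde{U}_0\subseteq \tilde{U}_2$ correspond to $B^{j+1,\mathrm{int}}_U$, $B^{j+1}_U$, $B^{j+1,\mathrm{ext}}_U$ respectively. I would define the events $\cd_i$, $\cd=\cap_i \cd_i$, $\cg$, and $\mathcal{B}$ exactly as there: $\cd_i$ requires $X^j_{T_i}\hookrightarrow Y^j_{T_i}$ (with the extra demand that every $Y^j_\ell$, $\ell\in T_i$, is good whenever $T_i\subseteq \tilde{U}_2\setminus \tilde{U}_1$), the event $\cg$ asks that every $Y^j_\ell$ with $\ell\in \tilde{U}_2\setminus(\cup_i T_i)$ is good, and $\mathcal{B}$ asks that $Y^{j+1}_U$ is valid with domain $\hat{U}_X$. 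On $\cd\cap \cg\cap \mathcal{B}$ the map $\Upsilon$ provides the required embedding, so
\[
S_{j+1}(X) \;\geq\; \P[\cd]\,\P[\cg\mid \cd]\,\P[\mathcal{B}\mid \cg,\cd].
\]

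The three factors are estimated identically to Lemma \ref{l:A4Maph}. For $\P[\cd]$: any $T_i\subseteq \tilde{U}_2\setminus \tilde{U}_1$ sits in a conjoined outer buffer zone, hence by Definition \ref{d:conjoinedbuffer}(ii) must be semi-bad, and the total size of such $T_i$'s is bounded by $4k_0 v$; combining semi-badness with \eqref{e:goodconditiony} to cover the extra ``good'' requirement costs a factor of at most $2$ per bad cell, yielding $\P[\cd]\geq 2^{-4k_0 v}\prod_i S_j(X^j_{T_i})$. For $\P[\cg\mid \cd]$: since $|\tilde{U}_2|\leq 4vL_j^{2\alpha}$, the recursive estimate \eqref{e:goodconditiony} gives $(1-L_j^{-\gamma})^{|\tilde{U}_2|}\geq 2^{-v}$ because $\gamma>\alpha$ and $L_0$ is large. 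For $\P[\mathcal{B}\mid \cg,\cd]$: on $\cd\cap\cg$ the boundary of $\hat{U}_X$ is a valid potential boundary curve for $Y^{j+1}_U$, and the lower bound $(8k_0)^{-4k_0^2}100^{-(j+10)}$ on the probability of each valid choice of $(\ell_v,s_v)$ or $s_e$, summed over the at most $16 v k_0^2$ corner/edge choices and $4v$ random-choice instances for this particular curve, gives $(8k_0)^{-16vk_0^2}100^{-4(j+10)v}$.

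Multiplying these three lower bounds produces the stated estimate. There is no genuinely new obstacle here: the argument really only needs $v$ finite, and the semi-badness of near-boundary bad components (via conjoining) together with the standard domain/boundary enumeration do all the work. The point of the separate Case 4 is merely that this crude bound, applied together with the tail estimates of Proposition \ref{l:totalSizeBoundh}, is strong enough to absorb components with $K_X\geq vk_0$ that fall outside the ``typical'' regime $\A{1}_{X,j+1}\cup\A{2}_{X,j+1}$.
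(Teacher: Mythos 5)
Your proposal is correct and matches the paper exactly: the paper's own proof of Lemma \ref{l:A5Maph} consists of the single remark that it is identical to the proof of Lemma \ref{l:A4Maph}, using the $*$-canonical map from the domain of $X$ to itself and requiring $Y$ to have the same domain. Your filled-in details (the events $\cd$, $\cg$, $\mathcal{B}$ and the three-factor lower bound) are precisely the argument of Lemma \ref{l:A4Maph}, which indeed nowhere uses $V_X>v_0$.
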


Proof of Lemma \ref{l:A5Maph} is identical to the proof of Lemma \ref{l:A4Maph}, i.e.\ we once again get the result by considering the  $*$-canonical map from the domain of $X$ to itself and asking tor $X$ and $Y$ to have the same domain. We omit the details.

To complete the analysis of this case we have the following lemma.

\begin{lemma}\label{l:A5Boundh}
When $0<p\leq \frac12$ and $v\geq 1$,
\[
\mathbb{P}(X\in \A{4}_{X,j+1}, S_{j+1}(X)\leq p, V_{X}\geq v)\leq \frac15 p^{m_{j+1}} L_{j+1}^{-\beta}L_{j+1}^{-\gamma(v-1)}.
\]
\end{lemma}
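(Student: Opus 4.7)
The plan is to mirror the structure of the proof of Lemma \ref{l:A4Boundh}, using the embedding lower bound from Lemma \ref{l:A5Maph} in place of Lemma \ref{l:A4Maph} and then invoking the master tail estimate (Proposition \ref{l:totalSizeBoundh}); the only genuine difference is that here the size $V_X$ is capped at $v_0$, but the total bad size $K_X$ is forced to be at least $V_X k_0$, which is exactly the slack we need.

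First, observe that since $\A{4}_{X,j+1}\subseteq \{V_X\leq v_0\}$, we may restrict the sum over $v'=V_X$ to the range $v\leq v'\leq v_0$. For each such $v'$, Lemma \ref{l:A5Maph} gives
\[
S_{j+1}(X)\;\geq\; C_{v'}\prod_{i=1}^{N_X}S_j(X^j_{T_i}),\qquad C_{v'}:=(8k_0)^{-16v'k_0^2}100^{-4(j+10)v'}2^{-v'(1+4k_0)},
\]
so that $\{S_{j+1}(X)\leq p\}\cap\{V_X=v'\}$ forces $-\log\prod_i S_j(X^j_{T_i})\geq \log(C_{v'}/p)$. Combined with $K_X\geq v'k_0$ from the definition of $\A{4}_{X,j+1}$, Proposition \ref{l:totalSizeBoundh} yields
\[
\mathbb{P}\bigl(X\in \A{4}_{X,j+1},\,S_{j+1}(X)\leq p,\,V_X=v'\bigr)
\;\leq\; 500\,L_j^{-\gamma v' k_0/10}\,(p/C_{v'})^{m_{j+1}}\,L_{j+1}^{-9\gamma(v'-1)}.
\]

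Next I would compare the penalty $L_j^{-\gamma v' k_0/10}$ against the blow-up factor $C_{v'}^{-m_{j+1}}$. Taking logarithms, $-\log C_{v'}$ is at most $C'\,v'(j+\log k_0)$ for an absolute constant $C'$, while $\log L_j=\alpha^j\log L_0$ grows doubly exponentially in $j$; using the parameter constraint $\gamma k_0>300\alpha\beta$ together with $m\geq 9\alpha\beta+3\alpha\gamma v_0$ and $L_0$ sufficiently large, one obtains
\[
L_j^{-\gamma v' k_0/10}\,C_{v'}^{-m_{j+1}}\;\leq\; L_{j+1}^{-\beta}\,L_{j+1}^{-5\gamma v'},
\]
uniformly in $j\geq 0$ and $v'\geq 1$. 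Substituting this in and using $L_{j+1}^{-9\gamma(v'-1)}\leq L_{j+1}^{-\gamma(v'-1)}\cdot L_{j+1}^{-8\gamma(v'-1)}$, the $v'$-th term is bounded by $500\,p^{m_{j+1}}\,L_{j+1}^{-\beta}\,L_{j+1}^{-\gamma(v'-1)}\,L_{j+1}^{-3\gamma v'}$.

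Finally, summing over $v'$ from $v$ to $v_0$ the geometric tail in $L_{j+1}^{-3\gamma v'}$ is bounded (for $L_0$ large) by $\tfrac{1}{2500}L_{j+1}^{-\gamma(v-1)}$ times $L_{j+1}^{-\gamma(v'-1)}$ at $v'=v$, which gives the advertised bound $\tfrac15 p^{m_{j+1}}L_{j+1}^{-\beta}L_{j+1}^{-\gamma(v-1)}$. The main (only) obstacle is the bookkeeping in the second step: one has to verify that the constraints listed in \S\ref{s:parameters}, in particular $\gamma k_0>300\alpha\beta$, $m\geq 9\alpha\beta+3\alpha\gamma v_0$, and $\alpha>6$, together with $L_0$ large enough, suffice to absorb the $v'$-dependent constants $(8k_0)^{16v'k_0^2}$, $100^{4(j+10)v'}$, and $2^{v'(1+4k_0)}$ raised to the $m_{j+1}$-th power into the factor $L_j^{\gamma v'k_0/10}=L_{j+1}^{\gamma v'k_0/(10\alpha)}$; this is routine since $\gamma k_0/(10\alpha)\gg \beta$ and $\log L_j$ dominates $j\log(k_0)$.
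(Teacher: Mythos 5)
Your proposal is correct and follows essentially the same route as the paper: restrict to $v\le v'\le v_0$, use Lemma \ref{l:A5Maph} to convert $\{S_{j+1}(X)\le p\}$ into an upper bound on $\prod_i S_j(X^j_{T_i})$, feed this together with $K_X\ge v'k_0$ into Proposition \ref{l:totalSizeBoundh}, and absorb the $(8k_0)^{O(v'k_0^2)}100^{O(jv')}$ prefactors (raised to the power $m_{j+1}$) into $L_j^{-\gamma v'k_0/10}$ using $\gamma k_0\gg \alpha\beta$, $k_0\gg\alpha\gamma$ and the double-exponential growth of $L_j$. The only nitpick is that $-\log C_{v'}$ is of order $v'(j+k_0^2\log k_0)$ rather than $v'(j+\log k_0)$, but since $k_0$ is a fixed parameter this changes nothing, and your closing paragraph already accounts for the $k_0^2$ exponent explicitly.
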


\begin{proof}
Fix $p\leq \frac{1}{2}$ and $v\geq 1$. By definition of $\A{4}_{X,j+1}$ and using Lemma \ref{l:A5Maph} and Proposition \ref{l:totalSizeBoundh}, we get that 

\begin{eqnarray}
\mathbb{P}(X\in \A{4}_{X,j+1}, S_{j+1}(X)\leq p, V_{X}\geq v) &=& \sum_{v'=v}^{v_0}\P[X\in \A{5}_{X,j+1}, S_{j+1}(X)\leq p, V_{X}=v']\nonumber\\
&\leq & \sum_{v'=v}^{v_0}\P\biggl[\prod_{i=1}^{N_X}S_j(X^j_{T_i})\leq (2000k_0)^{64k_0^2v'(j+10)}p, V_{X}=v', K_{X}\geq vk_0\biggr]\nonumber\\
&\leq& \sum_{v'=v}^{v_0}500p^{m_{j+1}}(2000k_0)^{64k_0^2v'(j+10)m_{j+1}}L_{j}^{-\gamma v'k_0/10}\nonumber\\
&\leq & 500p^{m_{j+1}}(2000k_0)^{64k_0^2v_0(j+10)m_{j+1}}\sum_{v'=v}^{v_0}L_{j}^{-\gamma v'k_0/10}\nonumber\\
&\leq & 2000p^{m_{j+1}}(2000k_0)^{64k_0^2v_0(j+10)m_{j+1}}L_{j}^{-\gamma v k_0/10}\nonumber\\
& \leq & \frac15 p^{m_{j+1}} L_{j+1}^{-\beta}L_{j+1}^{-\gamma(v-1)}  
\end{eqnarray}
where the final inequality follows because $\gamma k_0 > 10\alpha \beta$ and $k_0> 10 \alpha \gamma$ and taking $L_0$ sufficiently large. 
This completes the proof.
\end{proof}

\subsection{Proof of Theorem~\ref{t:tailh}}
We now put together the four cases to establish the tail bounds.

\begin{proof}[Proof of Theorem~\ref{t:tailh}]
The case of $\frac12\leq p \leq 1-L_{j+1}^{-1}$ is established in Lemma~\ref{l:A1finalh}.  By Lemma~\ref{l:A1Map1h} and Lemma \ref{l:A1Sizeh} we have that $S_{j+1}(X) \geq \frac12$ for all $X\in \A{1}_{X,j+1}$ since $L_0$ is sufficiently large.  Hence we need only consider $0<p<\frac12$ and cases 2 to 4.  By Lemmas~\ref{l:A2Boundh}, \ref{l:A4Boundh} and~\ref{l:A5Boundh} then
\begin{align*}
\mathbb{P}(S_{j+1}(X)\leq p) \leq \sum_{l=2}^4 \mathbb{P}(X\in \A{l}_{X,j+1}, S_{j+1}(X)\leq p)\leq  p^{m_{j+1}} L_{j+1}^{-\beta}L_{j+1}^{-\gamma(v-1)}.
\end{align*}
The bound for $S_{j+1}^{\mathbb{Y}}$ follows similarly.
\end{proof}

\section{Estimates on Size of Components}
Our objective here is to bound the probability the $(j+1)$-level components have large size, i.e., we want to prove recursive estimates \eqref{e:sizex1h} and \eqref{e:sizey1h} at level $(j+1)$. We only prove the following theorem, the corresponding bound for $\mathbb{Y}$ components is identical.

\begin{theorem}
\label{t:sizeh}
Let $X$ be a component of $\mathbb{X}$ at level $(j+1)$ having law $\mu_j^{\mathbb{X}}$. Let $V_{X}$ denote the size of $\mathbb{X}$. Then we have for all $v\geq 1$,
$$\P[V_{X}\geq v] \leq L_{j+1}^{-\gamma(v-1)}.$$
\end{theorem}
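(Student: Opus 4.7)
The plan is to adapt the argument of Proposition~\ref{l:totalSizeBoundh} to the present setting by specializing to $k = 0$, $x = 0$ and dropping the factors involving $K_X$ and $S^*(X)$. By translation invariance I may assume $X = X^{*,j+1}(\mathbf{0})$, with $U \subseteq \Z^2$ the lattice component satisfying $X = X^{j+1}_U$. First write
\[
\P[V_X \geq v] \leq \sum_{v' \geq v}\sum_{H \in \mathcal{H}_{v'}} \P[U = H],
\]
where $\mathcal{H}_{v'}$ denotes the set of lattice animals of size $v'$ containing $\mathbf{0}$; since $|\mathcal{H}_{v'}| \leq 8^{v'}$, it suffices to bound $\P[U = H]$ for each fixed $H$ of size $v'$.

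For each such $H$ I would reuse the decomposition $\{U=H\} \subseteq A_1 \cup A_2 \cup A_3$ introduced in Section~\ref{s:tailestimateh}: on $\{U = H\}$ there is a non-neighbouring subset $H^* \subseteq H$ of size $\lceil v'/25 \rceil$ whose associated $(j+1)$-level lattice blocks are all bad, and for each such bad block at least one of the following must hold — either the blow-up of the corresponding cell contains bad $j$-level subcomponents of total size at least $k_0$ (pooled over $H^*$ into event $A_1$, giving total size $\geq k_0 v'/75$), or it contains a really bad $j$-level component (pooled into $A_2$), or some $L_j^{3/2}\times L_j^{3/2}$ sub-square fails the airport condition (pooled into $A_3$).

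For each of the three events I would then repeat the computations of Lemmas~\ref{l:tailsumcase1}, \ref{l:tailsumcase2}, and \ref{l:tailsumcase3} with the trivial parameter choices $k = 0$ and $x = 0$: in each case the dominant factor is $L_{j+1}^{-10\gamma(v'-1)}$, with all multiplicative constants absorbed once $L_0$ is sufficiently large. Specifically, for $A_1$ I invoke \eqref{e:tailx1h} and \eqref{e:sizex1h} at level $j$ to control a union over placements of non-neighbouring bad subcomponents of total size at least $k_0 v'/24$; for $A_2$ I additionally use the definition of really bad components, which costs a factor $L_j^{-(\gamma v_0/4 \wedge \beta/2)}$ per really bad component and exploits $\gamma v_0 \wedge 2\beta > 3000\alpha\gamma$; for $A_3$ I use the independence of the airport event from the bad $j$-level subcomponents inside the square (exactly as in Lemma~\ref{l:tailsumcase3}), together with a probabilistic bound of order $L_j^{-10\beta}$ for airport failure and $\beta > 75\alpha\gamma$.

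Summing the resulting per-animal bounds over $\mathcal{H}_{v'}$ and then geometrically over $v' \geq v$ — using $\gamma > 40\alpha$ and $L_0$ large — yields
\[
\P[V_X \geq v] \leq \sum_{v' \geq v} 8^{v'}\cdot C\cdot L_{j+1}^{-10\gamma(v'-1)} \leq L_{j+1}^{-\gamma(v-1)},
\]
as required. The analogous bound for $\mathbb{Y}$-components follows by the obvious symmetry between $\mathbb{X}$ and $\mathbb{Y}$. I expect no substantive obstacle: the argument is essentially a lightweight reprise of Section~\ref{s:tailestimateh}, and the only care needed is to verify that the case bounds in the proof of Proposition~\ref{l:totalSizeBoundh} still go through verbatim when one simply drops the $\{K_X \geq k\}$ and $\{-\log S^*(X) > x\}$ restrictions, which they do.
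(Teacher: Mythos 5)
Your proposal is correct and is essentially the paper's own proof: the paper deduces the theorem directly from Proposition~\ref{l:totalSizeBoundh} (with the $K_X$ and $S^*(X)$ restrictions dropped, exactly as you describe), absorbing the constant $500$ and the stronger exponent $9\gamma$ into $L_{j+1}^{-\gamma(v-1)}$ using that $L_0$ is large. The detailed reprise of the $A_1\cup A_2\cup A_3$ decomposition you give is precisely the content of that proposition, so no further argument is needed.
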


\begin{proof}
This follows immediately from Proposition \ref{l:totalSizeBoundh} and observing that $L_0$ and hence $L_j$ is sufficiently large.
\end{proof}

\section{Estimates for good blocks}\label{s:goodh}
We prove the remaining recursive estimates, i.e., the ones for the good blocks, in this final section thereby completing the induction. We start with showing that most blocks are good.
\subsection{Most Blocks are good}


First we prove the recursive estimates (\ref{e:goodconditionx}) and (\ref{e:goodconditiony}) at level $(j+1)$. We shall only prove the estimate \eqref{e:goodconditionx} as the other one follows in a similar manner. 

\begin{theorem}
\label{t:goodconditionh}
For $u\in \Z^2$, let $X=X^{j+1}_u$ denote the corresponding $\mathbb{X}$-block at level $(j+1)$. For $V\subseteq \Z^2\setminus \{u\}$, let $\cf_V=\cf_V^{\mathbb{X}}$ be as defined in \S~\ref{s:rech} (at level $j+1$). Then we have
$$\P[X~\text{is good}\mid \cf_V] \geq 1-L_{j+1}^{-\gamma}.$$ 
\end{theorem}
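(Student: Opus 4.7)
The plan is to take a union bound over the four failure modes in Definition~\ref{d:good}. By Observation~\ref{o:blockprop}(iii), the event ``$X^{j+1}_u$ is good'' depends (modulo the choice of domain) only on the level-$j$ configuration inside the blow-up $B^{j+1,{\rm ext}}(u)$ and on whether the four buffer zones adjacent to $B^{j+1}(u)$ are conjoined. The conditioning $\cf_V$ involves $(j+1)$-level blocks away from $u$; any $v\in V$ whose multi-cell does not share a buffer zone with $B^{j+1}(u)$ is conditionally independent of the relevant randomness in $B^{j+1,{\rm ext}}(u)$. For the finitely many $v$ that could interact (at most the eight king-lattice neighbours of $u$), I will use the same device as in the proof of Lemma~\ref{l:tailsumcase3}: enlarge each failure event to a slightly weaker event that is measurable with respect to a sub-configuration independent of $\cf_V$, losing only a $O(1)$ factor. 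So it suffices to prove an unconditional bound of order $L_{j+1}^{-\gamma-1}$ on each failure mode.

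For condition (i), $X^{j+1}_u$ has size $>1$ iff some outer buffer zone of $B^{j+1}(u)$ is conjoined, i.e.\ either contains total size of $j$-level bad components exceeding $k_0$, or contains a bad component that is not semi-bad. Each buffer zone contains $\asymp L_j^{\alpha+3}$ level-$j$ cells. Enumerating configurations of non-neighbouring bad subblocks of total size $k'\geq k_0+1$ as in the proof of Lemma~\ref{l:tailsumcase1} yields a bound $\leq L_j^{-\gamma k_0/4}$, while the second possibility is bounded via \eqref{e:tailx1h} and \eqref{e:sizex1h} by a union bound over the $\leq L_j^{\alpha+3}$ starting vertices of really bad components, contributing at most $L_j^{\alpha+3-\beta+o(1)}$. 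Summing the four buffers and using $\beta>1500\alpha\gamma$, $\gamma k_0>300\alpha\beta$ gives $\ll L_{j+1}^{-\gamma-1}$.

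Conditions (ii) and (iii) are entirely analogous, except that the enumeration is over the whole cell $B^{j+1}(u)$, which contains $L_j^{2\alpha-2}$ level-$j$ subcells, and condition (iii) requires bounding the probability that a specific bad component is really bad, i.e.\ has $V>v_0$ or $S_j\leq 1-(v_0^5k_0^4100^j)^{-1}$. By \eqref{e:tailx1h} applied with $x=1-(v_0^5k_0^4100^j)^{-1}$ (valid for $L_j$ sufficiently large so that $x\leq 1-L_j^{-1}$) and \eqref{e:sizex1h} with $v=v_0+1$, each site contributes at most $L_j^{-\beta+o(1)}+L_j^{-\gamma v_0}$, and the union bound is still $\ll L_{j+1}^{-\gamma-1}$ thanks to $\beta>(2\alpha-2)+\alpha\gamma$ and $\gamma(v_0-1)>\alpha(\gamma+1)$. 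Finally, for condition (iv), the airport estimate (Lemma~\ref{l:airportfinal}, referenced in the proof of Lemma~\ref{l:tailsumcase3}) gives that any single $L_j^{3/2}\times L_j^{3/2}$ square fails to be an airport with probability $\leq L_j^{-10\beta}$, and there are at most $L_j^{2\alpha-2}$ such squares in $B^{j+1}(u)$, so the contribution is $\leq L_j^{2\alpha-2-10\beta}\ll L_{j+1}^{-\gamma-1}$.

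The main obstacle is really the conditioning issue rather than any individual bound, since the parameter inequalities of \S\ref{s:parameters} have been engineered precisely to make all four exponents large enough. The delicate point is that the identity of the lattice block $H$ containing $u$ at level $j+1$, and hence even the set of level-$j$ cells that get absorbed into $X^{j+1}_u$, depends on which buffers are conjoined, and this in turn is not independent of $\cf_V$ via the shared-buffer mechanism. Carefully carrying through the weakening argument (replacing the conjoined/non-semi-bad/non-airport events by slightly larger events that ignore a bounded-size neighbourhood adjacent to the shared buffers with $V$) is where the bookkeeping lives; once done, the four tail bounds combine to give $\P[X^{j+1}_u\text{ is good}\mid \cf_V]\geq 1-L_{j+1}^{-\gamma}$.
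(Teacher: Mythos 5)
Your decomposition is the same as the paper's: a union bound over the failure modes of Definition~\ref{d:good}, with the size/semi-bad conditions controlled by the machinery of Proposition~\ref{l:totalSizeBoundh} (as in Lemma~\ref{l:goodconditionbadsemibad} of the paper, which gives $L_j^{-\beta/2}$ for $\{U^C_X\geq k_0\}\cup\{W^C_X\geq 1\}$) and the airport condition controlled by a Chernoff bound (Lemmas~\ref{l:airport1}--\ref{l:airportfinal}). Your parameter checks are consistent with the paper's. The part you explicitly defer --- the conditioning --- is, however, exactly where the paper's proof does its real work, and it resolves it by a mechanism somewhat different from the ``enlarge each failure event to an independent one'' device you sketch. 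The paper first writes $\P[X\text{ is bad}\mid\cf_V]\leq\max_{C\in\mathcal{C}^*_{u,V}}\P[X\text{ is bad}\mid\cf_V,\ce_C]$, i.e.\ it conditions on a \emph{fixed compatible boundary curve} $C$, so that the set $U^*$ of interior level-$j$ cells is deterministic. It then observes (Lemma~\ref{l:intbad}) that the interior badness event $I^C_{\rm bad}$ depends only on the blocks in $U^*$, hence is independent of the conditioning on the exterior blocks $X^j_{\tilde V}$, and controls the conditioning on $C$ being a valid boundary by the ratio bound $\P[\cdot\mid\cf^*_C]\leq\P[\cdot]/\P[\cf^*_C\mid X^j_{\tilde V}]$ together with $\P[\cf^*_C\mid X^j_{\tilde V}]\geq\P[\text{all }j\text{-level blocks in }U\text{ good}]\geq\tfrac12$ from \eqref{e:goodconditionx} at level $j$. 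This is a cleaner decoupling than event-enlargement because the denominator absorbs all the dependence at once.

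Two concrete points you would need to add to make your plan close. First, for condition (iv) the set of $L_j^{3/2}\times L_j^{3/2}$ squares of $j$-level cells ``contained in $X$'' depends on which domain is realised; the paper sidesteps this by introducing \emph{strong airports} on $(L_j^{3/2}-1)$-squares of cells (so that any containing $L_j^{3/2}$-square of blocks is an airport, whatever the domain turns out to be), and your appeal to Lemma~\ref{l:airportfinal} implicitly needs this. Second, note that Lemma~\ref{l:airportfinal} also requires a union bound over all semi-bad target components $Y$, which is handled by observing that the embedding event is determined by the level-$0$ anatomy of $Y$, so there are at most $8^{v_0}3^{4v_0L_j^2}$ cases --- this is beaten by the $e^{-cL_j^{5/2}}$ Chernoff bound but not by a crude $L_j^{-10\beta}$ bound alone. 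With these two repairs your plan matches the paper's proof.
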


Let us first set-up some notation before we move towards proving Theorem \ref{t:goodconditionh}. Let $\mathcal{C}_u$ denote the set of all potential boundary curves of $X$ provided $V_{X}=1$, i.e., $\mathcal{C}_{u}$ denotes the set of potential boundary curves through the buffer zone of $B^{j+1}(u)$. Conditional on $\cf_{V}$, let $\mathcal{C}^*_{u,V}\subseteq \mathcal{C}_{u}$ denote the set of all potential boundary curves that are compatible with $\cf_{V}$. By the assumption on $\cf_{V}$, we must have that $\mathcal{C}^*_{u,V}$ is non-empty, e.g., if $V$ contains all the vertices surrounding $u$, then $\mathcal{C}^*_{u,V}$ will be a singleton.  

Now let us fix $C\in \mathcal{C}_{u}$. Let $\hat{U}=\hat{U}(C)$ denote the domain having boundary $C$  Let $\ce_{\hat{U}}$ denote the event that $\hat{U}$ is the domain of $X$. On $\ce_{\hat{U}}$, let $U\subseteq \Z^2$ be such that $X=X^j_U$, i.e., the $(j+1)$ level block $X$ consists of the $j$ level blocks corresponding to $U$. Let $\partial U$ denote the the vertices on the boundary of $U$ (i.e. the vertices in $U$ that have neighbours outside $U$) and $U^*=U\setminus \partial U$. Let $V^*\subseteq \Z^2$ be such that $X^j_{V^*}=X^{j+1}_{V}$. Let $\cf_{V^*}$ denote the conditioning on $X^j_{V^*}$ being valid, i.e., $X^j_{V*}$ being a union of $j$-level blocks. 

Fix $C\in \mathcal{C}_{u}$. Let $U^C_{X}$ denote the total size of bad components in $U^*$ and let $W^C_{X}$ denote the number of really bad components in $U^*$. We have the following lemma.

\begin{lemma}
\label{l:goodconditionbadsemibad}
In the above set-up $\P[\{U^C_{X}\geq k_0\}\cup \{W^C_{X}\geq 1\}]\leq L_j^{-\beta/2}$.
\end{lemma}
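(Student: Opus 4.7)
My plan is to bound the two events $\{U^C_X \geq k_0\}$ and $\{W^C_X \geq 1\}$ separately using the recursive estimates at level $j$, then combine via a union bound. The rough idea is that $U^*$ contains only polynomially many level-$j$ vertices while each has polynomially small probability of contributing to either event, with the polynomial in the tail beating the polynomial in the count.

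First I would bound $|U^*|$. Since $V_X = 1$, the domain $\hat{U}$ is contained in the blow-up of the single level-$(j+1)$ cell $B^{j+1}(u)$, which contains at most $(L_j^{\alpha-1} + 2L_j^4)^2 \leq 2L_j^{2\alpha-2}$ level-$j$ cells (using $\alpha > 6$). Hence $|U^*| \leq |U| \leq 2L_j^{2\alpha-2}$.

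Next I would handle $\{W^C_X \geq 1\}$. A really bad component is a bad component that fails to be semi-bad, so it has either size $\geq v_0 + 1$ or embedding probability $S_j^{\mathbb{X}} \leq 1 - (v_0^5 k_0^4 100^j)^{-1}$. For each vertex $v \in U^*$, the size estimate \eqref{e:sizex1h} at level $j$ bounds the chance that the component through $v$ has size $\geq v_0 + 1$ by $L_j^{-\gamma v_0}$, and the tail estimate \eqref{e:tailx1h} with $x = 1 - (v_0^5 k_0^4 100^j)^{-1} \leq 1 - L_j^{-1}$ (valid once $L_j$ is large) bounds the chance of small $S_j$ by $x^{m_j} L_j^{-\beta} \leq L_j^{-\beta}$. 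Since $\gamma v_0 > \beta$, each vertex contributes at most $2L_j^{-\beta}$, so a union bound gives $\P[W^C_X \geq 1] \leq 4L_j^{2\alpha - 2 - \beta}$.

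For $\{U^C_X \geq k_0\}$ I would apply Observation \ref{o:components} (at level $j$): within any bad component of size $k$, at least $\lceil k/25 \rceil$ vertices carry bad level-$j$ blocks and are mutually non-neighbouring. Together with condition (iii) of Definition \ref{d:latticecomp} — which ensures distinct lattice components are separated by good blocks, so bad blocks across different bad components are also mutually non-neighbouring — the event $\{U^C_X \geq k_0\}$ forces the existence of at least $\lceil k_0/25\rceil$ mutually non-neighbouring vertices in $U^*$ whose level-$j$ blocks are all bad. By \eqref{e:goodconditionx} applied iteratively (the non-neighbouring structure lets us use the estimate conditionally for each new block, since the conditioning allowed in \eqref{e:goodconditionx} is exactly a partial set of outside level-$j$ blocks), any specific such set has all-bad probability at most $L_j^{-\gamma k_0/25}$. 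A union bound over subsets yields
\[
\P[U^C_X \geq k_0] \leq \binom{2L_j^{2\alpha-2}}{\lceil k_0/25\rceil} L_j^{-\gamma k_0 /25} \leq L_j^{(k_0/25)(2\alpha - \gamma + 1)}.
\]

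Finally, I would use the parameter choices $\gamma k_0/25 - (2\alpha+1)k_0/25 \gg \beta/2$ and $\beta - (2\alpha - 2) \gg \beta/2$ to conclude that both terms are at most $L_j^{-\beta/2}/2$, giving the claimed bound. The main obstacle is making sense of the conditioning on $\cf_V$ and $\ce_{\hat U}$ at the level-$j$ scale: $\ce_{\hat U}$ is an event about blocks in the buffer zone of $B^{j+1}(u)$, while $\cf_V$ concerns faraway $(j+1)$-level blocks. However the recursive estimates \eqref{e:goodconditionx}, \eqref{e:sizex1h}, \eqref{e:tailx1h} are inductively formulated to hold conditionally on arbitrary partial outside level-$j$ information, so the relevant estimates transfer to the conditional setting after appropriately identifying the conditioning region — this is the step requiring the most care but no new ideas.
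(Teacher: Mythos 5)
Your proposal is correct and is essentially the argument the paper intends: its one-line proof simply points to ``the arguments in Proposition \ref{l:totalSizeBoundh}'', which are exactly the union bounds you carry out — extracting $\lceil k_0/25\rceil$ non-neighbouring bad blocks via Observation \ref{o:components} and applying \eqref{e:goodconditionx} for the event $\{U^C_X\geq k_0\}$, and using \eqref{e:sizex1h} together with \eqref{e:tailx1h} vertex by vertex for $\{W^C_X\geq 1\}$, with the parameter constraints absorbing the polynomial entropy factors. The only caveat, shared with the paper's own treatment, is the routine bookkeeping needed when two selected vertices could a priori lie in the same lattice block, which does not affect the bound.
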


\begin{proof}
This follows from the arguments in Proposition \ref{l:totalSizeBoundh} and using that $\beta> 4\alpha+2\gamma$, $\gamma(v_0-1)>2\beta$ and $\gamma k_0>10\beta$ are sufficiently large.
\end{proof}

Next define the following event about a stronger notion of airport. A $(L_{j}^{3/2}-1)\times (L_j^{3/2}-1)$ square $S$ of $j$ cells contained in $X^j_{U^*}$ is called a strong airport if any $L_{j}^{3/2}\times L_j^{3/2}$ square $\tilde{S}$ of $j$-level blocks containing $\tilde{S}$ is an airport. Let $\ce_{\hat{U}}^*$ denote the event that all $(L_{j}^{3/2}-1)\times (L_j^{3/2}-1)$ square of $j$ level cells contained in $X^j_{U^*}$ are strong airports.

Now we have the following Lemma about airports. First observe the following. Fix a square $S_1$ of size $L_j^{3/2}$ and a square $S_2$ of size $L_j^{3/2}-1$. Further fix a lattice animal $S$ of size at most $v_0$. Let $N(S,S_1)$ (resp.\ $N(S,S_2)$) denote the number of subsets of $S_1$ (resp.\ $S_2$) that are translates of $S$. It is easy to see that 
$|N(S,S_2)|\geq (1-L_j^{-1})|N(S,S_1)|$.

\begin{lemma}
\label{l:airport1}
Let $S\subseteq \Z^2$ be a fixed square of size $(L_{j}^{3/2}-1)$. Consider the set of blocks $X^{j}_{S}$. Fix a $j$-level semi-bad $\mathbb{Y}$-component $Y=Y^j_{S'}$. Let $\mathcal{S}$ denote the set of subsets of $S$ that are translates of $S'$. Let $H$ denote the event that 
$$\#\{\tilde{S}\in \mathcal{S}; A^{X^j_{\tilde{S}}}_{\rm valid}, X^j_{\tilde{S}}\hookrightarrow Y\} \geq (1-v_0^{-3}k_0^{-4}100^{-j})|\mathcal{S}|.$$
Then we have 
$\P[H\mid Y] \geq 1-e^{-cL_j^{5/2}}$ for some constant $c$ not depending on $L_j$.
\end{lemma}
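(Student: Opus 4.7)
The strategy is a concentration-of-measure argument conditional on $Y$. For each translate $\tilde{S} \in \mathcal{S}$, we will set $\xi_{\tilde{S}} := \mathbf{1}\{A^{X^j_{\tilde{S}}}_{\rm valid},\ X^j_{\tilde{S}} \hookrightarrow Y\}$, so that $H$ is exactly the event $\sum_{\tilde{S}}(1 - \xi_{\tilde{S}}) \leq v_0^{-3}k_0^{-4}100^{-j} |\mathcal{S}|$. By translation invariance of $\mathbb{X}$ and the definition of $S_j^{\mathbb{Y}}$, we have $\E[\xi_{\tilde{S}} \mid Y] = S_j^{\mathbb{Y}}(Y) \geq 1 - \epsilon$ for every $\tilde{S}$, where $\epsilon := v_0^{-5}k_0^{-4}100^{-j}$ by the semi-bad hypothesis on $Y$. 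Hence the expected failure count is only $\epsilon |\mathcal{S}|$, smaller than the airport threshold $v_0^2 \epsilon |\mathcal{S}|$ by a factor of $v_0^2$, and only concentration remains.

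The variables $\xi_{\tilde{S}}$ are not jointly independent, because nearby translates involve overlapping level-$j$ block structures in $\mathbb{X}$. To manufacture independence, the plan is to partition $\mathcal{S}$ into $K = K(v_0)$ sub-collections $\mathcal{S}_1, \ldots, \mathcal{S}_K$ using a sub-lattice of spacing $\gtrsim v_0$. The spacing is chosen so that any two translates $\tilde{S}_1, \tilde{S}_2 \in \mathcal{S}_i$ have bounding boxes separated by at least a few cells in $\ell_\infty$; since level-$j$ outer buffer zones extend only $L_{j-1}^5 \ll L_j$ beyond each cell, the outer buffer zones of $B^j_{\tilde{S}_1}$ and $B^j_{\tilde{S}_2}$ are then disjoint and automatically non-conjoined. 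Invoking the analog of Observation~\ref{o:blockprop}(iii) at level $j$ then ensures that $\{X^j_{\tilde{S}} : \tilde{S} \in \mathcal{S}_i\}$ is a mutually independent family, and, conditional on $Y$, the collection $\{\xi_{\tilde{S}} : \tilde{S} \in \mathcal{S}_i\}$ becomes an i.i.d.\ family of Bernoulli variables with common failure parameter at most $\epsilon$.

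With independence in hand we plan to apply a standard Chernoff bound within each sub-collection of size $N_i \geq |\mathcal{S}|/K$ to obtain
\[
\P\!\left[\sum_{\tilde{S}\in \mathcal{S}_i}(1-\xi_{\tilde{S}}) \geq v_0^2 \epsilon N_i \,\Big|\, Y\right] \leq \exp\bigl(-c\, v_0^2 \epsilon N_i \log v_0\bigr),
\]
and then union-bound over the $K$ sub-collections. Since $|\mathcal{S}| \geq \tfrac12 L_j^3$ for $L_0$ large, the resulting total failure probability is at most $K\exp(-c'\, \epsilon L_j^3 / K)$. Because $L_j = L_0^{\alpha^j}$ grows doubly exponentially while $\epsilon^{-1} = v_0^5 k_0^4 100^j$ grows only exponentially in $j$, the exponent $\epsilon L_j^3$ dominates $L_j^{5/2}$ once $L_0$ is sufficiently large, and the stated bound $\P[H \mid Y] \geq 1 - e^{-c L_j^{5/2}}$ follows.

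The main obstacle is the independence step: one must carefully identify the correct ``non-neighbouring'' condition at level $j$ — namely, that pairs of lattice animals with disjoint and non-conjoined outer buffer zones yield independent block configurations — and verify that the prescribed spacing in the partition is large enough to enforce it. Once this is pinned down, the remainder of the argument reduces to a routine Chernoff-plus-union-bound calculation easily closed by the doubly-exponential growth of $L_j$.
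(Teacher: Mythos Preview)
Your proposal is correct and follows essentially the same approach as the paper: partition $\mathcal{S}$ into $O(v_0^2)$ sub-collections of mutually non-neighbouring translates (the paper uses exactly $4v_0^2$), use the resulting conditional independence together with $S_j^{\mathbb{Y}}(Y)\geq 1-v_0^{-5}k_0^{-4}100^{-j}$ to apply a Chernoff bound in each sub-collection, and finish with a union bound. Your write-up is in fact more careful than the paper's about justifying the independence step and about tracking the $j$-dependence of $\epsilon$ against the doubly exponential growth of $L_j$.
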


\begin{proof}
Observe the following. As $|S'|\leq v_0$ it follows that $\mathcal{S}$ can be partitioned into $4v_0^2$ subsets $\mathcal{S}_{i}$, $i\in [4v_0^2]$ such that $\tilde{S}_1, \tilde{S}_2\in \mathcal{S}_{i}$ for some $i$ implies that $\tilde{S}_1$ and $\tilde{S}_2$ are non neighbouring. By using a Chernoff bound and $S_j(Y)\geq (1-v_0^{-5}k_0^{-4}100^{-j})$ it follows that for each $i$,
$$\P[\#\{\tilde{S}\in \mathcal{S}_i; \neg A^{X^j_{\tilde{S}}}_{\rm valid}~\text{or}~ X^j_{\tilde{S}}\not\hookrightarrow Y\} \geq v_0^{-3}k_0^{-4}100^{-j}|\mathcal{S}|]\leq  e^{-cL_j^{5/2}}$$
for some constant $c$ not depending on $L_j$. Taking a union bound over all $i$ we get the lemma.   
\end{proof}

\begin{lemma}
\label{l:airport2}
In the set-up of Lemma \ref{l:airport1}, we have 
$$\P[X^j_{S}~\text{is a strong airport}]\geq 1-e^{-c'L_j^{9/4}}$$ for some constant $c'>0$ not depending on $L_j$.
\end{lemma}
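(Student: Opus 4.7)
My plan is to prove Lemma~\ref{l:airport2} by upgrading Lemma~\ref{l:airport1} via a union bound over a polynomially-bounded family of ``structural types'' of semi-bad components.

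First, I would observe that $S$ is a strong airport if and only if each of the at most nine $L_j^{3/2}\times L_j^{3/2}$ squares $S_1$ containing $S$ is itself an airport. Hence it suffices to establish $\P[X^j_{S_1}\text{ is not an airport}]\leq e^{-c''L_j^{9/4}}$ for a fixed such $S_1$ and then take a union bound over the constantly many choices of $S_1$.

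For the fixed-$S_1$ bound, I would classify semi-bad components $Y^* = Y^j_U$ by their \emph{structural type} $\tau$: the shape $U$ (with $|U|\leq v_0$, giving at most $C^{v_0}$ options) together with the positions and shapes of the bad $(j-1)$-level sub-components of $Y^*$. Combining the semi-badness condition with the argument underlying Lemma~\ref{l:A1Map1h} and the recursive tail estimates \eqref{e:tailx1h}-\eqref{e:taily1h}, every bad $(j-1)$-sub-component of a semi-bad $Y^*$ must itself be semi-bad, and the effective combinatorial data of $Y^*$'s bad-sub-block skeleton has total size bounded by $v_0k_0$ after pruning negligible contributions. Consequently the number of relevant structural types is at most $L_j^{O(1)}$, polynomial in $L_j$. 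For each type $\tau$, the Chernoff argument of Lemma~\ref{l:airport1} goes through identically, using only (i) the shape $U$ to partition translates of $U$ in $S_1$ into $O(v_0^2)$ non-neighbouring classes producing independent embedding trials, and (ii) the uniform per-trial failure bound $1-S_j(Y^*)\leq v_0^{-5}k_0^{-4}100^{-j}$ valid for any semi-bad $Y^*$; this yields a failure probability $\leq e^{-cL_j^{5/2}}$ per type. A union bound over types and over the nine squares $S_1$ then gives
\[
\P[X^j_S\text{ is not a strong airport}] \leq 9 \cdot L_j^{O(1)} \cdot e^{-cL_j^{5/2}} \leq e^{-c'L_j^{9/4}},
\]
for $L_j$ sufficiently large, since $L_j^{5/2}-O(\log L_j) \geq L_j^{9/4}$.

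The main obstacle is making the Chernoff bound genuinely uniform across all semi-bad $Y^*$ of a fixed structural type $\tau$, since distinct $Y^*$'s within the same type nominally induce distinct $X$-measurable failure events. The intended resolution is to replace the family of failure events by a single sufficient deterministic condition $\mathcal{G}_{\tilde S,\tau}$ on $X^j_{\tilde S}$ --- namely that the bad-sub-block skeleton of $X^j_{\tilde S}$ is compatible with $\tau$ through some $\alpha$-canonical map furnished by Proposition~\ref{p:canonical1} --- and to verify, using the recursive embedding estimates at level $j-1$, that $\mathcal{G}_{\tilde S,\tau}$ implies $X^j_{\tilde S}\hookrightarrow Y^*$ for \emph{every} semi-bad $Y^*$ of type $\tau$, while $\P_{\mathbb{X}}[\neg\mathcal{G}_{\tilde S,\tau}]\leq v_0^{-5}k_0^{-4}100^{-j}$. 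Once this surrogate is in place, the per-type Chernoff argument applies to a single sequence of independent $X$-events, and the polynomial type-counting union bound closes the proof.
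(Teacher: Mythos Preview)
Your proposal has a genuine gap at exactly the point you flag as ``the main obstacle.'' Your structural type $\tau$ records only the shape $U$ and the positions and shapes of the bad $(j-1)$-level sub-components of $Y^*$, but not their \emph{contents}. Two semi-bad $Y^*$'s of the same type $\tau$ can have entirely different bad sub-components sitting in the same positions, and the event $X^j_{\tilde S}\hookrightarrow Y^*$ depends on those contents all the way down to level $0$ (ultimately on which level-$0$ blocks of $Y^*$ are good, $\mathbf{0}$, or $\mathbf{1}$). There is therefore no reason to expect a single $X$-measurable surrogate $\mathcal{G}_{\tilde S,\tau}$ with $\P[\neg\mathcal{G}_{\tilde S,\tau}]\leq v_0^{-5}k_0^{-4}100^{-j}$ that forces $X^j_{\tilde S}\hookrightarrow Y^*$ for \emph{every} $Y^*$ of type $\tau$; indeed, for any fixed $X$-configuration one can find a semi-bad $Y^*$ of the given type whose level-$0$ pattern is incompatible with it. Recursing your structural-type idea down through all levels would eventually force you to record the level-$0$ pattern of $Y^*$ anyway, which is no longer polynomial in $L_j$.

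The paper's argument bypasses this by taking the classification at the finest scale from the outset: conditioned on $X^j_{\tilde S}$, the event $X^j_{\tilde S}\hookrightarrow Y$ is determined solely by the level-$0$ structure of $Y$ (whether each level-$0$ sub-block is good, $\mathbf{0}$, or $\mathbf{1}$). Hence the number of \emph{effectively distinct} semi-bad $Y$'s is at most $8^{v_0}\cdot 3^{4v_0 L_j^2}$. For each such $Y$ one applies Lemma~\ref{l:airport1} directly---no uniformity issue arises, since $Y$ is now genuinely fixed---and the union bound closes because the per-$Y$ failure probability $e^{-cL_j^{5/2}}$ has exponent $L_j^{5/2}$, which dominates the $O(L_j^2)$ exponent in the count. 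This gives
\[
8^{v_0}\,3^{4v_0 L_j^2}\,e^{-cL_j^{5/2}} \leq e^{-c'L_j^{9/4}}
\]
for $L_j$ sufficiently large. The lesson is that you do not need a small number of types; you need each type to pin down the embedding event, and the $e^{-cL_j^{5/2}}$ decay is strong enough to absorb an $e^{O(L_j^2)}$ count.
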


\begin{proof}
Observe that, conditioned on $X^j_{\tilde{S}}$  the event $X^j_{\tilde{S}}\hookrightarrow Y$ is determined by the $0$ level structure of $Y$, i.e., by looking at whether each $0$ level block contained in $Y$ is $\mathbf{0}, \mathbf{1}$ or good. Hence for our purposes, the different number of semi-bad $Y$ at level $j$ is at most $8^{v_0}3^{4v_0L_j^2}$. The lemma now follows from Lemma \ref{l:airport1} by taking a union bound over all semi-bad $Y$ as $L_j$ is sufficiently large, and from the observation immediately preceding Lemma \ref{l:airport1}. 
\end{proof}
 
\begin{lemma}
\label{l:airportfinal}
Fix $C\in \mathcal{C}_{u}$ and let $\hat{U}$ denote the domain enclosed by $C$. Then we have $\P[\ce^*_{\hat{U}}]\geq 1-e^{-c'L_j^{9/4}}$ for some constant $c'>0$ not depending on $L_j$.
\end{lemma}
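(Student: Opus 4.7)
The plan is to derive Lemma \ref{l:airportfinal} as a straightforward union bound from Lemma \ref{l:airport2}, where the exponential decay $e^{-c L_j^{9/4}}$ easily absorbs the polynomial number of candidate squares contained in $\hat{U}$.

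First, I would observe that on the event $\ce_{\hat{U}}$, the set $U^*$ is a fixed lattice animal determined by the boundary curve $C$, and is contained in the blow-up $B^{j+1,\text{ext}}_u$. In particular, $|U^*| \leq 4L_j^{2(\alpha-1)}$ (a crude area bound coming from $L_{j+1}=L_j^\alpha$). Consequently the number of $(L_j^{3/2}-1)\times(L_j^{3/2}-1)$ squares of $j$-level cells contained in $X^j_{U^*}$ is at most some polynomial $\operatorname{poly}(L_j)$, say bounded by $L_j^{3\alpha}$ for $L_0$ sufficiently large.

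Next, for each such candidate square $S\subseteq U^*$, Lemma \ref{l:airport2} gives
\[
\P[X^j_S \text{ is a strong airport}] \geq 1 - e^{-c' L_j^{9/4}}.
\]
Crucially the bound in Lemma \ref{l:airport2} depends only on the side length of $S$ and is uniform over the location of $S$ (by translation invariance of the law of the underlying field), so the statement applies to every square in $U^*$ with the same constant $c'$. A union bound over all such squares then yields
\[
\P[\ce^*_{\hat{U}}] \geq 1 - L_j^{3\alpha} e^{-c' L_j^{9/4}}.
\]
Since $L_j^{3\alpha}$ is only polynomial while $e^{-c' L_j^{9/4}}$ decays super-polynomially, for $L_0$ (and hence $L_j$) sufficiently large we may absorb the polynomial prefactor by replacing $c'$ with a slightly smaller positive constant $c''>0$ to obtain $\P[\ce^*_{\hat{U}}] \geq 1 - e^{-c'' L_j^{9/4}}$, which is the desired estimate after renaming $c''$ as $c'$.

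There is no real obstacle here beyond bookkeeping: the only minor subtlety is that the identity of $U^*$ depends on $C$ and hence, a priori, on the $\mathbb{X}$-configuration in the buffer zone, but we have fixed $C$ (and hence $\hat{U}$ and $U^*$) deterministically at the outset of the statement, so the union bound uses only the joint law of the interior $j$-level blocks and Lemma \ref{l:airport2} applies verbatim.
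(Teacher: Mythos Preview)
Your proposal is correct and matches the paper's own proof essentially verbatim: the paper simply notes that the number of candidate squares in $U^*$ is $O(L_j^{2\alpha})$ and takes a union bound over Lemma~\ref{l:airport2}, exactly as you do. Your writeup is in fact more explicit about the bookkeeping (bounding $|U^*|$, absorbing the polynomial prefactor into the constant) than the paper's one-line sketch.
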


\begin{proof}
Follows in a similar manner to Lemma \ref{l:airport2} and noting that the number of $L_j^{3/2}\times L_j^{3/2}$ squares in $U^*$ are $O(L_j^{2\alpha})$ and taking a union bound over all of them. 
\end{proof}  

\begin{lemma}
\label{l:interiorbad}
On $\ce_{\hat{U}} \cap \ce^*_{\hat{U}} \cap \{U^C_{X}<k_0\} \cap \{W^C_{X}=0\}$, we have that $X$ is good.
\end{lemma}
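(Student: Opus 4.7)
The plan is to verify each of the four clauses of Definition~\ref{d:good} for the $(j+1)$-level block $X$ on the given event.

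Clause (i), namely $V_X=1$, is immediate from $\ce_{\hat U}$: by construction $\hat U$ is the potential domain carved out by a curve $C\in\mathcal{C}_u$, which runs through the buffer zone of the single cell $B^{j+1}(u)$, so the lattice block of $X$ is forced to be the singleton $\{u\}$.

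Clauses (ii) and (iii) both reduce to showing that every $j$-level bad subcomponent of $X$ lies inside $U^*$, which I would extract from Observation~\ref{o:blockprop}(ii): every $j$-level bad sub-block of $X$ is at distance at least $L_j^3$ cells from the boundary of $X$, and a fortiori lies in $U^*=U\setminus\partial U$. Hence $U_X^C$ counts every bad $j$-subcomponent of $X$, so the hypothesis $U_X^C<k_0$ gives (ii), and the hypothesis $W_X^C=0$ forces every such component to be semi-bad rather than really bad by the dichotomy of Definition~\ref{d:semibad}, giving (iii).

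The substantive step is clause (iv): every $L_j^{3/2}\times L_j^{3/2}$ square $\tilde S$ of $j$-cells contained in $X$ must be an airport. My strategy is to locate, inside each such $\tilde S$, a size-$(L_j^{3/2}-1)\times(L_j^{3/2}-1)$ sub-square $S$ with $S\subseteq U^*$. Then on $\ce^*_{\hat U}$ the square $S$ is a strong airport, and by the very definition of strong airport this promotes $\tilde S$ itself to an airport.

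To produce $S$ I would argue geometrically as follows. First, since $\tilde S\subseteq U$, any cell $v\in\tilde S\cap\partial U$ has a neighbour $v'\notin U\supseteq\tilde S$, so $v$ lies on the outer boundary of $\tilde S$ (rather than in its interior). Second, and this is the crux, $\tilde S\cap\partial U$ cannot meet two opposite sides of $\tilde S$: by Observation~\ref{o:bdcurve}(i) the domain $\hat U$ contains the ideal interior $B^{j+1,{\rm int}}(u)$ (a square of side $L_j^\alpha-2L_j^5$) and is contained in the blow-up $B^{j+1,{\rm ext}}(u)$, and because $\alpha>6$ and $L_0$ is large the side length $L_j^{5/2}$ of $\tilde S$ (as a 2D region of cells) is dwarfed by the width of $\hat U$ on every horizontal and every vertical section. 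Thus $\hat U$ is nowhere thin enough to pinch $\tilde S$ between two opposite boundary crossings. Consequently $\tilde S\cap\partial U$ is confined to at most two adjacent sides of $\tilde S$, and deleting at most one offending row and at most one offending column of $\tilde S$ yields one of its four size-$(L_j^{3/2}-1)$ sub-squares lying entirely in $U^*$, which is the required $S$. Combined with (i), (ii), (iii) this completes the verification that $X$ is good. The main obstacle will be the geometric step in (iv); the other three clauses are essentially unpackings of the definitions and of Observation~\ref{o:blockprop}.
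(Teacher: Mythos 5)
Your proposal is correct and follows essentially the same route as the paper, whose entire proof is the observation that on $\ce_{\hat{U}}$ the $j$-level blocks along $\partial U$ are good, so that the claim is "immediate from the definition of good blocks." You actually supply more detail than the paper does — in particular the geometric step showing that every $L_j^{3/2}\times L_j^{3/2}$ square of $j$-cells in $U$ contains a $(L_j^{3/2}-1)\times(L_j^{3/2}-1)$ sub-square inside $U^*$, which is exactly the point the paper's definition of strong airports is designed to exploit but which the paper leaves implicit.
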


\begin{proof}
Noticing that on $\ce_{\hat{U}}$, the $j$ level $\mathbb{X}$-blocks corresponding to $\partial U$ are all good, so this lemma follows immediately from the definition of good blocks.
\end{proof}

Now we are ready to prove Theorem \ref{t:goodconditionh}.

\begin{proof}[Proof of Theorem \ref{t:goodconditionh}]
Observe that 
$$\P[X~\text{is bad}\mid \cf_{V}]\leq \max_{C\in \mathcal{C}^*_{u}} \P[X~\text{is bad}\mid \cf_{V}, \ce_{C}].$$

Now fix $C\in \mathcal{C}^*_{u}$. Define $U=U_{C}, \hat{U}=\hat{U}_{C}$ and $U^*=U^*_{C}$ as above. Set $\tilde{V}=\Z^2\setminus U$. Let $\cf^*_{C}$ denote the event that $C$ is a valid level $(j+1)$ boundary.

\begin{eqnarray*}
\label{e:goodcondition1}
\P[X~\text{is bad}\mid \cf_{V}, X^{j}_{\tilde{V}}, \ce_{C}, \cf^*_{C}] &=& \dfrac{\P[X \text{is bad}, \ce_{C} \mid  X^{j}_{\tilde{V}}, \cf^*_{C}]}{\P[\ce_{\hat{U}} \mid X^{j}_{\tilde{V}}, \cf^*_{C}]}\\
&\leq & (8k_0)^{16k_0^2}100^{j+10}L_{j+1}^{-2\gamma} 
\end{eqnarray*}
where the last inequality follows from Lemma \ref{l:intbad} below and the construction of boundaries at level $j+1$. The theorem follows by averaging over the distribution of $j$ level blocks outside $V^*$.
\end{proof}

It remains to prove the following lemma.

\begin{lemma}
\label{l:intbad}
Fix $C\in \mathcal{C}^*_u$ and let $\hat{U}$ be as above. Set $I^C_{\rm bad}=(\neg{\ce^*_{\hat{U}}})\cup \{U^C_{X}\geq k_0\} \cap \{W^C_{X}\geq 1\}$. Consider the above set-up where we condition on $X^{j}_{\tilde{V}}$ such that it is compatible with $\ce_{\hat{U}}$. Then we have
$$\P[X~{\rm{is~bad}}, \ce_{\hat{U}} \mid X^{j}_{\tilde{V}}, \cf^*_{C}]\leq 2\P[I^C_{\rm bad}\mid X^{j}_{\tilde{V}}]\leq 2 L_{j+1}^{-2\gamma}.$$
\end{lemma}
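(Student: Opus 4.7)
The plan is to combine the structural statement of Lemma \ref{l:interiorbad} with the probability bounds in Lemmas \ref{l:goodconditionbadsemibad} and \ref{l:airportfinal}, after controlling for the effect of conditioning on $\cf^*_{C}$.

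First, I would invoke Lemma \ref{l:interiorbad}: on the event $\ce_{\hat{U}}$, if $X$ is bad then one of the conditions defining $I^C_{\rm bad}$ must fail. Consequently $\{X~\text{is bad}\}\cap \ce_{\hat{U}}\subseteq I^C_{\rm bad}\cap \ce_{\hat U}$, and therefore
\[
\P[X~\text{is bad},\ce_{\hat U}\mid X^{j}_{\tilde V},\cf^*_C]\leq \P[I^C_{\rm bad}\mid X^{j}_{\tilde V},\cf^*_C].
\]

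Next, I need to remove the conditioning on $\cf^*_C$ at a cost of at most a factor of $2$. The point is that $I^C_{\rm bad}$ is measurable with respect to $X^{j}_{U^*}$ (the $j$-level configuration in the interior of $U$), while $\cf^*_C$ is measurable with respect to the $j$-level configuration in a neighbourhood of the curve $C$, which lies in the buffer region $B^{j+1,\rm ext}_{U}\setminus B^{j+1,\rm int}_{U}$. Using the fact that non-neighbouring $j$-level blocks are independent (Observation \ref{o:blockprop} at level $j$), together with the recursive estimate \eqref{e:goodconditionx} at level $j$, one shows that $\P[\cf^*_C\mid X^{j}_{\tilde V}]\geq \tfrac{1}{2}$ for any $X^{j}_{\tilde V}$ compatible with $\ce_{\hat U}$: indeed, the validity of $C$ only requires the absence of bad $j$-components within distance $10L_j^4$ of the curve, and the proportion of boundary cells that are good is overwhelming by \eqref{e:goodconditionx}. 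Applying Bayes' rule,
\[
\P[I^C_{\rm bad}\mid X^{j}_{\tilde V},\cf^*_C]=\frac{\P[\cf^*_C\mid X^{j}_{\tilde V},I^C_{\rm bad}]}{\P[\cf^*_C\mid X^{j}_{\tilde V}]}\,\P[I^C_{\rm bad}\mid X^{j}_{\tilde V}]\leq 2\,\P[I^C_{\rm bad}\mid X^{j}_{\tilde V}],
\]
which gives the first inequality.

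Finally, a union bound combined with Lemmas \ref{l:goodconditionbadsemibad} and \ref{l:airportfinal} (both of which go through conditionally on $X^{j}_{\tilde V}$ by the same arguments) yields
\[
\P[I^C_{\rm bad}\mid X^{j}_{\tilde V}]\leq \P[\neg\ce^*_{\hat U}\mid X^{j}_{\tilde V}]+\P[\{U^C_X\geq k_0\}\cup\{W^C_X\geq 1\}\mid X^{j}_{\tilde V}]\leq e^{-c'L_j^{9/4}}+L_j^{-\beta/2}.
\]
Since $\beta/2>2\alpha\gamma$ by the parameter constraints (indeed $\beta>1500\alpha\gamma$), this is at most $L_j^{-2\alpha\gamma}=L_{j+1}^{-2\gamma}$ for $L_0$ sufficiently large, establishing the second inequality.

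The main obstacle is the middle step, namely justifying the factor-$2$ passage from $\P[I^C_{\rm bad}\mid X^{j}_{\tilde V},\cf^*_C]$ to $\P[I^C_{\rm bad}\mid X^{j}_{\tilde V}]$. The subtlety is that although $U^*$ and the curve $C$ are geometrically separated, the $j$-level blocks near $\partial U$ and those in $U^*$ can interact slightly through the recursive block construction; one must verify the lower bound $\P[\cf^*_C\mid X^{j}_{\tilde V}]\geq \tfrac12$ using only the random variables in the buffer zone, treating $X^{j}_{U^*}$ as essentially independent auxiliary randomness.
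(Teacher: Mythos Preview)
Your proposal is correct and follows essentially the same route as the paper: bound $\{X\text{ bad}\}\cap\ce_{\hat U}$ by $I^C_{\rm bad}$ via Lemma~\ref{l:interiorbad}, remove the conditioning on $\cf^*_C$ at the cost of a factor $2$ using $\P[\cf^*_C\mid X^j_{\tilde V}]\geq\tfrac12$, and then apply Lemmas~\ref{l:goodconditionbadsemibad} and~\ref{l:airportfinal} together with the independence of $I^C_{\rm bad}$ from $X^j_{\tilde V}$. The paper makes the lower bound on $\P[\cf^*_C\mid X^j_{\tilde V}]$ concrete by introducing the event $\cg_U=\{X^j(u')\text{ good for all }u'\in U\}$, noting that $\cg_U\subseteq\cf^*_C$ under the compatibility assumption and that $\P[\cg_U\mid X^j_{\tilde V}]\geq 1-4L_j^{2\alpha-\gamma}$ by the recursive estimate~\eqref{e:goodconditionx}; this cleanly resolves the subtlety you flagged in your final paragraph.
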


\begin{proof}
It follows from Lemma \ref{l:interiorbad} that 
\begin{eqnarray*}
\P[X~\text{is bad}, \ce_{\hat{U}} \mid X^{j}_{\tilde{V}}, \cf^*_{C}] &= & \dfrac{\P[X \text{is bad}, \ce_{\hat{U}}, \cf^*_{C}\mid X^{j}_{\tilde{V}}]}{\P[\cf^*_{C}\mid X^{j}_{\tilde{V}}]}\\
&\leq & \dfrac{\P[I^C_{\rm bad}\mid X^{j}_{\tilde{V}}]}{\P[\cf^*_{C}\mid X^{j}_{\tilde{V}}]}.
\end{eqnarray*}

Let $\cg_{U}$ denote the event that $j$ level blocks $X^j(u')$ for all $u'\in U$ are good. Observe that since $X^{j}_{\tilde{V}}$ is such that it is compatible with $\ce_{\hat{U}}$, it follows that on $X^{j}_{\tilde{V}}\cap \cg_{U}$ we have $\cf^*_{C}$. Hence we have using the recursive estimate \eqref{e:goodconditionx} at level $j$ that 

$$\P[\cf^*_{C}\mid X^{j}_{\tilde{V}}]\geq \P[\cg_{U}\mid X^{j}_{\tilde{V}}]\geq 1-4L_j^{\alpha-\gamma}\geq \frac{1}{2}.$$
 
Observe that $I^C_{\rm bad}$ only depends on the blocks corresponding to the set $U^*_{C}$ and hence is independent of $X^{j}_{\tilde{V}}$. The lemma now follows from Lemma \ref{l:goodconditionbadsemibad} and Lemma \ref{l:airportfinal} since $\beta > 4\alpha \gamma$ and $L_j$ is sufficiently large.
\end{proof}

\subsection{Good Blocks Embed into Good Blocks}

\begin{theorem}
Let $X=X^{j+1}_u$ and $Y=Y^{j+1}_u$ be level $(j+1)$ good $\mathbb{X}$ and $\mathbb{Y}$ blocks respectively. Then we have $X\hookrightarrow Y$.
\end{theorem}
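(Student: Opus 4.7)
Let $X=X^{j+1}_u$ and $Y=Y^{j+1}_u$ be good $(j+1)$-level blocks. By Definition~\ref{d:good}, both have size $1$ and hence correspond to the single ideal multi-block $B^{j+1}(u)$ (up to a translation that we absorb without loss of generality), with respective domains $\hat{U}_X$ and $\hat{U}_Y$. Let $X^{j}_{T_1},\ldots,X^{j}_{T_{N_X}}$ be the bad $j$-level subcomponents of $X$ and $Y^{j}_{T'_1},\ldots,Y^{j}_{T'_{N_Y}}$ those of $Y$. By goodness, all of these are semi-bad, $\sum_i |T_i|\leq k_0$ and $\sum_{i'}|T'_{i'}|\leq k_0$, and by construction all of them lie at distance at least $L_j^3$ from the respective domain boundaries; moreover every $L_j^{3/2}\times L_j^{3/2}$ square of $j$-level cells inside $\hat{U}_X$ (resp.\ $\hat{U}_Y$) is an airport for $\mathbb{X}$ (resp.\ $\mathbb{Y}$).

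The plan is to apply Proposition~\ref{p:canonical1} (at level $j+1$) to obtain a family $\{\Upsilon_h:h\in[L_j^2]^2\}$ of $\alpha$-canonical maps from $\hat{U}_X$ to $\hat{U}_Y$ with respect to $\mathcal{T}=\{T_1,\ldots,T_{N_X}\}$ and $\mathcal{T}'=\{T'_1,\ldots,T'_{N_Y}\}$, and then argue by counting that some $\Upsilon_{h^\ast}$ matches up all bad subcomponents in the sense required by Definition~\ref{d:embedding}. By the Proposition, the images $\Upsilon_h(T_i)$ translate rigidly in $h$, as do the preimages $\Upsilon_h^{-1}(T'_{i'})$, and condition (ii) of the Proposition ensures $T_i$ and $\Upsilon_h^{-1}(T'_{i'})$ are disjoint and non-neighbouring for every $h$, so all of the individual embedding events are compatible.

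To count bad $h$'s, I would partition $[L_j^2]^2$ into $L_j$ sub-squares $Q$ of side $L_j^{3/2}$. Fix $i\in[N_X]$. As $h$ ranges over a sub-square $Q$, $\Upsilon_h(T_i)$ ranges bijectively over the placements of the shape of $T_i$ inside some $L_j^{3/2}\times L_j^{3/2}$ square $S_{Q,i}\subseteq \hat{U}_Y$. Since $Y$ is good, $S_{Q,i}$ is an airport, so by Definition~\ref{d:airport} applied to the semi-bad $\mathbb{X}$-component $X^j_{T_i}$ (using the analogous airport property for $\mathbb{Y}$-blocks), at most $v_0^{-2}k_0^{-4}100^{-j}$ of the placements fail to have $Y^j_{\Upsilon_h(T_i)}$ valid and $X^j_{T_i}\hookrightarrow Y^j_{\Upsilon_h(T_i)}$. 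Summing over the $L_j$ sub-squares gives at most $v_0^{-2}k_0^{-4}100^{-j}L_j^4$ bad values of $h$ for this particular $i$. The symmetric argument, using that every $L_j^{3/2}\times L_j^{3/2}$ square of $X$ is an $\mathbb{X}$-airport applied to the semi-bad $\mathbb{Y}$-component $Y^j_{T'_{i'}}$, gives the same bound for each $i'\in[N_Y]$. Summing over $i$ and $i'$ (using $N_X+N_Y\leq 2k_0$) yields a total of at most $2k_0\cdot v_0^{-2}k_0^{-4}100^{-j}L_j^4 \ll L_j^4$ bad $h$'s.

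Hence some $h^\ast\in[L_j^2]^2$ simultaneously witnesses $X^j_{T_i}\hookrightarrow Y^j_{\Upsilon_{h^\ast}(T_i)}$ for every $i\in[N_X]$ and $X^j_{\Upsilon_{h^\ast}^{-1}(T'_{i'})}\hookrightarrow Y^j_{T'_{i'}}$ for every $i'\in[N_Y]$. Then $\Upsilon_{h^\ast}$ is an $\alpha$-canonical map that, by Definition~\ref{d:embedding} (or directly by Lemma~\ref{l:embedcond} after absorbing sub-blocks outside the bad subcomponents into good sub-blocks of $Y$), gives an embedding $X\hookrightarrow Y$. There is no substantive obstacle beyond bookkeeping: the semi-bad tail on embedding probabilities is the quantitative input, the airport property is precisely what allows us to convert that tail into a union bound over $h$, and Proposition~\ref{p:canonical1} supplies the geometric family of maps needed to use that union bound.
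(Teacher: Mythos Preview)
Your proposal is correct and follows essentially the same approach as the paper's proof: both apply Proposition~\ref{p:canonical1} to obtain the family $\{\Upsilon_h\}_{h\in[L_j^2]^2}$, use the airport property to bound (for each bad subcomponent on either side) the number of $h$'s that fail, and then take a union bound over the at most $2k_0$ bad subcomponents to locate a single $h^\ast$ that works. Your explicit partition of $[L_j^2]^2$ into $L_j$ sub-squares of side $L_j^{3/2}$ is a clean way to make the airport bookkeeping precise---the paper asserts the same bound (``since all rectangles of $L_j^{3/2}\times L_j^{3/2}$ sub-blocks are airports, it follows that\ldots'') without spelling out the partition step.
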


\begin{proof}
Let $C_{X}$ and $C_{Y}$ denote the boundary curves of $X$ and $Y$ respectively. Let $\tilde{U}_{X}$ and $\tilde{U}_{Y}$ denote the domains bounded by these curves. Let $\hat{U}_{X}, \hat{U}_{Y}\subseteq \Z^2$, such that $X=X^j_{\hat{U}_{X}}$ and $Y=Y^j_{\hat{U}_{Y}}$. Let $\mathcal{T}=\{T_1,T_2,\ldots , T_{N_{X}}\}$ (resp.\ $\mathcal{T}'=\{T'_1,T'_2,\ldots T'_{N_Y}\}$) be the set of subsets of $\hat{U}_X$ (resp.\ $\hat{U}_{Y}$) such that $X^j_{T_i}$ (resp.\ $Y^j_{T'_i}$) are the $j$-level bad subcomponents of $X$ (resp.\ $Y$).

By Proposition \ref{p:canonical1} there exists canonical maps $\Upsilon^{j+1}_{h_1,h_2}$, $(h_1,h_2)\in [L_j^2]\times [L_j^2]$ from $\tilde{U}_{X}$ to $\tilde{U}_{Y}$ with respect to $\mathcal{T}$ and $\mathcal{T}'$  which are bi-Lipschitz with Lipschitz constant $(1+10^{-(j+5)})$ such that for each $i\in [N_{X}]$ we have $\Upsilon_{h_1,h_2}(T_{i})=(h_1-1,h_2-1)+\Upsilon_{1,1}(T_{i})$ and for all $i\in [N_{Y}]$ we have  $\Upsilon^{-1}_{h_1,h_2}(T_{i})=-(h_1-1,h_2-1)+\Upsilon^{-1}_{1,1}(T'_{i})$. Now since all rectangles of $L_j^{3/2}\times L_j^{3/2}$ sub-blocks are airports, it follows that for all $i\in [N_{X}]$
$$\#\{(h_1,h_2)\in [L_j^2]\times [L_j^2]: X^j_{T_i}\not\hookrightarrow Y^{j}_{\Upsilon_{h_1,h_2}(T_i)}\} \leq v_0^{-2}k_0^{-4}100^{-(j-1)}L_j^4$$
and for all  $i\in [N'_{Y}]$
$$\#\{(h_1,h_2)\in [L_j^2]\times [L_j^2]: X^j_{\Upsilon^{-1}_{h_1,h_2}(T'_i)}\not\hookrightarrow Y^{j}_{T'_i}\} \leq v_0^{-2}k_0^{-4}100^{-(j-1)}L_j^4.$$
By taking a union bound it follows that there exists a canonical map $\Upsilon=\Upsilon^{j+1}$ from $\tilde{U}_{X}$ to $\tilde{U}_{Y}$ with respect to $\mathcal{T}$ and $\mathcal{T}'$  which are bi-Lipschitz with Lipschitz constant $(1+10^{-(j+5)})$, such that for all $i\in [N_X]$ for all $i'\in [N'_{Y}]$ we have $X^j_{T_i}\hookrightarrow Y^{j}_{\Upsilon_{h_1,h_2}(T_i)}$. The theorem now follows from definition that $X\hookrightarrow Y$.
\end{proof}

\bibliography{embedh}
\bibliographystyle{plain}

\end{document}